\newcommand{\ZZ}{\mathbb{Z}}
\newcommand{\NN}{\mathbb{N}}
\newcommand{\RR}{\mathbb{R}}
\newcommand{\PP}{\mathbb{P}}
\newcommand{\CC}{\mathbb{C}}
\newcommand{\OO}{\mathbb{O}}
\newcommand{\EE}{\mathbb{E}}
\renewcommand{\AA}{\mathbb{A}}
\newcommand{\ee}{\mathbb{e}}
\newcommand{\Pol}{\operatorname{Pol}}
\newcommand{\A}{\mathcal{A}}
\renewcommand{\O}{\mathcal{O}}
\renewcommand{\L}{\mathcal{L}}
\newcommand{\T}{\mathcal{T}}
\newcommand{\I}{\mathcal{I}}
\newcommand{\F}{\mathcal{F}}
\newcommand{\D}{\mathcal{D}}
\newcommand{\X}{\mathcal{X}}
\newcommand{\J}{\mathcal{J}}
\renewcommand{\S}{\mathcal{S}}
\renewcommand{\SS}{\mathcal{SS}}
\renewcommand{\P}{\mathcal{P}}
\newcommand{\un}{\underline}
\newcommand{\ov}{\overline}
\newcommand{\wh}{\widehat}
\newcommand{\wt}{\widetilde}
\newcommand{\m}{\mathfrak{m}}
\newcommand{\s}{\mathfrak{s}}
\newcommand{\n}{\mathfrak{n}}
\newcommand{\val}{\operatorname{val}}
\newcommand{\con}{\operatorname{con}}
\newcommand{\supp}{\operatorname{supp}}
\newcommand{\sm}{\operatorname{sm}}
\newcommand{\Prin}{\operatorname{Prin}}
\newcommand{\VStab}{\operatorname{VStab}}
\newcommand{\Deg}{\operatorname{Deg}}
\newcommand{\BCon}{\operatorname{BCon}}
\newcommand{\Con}{\operatorname{Con}}
\newcommand{\Div}{\operatorname{Div}}
\newcommand{\Pic}{\operatorname{Pic}}
\renewcommand{\Im}{\operatorname{Im}}
\newcommand{\End}{\operatorname{End}}
\newcommand{\Aut}{\operatorname{Aut}}
\newcommand{\Spec}{\operatorname{Spec}}
\newcommand{\Gr}{\operatorname{Gr}}
\newcommand{\Tors}{\operatorname{Tors}}
\renewcommand{\div}{\operatorname{div}}
\newcommand{\Xsing}{\operatorname{X_{\rm{sing}}}}
\newcommand{\NF}{\operatorname{NF}}
\newcommand{\TF}{\operatorname{TF}}
\newcommand{\Simp}{\operatorname{Simp}}
\newcommand{\PIC}{\operatorname{PIC}}
\newcommand{\Gm}{\operatorname{\mathbb{G}_m}}
\newcommand{\BD}{\operatorname{BD}}
\newcommand{\Z}{\mathbb{Z}}
\newcommand{\R}{\mathbb{R}}
\pgfplotsset{compat=1.17}
\definecolor{LivGreen}{cmyk}{68 0 100 0}
\newtheorem{theorem}{Theorem}[section]
\newtheorem{corollary}[theorem]{Corollary}
\newtheorem{lemma}[theorem]{Lemma}
\newtheorem{fact}[theorem]{Fact}
\newtheorem{proposition}[theorem]{Proposition}
\newtheorem{proposition-definition}[theorem]{Proposition-Definition}
\newtheorem{lemma-definition}[theorem]{Lemma-Definition}
\newtheorem{theoremalpha}{Theorem}
\newtheorem*{fact*}{Fact}
\theoremstyle{definition}
\newtheorem{definition}[theorem]{Definition}
\newtheorem{example}[theorem]{Example}
\newtheorem{remark}[theorem]{Remark}
\newtheorem{notation}[theorem]{Notation}
\numberwithin{equation}{section}
\newenvironment{sis}{\left\{\begin{aligned}}{\end{aligned}\right.}
\begin{document}

\title{A complete theory of  smoothable compactified Jacobians of nodal curves}

\author{Marco Fava}
\address{Marco Fava, Department of Mathematical Sciences, University of Liverpool, Liverpool, L69 7ZL, United Kingdom}
\email{marco.fava@liverpool.ac.uk}

\author{Nicola Pagani}
\address{Nicola Pagani, Department of Mathematical Sciences, University of Liverpool, Liverpool, L69 7ZL, United Kingdom; Dipartimento di Matematica,  Universit\`a di Bologna, Piazza di Porta S. Donato, 5, 40126.}
\email{pagani@liv.ac.uk}

\email{nicolatito.pagani@unibo.it}
\urladdr{http://pcwww.liv.ac.uk/~pagani/}

\author{Filippo Viviani}
\address{Filippo Viviani, Dipartimento di Matematica, Universit\`a di Roma ``Tor Vergata'', Via della Ricerca Scientifica 1, 00133 Roma, Italy}
\email{viviani@mat.uniroma2.it}

\keywords{Compactified Jacobians, nodal curves.}

\subjclass[msc2000]{{14H10}, {14H40}, {14D22}.}

		\begin{abstract}
This paper is the second of a series of three. 
In the first paper \cite{FPV1}, we introduced a new class of compactified Jacobians (that we baptized compactified Jacobians of \emph{vine type}, or simply \emph{V-compactified Jacobians}) for families of reduced curves.

Here we focus on the case of a \emph{nodal} curve over an algebraically closed field. We characterize the V-compactified Jacobians as those  that can arise as limits of Jacobians under every one-parameter smoothing of the nodal curve. Furthermore, we also characterize the V-compactified Jacobians as those that have the smallest possible number of irreducible components with a given fixed degeneracy subset. These results extends previous works by Pagani-Tommasi \cite{pagani2023stability} and Viviani \cite{viviani2023new}, where a similar classification result was achieved for \emph{fine} compactified Jacobians.

In the follow-up paper \cite{FPV3}, we will apply the results obtained here and in \cite{FPV1}, to classify the compactified \emph{universal} Jacobians over the moduli of pointed stable curves.

	\end{abstract} %%%%%%%%%

\maketitle
	\bigskip
	
	\tableofcontents

\section{Introduction}

This paper is the second of a series of three. In the first paper \cite{FPV1}, we introduced an abstract notion of a (modular) relative compactified Jacobian for a family of reduced curves. The aim of this paper, which completes the work started in \cite{pagani2023stability} and \cite{viviani2023new}, is to address the following natural

\vspace{0.1cm}

\textbf{Question:} Suppose that a smooth (projective, connected) curve  degenerates to a nodal (projective, connected) curve  in a $1$-parameter family, how can we degenerate "modularly" the Jacobian of the generic smooth curve?

\vspace{0.2cm}

Let us now clarify what  we mean by a \emph{modular degeneration} in the above question. Given a nodal curve $X$ over an algebraically closed field $k=\ov k$, a \textbf{compactified Jacobian stack} $\ov \J_X^d$ of degree $d$ of $X$ (see Definition~\ref{D:compJac})  is an open  substack  of the stack $\TF_X^d$ of rank-$1$, torsion-free sheaves of degree $d$ on $X$, which admits a proper good moduli space $\ov J_X^d$. The latter is called the associated \textbf{compactified Jacobian space}\footnote{The expression \emph{compactified Jacobian} refers to the fact that the smooth locus (both of the stack and of the space) is a disjoint union of several copies of the generalized Jacobian of $X$.}. We address the reader to the introduction of \cite{FPV1} and the references therein for some history, general motivation, and methods on the problem of studying the compactified Jacobian of singular curves.

In light of this definition, we can restate the above question as the task of characterizing all the degree-$d$ compactified Jacobians of the central nodal curve that can arise as the limit of the degree-$d$ Jacobian of some generic smooth curve.

\vspace{0.1cm}

In the special case where one additionally assumes that the rigidification of the compactified Jacobian stack $\ov \J_X^d$ is already proper, we say that $\ov \J_X^d$  is a \emph{fine compactified Jacobian}. For fine compactified Jacobians, the above question was solved by Pagani-Tommasi in  \cite{pagani2023stability} and by Viviani in \cite{viviani2023new}. The aim of this paper is to answer the same question more generally, i.e. for compactified Jacobians that are not necessarily fine.
\vspace{0.1cm}

Note that non-fine compactified Jacobians appear naturally. For example, if $X$ is stable, then slope-stability with respect to the ample canonical polarization $\omega_X$ (as defined by Simpson \cite{simpson}) yields  `canonical'  compactified Jacobians of any given degree $d$, which are fine if and only if $\gcd(d+1-g(X), 2g(X)-2)=1$, and which can acquire a different `degree of degeneracy' for different $d$'s,  the maximally degenerate ones appearing when $d$ equals $g(X)-1$ modulo $2g(X)-2$. These canonical compactified Jacobians are the fibers of Caporaso's compactification of the universal Jacobian over the moduli space $\ov{\mathcal{M}}_g$ of stable curves of genus $g$ (see \cite{caporaso}).

\vspace{0.1cm}

The main novelty of our work (and of the previous works  \cite{pagani2023stability} and \cite{viviani2023new}) compared to the existing literature is that we aim to classify \emph{all}  possible solutions to the above question.  It turns out that our complete classification produces more than the \emph{classical} compactified Jacobians: this was already observed in \cite{viviani2023new} in the  case of fine compactified Jacobians.

\subsection*{The results}
We first recall from \cite{FPV1} the definition of V-stability condition and of V-compactified Jacobian for a reduced curve. 

Given a connected reduced curve $X$ over $k=\ov k$, we denote by $\BCon(X)$ the set of biconnected subcurves of $X$, i.e. non trivial-subcurves $\emptyset \subsetneq Y\subsetneq X$ such that $Y$ and its complementary subcurve $Y^\mathsf{c}:=\ov{X\setminus Y}$ are connected.

A \emph{stability condition of vine type} (or simply a \textbf{V-stability condition)}  of characteristic $\chi=|\s|\in \Z$ on $X$ is a function  (see Definition \ref{D:VcJ}) 
$$
\begin{aligned}
\s: \BCon(X) & \longrightarrow \Z\\
Y & \mapsto \s_Y
\end{aligned}
$$
satisfying the following properties:
\begin{enumerate}
\item \label{E:condi1} for any $Y\in \BCon(X)$, we have 
\begin{equation*}
\s_Y+\s_{Y^\mathsf{c}}-\chi\in \{0,1\}.
\end{equation*}
A subcurve $Y\in \BCon(X)$ is said to be \emph{$\s$-degenerate} if $\s_Y+\s_{Y^\mathsf{c}}-\chi=0$, and \emph{$\s$-nondegenerate} otherwise.

\item  \label{E:condi2} given subcurves $Y_1,Y_2,Y_3\in \BCon(X)$ without pairwise common irreducible components such that $X=Y_1\cup Y_2\cup Y_3$, we have that:
\begin{enumerate}
 \item if two among the subcurves $\{Y_1,Y_2,Y_3\}$ are $\s$-degenerate, then also the third one is $\s$-degenerate;
            \item the following holds
            \begin{equation*}
            \sum_{i=1}^{3}\s_{Y_i}-\chi \in\begin{cases}
                \{1,2\} \textup{ if $Y_i$ is $\s$-nondegenerate for all $i$};\\
                \{1\} \textup{ if there exists a unique $i$ such that $Y_i$ is $\s$-degenerate}; \\
                \{0\} \textup{ if $Y_i$ is $\s$-degenerate for all $i$}.
            \end{cases}
        \end{equation*}
\end{enumerate}
\end{enumerate}

The \emph{degeneracy subset} of $\s$ is the collection
\begin{equation*}
\D(\s):=\{Y\in \BCon(X): Y \text{ is $\s$-degenerate}\}.
\end{equation*}
    A V-stability condition $\s$ is called \emph{general}
    if every $Y\in \BCon(X)$ is $\s$-nondegenerate, i.e. if $\D(\n)=\emptyset$.

\vspace{0.1cm}

To any V-stability condition on $X$, we  associate a compactified Jacobian stack of $X$ as  follows.

\begin{fact*}\label{ThmA}(see \cite[Thm.~A, Thm~C(2)]{FPV1}) 
For any V-stability condition $\mathfrak s$ on $X$, then 
\begin{equation*}
\ov \J_X(\s):=\{I \in \TF_X\: : \: \chi(I)=|\s| \text{ and } \chi(I_Y)\geq \s_Y  \text{ for any  } Y\in \BCon(X)\}.
\end{equation*}
is a compactified Jacobian stack of $X$, called the \textbf{V-compactified Jacobian stack} associated to $\mathfrak s$. 
Moreover, $\mathfrak s$ is uniquely determined by $\ov \J_X(\mathfrak s)$. 
\end{fact*}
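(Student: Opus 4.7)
The plan is to verify the three defining features in order: (i) $\overline{\J}_X(\s)$ is an open substack of $\TF_X^{|\s|}$, (ii) it admits a proper good moduli space, and (iii) the V-stability condition $\s$ is recoverable from $\overline{\J}_X(\s)$.

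For (i), the argument is upper semicontinuity. If $I_Y$ denotes the maximal torsion-free quotient of $I|_Y$, then in any flat family of rank-$1$ torsion-free sheaves of total Euler characteristic $|\s|$, the torsion of $I|_Y$ supported at the nodes of $X$ lying on $Y$ can only jump upward under specialization, so $\chi(I_Y)$ can only drop. Hence each inequality $\chi(I_Y)\geq \s_Y$ is an open condition, and since $\BCon(X)$ is finite the intersection of these open conditions yields an open substack of $\TF_X^{|\s|}$.

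The heart of the proof is (ii). Boundedness is immediate from the sequence $0\to I \to I_Y\oplus I_{Y^\mathsf{c}}\to I|_{Y\cap Y^\mathsf{c}}\to 0$, which bounds $\chi(I_Y)$ from above in terms of $\s_{Y^\mathsf{c}}$ and $|Y\cap Y^\mathsf{c}|$, so only finitely many multidegree types contribute. The substance lies in the valuative criterion: given a smoothing $\mathcal{X}\to \Delta$ and a line bundle on the generic fiber, I would extend to a flat family of torsion-free sheaves and then perform iterated elementary transformations at the nodes of $X$ to push the central fiber into $\overline{\J}_X(\s)$. The main obstacle is the combinatorial fact that axioms (\ref{E:condi1})--(\ref{E:condi2}) are precisely what is needed for the polytope cut out in multidegree space by $\{\chi(I_Y)\geq \s_Y\}_{Y\in\BCon(X)}$ to form a fundamental domain for the twister lattice at the nodes, with adjacent chambers overlapping exactly along $\s$-degenerate walls. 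Axiom (\ref{E:condi1}) governs walls indexed by a single $Y$ (either two chambers meet sharply, or they share a codimension-one degenerate stratum), while axiom (\ref{E:condi2}) encodes the closing-up condition at triple junctions $X = Y_1 \cup Y_2 \cup Y_3$. Once this combinatorics is established, universal closedness gives existence of a limit in $\overline{\J}_X(\s)$, and separatedness of the associated good moduli space follows because any two such extensions differ by twists along $\s$-degenerate subcurves and hence are S-equivalent.

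Finally, (iii) is short: for each $Y\in \BCon(X)$ one has
\[ \s_Y \;=\; \min\bigl\{\chi(I_Y) : I \in \overline{\J}_X(\s)\bigr\}, \]
the minimum being attained by, e.g., the pushforward of a line bundle of appropriately chosen multidegree from the partial normalization of $X$ at $Y\cap Y^\mathsf{c}$. Hence the function $\s$ is determined by the open substack $\overline{\J}_X(\s)$.
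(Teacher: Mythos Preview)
The paper does not prove this Fact: it is stated in the introduction as a result imported from the companion paper \cite{FPV1}, and the closely related Theorem~\ref{T:VcJ-smoo} later in the paper is proved entirely by citation to \cite[Thm.~6.6]{FPV1} and \cite[Cor.~8.11]{FPV1}. So there is no in-paper argument to compare against.

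Your outline is broadly the right shape for what the proof in \cite{FPV1} must contain, but part~(ii) has a structural gap. You argue boundedness and the valuative criterion for properness, and then say ``separatedness of the associated good moduli space follows because any two such extensions differ by twists along $\s$-degenerate subcurves and hence are S-equivalent.'' This presupposes that the good moduli space exists. For a stack that is not Deligne--Mumford, existence of a good moduli space is not automatic and is not implied by properness-type valuative criteria; by the Alper--Halpern-Leistner--Heinloth criterion one must verify $\Theta$-completeness and S-completeness of $\ov\J_X(\s)$ itself. Your elementary-transformation argument addresses universal closedness, but says nothing about $\Theta$-completeness, and your S-equivalence remark is circular as stated: one must show that for any $\s$-semistable $I$ and any ordered partition $Y_\bullet$ arising from the filtration, the graded sheaf $\Gr_{Y_\bullet}(I)$ remains $\s$-semistable. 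That is precisely where the axioms~\eqref{E:condi1}--\eqref{E:condi2} enter most delicately, and it is the content behind Lemma~\ref{L:ss-isotr2} and Proposition~\ref{P:iso-spec} in the present paper (in the nodal case). The ``polytope is a fundamental domain'' picture you sketch is correct intuition, but it is the conclusion of the argument, not a substitute for it.

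Part~(iii) is correct and is essentially how the paper (via \cite[Cor.~8.11]{FPV1}) recovers $\s$; the only thing to add is that attainment of the minimum $\chi(I_Y)=\s_Y$ is not obvious a priori---it follows from the fact that $\P_\n$ is a PT-assignment (Theorem~\ref{T:V=PT}), hence nonempty on every admissible stratum and in particular on the minimal ones where the inequalities are forced to be equalities.
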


The classical compactified Jacobians (constructed in \cite{Oda1979CompactificationsOT}, \cite{simpson},  \cite{caporaso} and  \cite{esteves}) are special cases of V-compactified Jacobians, as we now explain. 
If $\psi$ is a  \emph{numerical polarization} on $X$ of characteristic $\chi=|\psi|\in \Z$, i.e. an additive function 
$$
\psi:\{\text{Subcurves of } X\} \longrightarrow \RR \: \text{ such } \psi_X=\chi,
$$
then the collection 
 \begin{equation*}
 \mathfrak s(\psi):=\left\{\mathfrak s(\psi)_Y:=\lceil \psi_Y \rceil  \: \text{ for any  } Y\in \BCon(X)\right\}
 \end{equation*}
 is a  V-stability condition of characteristic $\chi$, called the V-stability condition associated to $\psi$. 
 Then we say that the \textbf{classical compactified Jacobian} associated to $\psi$ is the V-compactified Jacobian associated to  $\s(\psi)$, namely:  
\begin{equation}\label{E:fcJphi}
\ov J_X(\s(\psi)):=\ov J_X(\psi)=\left\{I \in \TF_X: \: \chi(I)=|\psi| \text{ and } \chi(I_Y)\geq \psi_Y \text{ for any } Y\in \BCon(X)\right\}.
\end{equation}

Summing up, we have the following inclusions 
\begin{equation}\label{E:inc-fcJ}
\left\{\begin{aligned}
\text{Classical compactified} \\
\text{Jacobians of } X
\end{aligned}\right\} \hookrightarrow
\left\{\begin{aligned}
\text{V-compactified} \\
\text{Jacobians of } X
\end{aligned}\right\} \hookrightarrow
\left\{\begin{aligned}
\text{Compactified} \\
\text{Jacobians of } X
\end{aligned}\right\},
\end{equation}
which were shown, in general, to be strict, in \cite[Thm. C]{viviani2023new} (already for nodal curves and in the fine case). 

\vspace{0.1cm}

In this paper, we provide two characterizations  of V-compactified Jacobians of a nodal curve. The first characterization shows that V-compactified Jacobians provide a solution to the initial Question.

\begin{theoremalpha}\label{ThmB}(see Theorems \ref{T:VcJ-smoo} and \ref{T:cla-smoo})
Let $X$ be a connected nodal curve and let $\ov \J_X^d$ be a compactified Jacobian stack of $X$ of degree $d$. Then the following conditions are equivalent:
\begin{enumerate}
\item  $\ov \J_X^d$ is V-compactified Jacobian stack, i.e. $\ov \J_X^d=\ov \J_X(\mathfrak s)$ for some V-stability condition $\mathfrak s$ on $X$; 
 \item $\ov \J_X^d$ is  \emph{smoothable}, i.e.
    for any one-parameter smoothing $\X/\Delta$ of $X$, the open substack $\ov \J_{\X}^d:=\ov \J_X^d\cup \J_{\X_{\eta}}^d\subset \TF_{\X/\Delta}^d$ admits a good moduli space $\ov J_{\X}^d$ that is proper over $\Delta$.
\end{enumerate}
\end{theoremalpha}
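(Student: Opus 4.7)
The plan is to prove the two implications separately. For $(1) \Rightarrow (2)$, that a V-compactified Jacobian $\ov \J_X(\s)$ is smoothable, the idea is to promote $\s$ to a relative stability datum on the family $\X/\Delta$ whose associated relative compactified Jacobian recovers $\ov \J_X(\s)$ on the central fiber. Concretely, one should exhibit (possibly after a finite base change) a numerical polarization $\Psi$ on some semistable model of $\X/\Delta$ whose associated V-stability condition restricts to $\s$ on the central fiber. Working on a semistable model rather than on $\X$ itself is essential, since by the strictness of the inclusions in \eqref{E:inc-fcJ} not every V-stability condition arises from a numerical polarization on $X$ itself. Once such an extension is produced, properness of the resulting relative classical compactified Jacobian over $\Delta$ is known (e.g. by \cite{esteves}), and this gives the smoothability of $\ov \J_X(\s)$.

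For $(2) \Rightarrow (1)$, starting from a smoothable $\ov \J_X^d$, the natural candidate V-stability condition is
\[
\s_Y := \min\{\chi(I_Y) : I \in \ov \J_X^d\} \quad \text{for } Y \in \BCon(X),
\]
which is well-defined thanks to the properness of the good moduli space $\ov J_X^d$. The inclusion $\ov \J_X^d \subseteq \ov \J_X(\s)$ is built into the definition, while the reverse inclusion, and hence the equality $\ov \J_X^d = \ov \J_X(\s)$, follows from the uniqueness statement in the Fact once $\s$ has been checked to be a genuine V-stability condition.

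The heart of the proof is verifying the combinatorial axioms of $\s$ from smoothability. For axiom \eqref{E:condi1}, fix $Y \in \BCon(X)$ and choose a one-parameter smoothing of $X$ that smooths precisely the nodes separating $Y$ from $Y^\mathsf{c}$; the possible proper limits of generic degree-$d$ line bundles from the smooth fiber are then torsion-free sheaves on $X$ that are either invertible at all these nodes, yielding $\chi(I_Y)+\chi(I_{Y^\mathsf{c}})-\chi = 1$, or fail to be invertible at some such node, yielding $0$, so both values in $\{0,1\}$ are realized and no other is possible. For axiom \eqref{E:condi2}, with $X = Y_1\cup Y_2\cup Y_3$, one plays the same game at the nodes pairwise separating the $Y_i$'s: the count of nodes where invertibility fails matches the count of $\s$-degenerate subcurves among $\{Y_1,Y_2,Y_3\}$, and a careful smoothing for each pattern forces the three numerical alternatives on $\sum_i \s_{Y_i} - \chi$ stated in the axiom.

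I expect the main obstacle to lie in the bookkeeping for axiom \eqref{E:condi2}: one must engineer, for each configuration of degenerate versus nondegenerate subcurves among $\{Y_1,Y_2,Y_3\}$, a one-parameter smoothing whose limits realize the corresponding predicted value of $\sum_i \s_{Y_i}-\chi$, and simultaneously rule out other values. This is analogous to arguments in \cite{pagani2023stability} and \cite{viviani2023new} for fine compactified Jacobians, where rigidification ensures unique limits; in the non-fine setting one has instead nontrivial automorphisms along $\D(\s)$, and the minimum in the definition of $\s_Y$ must be analyzed at the level of S-equivalence classes in $\ov J_X^d$ rather than of isomorphism classes of sheaves.
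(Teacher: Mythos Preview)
For $(1)\Rightarrow(2)$, your high-level idea---extend $\s$ to a relative stability datum on $\X/\Delta$---is exactly what the paper does, but your concrete implementation is unsubstantiated: you propose to realize $\s$ as $\s(\Psi)$ for a numerical polarization $\Psi$ on a semistable model, yet the first inclusion in \eqref{E:inc-fcJ} is strict for nodal curves, and a semistable model is again nodal. You give no argument for why blowing up would make an arbitrary V-stability classical. The paper sidesteps this: since the generic fiber of $\X/\Delta$ is smooth, $\s$ extends trivially to a V-stability on the family, and \cite[Thm.~6.6]{FPV1} furnishes the proper good moduli space directly for V-stabilities, with no detour through numerical polarizations.

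For $(2)\Rightarrow(1)$ there are genuine gaps. First, your appeal to ``the uniqueness statement in the Fact'' to obtain the reverse inclusion $\ov\J_X(\s)\subseteq\ov\J_X^d$ is a non-sequitur: the Fact says $\s$ is determined by $\ov\J_X(\s)$, not that any compactified Jacobian contained in $\ov\J_X(\s)$ must equal it. (The paper does prove such a rigidity statement, Proposition~\ref{P:incl-cJ}, but it requires the two compactified Jacobians to have the \emph{same} degeneracy subset, which you have not verified.) Second, ``a one-parameter smoothing that smooths precisely the nodes separating $Y$ from $Y^\mathsf{c}$'' is incoherent: a smoothing by definition has smooth generic fiber, so all nodes are smoothed. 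Third, your verification of axiom~\eqref{E:condi1} conflates $\s_Y+\s_{Y^\mathsf{c}}$ with $\chi(I_Y)+\chi(I_{Y^\mathsf{c}})$ for a single sheaf $I$; but the minima defining $\s_Y$ and $\s_{Y^\mathsf{c}}$ may be attained by different sheaves, and for a fixed $I$ one has $\chi(I_Y)+\chi(I_{Y^\mathsf{c}})-\chi=\val_{G(I)}(V(\Gamma_Y))\in\{0,\ldots,|Y\cap Y^\mathsf{c}|\}$, not $\{0,1\}$.

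The paper's route for $(2)\Rightarrow(1)$ is entirely different and combinatorial. It first shows (Proposition~\ref{P:unionorb}) that any compactified Jacobian is a union of $\PIC_X^{\un 0}$-orbits and is thus encoded by its poset of orbits $\P(\ov\J_X^d)\subset\OO^d(\Gamma_X)$. Smoothability is used in a single place, Lemma~\ref{L:JbarVbullet}: given $D,D'\in\P(\ov\J_X^d)(G)$ that are chip-firing equivalent via $f$, one chooses a smoothing $\X/\Delta$ with prescribed local multiplicities $m(e)$ at each node (depending on $f$), lifts $D$ and $D'$ to line bundles on a regular semistable model that agree on the generic fiber, and then invokes S-completeness of $\ov\J_{\X}^d\to\Delta$ to produce a common isotrivial degeneration. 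This shows $\P(\ov\J_X^d)$ is a PT-assignment (Definition~\ref{D:PTstab}); the purely combinatorial Theorem~\ref{T:V=PT} then identifies PT-assignments with semistable sets of V-stability conditions, yielding $\P(\ov\J_X^d)=\P_\n$ and hence $\ov\J_X^d=\ov\J_X(\n)$.
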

In the above Theorem, we denote by  $\TF^d_{\X/\Delta}$ the stack of relative rank-$1$, torsion-free, degree-$d$ sheaves on the family  $\X\to \Delta$. 

Let us comment on the proof of Theorem~\ref{ThmB}. The implication (1) $\Rightarrow$ (2) is a special case of \cite[Thm.~A]{FPV1}. For the implication (2) $\Rightarrow$ (1) we proceed as follows.  
Recall that the stack $\TF_X^d$ of rank-$1$, torsion-free sheaves of degree~$d$ on $X$ admits an action of the generalized Jacobian $\PIC^{\un 0}(X)$, whose orbits are given by $\TF_X=\coprod_{(G,D)} \TF(G,D)$, where $\TF(G,D)$ is the locally closed substack consisting of sheaves whose non-singular graph, i.e. the spanning subgraph of $\Gamma_X$ obtained by removing the edges corresponding to the nodes where the sheaf is not free,  is $G$ and whose multidegree is the divisor $D$  on $\Gamma_X$ with degree $d-|E(G)^\mathsf{c}|$ (see Section~\ref{Sub:sheaves} for more details). 
We first prove in Proposition~\ref{P:unionorb} that any compactified Jacobian stack $\ov \J_X^d$ is a union of such orbits and hence we can define its poset of orbits as the collection
$$\P(\ov \J_X^d):=\bigcup_G \P(\ov \J_X^d)(G), \  \text{ where } \ \P(\ov \J_X^d)(G):=\{D\in \Div^{d-|E(G)^\mathsf{c}|}(\Gamma_X)\: : \TF(G,D)\subset \ov \J_X^d\},$$
where $G$ runs over all spanning subgraphs of $\Gamma_X$.
We then show in Corollary \ref{C:prop-deg-cJ} that $\P(\ov \J_X^d)$ contains a BD-set (=generalized break divisor set, see Definition \ref{D:BD-set}) with degeneracy subset 
$$
\D(\ov \J_X^d):=\{Y\in \BCon(X): \text{ there exists a sheaf } I=I_{Y}\oplus I_{Y^\mathsf{c}}\in \ov \J_X^d\}.
$$
Next, we show in Proposition~\ref{P:PT-Jbar} that the assumption that  $\ov \J_X^d$ is smoothable forces the poset $\ov \P(\ov \J_X^d)$ to be a Picard Type assignment (shortened in PT-assignment throughout see Definition \ref{D:PTstab}), which roughly speaking means that each set $\P(\ov \J_X^d)(G)$ provides an almost-unique set of representative of divisors on $G$ up to the chip-firing action.   
We then conclude by arguing, in Theorem~\ref{T:V=PT}, that PT-assignments coincide with the semistable sets of V-stability conditions.

Our second main result characterizes V-compactified Jacobians of nodal curves as those, among the compactified Jacobians with a fixed degeneracy subset, that have the smallest poset of orbits, or the smallest number of irreducible components. 
Observe that, from the definition of the degeneracy subset $\D(\ov \J_X^d)$, it follows easily that  if $\P(\ov \J_X^d)(G)\neq \emptyset$ then $G$ must be \text{$\D(\ov \J_X^d)$-admissible}, i.e. it must satisfy that each of its connected component $H$ is such that the corresponding subcurve $X[V(H)^\mathsf{c}]$ of $X$ induced by $V(H)^\mathsf{c}$ is a union of biconnected subcurves in $\D(\ov \J_X^d)$ without common pairwise irreducible components (see Definition~\ref{D:Dadmissible}).

\begin{theoremalpha}\label{ThmC}(see Corollary \ref{C:bound-orb}, Theorems \ref{T:V=PT} and \ref{T:smallcJ})
Let $X$ be a connected nodal curve over $k=\ov k$ and let $\ov \J_X^d$ be a connected compactified Jacobian of $X$ of degree $d$ with degeneracy subset  $\D(\ov \J_X^d)$. 
\begin{enumerate}
    \item \label{ThmC1} For any $\D(\ov \J_X^d)$-admissible spanning subgraph $G$ of $\Gamma_X$, we have that 
$$
|\P(\ov \J_X^d)(G)|\geq c_{\D(\ov \J_X^d)}(G)
$$
where $c_{\D(\ov \J_X^d)}(G)$ is the number (called the $\D(\ov \J_X^d)$-complexity of $G$, see see Definition~\ref{D:Dadmissible}) of minimal $\D(\ov \J_X^d)$-admissible spanning subgraphs contained in $G$. In particular, $\ov \J_X^d$ has at least $c_{\D(\ov \J_X^d)}(\Gamma_X)$ irreducible components.
\item \label{ThmC2} The following conditions are equivalent:
\begin{enumerate}[(a)]
    \item $\ov \J_X^d$ is a V-compactified Jacobian, i.e. $\ov \J_X^d=\ov \J_X(\s)$ for some V-stability condition $\s$ on $X$, in which case   $\D(\ov \J_X^d)=\D(\s)$. 
    \item Equality holds in Part~\eqref{ThmC1} for any $\D(\ov \J_X^d)$-admissible spanning subgraph $G$ of $\Gamma_X$.
    \item The stack $\ov \J_X^d$ has exactly $c_{\D(\ov \J_X^d)}(\Gamma_X)$ irreducible components.
\end{enumerate}
\end{enumerate}
\end{theoremalpha}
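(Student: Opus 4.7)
My plan is to translate both parts into combinatorial statements about the poset $\P(\ov\J_X^d)$ of $\PIC^{\un 0}(X)$-orbits in $\ov\J_X^d$, exploiting three previously established inputs: (i) $\P(\ov\J_X^d)$ contains a BD-set $B$ with degeneracy subset $\D:=\D(\ov\J_X^d)$ (Corollary~\ref{C:prop-deg-cJ}); (ii) by the defining property of a BD-set (Definition~\ref{D:BD-set}), one has $|B(G)|=c_\D(G)$ for every $\D$-admissible spanning subgraph $G\subseteq\Gamma_X$; and (iii) V-compactified Jacobians correspond bijectively to PT-assignments, and every slice of a PT-assignment with degeneracy $\D$ has cardinality exactly $c_\D(G)$ (Theorem~\ref{T:V=PT}).

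For Part~(1), the inequality is immediate from the inclusion $B(G)\subseteq\P(\ov\J_X^d)(G)$ combined with $|B(G)|=c_\D(G)$. The bound on the number of irreducible components is then obtained by specialising to $G=\Gamma_X$ (which is tautologically $\D$-admissible, since $V(\Gamma_X)^\mathsf{c}=\emptyset$) and observing that the irreducible components of $\ov\J_X^d$ are in bijection with the top-stratum orbits $\TF(\Gamma_X,D)\subseteq\ov\J_X^d$, as the line-bundle locus is open and dense in each component.

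The easy directions of Part~(2) run as follows. For (a)$\Rightarrow$(b): by Theorem~\ref{T:V=PT}, $\P(\ov\J_X(\s))$ is a PT-assignment with $\D(\ov\J_X(\s))=\D(\s)$, so every $\D(\s)$-admissible slice has cardinality $c_{\D(\s)}(G)$. For (b)$\Rightarrow$(c): specialise to $G=\Gamma_X$.

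The hard direction (c)$\Rightarrow$(a) is the main obstacle. My plan is to upgrade the BD-set inclusion $B\subseteq\P(\ov\J_X^d)$ from (i) to a full equality of posets; Theorem~\ref{T:V=PT} will then supply a V-stability condition $\s$ with $\ov\J_X^d=\ov\J_X(\s)$. Assumption (c) combined with (ii) already forces the top-stratum equality $B(\Gamma_X)=\P(\ov\J_X^d)(\Gamma_X)$. To propagate this to lower slices, I would exploit the fact that any compactified Jacobian equals the closure in $\TF_X^d$ of its line-bundle locus (a consequence of openness, connectedness, and properness of the good moduli space); hence every orbit $\TF(G,D')\subseteq\ov\J_X^d$ arises as a specialisation of some top-stratum orbit $\TF(\Gamma_X,D)\subseteq\ov\J_X^d$ under the chip-firing combinatorics recalled in Section~\ref{Sub:sheaves}. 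Since a BD-set is by construction closed under such specialisations and is entirely determined by its top-stratum slice, one obtains $\P(\ov\J_X^d)\subseteq B$, which together with (i) yields the desired equality and completes the proof.
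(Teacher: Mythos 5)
Your treatment of Part~(1) and of the implications (a)$\Rightarrow$(b)$\Rightarrow$(c) is essentially the paper's, but your input (ii) is mis-stated: the defining property of a BD-set does \emph{not} give $|B(G)|=c_\D(G)$; Proposition~\ref{BDthm}\eqref{BDthm1} (together with Lemma~\ref{L:BDrestr} for the restriction to $G$) only gives the inequality $|B(G)|\geq c_\D(G)$, and equality is precisely the nontrivial condition of being a (weak) numerical PT-assignment, equivalent by Theorem~\ref{T:V=PT} to coming from a V-stability. If every BD-set satisfied your (ii), every compactified Jacobian would be a V-compactified Jacobian and the theorem would be vacuous. Fortunately the inequality points in the direction you need, so Part~(1) and (b)$\Rightarrow$(c) survive after this correction.

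The genuine gap is in (c)$\Rightarrow$(a), in the propagation step. You argue that every orbit of $\P(\ov\J_X^d)$ is a specialization of a top-stratum orbit, that ``a BD-set is by construction closed under such specialisations and is entirely determined by its top-stratum slice,'' and conclude $\P(\ov\J_X^d)\subseteq B$. Both claims about BD-sets are false: a BD-set is by definition an \emph{upper} subset of $\OO^d_\D(\Gamma_X)$, i.e.\ closed under generization (moving up the poset), not under specialization; and it is determined by its forest function on the \emph{minimal} admissible forests, not by its maximal (top-stratum) slice. Knowing $B(\Gamma_X)=\P(\ov\J_X^d)(\Gamma_X)$ and that every orbit of $\P(\ov\J_X^d)$ lies below some top orbit says nothing about which lower orbits $\P(\ov\J_X^d)$ contains beyond those of $B$ --- two upper sets with the same maximal elements need not coincide. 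This is exactly the point where combinatorics alone is insufficient. The paper's route is: the top-stratum count forces $B$ to be a weak numerical PT-assignment, hence $B=\P_\n$ for a unique V-stability $\n$ by Theorem~\ref{T:V=PT}, giving an inclusion $\ov\J_X(\n)\subseteq\ov\J_X^d$ of \emph{connected compactified Jacobians with the same degeneracy subset}; one then invokes Proposition~\ref{P:incl-cJ}, whose proof uses S-completeness and $\Theta$-completeness of the stack (via Proposition~\ref{P:Theta-red}) to show that such an inclusion must be an equality. That geometric input is what your argument is missing, and I do not see how to avoid it.
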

Theorem~\ref{ThmC} generalizes the fact, well--known for classical fine compactified Jacobians (see \cite{meloviviani}), and more generally established in \cite[Cor~2.34]{viviani2023new}, that smoothable fine compactified Jacobians of $X$ have a number of irreducible components equal to the complexity of the dual graph of $X$, and that all other fine compactified Jacobians have a larger number of irreducible components.

Let us comment on the proof of Theorem~\ref{ThmC}. Part~\eqref{ThmC1} follows from the fact that the poset of orbits $\P(\ov \J_X^d)$ of $\ov \J_X^d$ contains a BD-set $\BD_I$ with degeneracy subset $\D(\ov \J_X^d)$ (by Corollary~\ref{C:prop-deg-cJ}) together with the inequality $\BD_I(G)\geq c_{\D(\ov \J_X^d)}(G)$ which holds for any BD-set by Proposition~\ref{BDthm}.

As for Part~\eqref{ThmC2}, the implication (a)$\Rightarrow$(b) follows from the fact that if $\ov \J_X^d$ is a V-compactified Jacobian then its poset of orbits $\ov \P(\ov \J_X^d)$ is a PT-assignment with degeneracy subset $\D(\ov \J_X^d)$ by Theorem \ref{T:V=PT} and any PT-assignment $\P$ with degeneracy subset $\D$ satisfies the formula $\P(G)=c_{\D}(G)$ for any $\D$-admissible spanning subgraph $G$ by Proposition~\ref{P:cardPTstab} (which can be seen as a generalization of  Kirchhoff's theorem). The implication (b)$\Rightarrow$(c) is obvious, while the  implication (c)$\Rightarrow$(a) follows from the fact that if a BD set $\BD_I$ with degeneracy subset $\D$ satisfies $\BD_I(\Gamma_X)=c_{\D}(\Gamma_X)$ then it is a PT-assignment by Theorem~\ref{T:V=PT} together with the fact that if two connected compactified Jacobians with the same degeneracy set are contained one into the other then they are equal by Proposition~\ref{P:incl-cJ}.

\subsection*{Open questions}

This work leaves open some natural question which we plan to investigate in the near future.

\begin{enumerate}
    \item Does Theorem \ref{ThmB} extend to other classes of smoothable reduced curves?
    
    For example, it would be interesting to determine whether Theorem \ref{ThmB} extends to reduced curves with locally planar singularities, whose V-compactified Jacobians have nice geometric properties as shown  in \cite[Sec. 8]{FPV1}.
    
    \item Can one characterize V-compactified Jacobians of a nodal curve $X$ as those compactified Jacobians that are \emph{regularly smoothable}, i.e. those compactified Jacobian stacks $\ov \J^d_X$ such that  there exists a regular one-parameter smoothing $\X/\Delta$ of $X$ with the property that the open substack $\ov \J_{\X}^d:=\ov \J_X^d\cup \J_{\X_{\eta}}^d\subset \TF_{\X/\Delta}^d$ admits a good moduli space $\ov J_{\X}^d$ that is proper over $\Delta$?

    A positive answer has been obtained for fine compactified Jacobians in  \cite{viviani2023new} (building upon the results of \cite{pagani2023stability}).
\end{enumerate}

\subsection*{Outline of the paper}

The paper is organized as follows. In Section \cref{Sec:Vstab}, we introduce V-stability conditions on graphs:  we study their associated semistable sets in \cref{Sub:ssV} and some operations on V-stabilities in \cref{Sub:OperV}. In Section \cref{Sec:PT}, we introduce PT-assignments and BD-sets: we prove a Kirchhoff's type formula for the cardinality of a PT-assignment in \cref{Sub:Kirc} and we prove in \cref{Sub:PT=V} that PT-assignments coincide with the semistable sets of V-stabilities conditions and with BD-sets having the smallest number of maximal elements. In Section \cref{Sec:cJ}, we study compactified Jacobians: after reviewing the properties of the stack of rank-$1$ torsion-free sheaves on a nodal curve in \cref{Sub:sheaves}, we introduce compactified Jacobian stacks and spaces in \cref{Sub:cJ}. Then, we study V-compactified Jacobians in \cref{Sub:VcJ}, and we give two characterizations of them in the following subsections: in \cref{Sub:smallcJ} we characterize them as those with the smallest number of irreducible components, and in \cref{SUb:smoothcJ} we characterize them as the smoothable ones. In Section \cref{Sec:Examples} we discuss examples of compactified Jacobians: in \cref{Sub:maxdeg} we classify the maximally degenerate compactified Jacobians; we classify all compactified Jacobians of irreducible curves (resp. vine curves, resp. compact-type curves) in \cref{Sub:irr} (resp. \cref{Sub:vine}, resp. \cref{Sub:compact}); in \cref{Sub:cycle} we classify V-compactified Jacobians for necklace curves.

\vspace{0.5cm}

\paragraph { \bf Acknowledgements}
MF is supported by the DTP/EPSRC award 
EP/W524001/1.

NP is funded by the PRIN 2022 ``Geometry Of Algebraic Structures: Moduli, Invariants, Deformations'' funded by MUR, and he is a member of the GNSAGA section of INdAM. 

FV is funded by the MIUR  ``Excellence Department Project'' MATH@TOV, awarded to the Department of Mathematics, University of Rome Tor Vergata, CUP E83C18000100006, by the  PRIN 2022 ``Moduli Spaces and Birational Geometry''  funded by MIUR,  and he is a member of INdAM and of the Centre for Mathematics of the University of Coimbra (funded by the Portuguese Government through FCT/MCTES, DOI 10.54499/UIDB/00324/2020).

\section{V-stability conditions on graphs}\label{Sec:Vstab}

After fixing some notation, the main definition of this section is Definition~\ref{D:Vstab}, where we introduce the notion of a V-stability condition and its semistable (resp. polystable, stable) set (see Definition~\ref{D:Pn}).

\subsection{Notation on graphs}\label{Sub:not-gr}

Let $\Gamma$ be a finite graph with vertex set $V(\Gamma)$ and edge set $E(\Gamma)$. The \emph{genus} of $\Gamma$ is 
$$g(\Gamma):=|E(\Gamma)|-|V(\Gamma)|+\gamma(\Gamma),$$
where $\gamma(\Gamma)$ is the number of connected components of $\Gamma$.

The spaces of \emph{$1$-cochains} and of \emph{$1$-chains} on a graph $\Gamma$ can be defined in the following way. Denote by $\EE(\Gamma)$ the set of oriented edges of $G$ and denote by $s,t:\EE(\Gamma)\to V(\Gamma)$ the source and target functions that associate to an oriented edge $\ee$ its source and target, respectively.  Given an oriented edge $\ee$, we denote by $\ov{\ee}$ the opposite oriented edge so that $t(\ee)=s(\ov \ee)$ and $s(\ee)=t(\ov \ee)$, and we will denote by $|\ee|=|\ov \ee|\in E(\Gamma)$ the underlying non-oriented edge. 
The space of $1$-chains and $1$-cochains on $\Gamma$ with coefficients in an abelian group $A$ (e.g. $A=\ZZ$)  are defined as 
\begin{equation*}
\CC_1(\Gamma,A):=\frac{\bigoplus_{\ee \in \EE(\Gamma)} A\cdot \ee}{(\ee=-\ov \ee)} \quad \text{ and } \quad \CC^1(\Gamma,A):=\{f:\EE(\Gamma)\to A\: : f(\ee)=-f(\ov \ee)\}. 
\end{equation*}

The space of \emph{$0$-cochains} (or functions) on a  graph $\Gamma$ with coefficients in an abelian group $A$ (e.g. $A=\ZZ$) is given by 
\begin{equation*}\label{E:0-cochains}
C^0(\Gamma,A):=\left\{g:V(\Gamma)\to A\right\}.
\end{equation*}
and it is endowed with the coboundary map 
\begin{equation*}\label{E:cobound}
\begin{aligned}
\delta=\delta_\Gamma: C^0(\Gamma,A) & \longrightarrow \CC^1(\Gamma,A),\\
g &\mapsto \delta(g)(\ee):=g(t(\ee))-g(s(\ee)).
\end{aligned}
\end{equation*}
We will be interested in two kinds of \textbf{subgraphs} of $\Gamma$:
\begin{itemize}
\item Given a subset $T\subset E(\Gamma)$, we denote by   $\Gamma\setminus T$ the
subgraph of $\Gamma$ obtained from $\Gamma$ by deleting the edges belonging to $T$.
Thus we have that $V(\Gamma\setminus T)=V(\Gamma)$ and $E(\Gamma\setminus T)=E(\Gamma)\setminus T$.
The subgraphs of the form $\Gamma\setminus T$ are called \emph{spanning subgraphs}.

The set of all spanning subgraphs of $\Gamma$ is denoted by $\SS(\Gamma)$ and it is a poset via the relation
$$
G=\Gamma\setminus T\leq G'=\Gamma\setminus T' \Longleftrightarrow T\supseteq T'.
$$
We will denote by $\SS_{\rm con}(\Gamma)$ the subset of $\SS(\Gamma)$ consisting of all the spanning subgraphs whose connected componentes coincide with those of $\Gamma$. 
If $\Gamma$ is connected, the minimal elements of $\SS_{\rm con}(\Gamma)$ are exactly the \emph{spanning trees} of $\Gamma$, i.e. the connected spanning subgraphs that are trees.
The set of all spanning trees of $\Gamma$ will be denoted by $\S\T(\Gamma)$.

\item Given a non-empty subset $W\subset V(\Gamma)$, we denote by   $\Gamma[W]$ the subgraph
whose vertex set is $W$ and whose edges are all the edges of $\Gamma$ that join two vertices in $W$.
The subgraphs of the form $\Gamma[W]$ are called \emph{induced subgraphs} and we say that
$\Gamma[W]$ is induced from $W$.

We will use the following notation for the \emph{biconnected} and \emph{connected} subsets of $V(\Gamma)$, respectively: 
\begin{equation}\label{E:Bconn}
\begin{sis}
&     \BCon(\Gamma):=\{\emptyset \subsetneq W \subsetneq V(\Gamma)\: : \Gamma[W] \text{ and } \Gamma[W^\mathsf{c}] \text{ are connected}\},\\
&     \Con(\Gamma):=\{\emptyset \subsetneq W \subseteq V(\Gamma)\: : \Gamma[W] \text{ is connected}\}.\\
\end{sis}
\end{equation}
\end{itemize}

Given a subset $T\subset E(\Gamma)$, we will denote by $\Gamma/T$ the $T$-\emph{contraction} of $\Gamma$, i.e. the graph obtained from $\Gamma$ by contracting all the 
edges belonging to $T$.  A \textbf{morphism of graphs} $f:\Gamma\to \Gamma'$ is a contraction of some edges of $\Gamma$ followed by an automorphism of $\Gamma'$. 
Such a morphism induces an injective pull-back map on edges $f^E:E(\Gamma')\hookrightarrow E(\Gamma)$ and a surjective push-forward map on vertices $f_V:V(\Gamma)\twoheadrightarrow V(\Gamma')$.
Note that the complement $(\Im f^E)^\mathsf{c}$ of the image of $f^E$ consists of the edges that are contracted by $f$.

Fix a subset $S\subseteq E(\Gamma)$. For any pair $(W_1,W_2)$ of disjoint subsets of $V(\Gamma)$, we denote the \emph{$S$-valence} of $(W_1,W_2)$ to be 
$$\val_S(W_1,W_2):=|S\cap E(\Gamma[W_1],\Gamma[W_2])|,$$
where $E(\Gamma[W_1], \Gamma[W_2])$ is the subset of $E(\Gamma)$ consisting of all the edges
of $\Gamma$ that join some vertex of $W_1$ with some vertex of $W_2$. As a special case, the $S$-valence of a subset $W\subseteq V(\Gamma)$ is 
$$\val_S(W):=\val_S(W,W^\mathsf{c}),$$
where $W^\mathsf{c}:=V(\Gamma)\setminus W$ is the complement subset of $W$.  For any subset $W\subseteq V(\Gamma)$, we denote the \emph{S-degree} of $W$ by
$$
e_S(W):=|S\cap E(\Gamma[W])|.
$$
In the special case $S=E(\Gamma)$, we set 
$$
e_{\Gamma}(W):=e_{E(\Gamma)}(W) \text{ and } \val_{\Gamma}(W_1,W_2):=\val_{E(\Gamma)}(W_1,W_2),
$$
and we call them the $\Gamma$-degree and the $\Gamma$-valence, respectively. When the graph $\Gamma$ is clear from the context, we set 
$$
e(W):=e_{\Gamma}(W) \text{ and } \val(W_1,W_2):=\val_{\Gamma}(W_1,W_2).
$$
Note that, using the identification $V(\Gamma)=V(\Gamma\setminus S)$, we have that 
 \begin{equation}\label{E:val-valS}
e_{\Gamma}(W)=e_{S}(W)+e_{\Gamma\setminus S}(W) \text{ and } \val_{\Gamma}(W_1,W_2)=\val_{S}(W_1,W_2)+\val_{\Gamma\setminus S}(W_1,W_2).
\end{equation}
Given pairwise disjoint subsets $W_1, W_2, W_3\subseteq V(\Gamma)$, we have the following formulas
\begin{equation}\label{add-val}
\begin{sis}
& \val_S(W_1\cup W_2,W_3)=\val_S(W_1,W_3)+\val_S(W_2,W_3),\\
& \val_S(W_1\cup W_2)=\val_S(W_1)+\val_S(W_2)-2\val_S(W_1,W_2),\\
& e_S(W_1\cup W_2)=e_S(W_1)+e_S(W_2)+\val_S(W_1,W_2),\\
& |S|=e_S(W_1)+e_S(W_1^\mathsf{c})+\val_S(W_1).
\end{sis}
\end{equation}

A \textbf{divisor} on $\Gamma$ is an integral linear combination of vertices: 
$$
D=\sum_{v\in V(\Gamma)} D_v \cdot v.
$$
With respect to pointwise addition, divisors form a group denoted by $\Div(\Gamma)$.
Given a subset $W\subseteq V(\Gamma)$, we will set 
$D_W:=\sum_{v\in V(\Gamma)} D_v.$
The degree of a divisor $D$ is given by 
$$
\deg(D)=|D|:=D_{V(\Gamma)}=\sum_{v\in V(\Gamma)} D_v.
$$
For any $d\in \ZZ$, we will denote by $\Div^d(\Gamma)$ the set of all divisors of degree $d$.

We can associate to any integral function on $\Gamma$ a divisor on $\Gamma$ in the following way:
\begin{equation}\label{E:div}
  \begin{aligned}
      \div: C^0(\Gamma,\ZZ) & \longrightarrow \Div(\Gamma), \\
      f & \mapsto \div(f)=\sum_{v\in V}\left[\sum_{\substack{\ee\in \EE(\Gamma)\: \\ t(\ee)=v}}\left(f(s(\ee))-f(t(\ee)) \right)\right]v.
  \end{aligned}  
\end{equation}
The map $\div$ is a homomorphism of groups whose image $\Prin(\Gamma):=\Im(\div)$, called the subgroup of principal divisors, is contained in $\Div^0(\Gamma)$. Moreover, if $\Gamma$ is connected, then the kernel of $\div$ consists of the constant functions, which implies that the quotient
\[\Pic^0(\Gamma):=\frac{\Div^0(\Gamma)}{\Prin(\Gamma)}\]
is a finite group, called the \textbf{Jacobian group} of $\Gamma$ (also known as degree class group, or  sandpile group, or  critical
group of the graph $\Gamma$). The Kirchhoff's matrix-tree theorem implies that the cardinality of $\Pic^0(\Gamma)$ is
equal to the \emph{complexity} $c(\Gamma)$ of $\Gamma$, defined as the number of spanning trees of $\Gamma$.

For any $d\in \ZZ$, the set $\Div^d(\Gamma)$ is a torsor for the group $\Div^0(\Gamma)$.
Therefore, the subgroup $\Prin(\Gamma)$ acts on $\Div^d(\Gamma)$ and the quotient 
\begin{equation}\label{lat-class0}
\Pic^d(\Gamma)=\frac{\Div^d(\Gamma)}{\Prin(\Gamma)}
\end{equation}
is a torsor for $\Pic^0(\Gamma)$. 

As in \cite[Definition $1.1$]{viviani2023new}, given a connected graph $\Gamma$ and an integer $d\in\ZZ$, we define the \textbf{poset of divisors on spanning subgraphs} of degree $d$ as the set 
\begin{equation}\label{E:OO}
    \OO^d(\Gamma):=\{(G,D):G\in\S\S(\Gamma),D\in \textup{Div}^{d-|E(G)^\mathsf{c}|}(G)=\textup{Div}^{d-|E(G)^\mathsf{c}|}(\Gamma)\},
\end{equation}
where $\S\S(\Gamma)$ denotes the set of spanning subgraphs of $\Gamma$,
endowed with a partial order that we are now going to define. 

Given a partial orientation $\O$ of $G$, i.e. an orientation of a subset of $E(G)$ denoted by $\supp(\O)$, we define the outgoing divisor of $\O$ to be
\begin{equation}\label{E:outDiv}
D(\O):=\sum_{\ee\in \O}t(\ee)\in \Div^{|\supp(\O)|}(G).
\end{equation}
The poset structure on $\OO^d(\Gamma)$ is defined as follows 
\begin{equation}\label{E:leq-GD}
    (G,D)\geq (G',D')\iff
    \begin{cases}
        \text{there exists a partial orientation $\O$ of $G$ such that} \\
       G'=G\setminus \supp(\O) \text{ and } D'=D-D(\O).
    \end{cases}
\end{equation}
We will write $(G,D)\geq_{\O} (G',D')$ if the above condition holds, and note that any two of the data $\{(G,D), (G',D'),\O\}$ uniquely determines the third.

We also sometimes consider the subposet 
\begin{equation}\label{E:OOcon}
    \OO^d_{\con}(\Gamma):=\{(G,D)\in \OO^d(\Gamma) : G\in\S\S_{\con}(\Gamma)\}.
\end{equation}

There are two operations that we can perform on the poset $\OO^d(\Gamma)$:
\begin{itemize}
    \item for any $G\in \SS(\Gamma)$,
there is a natural inclusion of posets 
\begin{equation}\label{E:incl-O}
\begin{aligned}
    \iota_G:\OO^{d-|E(G)^\mathsf{c}|}(G)& \hookrightarrow \OO^d(\Gamma), \\
    (G',D) & \mapsto (G',D). 
    \end{aligned}
\end{equation}
\item for any morphisms $f:\Gamma \to \Gamma'$ of graphs, there is a push-forward map
\begin{equation}\label{E:f*-O}
\begin{aligned}
f_*: \OO^d(\Gamma) & \longrightarrow \OO^d(\Gamma')\\
(\Gamma\setminus S, D) & \mapsto f_*(\Gamma\setminus S,D):= \left(f_*(\Gamma\setminus S), f_*(D)\right),
\end{aligned}
\end{equation}
where $f_*(\Gamma\setminus S):=\Gamma'\setminus (f^E)^{-1}(S)$ and $f_*(D)$ is defined by 
$$
 f_*(D):=\left\{f_*(D)_v:=D_{f_V^{-1}(v)}+e_{S\cap (\Im f^E)^\mathsf{c}}(f_V^{-1}(v)) \right\}_{v\in V(\Gamma')}
$$
\end{itemize}
It is proved in \cite[Prop. 1.2]{viviani2023new} that $f_*$ is surjective, order-preserving and it has the upper lifting property.

\subsection{Definition of V-stability conditions}\label{Sub:DefVStab}

Given a connected graph $\Gamma$, we denote by $\BCon(\Gamma)$ the set of all nontrivial biconnected subsets of $V(\Gamma)$, as in \eqref{E:Bconn}.

\begin{definition}\label{D:Vstab}
    Let $\Gamma$ be a connected graph. A \textbf{V-stability condition} of degree $d$ on $\Gamma$ is a map
    \begin{align*}
        \n:\BCon(\Gamma)&\to \ZZ\\
        W&\mapsto \n_W
    \end{align*}
    satisfying the following properties:
    \begin{enumerate}
        \item \label{D:Vstab1} For any $W\in \BCon(\Gamma)$, we have
        \begin{equation}\label{E:sum-n}
            \n_W+\n_{W^\mathsf{c}}+\val(W)-d\in\{0,1\}.
        \end{equation}
        The subset $W$ is said to be \emph{$\n$-degenerate} if $\n_W+\n_{W^\mathsf{c}}+\val(W)-d=0$, and \emph{$\n$-nondegenerate} otherwise.

        \item \label{D:Vstab2} For any  pairwise disjoint $W_1,W_2,W_3\in \BCon(\Gamma)$ such that and  $W_1\cup W_2\cup W_3=V(\Gamma)$, we have that:
        \begin{itemize}
            \item for each $i\neq j$, if $W_i$ and $W_j$ are $\n$-degenerate, then $W_k$ is $\n$-degenerate, where $k\neq i,j$;
            \item the integer
            \begin{equation}\label{E:tria-n}
            \sum_{i=1}^{3}\n_{W_i}+\sum_{1\leq i<j\leq 3} \val(W_i,W_j)-d \in\begin{cases}
                \{1,2\}, \textup{ if $W_i$ is $\n$-nondegenerate for all $i$};\\
                \{1\}, \textup{ if there exists a unique $i$ such that $W_i$ is $\n$-degenerate};\\
                \{0\}, \textup{ if $W_i$ is $\n$-degenerate for all $i$}.
            \end{cases}
        \end{equation}
        \end{itemize}
    \end{enumerate}
    The degree $d$ of $\n$ is also denoted by $|\n|$.

\vspace{0.1cm}
The \emph{degeneracy subset} of $\n$ is the collection
\begin{equation}\label{E:Dn}
\D(\n):=\{W\in \BCon(\Gamma)\: : W \text{ is $\n$-degenerate} \}.
\end{equation}
    A V-stability condition $\n$ is called \emph{general} (as in \cite[Definition $1.4$]{viviani2023new}) if every $W\in \BCon(\Gamma)$ is $\n$-nondegenerate, i.e. $\D(\n)=\emptyset$.
\end{definition}

\begin{remark}\label{R:n-deg}
Let $\n$ be a V-stability condition on $\Gamma$.
\begin{enumerate}
    \item \label{R:n-deg1}
   The degeneracy subset $\D(\n)$  satisfies the following two properties:
    \begin{itemize}
        \item for any $W\in \BCon(\Gamma)$, we have that $W\in \D(\n)$ if and only if $W^\mathsf{c}\in \D(\n)$.
        \item for any two disjoint subsets $W_1,W_2\in \BCon(\Gamma)$ such that $W_1\cup W_2\in \BCon(\Gamma)$, we have that if two of the elements of $\{W_1,W_2,W_1\cup W_2\}$ belong to $\D(\n)$ then also the third one belongs to $\D(\n)$. 
    \end{itemize}
   This follows easily from the above Definition \ref{D:Vstab}. 
   \item \label{R:n-deg2} If $W_1,W_2$ are disjoint elements of $\BCon(\Gamma)$ such that $W_1 \cup W_2\in \BCon(\Gamma)$, then we have that 
   \begin{equation}\label{E:n-union}
   \n_{W_1\cup W_2}-\n_{W_1}-\n_{W_2}-\val(W_1,W_2)\in 
   \begin{cases}
        \{0\} & \text{ if $W_1$  or  $W_2$ is $\n$-degenerate};\\
        \{-1\} & \text{ if $W_1$  and  $W_2$ are $\n$-nondegenerate} \\
        & \text{ and $W_1\cup W_2$ is $\n$-degenerate};\\
      \{0,-1\} & \text{ if $W_1, W_2, W_1\cup W_2$ are $\n$-nondegenerate}.\\
   \end{cases}
   \end{equation}
   This follows by combining \eqref{D:Vstab1} for $W_1\cup W_2$ and \eqref{D:Vstab2} for $(W_1,W_2,(W_1\cup W_2)^\mathsf{c})$.
   \end{enumerate}
\end{remark}

\begin{example}[\textbf{Classical V-stability conditions}]
Let $\phi$ be a \emph{numerical polarization} on $\Gamma$ of degree $d$, i.e. an additive function
$$
\begin{aligned}
    \phi:\left\{\text{Subsets of } V(\Gamma) \right\}&  \longrightarrow \RR,\\
   W & \mapsto \phi_W,
\end{aligned}    
$$
such that $|\phi|:=\phi_{V(\Gamma)}=d$. Then the 
function 
\begin{equation}\label{E:n-phi}
\begin{aligned}
        \n(\phi):\BCon(\Gamma)&\to \ZZ\\
        W&\mapsto \n(\phi)_W:=\left\lceil\phi_W-\frac{\textup{val}(W)}{2}\right\rceil.
    \end{aligned}
    \end{equation}
is a V-stability condition of degree $d$, called the V-stability condition associated to $\phi$.
This follows, by taking the upper integral parts, from the following two equalities
$$
\begin{sis}
& \left(\phi_W-\frac{\val(W)}{2}\right)+\left(\phi_{W^\mathsf{c}}-\frac{\val(W^\mathsf{c})}{2}\right)-d+\val(W)=0,\\    
& \sum_{i=1}^3 \left(\phi_{W_i}-\frac{\val(W_i)}{2} \right)+\sum_{1\leq i<j\leq 3} |E(W_i,W_j)|-d=0.
\end{sis}
$$
The V-stabilities of the form $\n(\phi)$ are called \emph{classical}.

Consider the arrangement of hyperplanes in the affine space $\Div^d(\Gamma)_{\RR}$ of numerical polarizations of degree $d$ (see \cite[Sec. 7]{Oda1979CompactificationsOT} and \cite[Sec. 3]{MRV}):
\begin{equation}\label{E:arr-hyper}
\A_{\Gamma}^d:=\left\{\phi_W-\frac{\val(W)}{2}=n\right\}_{W\in \BCon(\Gamma), n\in \ZZ}.
\end{equation}
 We get an induced wall and chamber decomposition of $\Div^d(\Gamma)_{\RR}$ such that two numerical polarizations $\phi, \phi'$ belong to the same region, i.e. they have the same relative positions with respect to all the hyperplanes,  if and only if $\n(\phi)=\n(\phi')$. In other words, the set of regions induced by $\A_{\Gamma}^d$ is the set of classical V-stability conditions of degree $d$.

Note also that $\n(\phi)$ is a general V-stability condition if and only if $\phi$ belongs to a chamber (i.e. a maximal dimensional region), or equivalently, if it does not lie on any hyperplane. 
\end{example}

We now introduce the extended degeneracy set and the extended V-function of a V-stability.

\begin{definition}\label{D:ext-n}
    Let $\n$ be a V-stability of degree $d$ on $\Gamma$.
    \begin{enumerate}
    \item\label{D:ext-n1} The \emph{extended degeneracy subset} of $\n$ is 
       $$
        \wh \D(\n):=\left\{
    \begin{aligned}
        & W\in \Con(\Gamma)\: :  \Gamma[W^\mathsf{c}]=\Gamma[Z_1]\coprod\ldots \coprod \Gamma[Z_k] \\
        &    \text{ with } Z_i \in \D(\n) \text{ for all } i=1, \ldots, k.
     \end{aligned}   
        \right\} \bigcup \{V(\Gamma)\}. $$
        \item \label{D:ext-n2}
    The \emph{extended V-function}  associated to $\n$ is the function 
    (denoted with the same symbol by abuse of notation) 
    $$
    \begin{aligned}
     \n:\left\{\text{Subsets of } V(\Gamma)\right\}  & \longrightarrow \ZZ\\
     W& \mapsto \n_W
     \end{aligned}
    $$
    defined as follows.

    If $W\in \Con(\Gamma)$ and $\Gamma[W^\mathsf{c}]=\Gamma[Z_1]\coprod\ldots \coprod \Gamma[Z_k]$ is the decomposition into connected components (which then implies that each  $Z_i$ belongs to $\BCon(\Gamma)$),we set 
     \begin{equation}\label{E:n-ext}
    \n_W= d-\sum_{i=1}^k \n_{Z_i}-\val(W)+|\{Z_i: \: Z_i\not\in \D(\n)\}|.
    \end{equation}

If $W\not \in \Con(\Gamma)$ and $\Gamma[W]=\Gamma[W_1]\coprod \ldots \coprod \Gamma[W_h]$ is the decomposition into connected components, we set 
 \begin{equation}\label{E:n-ext2}
    \n_W= \sum_{j=1}^h \n_{w_j}.
    \end{equation}
\end{enumerate}
\end{definition}
It follows from \eqref{E:sum-n} that 
\begin{itemize}
\item $\D(\n)=\wh \D(\n)\cap \BCon(\Gamma)$,
\item the restriction of the extended V-function $\n$ to $\BCon(\Gamma)$  coincides with the original V-stability condition $\n$. 
\end{itemize}

The  extended V-function, when restricted to the extended degeneracy subset, satisfies the following additivity property.

\begin{lemma}\label{L:add-whn} 
Let $W_1,W_2$ be disjoint subsets of $\wh \D(\Gamma)$ such that $W_1\cup W_2\in \wh \D(\Gamma)$.
 Then 
     $$
    \n_{W_1\cup W_2}= \n_{W_1}+\n_{W_2}+\val(W_1,W_2).
    $$
\end{lemma}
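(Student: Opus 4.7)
The plan is to induct on $r$, the number of connected components of $\Gamma[(W_1 \cup W_2)^\mathsf{c}]$, supported throughout by the identity
\[
\n_W + \n_{W^\mathsf{c}} + \val(W,W^\mathsf{c}) = d \qquad (\star)
\]
valid for every $W \in \wh\D(\n)$, which is immediate from \eqref{E:n-ext} and \eqref{E:n-ext2} together with the observation that each connected component of $W^\mathsf{c}$ lies in $\D(\n)$ when $W\in\wh\D(\n)$. In the base case $r=0$ one has $W_1\cup W_2 = V(\Gamma)$, so $W_1^\mathsf{c}=W_2$ is connected; the definition of $\wh\D(\n)$ then forces $W_1,W_2\in\BCon(\Gamma)\cap\D(\n)$, and the defining relation \eqref{E:sum-n} applied to the degenerate element $W_1$ gives precisely $\n_{W_1}+\n_{W_2}+\val(W_1,W_2) = d = \n_{V(\Gamma)}$.

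The inductive step depends on the following sub-lemma which I would first establish: \emph{if $W\in\wh\D(\n)$ and $R\in\D(\n)\cap\BCon(\Gamma)$ are disjoint with $W\cup R\in\wh\D(\n)$, then $\n_{W\cup R} = \n_W+\n_R+\val(W,R)$.} To prove it, I would write $\Gamma[W^\mathsf{c}] = T_1\sqcup\cdots\sqcup T_t$ with each $T_i\in\D(\n)$, and assume $R\subseteq T_1$. If $R=T_1$, combining $(\star)$ for $W$ and $W\cup R$ with $\val(W\cup R)-\val(W) = \val(R)-2\val(W,R) = -\val(W,R)$ — the last equality since $R$ has no edges to any $T_j$ with $j\geq 2$ — delivers the formula. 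If $R\subsetneq T_1$, I would decompose $T_1\setminus R = U_1\sqcup\cdots\sqcup U_m$; the assumption $W\cup R\in\wh\D(\n)$ forces each $U_i\in\D(\n)$. The crucial point is that $R\in\BCon(\Gamma)$ forces $R^\mathsf{c}$ to be connected, and this in turn forces $\val(U_i,W)\geq 1$ for every $i$ (otherwise $U_i$ would be isolated in $R^\mathsf{c}$). Using this one checks by induction on $k$ that each partial union $R\cup U_1\cup\cdots\cup U_k$ lies in $\BCon(\Gamma)\cap\D(\n)$: biconnectedness because the complement stays connected via the remaining $U_i$'s meeting $W$, degeneracy by iterating Remark~\ref{R:n-deg}\eqref{R:n-deg1}. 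Iterating \eqref{E:n-union} at each step, and noting $\val(U_i,U_j)=0$ for $i\neq j$, telescopes to $\n_{T_1} = \n_R+\sum_i\n_{U_i}+\val(R,T_1\setminus R)$, after which the sub-lemma formula follows by the same manipulation of $(\star)$ as in the first case.

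With the sub-lemma in hand, the inductive step for $r\geq 1$ would proceed as follows: I pick a component $R$ of $\Gamma[(W_1\cup W_2)^\mathsf{c}]$ adjacent to $W_1\cup W_2$ (at least one exists since $\Gamma$ is connected), and by the $W_1\leftrightarrow W_2$ symmetry assume $R$ is adjacent to $W_2$. One checks that both $W_2\cup R$ and $W_1\cup W_2\cup R$ belong to $\wh\D(\n)$, the latter having $r-1$ components in its complement. Applying the sub-lemma to $(W_2,R)$ and $(W_1\cup W_2,R)$, and the inductive hypothesis to $(W_1,W_2\cup R)$, gives two expressions for $\n_{W_1\cup W_2\cup R}$; comparing them and using $\val(W_1\cup W_2,R) = \val(W_1,R)+\val(W_2,R)$ produces the desired identity $\n_{W_1\cup W_2}=\n_{W_1}+\n_{W_2}+\val(W_1,W_2)$. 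The main obstacle is the biconnectedness argument inside the sub-lemma's second case, which depends essentially on the hypothesis $R\in\BCon(\Gamma)$ (and thus on the adjacency of each $U_i$ to $W$); without it, one could not iterate \eqref{E:n-union} to obtain the telescoping formula for $\n_{T_1}$.
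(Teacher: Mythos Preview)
The paper itself does not give a proof of this lemma; it simply refers to \cite[Cor.~4.7]{FPV1} and omits the details. Your proposal therefore cannot be compared line-by-line with the paper's argument, but it does supply a complete direct proof, and the overall strategy---reducing via the identity $(\star)$ to a sub-lemma about adjoining a single biconnected degenerate piece $R$, then inducting on the number of components of the complement---is sound. The telescoping computation inside the sub-lemma and the final comparison of the two expressions for $\n_{W_1\cup W_2\cup R}$ both check out.

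There is one step in the inductive argument that is not as routine as the phrase ``one checks'' suggests: the claim that $W_2\cup R\in\wh\D(\n)$. Write $(W_1\cup W_2)^\mathsf{c}=R\sqcup R_2\sqcup\cdots\sqcup R_r$. The connected components of $(W_2\cup R)^\mathsf{c}=W_1\cup R_2\cup\cdots\cup R_r$ are the $R_j$ (for $j\geq 2$) not adjacent to $W_1$---these are already in $\D(\n)$---together with the single component $C:=W_1\cup\bigcup_{j\in J}R_j$, where $J=\{j\geq 2: R_j\text{ is adjacent to }W_1\}$. You need $C\in\D(\n)$, and this is not immediate. One way to see it: let $S_1$ be the component of $\Gamma[W_1^\mathsf{c}]$ containing $W_2$; then $S_1\in\D(\n)$ since $W_1\in\wh\D(\n)$, and $C^\mathsf{c}$ is obtained from $S_1$ by removing, one at a time, those $R_j$ adjacent to both $W_1$ and $W_2$. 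At each removal the result and its complement remain connected (every remaining $R_j$ is still adjacent to $W_2$, and every removed one is adjacent to $W_1$), so Remark~\ref{R:n-deg}\eqref{R:n-deg1} applies iteratively to give $C^\mathsf{c}\in\D(\n)$, hence $C\in\D(\n)$. Once this verification is inserted, your argument is complete.
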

Observe that if $W_1,W_2\in \D(\n)$ with $W_1\cup W_2=V(\Gamma)$ then the above Lemma follows \eqref{E:sum-n}, while if $W_1,W_2,W_1\cup W_2\in \D(\n)$ then the above Lemma follows from \eqref{E:tria-n} (see Remark \ref{R:n-deg}\eqref{R:n-deg2}). 

\begin{proof} 
The proof is analogous to the proof of \cite[Cor. 4.7]{FPV1} (mutatis mutandis), and hence it will be omitted.
\end{proof}

We now introduce a poset structure on the set 
$$
\VStab^d(\Gamma):=\left\{\text{V-stability conditions of degree $d$ on } \Gamma\right\}. 
$$

\begin{definition}
Let $\n_1,\n_2\in \VStab^d(\Gamma)$. 
We say that $\n_1\geq \n_2$ if
$$
(\n_1)_Z\geq (\n_2)_Z \text{ for every } Z\in \BCon(\Gamma).
$$
\end{definition}

It follows from \eqref{E:sum-n} that if $\n_1\geq \n_2$ then, for any $Z\in \BCon(\Gamma)$, we have that 
\begin{equation}\label{E:n1>n2}
(\n_1)_Z=
\begin{cases}
(\n_2)_Z & \text{ if } Z \text{ is $\n_2$-nondegenerate},\\
(\n_2)_Z \text{ or } (\n_2)_Z+1 & \text{ if } Z \text{ is $\n_2$-degenerate},\\
\end{cases}
\end{equation}

\subsection{The semistable set of a V-stability condition}\label{Sub:ssV}

We now introduce some subsets of the poset $\OO^d(\Gamma)$ of \eqref{E:OO}, associated to a V-stability condition of degree $d$ on $\Gamma$.

\begin{definition}\label{D:Pn}
    Let $\n$ be a V-stability condition of degree $d$ on the graph $\Gamma$. 
    \begin{enumerate}
        \item \label{D:Pn1} The \emph{$\n$-semistable} set is the subset of $\OO^d(\Gamma)$ defined by 
    \begin{equation*}
        \P_\n:=\{(\Gamma\setminus S,D)\in\OO^d(\Gamma):D_Z+e_S(Z)\geq \n_Z \textup{ for any } Z\in  \BCon(\Gamma)\}.
    \end{equation*}
     \item \label{D:Pn2} The \emph{$\n$-polystable} set is the subset of $\OO^d(\Gamma)$ defined by 
    \begin{equation*}
        \P_\n^{ps}:=\{(\Gamma\setminus S,D)\in\P_\n: \textup{ if }  
D_Z+e_S(Z)=\n_Z \textup{ for } Z\in \D(\n) \subseteq \BCon(\Gamma), \textup{ then } E(Z,Z^\mathsf{c})\subseteq S\}.
    \end{equation*}
    \item \label{D:Pn3} The \emph{$\n$-stable} set is the subset of $\OO^d(\Gamma)$ defined by 
    \begin{equation*}
        \P_\n^{st}:=\{(\Gamma\setminus S,D)\in\P_\n:D_Z+e_S(Z)> \n_Z \textup{ for any } Z\in  \D(\n) \subseteq \BCon(\Gamma)\}.
    \end{equation*}
\end{enumerate}
    For any $G\in\S\S(\Gamma)$, we define 
    \begin{equation*}
        \P_\n(G):=\{D\in\textup{Div}^{d-|E(G)^\mathsf{c}|}(\Gamma):(G,D)\in \P_\n\},
    \end{equation*}
    and then define $\P_\n^{ps}(G)$ and $\P_\n^{st}(G)$ in analogy.
\end{definition}

Some remarks on the above definition are in order. 

\begin{remark}\label{R:prop-Pn}
Let $\n$ be a V-stability condition of degree $d$ on $\Gamma$ and let $(\Gamma\setminus S, D)\in \P_\n$.
\begin{enumerate}
    \item \label{R:prop-Pn1} If $Z\in \BCon(\Gamma)$ then 
    \begin{equation}\label{E:uppbound}
     D_Z+e_S(Z)\leq 
     \begin{cases}
         \n_Z+\val_{\Gamma\setminus S}(Z) & \text{ if $Z$ is $\n$-degenerate,}\\
         \n_Z+\val_{\Gamma\setminus S}(Z)-1 & \text{ if $Z$ is $\n$-nondegenerate.}\\
     \end{cases}
    \end{equation}

Indeed, by applying the lower bound for $D_{Z^\mathsf{c}}$ coming from definition of $\P_\n$, 
using \eqref{E:val-valS}, \eqref{add-val}, and the fact that $\deg D=d-|S|$, we compute
$$
d-|S|-D_Z=D_{Z^\mathsf{c}}\geq \n_{Z^\mathsf{c}}-e_S(Z^\mathsf{c})=\n_{Z^\mathsf{c}}+e_S(Z)+\val(Z)+\val_{\Gamma\setminus S}(Z)-|S|.
$$
We now conclude using \eqref{E:sum-n}.
    \item \label{R:prop-Pn2} Let $W$ a non-trivial connected subset of $V(\Gamma)$ and denote by
$$\Gamma[W^\mathsf{c}]=\Gamma[Z_1]\coprod \ldots \coprod \Gamma[Z_k]$$
  the decomposition of $\Gamma[W^\mathsf{c}]$ into connected components 
(so that $Z_i$ is  biconnected for every $1\leq i \leq k$). 
Then we have that 
\begin{equation}\label{E:connW}
\begin{sis}
& \n_{W^\mathsf{c}}\leq D_{W^\mathsf{c}}+e_S(W^\mathsf{c}) \leq  \n_{W^\mathsf{c}}+\val_{\Gamma\setminus S}(W^\mathsf{c})-|\{Z_i: \: Z_i\not\in \D(\n)\}|,\\
& \n_{W}\leq D_{W}+e_S(W) \leq  \n_{W}+\val_{\Gamma\setminus S}(W)-|\{Z_i: \: Z_i\not\in \D(\n)\}|.
\end{sis}
\end{equation}

Indeed, the first inequality in \eqref{E:connW} follows by summing the lower bounds for $D_{Z_i}$ from the definition of $\P_\n$ and the upper bounds \eqref{E:uppbound} for $D_{Z_i}$, and using that $\n_{W^\mathsf{c}}=\sum_{i=1}^k \n_{Z_i}$ by Definition \ref{D:ext-n}.

The second inequality is deduced from the first one using that (by \eqref{add-val})
$$
D_W+e_S(W)=d-|S|-D_{W^\mathsf{c}}+e_S(W)=d-(D_{W^\mathsf{c}}+e_S(W^\mathsf{c}))-\val_S(W),
$$
and the formula $\n_W=d-\sum_{i=1}^k \n_{Z_i}-\val_{\Gamma}(W)+|\{Z_i: \: Z_i\not\in \D(\n)\}|$ from Definition \ref{D:ext-n}. 

\item \label{R:prop-Pn3} If $W\in \wh \D(\n)$ (see Definition \ref{D:ext-n}), we have that 
    $$
    D_W+e_S(W)\geq \n_W,
    $$
    with equality if $E_{\Gamma\setminus S}(W,W^\mathsf{c})=\emptyset$.

    This follows from \eqref{E:connW} using definition \eqref{E:n-ext}.
\end{enumerate}

\end{remark}

In order to study the properties of the semistable set $\P_\n$, we now define subgraphs and divisors on them that are  "admissible" with respect to $\n$.

\begin{definition}\label{D:adm-n}
   Let $\n$ be a V-stability of degree $d$ on $\Gamma$.
 \begin{enumerate}
      \item A connected subgraph $G$ of $\Gamma$ is said to be \emph{$\n$-admissible} 
      if $V(G)\in \wh \D(\n)$.
      
     \item A spanning subgraph $G$ of $\Gamma$ is said to be \emph{$\n$-admissible} if it is a disjoint union of connected $\n$-admissible subgraphs, or in other words if its decomposition $G=\bigsqcup_{i=1}^h G_i$ into connected components satisfies $V(G_i)\in \wh \D(\n)$ for each $i=1,\ldots,h$. 
      
      We let $\S\S_\n(\Gamma)$ be the set of $\n$-admissible spanning subgraphs of $\Gamma$.
      \item 
   The poset of \emph{$\n$-admissible divisors on $\n$-admissible spanning subgraphs} of degree $d$ is the subposet of $\OO^d(\Gamma)$ given by 
    \begin{equation*}
        \OO^d_\n(\Gamma):=\left\{(G=\Gamma\setminus S,D)\in\OO^d(\Gamma): 
        \begin{aligned}
        & G=\bigsqcup_{i=1}^hG_i  \textup{ is $\n$-admissible and } \\
        & D_{V(G_i)}+e_S(V(G_i))=\n_{V(G_i)}
        \end{aligned} \right\}.
    \end{equation*}
    \end{enumerate}
\end{definition}

\begin{remark}\label{R:Ocon}
Let $\n$ be a V-stability of degree $d$ on $\Gamma$.
\begin{enumerate}
    \item Since $\wh \D(\n)$ contains $V(\Gamma)$ by Definition \ref{D:ext-n}, we deduce that 
    $$\S\S_{\con}(\Gamma)\subseteq \S\S_\n(\Gamma)\quad \text{ and } \quad \OO^d_{\con}(\Gamma)\subseteq \OO_\n^d(\Gamma).
    $$
    \item The subset $\OO^d_\n(\Gamma)$ is an upper subset of $\OO^d(\Gamma)$ as it follows easily from the fact that if $(G,D)\geq (G',D')$, then the vertex set of any connected component of $G$ is the disjoint union of the vertex sets of some of the connected components of $G'$.
\end{enumerate}     
\end{remark}

\begin{proposition}\label{P:n-ss}
Let $\n$ be a V-stability of degree $d$ on $\Gamma$.
\begin{enumerate}
 \item \label{P:n-ss1} The subset $\P_\n$  is a finite upper subset of $\OO^d(\Gamma)$.
 \item \label{P:n-ss2} The subset $\P_\n$ is contained in $\OO^d_{\n}(\Gamma)$.
\end{enumerate}
\end{proposition}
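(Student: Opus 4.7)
The plan is to handle the two parts essentially independently, with Part~(2) following quickly from Remark~\ref{R:prop-Pn}\eqref{R:prop-Pn2} once the extended semistability inequalities are in hand. For Part~(1), upper-closure is a direct orientation computation, and finiteness comes from bounding divisor coordinates using the explicit upper bound of Remark~\ref{R:prop-Pn}\eqref{R:prop-Pn1}.

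To prove that $\P_\n$ is an upper subset of $\OO^d(\Gamma)$, I would take $(G^*,D^*)\geq_\O (G,D)\in \P_\n$ with $G=\Gamma\setminus S$ and $G^*=\Gamma\setminus S^*$. Since $\supp(\O)\subseteq E(G^*)$ and $E(G)=E(G^*)\setminus \supp(\O)$, we get $\supp(\O)=S\setminus S^*$ and $D^*=D+D(\O)$. For any $Z\in \BCon(\Gamma)$ I would compute
\[
D^*_Z+e_{S^*}(Z)=D_Z+e_S(Z)+\bigl(D(\O)_Z-e_{\supp(\O)}(Z)\bigr),
\]
and observe that the parenthesized difference equals the number of oriented edges of $\O$ with target in $Z$ and source in $Z^\mathsf{c}$, which is non-negative. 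Consequently $D^*_Z+e_{S^*}(Z)\geq D_Z+e_S(Z)\geq \n_Z$, so $(G^*,D^*)\in \P_\n$.

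For finiteness, since $\SS(\Gamma)$ is finite, I would fix $G=\Gamma\setminus S$ and show that the set of $D$ with $(G,D)\in \P_\n$ is finite by bounding each coordinate $D_v$. For any $Z\in \BCon(\Gamma)$ the semistability inequality together with Remark~\ref{R:prop-Pn}\eqref{R:prop-Pn1} sandwich $D_Z$ into a finite interval. To bound an individual $D_v$, I would decompose $V(\Gamma)\setminus \{v\}$ into its connected components $W_1,\ldots,W_r$; each $W_i$ is biconnected (if $r=1$, directly; if $r\geq 2$, then $v$ must be a cut vertex adjacent to every $W_j$, so $\Gamma[W_i^\mathsf{c}]$ remains connected through $v$). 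The identity $D_v=(d-|S|)-\sum_i D_{W_i}$ then exhibits $D_v$ as a bounded quantity.

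For Part~(2), write $G=\bigsqcup_{i=1}^h G_i$ and, for each $i$, set $W:=V(G_i)$. Since $G_i$ is a connected component of $\Gamma\setminus S$, one has $\val_{\Gamma\setminus S}(W)=0$. Applying Remark~\ref{R:prop-Pn}\eqref{R:prop-Pn2} to this $W$, with decomposition $\Gamma[W^\mathsf{c}]=\bigsqcup_{j=1}^k \Gamma[Z_j]$, gives
\[
\n_W\leq D_W+e_S(W)\leq \n_W-|\{Z_j:Z_j\notin \D(\n)\}|,
\]
which simultaneously forces every $Z_j$ to lie in $\D(\n)$—so $W\in \wh \D(\n)$ and $G$ is $\n$-admissible—and forces the equality $D_W+e_S(W)=\n_W$ required by Definition~\ref{D:adm-n}. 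The only mildly technical point is the cut-vertex argument used to bound individual $D_v$; everything else is a clean application of the previously established inequalities.
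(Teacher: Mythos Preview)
Your proof is correct and follows essentially the same approach as the paper. The upper-closure computation and the use of Remark~\ref{R:prop-Pn}\eqref{R:prop-Pn2} with $\val_{\Gamma\setminus S}(V(G_i))=0$ for Part~(2) are exactly what the paper does; for finiteness the paper simply asserts that the two-sided bounds on $D_Z$ for $Z\in\BCon(\Gamma)$ suffice, while you spell out the cut-vertex reduction needed to bound each individual $D_v$, which is a nice bit of extra care.
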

\begin{proof}
Part \eqref{P:n-ss1}: the fact that $\P_\n$ is finite follows from the fact that, for
 $(\Gamma\setminus S, D)\in \P_\n$, we have that 
$$
D_Z\in [\n_Z-e_S(Z), \n_Z-e_S(Z)+\val_{\Gamma\setminus S}(Z)] \quad \text{ for any } Z\in \BCon(\Gamma),
$$
as it follows from the definition of $\P_\n$ and Remark \ref{R:prop-Pn}\eqref{R:prop-Pn1}.

Let us now show that  $\P_\n$ is an upper subset. Pick two ordered elements $(G=\Gamma\setminus S,D)\geq_{\O} (G'=\Gamma\setminus S',D')$ of $\OO^d(\Gamma_X)$. Then $\O$ is an orientation of the edges of $S'\setminus S$ and $D=D'+D(\O)$. Fix $Z\in \BCon(\Gamma)$ and observe that for any oriented edge $\ee\in \O$ whose underlying edge $e$ belongs to $E(\Gamma[Z])$, we have that $t(\ee)\in Z$. Hence we have that 
$$
D_Z+e_S(Z)=D'_Z+D(\O)_Z+e_S(Z)\geq D_Z'+e_{S'}(Z).
$$
From the above inequality, we infer that if $(G',D')\in \P_\n$ then $(G,D)\in \P_\n$, which shows that $\P_\n$ is an upper subset.

Part \eqref{P:n-ss2}: take $(G=\Gamma\setminus S, D)\in \P_\n$ and consider the decomposition $G=\bigsqcup_{i=1}^h G_i$ into connected components. 
If $G$ is connected then the result is obvious by Remark \ref{R:Ocon}.
Otherwise, take the vertex set $V(G_i)$ of one of the connected components of $G$
and denote by 
$$\Gamma[V(G_i)^\mathsf{c}]=\Gamma[Z_1]\coprod \ldots \coprod \Gamma[Z_k]$$ 
the decomposition of $\Gamma[V(G_i)^\mathsf{c}]$ into connected components 
(so that $Z_i$ is  biconnected for every $1\leq i \leq k$). 
By applying the second equation of \eqref{E:connW} to $W=V(G_i)$ and using that 
$\val(V(G_i))=\val_S(V(G_i))$ since $\val_{\Gamma\setminus S}(V(G_i))=0$, we deduce that 
$$
\begin{sis}
    & Z_i \text{ is $\n$-degenerate for any }1\leq i\leq k \Rightarrow V(G_i)\in \wh D(\n), \\
    & D_{V(G_i)}+e_S(V(G_i))=d-\sum_{i=1}^k \n_{Z_i}-\val(V(G_i))=\n_{V(G_i)}.
\end{sis}
$$
This shows that  $(G=\Gamma\setminus S, D)\in \OO_\n^d(\Gamma)$.
\end{proof}

\begin{remark}
It follows from the proof of the above Proposition that  $\P_\n^{st}$ is also an upper subset. However, the same is not true of $\P^{ps}_\n$. 
\end{remark}

\subsection{Operations on V-stability conditions}\label{Sub:OperV}

In this subsection, we define two operations on V-stability conditions: the push-forward along a morphism and the restriction to admissible connected subgraphs (generalizing \cite[Lemma-Definitions 1.6 and 1.7]{viviani2023new} for general V-stability conditions). 

Let us begin with the push-forward of a V-stability along a morphism of graphs. 

\begin{lemma-definition}\label{LD:V-func}
Let $\n$ be a V-stability of degree $d$ on a connected graph $\Gamma$. Let $f:\Gamma\to \Gamma'$ be a morphism of graphs and 
denote by $f_V:V(\Gamma)\twoheadrightarrow V(\Gamma')$ the induced surjection on vertices. 
The \emph{push-forward} of $\n$ along $f$ is the V-stability on $\Gamma'$ given by 
\begin{equation}\label{E:V-func}
f_*(\n):=\{f_*(\n)_Z:=\n_{f_V^{-1}(Z) }  \: : Z\in \BCon(\Gamma')\}.
 \end{equation}
We have that 
\begin{itemize}
    \item $| f_*(\n)|=|\n|$;
    \item $\D(f_*(\n))=\{ Z\in \BCon(\Gamma')\: : \: f_V^{-1}(Z) \text{ is $\n$-degenerate}\}$;
    \item $\wh{\D}(f_*(\n))=\{ Z\in \BCon(\Gamma')\: : \: f_V^{-1}(Z) \text{ is $\n$-degenerate}\}$;
    \item  $f_*(\n)_Z:=\n_{f_V^{-1}(Z)}$ for any $Z\subset V(\Gamma')$. 
\end{itemize} 
\end{lemma-definition}
Note that the above definition of $f_*(\n)$ makes sense since if 
$Z\in \BCon(\Gamma')$ then $f_V^{-1}(Z)\in \BCon(\Gamma)$.
\begin{proof}
The fact that $f_*(\n)$ is a V-stability condition of degree $|\n|$ and the description of $\D(f_*(\n))$ follow from the fact that if $Z_1, Z_2\in \BCon(\Gamma')$ are pairwise disjoint then $f_V^{-1}(Z_1), f_V^{-1}(V_2)$ are pairwise disjoint and $|E_{\Gamma}(f_V^{-1}(Z_1), f_V^{-1}(Z_2))|=|E_{\Gamma'}(Z_1, Z_2)|$. 

The last two assertions follow from the fact if $Z\in \BCon(\Gamma')$ (resp. $Z\in \Con(\Gamma')$) then $f_V^{-1}(Z)\in \BCon(\Gamma)$ (resp. $f_V^{-1}(Z)\in \Con(\Gamma)$).
\end{proof}

\begin{example}
Let $\phi$ a numerical polarization of degree $d$ on $\Gamma$ and let $f:\Gamma\to \Gamma'$ be a morphism. Consider the numerical polarization $f_*\phi$ of degree $d$ on $\Gamma'$ given by 
$$
(f_*\phi)_Z:=\phi_{f_V^{-1}(Z)} \quad \text{for any } Z\subseteq V(\Gamma').
$$
Then we have that 
$$
f_*\n(\phi)=\n(f_*\phi).
$$
\end{example}

    The semistable sets behave well with respect to the push-forward on the poset of divisors on spanning subgraphs in \eqref{E:f*-O} and the push-forward of V-stabilities defined in Lemma-Definition \ref{LD:V-func}.

\begin{proposition}\label{P:Pn-func}
Let $\mathfrak n$ be a V-stability on a connected graph $\Gamma$ and let $f:\Gamma\to \Gamma'$ be a morphism of graphs. 
Then the push-forward $f_*$ of \eqref{E:f*-O} is such that  
$$
f_*(\P_{\n})\subseteq  \P_{f_*(\n)}.
$$
\end{proposition}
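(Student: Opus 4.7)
The plan is to verify the defining inequality of $\P_{f_*(\n)}$ directly from the one defining $\P_\n$, by means of a bookkeeping argument that splits $S$ according to whether its edges are contracted by $f$ or not.

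Fix $(\Gamma\setminus S,D)\in \P_\n$ and write $(G',D'):=f_*(\Gamma\setminus S,D)$, so that $G'=\Gamma'\setminus S'$ with $S':=(f^E)^{-1}(S)$. Fix an arbitrary $Z'\in \BCon(\Gamma')$; we must show that
$$D'_{Z'}+e_{S'}(Z')\geq f_*(\n)_{Z'}.$$
The key observation is that the subset $W:=f_V^{-1}(Z')\subseteq V(\Gamma)$ lies in $\BCon(\Gamma)$ (since $Z'\in \BCon(\Gamma')$ and $f$ is surjective with connected fibers after contraction), so by Lemma-Definition~\ref{LD:V-func} we have $f_*(\n)_{Z'}=\n_W$, and by the assumption on $(\Gamma\setminus S,D)$ we have $D_W+e_S(W)\geq \n_W$. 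Hence it suffices to prove the identity
$$D'_{Z'}+e_{S'}(Z')=D_W+e_S(W). \tag{$\ast$}$$

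The first step in proving $(\ast)$ is to unwind the definition of $D'$ from \eqref{E:f*-O}. Summing over $v\in Z'$ gives
$$D'_{Z'}=\sum_{v\in Z'}D_{f_V^{-1}(v)}+\sum_{v\in Z'}e_{S\cap (\Im f^E)^\mathsf{c}}(f_V^{-1}(v))=D_W+e_{S\cap (\Im f^E)^\mathsf{c}}(W),$$
where the last equality uses two facts: that $W$ is the disjoint union of the fibers $f_V^{-1}(v)$ for $v\in Z'$, and that every edge in $(\Im f^E)^\mathsf{c}$ is contracted by $f$ and therefore lies entirely inside a single fiber, so the edges of $S\cap (\Im f^E)^\mathsf{c}$ contained in $W$ are exactly the disjoint union of those contained in each fiber $f_V^{-1}(v)$ with $v\in Z'$.

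The second step is to compute $e_{S'}(Z')$. Since $f^E$ identifies $E(\Gamma')$ with $\Im(f^E)\subseteq E(\Gamma)$ and preserves endpoints up to applying $f_V$, an edge $e'\in E(\Gamma')$ contributes to $e_{S'}(Z')$ if and only if $f^E(e')\in S\cap \Im(f^E)$ has both endpoints in $W=f_V^{-1}(Z')$. Hence
$$e_{S'}(Z')=e_{S\cap \Im(f^E)}(W).$$
Adding the two expressions and using the disjoint decomposition $S=(S\cap \Im(f^E))\sqcup (S\cap (\Im f^E)^\mathsf{c})$ together with the additivity of $e_{(-)}(W)$ in the edge-subset, we obtain $(\ast)$ and conclude.

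I do not expect a serious obstacle here: the proof is essentially a combinatorial identity $(\ast)$. The only subtle point is keeping track of the contracted edges of $S$: they contribute to $D'_{Z'}$ (via the correction term in the definition of $f_*D$) rather than to $e_{S'}(Z')$, while the non-contracted edges of $S$ contribute in the opposite way, and the two contributions precisely reassemble $e_S(W)$.
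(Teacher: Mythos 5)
Your proof is correct and is essentially the argument the paper has in mind: the paper simply delegates to \cite[Prop.~1.11]{viviani2023new}, whose proof is exactly this direct verification that for $Z'\in\BCon(\Gamma')$ and $W=f_V^{-1}(Z')$ one has $D'_{Z'}+e_{S'}(Z')=D_W+e_S(W)$ and $f_*(\n)_{Z'}=\n_W$, so the defining inequalities are preserved. Your bookkeeping of the split $S=(S\cap\Im f^E)\sqcup(S\cap(\Im f^E)^\mathsf{c})$ is the right way to see the identity, and the remaining ingredients (that $f_V^{-1}(Z')\in\BCon(\Gamma)$ and that $f_*$ lands in $\OO^d(\Gamma')$) are already recorded in the paper after Lemma-Definition~\ref{LD:V-func} and in \eqref{E:f*-O}.
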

\begin{proof}
The proof given in \cite[Prop. 1.11]{viviani2023new} for general V-stabilities extends verbatim. 
\end{proof}

Next, we show that a V-stability on a  graph  induces a natural V-stability on each connected $\n$-admissible  subgraph, as in Definition \ref{D:adm-n}.

\begin{lemma-definition}\label{LD:V-subgr}
Let $\n$ be a V-stability of degree $d$ on a connected graph $\Gamma$. Let $G$ be a connected $\n$-admissible subgraph of $\Gamma$ and write $G=\Gamma[V(G)]\setminus S$.  

Then the \emph{restriction of $\n$ to $G$} is a V-stability condition $\n(G)$  on $G$  defined by 
\begin{equation}\label{E:nG}
\n(G)_W:=\n_{W}-e_S(W) \text{ for any } W\in \BCon(G).
\end{equation}
We have that 
\begin{itemize}
    \item $|\n(G)|=\n_{V(G)}-|S|$;
    \item $\wh \D(\n(G))=\wh \D(\Gamma)\cap \Con(G)$. 
\end{itemize} 
\end{lemma-definition}
\begin{proof}
If $G=\Gamma[V(G)]$ (i.e. if $S=\emptyset$), then the proof is analogous to the proof of \cite[Lemma-Definition 4.8]{FPV1} (mutatis mutandis). 

The general case follows from the fact that $\n(G)=\n(H)(H\setminus S)$, where $H:=\Gamma[V(G)]$, and the proof that this restriction is well-defined is analogous to the proof given in \cite[Lemma-Definiion 1.6]{viviani2023new} for general V-stabilities. 
\end{proof}

\begin{example}
 Let $\phi$ be a numerical polarization of degree $d$ on $\Gamma$.
A connected subgraph $G=\Gamma[V(G)]\setminus S$ of $\Gamma$ is $\n(\phi)$-admissible if and only if the decomposition into connected components $\Gamma[V(G)^\mathsf{c}]=\Gamma[Z_1]\coprod \ldots \coprod \Gamma[Z_k]$ satisfies the condition 
$$
\phi_{Z_i}-\frac{\val(Z_i)}{2}\in \ZZ \text{ for every } 1\leq i \leq k,
$$
which then implies that 
$$
\phi_{V(G)}-\frac{\val(V(G))}{2}\in \ZZ.$$
Consider the function given by 
$$
\phi(G)_Z:=\phi_Z-e_S(Z)-\frac{\val(Z,V(G)^\mathsf{c})}{2},
$$
 for any $Z\subset V(G)$. 
 Then $\phi(G)$ is a numerical polarization on $G$ of degree 
 $$
 |\phi(G)|=\phi_{V(G)}-\frac{\val(V(G))}{2}-|S|
 $$
 such that 
 $$
 \n(G)=\n(\phi(G)).
 $$
\end{example}

For any spanning admissible subgraph, the semistable sets behave well with respect to the inclusion  \eqref{E:incl-O} and the restriction of V-stabilities defined in Lemma-Definition \ref{LD:V-subgr}. 

\begin{proposition}\label{P:Pn-rest}
  Let $\n$ be a V-stability of degree $d$ on a connected graph $\Gamma$. Let  $G=\Gamma\setminus S$ be a $\n$-admissible spanning subgraph of $\Gamma$ and let $G=\coprod_{i=1}^ h G_i$ its deomposition into connected components. 
  Then the inclusion $\iota_G$ of \eqref{E:incl-O} is such that 
  $$
  (\iota_G)^{-1}(\P_\n)=\P_{\n(G_1)}\times \ldots \times \P_{\n(G_h)},
  $$
with respect to the inclusion 
$$
\begin{aligned}
    \P_{\n(G_1)}\times \ldots \times \P_{\n(G_h)} & \hookrightarrow \OO^{d-|S|}(G),\\
    ((G_1\setminus S_1,D_1),\ldots, (G_h\setminus S_h,D_h))& \mapsto 
    (G\setminus (S_1\cup \ldots \cup S_h),D_1+\ldots+D_h).
\end{aligned}
$$
\end{proposition}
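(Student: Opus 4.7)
The plan is to verify both inclusions in the asserted equality. Under $\iota_G$, an element $(G\setminus T,D)\in \OO^{d-|S|}(G)$ corresponds to $(\Gamma\setminus (S\cup T),D)\in \OO^d(\Gamma)$; setting $T_i:=T\cap E(G_i)$, $S_i:=S\cap E(\Gamma[V(G_i)])$, and $D_i:=D|_{V(G_i)}$, I have $T=\coprod_i T_i$ and $D=\sum_i D_i$, and throughout the argument I will exploit two elementary observations: first, for any $W\subseteq V(G_i)$, one has $D_W=(D_i)_W$ and $e_{S\cup T}(W)=e_{S_i}(W)+e_{T_i}(W)$; second, every edge of $S\cup T$ with endpoints in distinct $V(G_i)$'s must lie in $S$ (since $T\subseteq E(G)$ and the $V(G_i)$'s are the connected components of $G$). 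The central identity I invoke repeatedly is Lemma-Definition~\ref{LD:V-subgr}: $\n(G_i)_W=\n_W-e_{S_i}(W)$ for $W\in \BCon(G_i)$, where $\n_W$ denotes the extended $\n$-function of Definition~\ref{D:ext-n}.

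For the inclusion $(\iota_G)^{-1}(\P_\n)\subseteq \P_{\n(G_1)}\times\cdots\times \P_{\n(G_h)}$, fix $(G\setminus T,D)\in \P_\n$, an index $i$, and $W\in \BCon(G_i)\subseteq \Con(\Gamma)$. Remark~\ref{R:prop-Pn}\eqref{R:prop-Pn2} yields $D_W+e_{S\cup T}(W)\geq \n_W$; substituting the identities above gives $(D_i)_W+e_{T_i}(W)\geq \n(G_i)_W$, the defining inequality for $(G_i\setminus T_i,D_i)\in \P_{\n(G_i)}$. The degree match $\deg D_i=|\n(G_i)|-|T_i|$ follows from Remark~\ref{R:prop-Pn}\eqref{R:prop-Pn3} applied to $V(G_i)\in \wh\D(\n)$: because $\Gamma\setminus(S\cup T)\subseteq G$ has no edges between $V(G_i)$ and its complement in $V(\Gamma)$, the inequality there collapses to the equality $D_{V(G_i)}+|S_i|+|T_i|=\n_{V(G_i)}$, yielding $\deg D_i=|\n(G_i)|-|T_i|$ as needed.

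For the reverse containment, assume $(G_i\setminus T_i,D_i)\in \P_{\n(G_i)}$ for every $i$ and fix $Z\in \BCon(\Gamma)$; write $Z_i:=Z\cap V(G_i)$. Decomposing each $Z_i$ into its $G_i$-connected components $\{Z_i^\alpha\}_\alpha$, applying Remark~\ref{R:prop-Pn}\eqref{R:prop-Pn2} for $\n(G_i)$ to each $Z_i^\alpha\in \Con(G_i)$, and summing over $\alpha$ (noting that no $T_i$-edge can join two distinct $G_i$-components of $Z_i$), I obtain $(D_i)_{Z_i}+e_{T_i}(Z_i)\geq \n(G_i)_{Z_i}$, where the right-hand side is the extended value from \eqref{E:n-ext2}. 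Combined with the expansion
\[
D_Z+e_{S\cup T}(Z)=\sum_i\bigl[(D_i)_{Z_i}+e_{T_i}(Z_i)+e_{S_i}(Z_i)\bigr]+\sum_{i<j}\val_S(Z_i,Z_j),
\]
the target inequality $D_Z+e_{S\cup T}(Z)\geq \n_Z$ reduces to the combinatorial identity
\[
\sum_i\bigl[\n(G_i)_{Z_i}+e_{S_i}(Z_i)\bigr]+\sum_{i<j}\val_S(Z_i,Z_j)=\n_Z.
\]

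The main obstacle is establishing this final identity, which is where I expect the bulk of the bookkeeping to reside. My plan is first to upgrade the formula of Lemma-Definition~\ref{LD:V-subgr} from $W\in \BCon(G_i)$ to arbitrary $W\subseteq V(G_i)$, by case analysis on whether $W$ and $V(G_i)\setminus W$ are connected in $G_i$ and repeated use of the recursive formulas \eqref{E:n-ext}--\eqref{E:n-ext2} for the extended $\n$-function. The full identity then follows by iterating the additivity Lemma~\ref{L:add-whn} along the partition $V(\Gamma)=\coprod_i V(G_i)$, using crucially that each $V(G_i)\in \wh\D(\n)$ by the $\n$-admissibility of $G$; the cross-component valences $\val_S(Z_i,Z_j)$ are precisely the boundary terms produced by Lemma~\ref{L:add-whn} (since edges of $\Gamma$ joining distinct $V(G_k)$'s all lie in $S$). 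These computations parallel those carried out in the proofs of Lemma-Definition~\ref{LD:V-subgr} and Lemma~\ref{L:add-whn}, which the paper chooses to refer back to rather than to reproduce in detail.
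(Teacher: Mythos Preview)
Your argument for the forward inclusion and the degree matching are correct and essentially the paper's. The gap is in the backward inclusion: the ``combinatorial identity''
\[
\sum_i\bigl[\n(G_i)_{Z_i}+e_{S_i}(Z_i)\bigr]+\sum_{i<j}\val_S(Z_i,Z_j)\;=\;\n_Z
\]
that you set out to prove is in general \emph{false}; only the inequality~$\geq$ holds, and that weaker statement is what is actually needed. For a concrete failure, take $\Gamma=C_4$, a V-stability $\n$ of degree~$0$ for which $\{v_2,v_3\},\{v_4,v_1\},\{v_1,v_2\},\{v_3,v_4\}$ are degenerate but the singletons are not (such $\n$ exist by the classification in \S\ref{Sub:cycle}), and $S=\{e_1,e_3\}$, so that $V(G_1)=\{v_2,v_3\}$ and $V(G_2)=\{v_4,v_1\}$. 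For $Z=\{v_1,v_2\}$ one gets $Z_1=\{v_2\}$, $Z_2=\{v_1\}$, and the left side is $\n_{\{v_2\}}+\n_{\{v_1\}}+1$, which by \eqref{E:n-union} equals $\n_{\{v_1,v_2\}}+1$, not $\n_{\{v_1,v_2\}}$.

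The reason your plan cannot work is that Lemma~\ref{L:add-whn} requires \emph{all} the pieces to lie in $\wh\D(\n)$, whereas the subsets $Z_i=Z\cap V(G_i)$ have no reason to be degenerate even though the $V(G_i)$ are. What the paper uses at this step is instead the subadditivity inequality from Remark~\ref{R:n-deg}\eqref{R:n-deg2}, namely $\n_{W_1\cup W_2}\le \n_{W_1}+\n_{W_2}+\val(W_1,W_2)$, iterated over the $G_i$-connected pieces of the $Z_i$; combined with your earlier estimate this yields $D_Z+e_{S\cup T}(Z)\ge \n_Z$ directly. Apart from this correction (and the paper's preliminary reduction to $T=\emptyset$, which is purely organizational), your outline coincides with the paper's proof.
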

\begin{proof}
  First of all, we make two reductions.

\un{Reduction 1:} it is enough to prove that 
\begin{equation}\label{E:redu1}
(\iota_G)^{-1}(\P_\n(G))=\P_{\n(G_1)}(G_1)\times \ldots \times \P_{\n(G_h)}(G_h). 
\end{equation}

Indeed, the original statement follows from equality \eqref{E:redu1} applied to any spanning subgraph of $G$, since any such spanning subgraph is of the form   $\coprod_{i=1}^h G_i\setminus S_i$.

 \un{Reduction 2:} it is enough to prove that, given divisors $D_i\in \Div^{d_i}(G_i)$ with $d_i:=|\n(G_i)|$ with $1\leq i \leq h$, we have that 
 \begin{equation}\label{E:redu2}
  (G,D=\sum_{i=1}^h D_i) \text{ is $\n$-semistable } \Longleftrightarrow 
  (G_i,D_i) \text{ is $\n(G_i)$-semistable for any } 1\leq i \leq h. 
 \end{equation}

This follows from the fact that 
$$
(\iota_G)^{-1}(\P_\n)\subseteq \OO^{d_i}(G_1)\times \ldots \times \OO^{d_h}(G_h),
$$
which is implied by Proposition \ref{P:n-ss}\eqref{P:n-ss2}.

\vspace{0.1cm}

We now prove \eqref{E:redu2}, by showing the two implications separately. 

\un{Proof of $\Longrightarrow$ in \eqref{E:redu2}:}
Let $W\in \BCon(G_i)\subseteq \Con(G)$. By Remark~\ref{R:prop-Pn}(\ref{R:prop-Pn2}) and Lemma-Definition~\ref{LD:V-subgr}, we have 
$$
(D_i)_W=D_W\geq \n_W-e_S(W)=\n(G_i)_W.
$$

\un{Proof of $\Longleftarrow$ in \eqref{E:redu2}:}
Let $W\in \BCon(\Gamma)$ and set $W_i:=W\cap V(G_i)$. For each $W_i$, consider the decomposition $G_i[W_i]=\coprod_j G_i[W_i^j]$ into connected components. Again, by Remark~\ref{R:prop-Pn}(\ref{R:prop-Pn2}) and Lemma-Definition~\ref{LD:V-subgr}, we obtain 

\begin{equation*}
D_W=\sum_{i,j} (D_i)_{W_i^j}\geq \sum_{i,j} [\n_{W_i^j}-e_S(W_i^j)]
\geq \n_W-\sum_{(i,j)\neq (i',j')} \val_{\Gamma}(W_i^j,W_{i'}^{j'})-\sum_{i,j} e_S(W_i^j)=
\n_W-e_S(W),
\end{equation*}
where we have used Remark \ref{R:n-deg}\eqref{R:n-deg1} in the second inequality 
and that $\val_{\Gamma}(W_i^j,W_{i'}^{j'})=\val_{S}(W_i^j,W_{i'}^{j'})$ in the last equality.
\end{proof}

\section{PT-assignments}\label{Sec:PT}

In this section we introduce the Picard Type assignments, shortened in PT-assignments (Definition~\ref{D:PTstab}). We then prove (Theorem~\ref{T:V=PT}) that they are exactly the semistable sets of the V-stability conditions (from Definition~\ref{D:Pn}). 

We begin by  introducing the notion of degeneracy subsets and their associated admissible subgraphs. 

\begin{definition}\label{D:Deg}
Let $\Gamma$ be a connected graph. 
\begin{enumerate}
\item A \emph{degeneracy subset} for $\Gamma$ is a subset $\D\subseteq \BCon(\Gamma)$ satisfying the following two properties:
\begin{itemize}
    \item $V\in \BCon(\Gamma)\Rightarrow V^\mathsf{c}:=V(\Gamma)\setminus V\in \D$;  
\item 
for all $V_1, V_2 \in \BCon(\Gamma)$ such that $V_1 \cap V_2=\emptyset$ and $V_1\cup V_2\in \BCon(\Gamma)$, we have
\[
 V_1, V_2 \in \D \Rightarrow V_1\cup V_2\in \D.\]
\end{itemize}
We denote by $\Deg(\Gamma)$ the poset of degeneracy subsets of $\Gamma$, ordered by inclusion. 
\item  The \emph{extended degeneracy subset} associated to $\mathcal{D}\in \Deg(\Gamma)$ is the subset $\widehat{\mathcal{D}}\subseteq \Con(\Gamma)$ defined as
\[
\wh \D:=\Bigg\{V \in \Con(\Gamma): \  \Gamma[V^\mathsf{c}] = \bigsqcup_{i \in I} \Gamma[V_i] \textrm{ with all } V_i \in \mathcal{D}\Bigg\} \cup \{V(\Gamma)\}.
\]
\end{enumerate}
\end{definition}
From the two properties of a degeneracy subset, it follows that, given $\D\in \Deg(\Gamma)$, for all $V_1, V_2 \in \BCon(\Gamma)$ such that $V_1 \cap V_2=\emptyset$ and $V_1\cup V_2\in \BCon(\Gamma)$,  if two among the subsets  $\{V_1, V_2,V_1\cup V_2\}$ are in  $\D$ then also the third one is in $\D$.

\begin{remark}
 \noindent 
 \begin{enumerate}[(i)]
     \item $\Deg(\Gamma)$ has a minimal element, namely $\emptyset$, and a maximal element, namely $\BCon(\Gamma)$.  
     \item The extended degeneracy subset $\wh \D$ of $\D\in \Deg(\Gamma)$ completely determines $\D$ since we have that $\D=\wh \D\cap \BCon(\Gamma)$.
     \item Given a V-stability $\n$ on $\Gamma$, we have that $\D(\n)$ is a degeneracy subset by Remark \ref{R:n-deg}\eqref{R:n-deg1} and $\wh \D(\n)$ is the extended degeneracy subset associated to $\D(\n)$ via Definition \ref{D:ext-n}.
 \end{enumerate}
\end{remark}

\begin{definition}\label{D:Dadmissible}
Let ${\D}$ be a degeneracy subset for $\Gamma$. 
\begin{enumerate}
      \item A connected subgraph $G$ of $\Gamma$ is said to be  \emph{$\D$-admissible} if $V(G)\in \wh \D$.
      
      \item A spanning subgraph $G$ of $\Gamma$ is said to be \emph{$\D$-admissible} if its decomposition $G=\bigsqcup_{i\in I} G_i$ into connected components satisfies $V(G_i)\in \wh \D$ for each $i \in I$. 

      We denote by $\S\S_\D(\Gamma)$ the poset (with respect to inclusions) of \emph{$\D$-admissible spanning subgraphs of $\Gamma$}, by $\S\F_\D(\Gamma)$ the set of \emph{$\D$-admissible spanning forests of $\Gamma$} and by $m\S\S_\D(\Gamma)$ the minimal elements of $\S\S_\D(\Gamma)$.
      Observe that $m\S\S_\D(\Gamma)\subseteq \S\F_\D(\Gamma)$.

\item For any $G\in \S\S_{\D}(\Gamma)$,  the cardinality of the $\D$-admissible spanning forests contained in $G$ will be denoted by $c_\D(G)$ and called the \emph{$\D$-complexity}  of $G$.

\item   The poset of \emph{divisors on $\D$-admissible spanning subgraphs} of degree $d$ is the subposet of $\OO^d(\Gamma)$ given by 
    \begin{equation*}
        \OO^d_\D(\Gamma):=\left\{(G=\Gamma\setminus S,D)\in\OO^d(\Gamma): 
         G \textup{ is $\D$-admissible}  \right\}.
    \end{equation*}
   \end{enumerate}
\end{definition}

\begin{remark}\label{R:Dadm} 
Let $\D$ be a degeneracy subset for $\Gamma$. 
\begin{enumerate}
    \item Since $\wh \D$ contains $V(\Gamma)$, we deduce that 
$$\S\S_{\con}(\Gamma)\subseteq \S\S_\D(\Gamma) \subseteq \S\S(\Gamma)\quad \text{(which implies that }  \OO^d_{\con}(\Gamma)\subseteq \OO_\D^d(\Gamma)\subseteq \OO^d(\Gamma)).$$
The above two inclusions are equalities in the two extreme cases: 
\begin{itemize}
\item $\S\S_{\con}(\Gamma)= \S\S_\D(\Gamma)\Leftrightarrow \D=\emptyset$, in which case $c_{\emptyset}(\Gamma)$ is the complexity $c(\Gamma)$ of $\Gamma$, i.e. the number of spanning trees of $\Gamma$.
\item $\S\S_{\D}(\Gamma)= \S\S(\Gamma)\Leftrightarrow \wh\D\supset V(\Gamma)$, in which case $c_{\D}(\Gamma)$ is the number of spanning forests of $\Gamma$.
\end{itemize}

    \item The subset $\OO^d_\D(\Gamma)$ is an upper subset of $\OO^d(\Gamma)$ as it follows easily from the fact that if $(G,D)\geq (G',D')$, then the vertex set of any connected component of $G$ is the disjoint union of the vertex sets of some of the connected components of $G'$.
    \item If $\D=\D(\n)$ for some V-stability $\n$ on $\Gamma$, then we have that 
     $\S\S_{\D(\n)}(\Gamma)=\S\S_\n(\Gamma)$ and $\OO_\n^d(\Gamma)\subseteq \OO_{\D(\n)}^d(\Gamma)$ (although this last inclusion is in general strict).
\end{enumerate}     
\end{remark}

For future use, we now observe that there are two natural operations on degeneracy subsets: the push-forward along a graph morphism and the restriction to an admissible connected subgraph. 

\begin{lemma-definition}\label{LD:operD}
Let $\D$ be a degeneracy subset for $\Gamma$.
\begin{enumerate}
    \item \label{LD:operD1}
    Let  $f:\Gamma\to \Gamma'$ be a morphism of graphs.  The pushforward of $\D$ along $f$ is  
    $$
    f_*\D:=\{Z\in \BCon(\Gamma'):f^{-1}_V(Z)\in\D\}\in \Deg(\Gamma').
    $$
    and its associated extended degeneracy subset is 
    $$
    f_*\wh\D:=\{Z\in \Con(\Gamma'):f^{-1}_V(Z)\in\wh\D\}.
    $$
\item \label{LD:operD2}
    Let $G\leq\Gamma$ be a $\D$-admissible connected subgraph. The restriction of $\D$ to $G$ is  
    $$
    \D(G):=\{Z\in\BCon(G):Z\in\wh \D\}\in \Deg(G)
    $$
    and its associated extended degeneracy subset is
    $$
    \wh \D(G):=\{Z\in\Con(G):Z\in\wh \D\}.
    $$
\end{enumerate}
\end{lemma-definition}
\begin{proof}
Part \eqref{LD:operD1} follows from the fact that $f_V$ is surjective and that 
if $Z\in \BCon(\Gamma')$ (resp. $Z\in \Con(\Gamma')$) then $f_V^{-1}(Z)\in \BCon(\Gamma)$ (resp. $f_V^{-1}(Z)\in \Con(\Gamma)$).

Part \eqref{LD:operD2}: consider the decomposition into connected components $\Gamma[V(G)^\mathsf{c}]=\coprod_{i=1}^k \Gamma[Z_i]$.  For any $W\in \BCon(G)$ there exists at least one subset  $I\subseteq \{1,\ldots, k\}$ such that $Z_I\coprod W:=\coprod_{i\in I} Z_i\coprod W\in \BCon(\Gamma)$. 
From the properties of $\D$ and $\wh \D$, it follows that  
\begin{equation}\label{E:2def-DG}
    \D(G)=\{ W\in \BCon(G)\: : \: Z_I\coprod W \text{ is biconnected and $\D$-admissible}\},
    \end{equation}
    where the above condition is independent of the choosen $I$ such that $Z_I\coprod W\in \BCon(\Gamma)$. 
Then we conclude that $\D(G)$ is a degenerate subset for $G$ using that $\D$ is a degenerate subset for $\Gamma$ and that 
\begin{itemize}
    \item $(Z_I\coprod W)^\mathsf{c}=Z_{I^\mathsf{c}}\coprod W^\mathsf{c}\in \BCon(\Gamma)$;
\item For $W_1,W_2,W_3\in \BCon(G)$ pairwise disjoint such that $W_1\coprod W_2\coprod W_3=V(G)$, we can choose three pairwise disjoints subsets $I_1,I_2,I_3\subset [k]$ with $I_1\cup I_2\cup I_3=[k]$ such that $Z_{I_i}\coprod W_i\in \BCon(\Gamma)$ for any $1\leq i \leq 3$, which then implies that $\{Z_{I_i}\coprod W_i\}_{1\leq i \leq 3}$ are pairwise disjoint and their union is the entire vertex set $V(\Gamma)$.
\end{itemize}
\end{proof}

\begin{remark}
    If $\D=\D(\n)$ for some V-stability $\n$ on $\Gamma$, then 
    \begin{itemize}
        \item for any morphism $f:\Gamma\to \Gamma'$, we have that $f_*\D(\n)=\D(f_*\n)$ by Lemma-Definition \ref{LD:V-func};
        \item for any $\D(\n)$-admissible connected subgraph $G\leq \Gamma$, we have that $\D(\n)(G)=\D(\n(G))$ by Lemma-Definition \ref{LD:V-subgr} and \eqref{E:2def-DG}.
    \end{itemize}
\end{remark}

The behaviour of the poset of divisors on admissible spanning subgraphs with respect to the above two operations is explained in the following

\begin{lemma}\label{L:operOD}
Let $\D$ be a degeneracy subset for $\Gamma$.
\begin{enumerate}
    \item \label{L:operOD1}
    Let  $f:\Gamma\to \Gamma'$ be a morphism of graphs.  Then the push-forward map \eqref{E:f*-O} is such that 
    $$
    f_*(\OO_{\D}^d(\Gamma))=\OO^d_{f_*(\D)}(\Gamma'). 
    $$
\item  \label{L:operOD2}  Let $G\leq\Gamma$ be a $\D$-admissible spanning subgraph and let $G=\coprod_i G_i$ be its decomposition into connected components. Then 
\begin{enumerate}[(i)]
\item \label{L:operOD2i} we have the equality 
$$
\S\S_{\D}(\Gamma)\cap \S\S(G)=\prod_i \S\S_{\D(G_i)}(G_i)=:\S\S_{\D}(G).$$
In particular, $c_{\D}(G)=\prod_i c_{\D(G_i)}(G_i)$.
\item \label{L:operOD2ii} the natural inclusion \eqref{E:incl-O} is such that 
$$
\iota_G^{-1}(\OO^d_{\D}(\Gamma))=\coprod_{\un d=(d_i)} \prod_i \OO^{d_i}_{\D(G_i)}(G_i)=:\OO^{d-|E(G)^\mathsf{c}|}_{\D}(G)
$$
where $\un d=(d_i)$ vary among all tuples such that $\sum_i d_i=d-|E(G)^\mathsf{c}|$. 
\end{enumerate}
\end{enumerate}
\begin{proof}
    Part \eqref{L:operOD1} follows from the fact that, for any $\Gamma\setminus S\in \S\S(\Gamma)$, the pull-back of the vertex set of any connected component of $f_*(\Gamma\setminus \S)$ is a union of vertex sets of connected components of $\Gamma\setminus S$.

    Part \eqref{L:operOD2} follows from the fact that any spanning subgraph $H\leq G$ can be written as $H=\coprod_i H_i$ where $H_i$ is spanning subgraphs of $G_i$, and that $H$ is $\D$-admissible if and only if each $H_i$ is $\D(G_i)$-admissible.
\end{proof}

\end{lemma}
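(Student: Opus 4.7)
The plan is to treat the two parts separately, each by unwinding the definition of $\OO^d_\D$ and leveraging the descriptions of $f_*\D$ and $\D(G_i)$ established in Lemma-Definition~\ref{LD:operD}.

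For Part~\eqref{L:operOD1}, I will check both set inclusions. The containment $f_*(\OO^d_\D(\Gamma))\subseteq\OO^d_{f_*\D}(\Gamma')$ rests on the following combinatorial observation: for any $(G=\Gamma\setminus S,D)\in\OO^d_\D(\Gamma)$ and any connected component $W'$ of $f_*G$, the preimage $f_V^{-1}(W')$ is a disjoint union of vertex sets of connected components of $G$. Indeed, any edge of $G$ joining $f_V^{-1}(W')$ to its complement in $V(\Gamma)$ would either be contracted by $f$ (impossible, since contracted edges are confined to a single fiber of $f_V$) or would project under $f^E$ to an edge of $f_*G$ leaving $W'$ (contradicting that $W'$ is a connected component). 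Combined with the description $\widehat{f_*\D}=\{Z\in\Con(\Gamma'):f_V^{-1}(Z)\in\widehat{\D}\}$ from Lemma-Definition~\ref{LD:operD}\eqref{LD:operD1} and the $\D$-admissibility of every component of $G$, I conclude $W'\in\widehat{f_*\D}$. For the reverse inclusion, I exhibit a canonical lift: given $(G'=\Gamma'\setminus S',D')\in\OO^d_{f_*\D}(\Gamma')$, set $S:=f^E(S')$, so that no contracted edge is removed from $G:=\Gamma\setminus S$; then $f_*G=G'$ and the connected components of $G$ are in bijection with those of $G'$ via $f_V^{-1}$, so $\D$-admissibility of $G$ is equivalent to $f_*\D$-admissibility of $G'$. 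The divisor $D'$ then lifts to some $D$ with $f_*D=D'$ by the surjectivity of $f_*$ on $\OO^d$ \cite[Prop.~1.2]{viviani2023new}.

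For Part~\eqref{L:operOD2}\eqref{L:operOD2i}, I start from the observation that, since $G=\coprod_i G_i$ is the decomposition into connected components, no edge of $G$ crosses between distinct $G_i$'s. Thus any spanning subgraph $H\le G$ admits a unique decomposition $H=\coprod_i H_i$ with $H_i\le G_i$ spanning, and the connected components of $H$ are exactly the connected components of the individual $H_i$'s. For any such component $C\subseteq V(G_i)$, Lemma-Definition~\ref{LD:operD}\eqref{LD:operD2} gives the equivalence $V(C)\in\widehat{\D}\iff V(C)\in\widehat{\D(G_i)}$. Therefore $H$ is $\D$-admissible if and only if each $H_i$ is $\D(G_i)$-admissible, yielding the claimed equality of posets; the product formula for $c_\D(G)$ then follows by specialising to spanning forests and counting. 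Part~\eqref{L:operOD2}\eqref{L:operOD2ii} is then a direct consequence of \eqref{L:operOD2i} combined with the natural identification of a divisor of total degree $d-|E(G)^\mathsf{c}|$ on $G$ with a tuple $(D_i)_i$ of divisors on the $G_i$ of prescribed degrees $d_i$, summing to $d-|E(G)^\mathsf{c}|$.

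The main obstacle I foresee lies in the first inclusion of Part~\eqref{L:operOD1}: once $f_V^{-1}(W')$ is known to be a disjoint union of elements of $\widehat{\D}$ that is itself connected in $\Gamma$, one still has to verify that it lies in $\widehat{\D}$. This closure property of $\widehat{\D}$ under connected disjoint unions of its own elements does not follow immediately from the definition; it requires a careful case analysis, most cleanly an induction on the number of pieces, exploiting both axioms defining a degeneracy subset: closure under complementation and the triples condition.
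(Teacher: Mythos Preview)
Your proposal is correct and follows the same line as the paper. For Part~\eqref{L:operOD2} the two arguments are identical. For Part~\eqref{L:operOD1} you are more explicit than the paper: you treat both inclusions separately, and you correctly flag a step that the paper's two-line proof passes over in silence, namely that a connected set $W\in\Con(\Gamma)$ which is a disjoint union of elements of $\widehat\D$ again lies in $\widehat\D$. This closure property is genuinely needed for the forward inclusion and does not fall out of the definition immediately; your plan to prove it by induction on the number of pieces, using both axioms of a degeneracy subset, is sound. (A clean way to organise that induction: pass to the quotient graph obtained by contracting each piece $V_j$ to a point, so that the question becomes ``if every singleton lies in $\widehat{\D^*}$ then $\D^*=\BCon$''; then for $Z\in\BCon$ choose $v\in Z^\mathsf c$ that is not a cut vertex of $\Gamma[Z^\mathsf c]$, observe this forces $\{v\}\in\BCon(\Gamma)$, and apply the inductive hypothesis together with Lemma-Definition~\ref{LD:operD}\eqref{LD:operD2} on $\Gamma[\{v\}^\mathsf c]$.) So your ``obstacle'' is a matter of supplying detail the paper omits rather than a defect in the approach.
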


We now introduce some notation in view of the main definition of this section.

\begin{notation} \label{N:NotationPT} 
\begin{enumerate} 
\noindent
\item  \label{N:Vbullet} Let $V_\bullet=(V_1,\ldots, V_k)$ be an ordered partition of $V(\Gamma)$, i.e. $V_i \cap V_j = \emptyset$ whenever $i \neq j$ and $\bigcup_{i=1}^k V_i = V(\Gamma)$. Setting  $E_G(V_\bullet)=\bigcup_{i=1}^k E_G(V_i, V(G) \setminus V_i)$, each such $V_\bullet$ induces a partial orientation $\mathcal{O}_G({V_\bullet})$ on $E_G( V_\bullet)$ defined by $s(\ee) \in V_i$ and $t(\ee) \in V_j$ if and only if $i<j$. 
Given an ordered partition $V_\bullet=(V_1,\ldots,V_k)$, we denote by $\overline{V}_\bullet:=(V_k,\ldots,V_1)$   the opposite ordered partition.

\item \label{N:subposet} Let $\P$ be an upper subset of $\OO^d_\D(\Gamma)$, for some degeneracy subset $\D$, and let $G\in\S\S_\D(\Gamma)$. We define the full subposet $S_G(\P)$ of $\S\S_{\D}(G)$, whose objects are subgraphs $G'\leq G$ such that:
        \begin{itemize}
            \item there exists an ordered partition $W_\bullet$ of $V(G)$ with $G'=G\setminus E(W_\bullet)$;
            \item $\P(G')\neq\emptyset$.
        \end{itemize} 

\item \label{N:PicDisc} If $G= \coprod_{i \in I(G)} G_i$ is the decomposition in connected components, we note that $\Pic^d(G)= \prod_{\sum_i d_i=d} \Pic^{d_i}(G_i)$. If $\delta : I(G) \to \mathbb{Z}$ satisfies $\sum_{i \in I(G)} \delta(i)=d$, we write $\Pic^\delta(G)$ for the subset $\prod_{i \in I(G)} \Pic^{\delta(i)} (G_i)$ of $\Pic^d(G)$.
\end{enumerate}
\end{notation}

\begin{definition}\label{D:PTstab}
    Let $\D\in\Deg(\Gamma)$, and let $\P$ be an upper subset of $\OO^d_\D(\Gamma)$. 
    We say that $\P$ is:
    
    \begin{enumerate}
        \item\label{D:PTstab1} A \emph{Picard Type assignment}, shortened to \emph{PT-assignment}, of degree $d$ on $\Gamma$ if for any $G\in\S\S_{\D}(\Gamma)$, the homomorphism 
    \begin{equation*}
        \pi_G:\P(G):=\{D\in\textup{Div}^{d-|E(G)^\mathsf{c}|}(G):(G,D)\in\P\} \to \textup{Pic}^{d-|E(G)^\mathsf{c}|}(G)
    \end{equation*}
        satisfies the following properties:
        
        \begin{enumerate}
            \item\label{D:PTstabProp1} it surjects onto $\Pic^{\delta_G}(G)$ for some $\delta_G : I(G) \to \mathbb{Z}$ such that $\sum_{i \in I(G)} \delta_G(i)= d- |E(G)^\mathsf{c}|$;
        \item\label{D:PTstabProp2} for each $D_1,D_2\in\P(G)$ such that $\pi_G(D_1)=\pi_G(D_2)$, there exists an ordered partition  $V_\bullet=(V_1,\ldots,V_k)$ of $V(\Gamma)$, where each $V_i$ is a disjoint union of elements of $\D$, such that the equality 
        \[D_1-D(\O_G(V_\bullet))=D_2-D(\O_G(\overline{V_\bullet}))\]
        holds, and the latter divisor belongs in $\P(G\setminus E_G(V_\bullet))$.
        \end{enumerate}
        \item A \emph{numerical PT-assignment} of degree $d$ on $\Gamma$ if for any $G\in\S\S_\D(\Gamma)$ we have that 
        \begin{equation*}
            |\P(G)|=c_\D(G).
        \end{equation*}
        \item A \emph{weak PT-assignment} of degree $d$ on $\Gamma$ if for any $G\in S_\Gamma(\P)$, the homomorphism 
    \begin{equation*}
        \pi_G:\P(G):=\{D\in\textup{Div}^{d-|E(G)^\mathsf{c}|}(G):(G,D)\in\P\} \to \textup{Pic}^{d-|E(G)^\mathsf{c}|}(G)
    \end{equation*}
        satisfies Properties~\ref{D:PTstabProp1} and \ref{D:PTstabProp2} of Part~\ref{D:PTstab1}.
        \item A \emph{weak numerical PT-assignment} of degree $d$ on $\Gamma$ if we have the equality 
        \begin{equation*}
            |\P(\Gamma)|=c_\D(\Gamma).
        \end{equation*}
    \end{enumerate}
    In each of the above, we say that $\D$ is the degeneracy set of the assignment $\P$ and we write $\D=\D(\P)$.  The extended degeneracy subset associated to $\D(\P)$ is denoted by $\wh \D(\P)$.
\end{definition}

\begin{remark}\label{R:compare-PT}
\noindent
\begin{enumerate}
    \item  Proposition~\ref{P:cardPTstab} below will imply the following diagram of implications:

\begin{tikzcd}
		\{\textup{PT-assignments}\} \arrow[r,Rightarrow] \arrow[d,Rightarrow] 
		& \{\textup{Weak PT-assignments}\} \arrow[d,Rightarrow] \\
		\{\textup{Numerical PT-assignments}\} \arrow[r,Rightarrow]
		& \{\textup{Weak numerical PT-assignments.}\}
\end{tikzcd}    
\item The above assignments with empty degeneracy subset were considered in \cite[Def. 4.4]{pagani2023stability} and \cite[Def. 1.3]{viviani2023new}:
\begin{itemize}
    \item PT-assignments with empty degeneracy subsets are the same as stability assignments in \cite{pagani2023stability} and upper subsets of strong Neron type in \cite{viviani2023new};
    \item weak PT-assignments with empty degeneracy subsets are the same as upper subsets of  Neron type in \cite{viviani2023new};
    \item numerical PT-assignments with empty degeneracy subsets are the same as upper subsets of numerical strongly Neron type in \cite{viviani2023new};
    \item weak numerical PT-assignments with empty degeneracy subsets are the same as upper subsets of numerical Neron type in \cite{viviani2023new}.
\end{itemize}
\end{enumerate}
\end{remark}

\begin{remark}\label{R:DegreemSS}
If $\P$ is an upper subset of $\OO^d_\D(\Gamma)$ that satisfies Condition~\ref{D:PTstabProp2} of Definition~\ref{D:PTstab}, then for every  $G \in \S \S_\D(\Gamma)$ there exists at most one function $\delta_G^\P : I(G) \to \mathbb{Z}$ such that $\P(G) \to \Pic^{\delta_G^\P}(G)$ is surjective, and the function $\delta_G^\P$ may be inductively  determined on the number $t(G)=|I(G)|$ of connected components of $G$, as follows. First of all, by upper closure, we may reduce to the case where $G$ is a minimal $\D(\P)$-admissible spanning \emph{forest} of $\Gamma$.

If $t(G)=1$ there is nothing to prove: there exists a unique function that sums to $d$ on the (unique) connected component(s).

Assume $t(G)>1$. Because $G$ is spanning, there exists $e \in E(\Gamma) \setminus E(G)$ such that $t(G \cup \{e\})=t(G)-1$. Call $G_1$ and $G_2$ the two connected components of $G$ such that $G_1 \cup G_2 \cup \{e\}$ is connected (hence a tree of the forest $G\cup\{e\}$), and call $v_1 \in G_1$ and $v_2 \in G_2$ the two endpoints of $e$.

By the induction hypothesis, Condition~\ref{D:PTstabProp2} determines a unique sum $d$ function $\delta^\P_{G \cup  \{e\}}$ on the set of connected components of $G\cup \{e\}$. This determines the value $\delta^\P_G(G_i)= \delta^\P_{G \cup \{e\}} (G_i)$ on each connected component $G_i$ of $G$ other then $G_1$ and $G_2$. This also determines the sum $\delta^\P_G(G_1)+ \delta^\P_G(G_2)+1$ to equal $\delta^\P_{G \cup \{e\}}(G_1 \cup G_2 \cup \{e\})$. 

Now assume for a contradiction that there exist $D,E \in \P(G)$ with $D_{G_1} \neq E_{G_1}$, $D_{G_2} \neq E_{G_2}$, and $D_{G_i}=\delta^\P_{G \cup \{e\}}(G_i)= E_{G_i}$ for all $i \neq 1,2$. Without loss of generality, we assume $D_{G_1}<E_{G_1}$, hence $D_{G_2}>E_{G_2}$ (because $D_{G_1}+D_{G_2}=E_{G_1}+E_{G_2}$). 

Let $D' = D+v_2$ and $E'=E+v_1$. Then $D' \neq E'$ and, because $\P$ is upper closed, we have $D',E' \in \P(G \cup \{e\})$, from which we deduce \[D'_{G_1 \cup G_1 \cup \{e\}} = \delta^\P_{G \cup \{e\}} (G_1 \cup G_2 \cup \{e\})= E'_{G_1 \cup G_2 \cup \{e\}}.\] Since they have the same degree, these two divisors are chip-firing equivalent on $G \cup \{e\}$, because the latter is a forest. By Condition~\ref{D:PTstabProp2}, we deduce the existence of a partition $V_\bullet$ of $V(\Gamma)$ such that \[D'-D(\O_{G_1 \cup G_2 \cup \{e\}}({V}_{\bullet}))= E' -D( \O_{G_1 \cup G_2 \cup \{e\}}(\overline{V}_{\bullet})).\] We observe that, since $G\cup\{e\}$ is a spanning forest, there exists only one minimal spanning forest $G_{\text{min}}\leq G\cup\{e\}$. Thus, it must be $G_{\text{min}}=G$, and we deduce that $V_\bullet=(V(G_1),V(G_2),\bigcup_{i \neq 1,2}(G_i))$. Therefore, $D(\O_{G_1 \cup G_2 \cup \{e\}}(V_{\bullet}))=v_2$ and $D(\O_{G_1 \cup G_2 \cup \{e\}}(\overline{V}_{\bullet}))=v_1$, hence $D_{G_i}=E_{G_i}$ for $i=1,2$; a contradiction.
\end{remark}

The ordered partitions that appear in Definition \ref{D:PTstab} are quite special, as we show in the following.

\begin{lemma}\label{L:DO-div}
   Let $V_\bullet=(V_1,\ldots,V_k)$ be an ordered partition of $V(\Gamma)$ and let $G$ be a spanning subgraph of $\Gamma$. Then 
   $$D(\O_G(V_\bullet))-D(\O_G(\ov V_\bullet))\in \Prin(G)$$
   if and only if there exists another ordered partition $W_\bullet=(W_0,\ldots,W_q)$ of $V(\Gamma)=V(G)$ such that 
   \begin{enumerate}[(i)]
   \item \label{L:DO-div1} $\O_G(V_\bullet)=\O_G(W_\bullet)$;
   \item  \label{L:DO-div2} $E_G(W_i,W_j)=\emptyset$ unless $j-i=\pm 1$.
    \end{enumerate}
   In such a  case we have that  
   \begin{equation}\label{E:diff-DO-div}
    D(\O_G(V_\bullet))-D(\O_G(\ov V_\bullet))=\sum_{k=0}^{q-1} \div(\chi_{\bigcup_{i\leq k} W_i})=\sum_{i=0}^q (q-i)\div(\chi_{W_i})
    \end{equation}
\end{lemma}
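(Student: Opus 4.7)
We prove the two implications separately.

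The ``if'' direction $(\Leftarrow)$ is direct. Granted $W_\bullet$ satisfying (i) and (ii), condition (i) lets us rewrite the left-hand side of \eqref{E:diff-DO-div} as $D(\O_G(W_\bullet))-D(\O_G(\ov W_\bullet))=\sum_{\ee\in\O_G(W_\bullet)}(t(\ee)-s(\ee))$. By (ii), each such $\ee$ goes from some $W_i$ to the next part $W_{i+1}$. On the other hand, unfolding $\div(\chi_{U_k})$ as a sum over edges crossing $U_k$ to $U_k^\mathsf{c}$, this edge contributes to $\div(\chi_{U_k})$ exactly for $k=i$, with value $t(\ee)-s(\ee)$. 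This proves the first equality in \eqref{E:diff-DO-div}. The second equality is a telescoping: for each $v\in W_j$, $\chi_{U_k}(v)=1$ iff $k\geq j$, so $\sum_{k=0}^{q-1}\chi_{U_k}=\sum_{i=0}^{q}(q-i)\chi_{W_i}$.

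For the ``only if'' direction $(\Rightarrow)$, write $D(\O_G(V_\bullet))-D(\O_G(\ov V_\bullet))=\div(g)$ for some integer-valued function $g$ on $V(G)$. The plan is to realize $W_\bullet$ as the level-set partition of $g$ (ordered from highest to lowest value). For this to yield conditions (i) and (ii), we need $g$ to satisfy the following termwise property (possibly after adjusting $g$ by a locally constant function on each connected component of $G$): $g$ is constant on every connected component of every $G[V_a]$, and for each edge $uv\in E_G(V_\bullet)$ with $u\in V_a$ and $v\in V_b$, $a<b$, we have $g(u)=g(v)+1$. Granted this, set $W_j:=\{v\in V(\Gamma):g(v)=\max g-j\}$ for $j=0,\ldots,q:=\max g-\min g$. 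Then (i) holds because the orientations on $E_G(W_\bullet)=E_G(V_\bullet)$ both record the strict decrease of $g$ along crossing edges, and (ii) holds because such edges cross between levels differing by exactly one. The formula \eqref{E:diff-DO-div} then follows by applying the ``if'' direction to the partition $W_\bullet$ just constructed.

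The heart of the argument is establishing the termwise property for $g$. This is equivalent to saying that the $\pm 1$-valued $1$-cochain $\omega$ associated to $\O_G(V_\bullet)$ (namely $\omega(\ee)=1$ if $\ee\in\O_G(V_\bullet)$, $\omega(\ee)=-1$ if $\ov{\ee}\in\O_G(V_\bullet)$, and $\omega(\ee)=0$ otherwise) is a coboundary on $G$, or equivalently has vanishing sum around every cycle of $G$. The main obstacle is to deduce this from $D\in\Prin(G)$. The key observation is that $\O_G(V_\bullet)$ induces an acyclic orientation on the multigraph obtained from $G$ by collapsing every connected component of every $G[V_a]$ to a point; a hypothetical non-vanishing period of $\omega$ on some undirected cycle of $G$ would produce an integrality obstruction to solving $\div(g)=D$ over $\ZZ$, contradicting the hypothesis $D\in\Prin(G)$. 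Once $\omega$ is known to be a coboundary, the required potential $g$ exists, the construction of $W_\bullet$ proceeds as above, and the explicit formula in \eqref{E:diff-DO-div} is the output of the ``if'' direction applied to this $W_\bullet$.
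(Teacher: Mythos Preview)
Your ``if'' direction is correct and matches the paper. Your plan for the ``only if'' direction --- take $W_\bullet$ to be the level sets of $g$ and verify the termwise property --- is also the paper's plan. The gap is in your justification of the termwise property (equivalently, that $\omega$ is a coboundary).

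You assert that a nonvanishing period of $\omega$ on some cycle would give an ``integrality obstruction'' to solving $\div(g)=D$ over $\ZZ$. This is not correct as stated. Writing $c\in\CC_1(G,\ZZ)$ for the $1$-chain dual to $\omega$, one has $D=\partial c$, and the hypothesis $D\in\Prin(G)=\partial(\Im\delta)$ is equivalent to $c\in\Im(\delta)+Z_1(G,\ZZ)$, \emph{not} to $c\in\Im(\delta)$. Since $\Im(\delta)+Z_1$ has finite index in $\CC_1$, membership there says nothing about the pairings $\langle c,\gamma\rangle$ for cycles $\gamma$. The acyclicity of the induced orientation on the quotient graph is true but does not close this gap: it only says $\omega$ has a potential on the quotient, not on $G$. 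So the crucial implication ``$D\in\Prin(G)\Rightarrow\omega$ is a coboundary'' genuinely uses the special shape of $\omega$ and needs an argument.

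The argument that works is a maximum principle on level sets of $g$, which is essentially the content of the paper's Claim~2. Let $U=g^{-1}(\max g)$. Summing the equality $D_v=\div(g)_v$ over $v\in U$ gives, on one side, (\#\ $\O$-edges into $U$) $-$ (\#\ $\O$-edges out of $U$), and on the other side $\sum_{e\in E_G(U,U^\mathsf{c})}(g(\text{outside})-g(\text{inside}))\le -|E_G(U,U^\mathsf{c})|$. Writing $a,b,c$ for the numbers of $\O$-edges out of $U$, $\O$-edges into $U$, and non-crossing edges across $\partial U$, this reads $b-a\le -(a+b+c)$, forcing $b=c=0$ and then equality, i.e.\ $g$ drops by exactly $1$ across $\partial U$. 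Thus $U$ is a union of connected components of the $G[V_a]$'s, all $\O$-edges at $U$ point outward, and they land in $g^{-1}(\max g-1)$. Iterating with $g^{-1}(\{\max g,\max g-1\})$, etc., yields the full termwise property and hence conditions (i)--(ii) for $W_\bullet$. This replaces your integrality/period heuristic with a concrete inequality that actually uses $D\in\Prin(G)$.
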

\begin{proof}
Let us first prove the if condition. Assume that there exists an ordered partition $W_\bullet$ satisfying \eqref{L:DO-div1} and \eqref{L:DO-div2}, and let us check Formula \eqref{E:diff-DO-div}.
We compute the right hand side
$$
\div\left(\sum_{i=0}^q(q-i)\chi_{W_i}\right)=\sum_{\substack{\ee\in \EE(G): \\ s(\ee)\in W_i, t(\ee)\in W_j}}[(q-i)-(q-j)]t(\ee)=
$$
$$=
\sum_{\substack{\ee\in \EE(G): \\ s(\ee)\in W_{j-1}, t(\ee)\in W_j}}t(\ee)-\sum_{\substack{\ee\in \EE(G): \\ s(\ee)\in W_{j+1}, t(\ee)\in W_j}}t(\ee)=D(\O_G(W_\bullet))-D(\O_G(\ov W_\bullet)),
$$
where we have used \eqref{L:DO-div2} in the last two equalities. 
We now conclude using that 
$$
D(\O_G(W_\bullet))-D(\O_G(\ov W_\bullet))=D(\O_G(V_\bullet))-D(\O_G(\ov V_\bullet))
$$
because of Condition \eqref{L:DO-div1}.

We now show the only if condition. Assume that 
\begin{equation}\label{E:DO=div}
 D(\O_G(V_\bullet))-D(\O_G(\ov V_\bullet))=\div(f) \: \text{ for some } f:V(G)\to \ZZ.
\end{equation}
Up to translating $f$ by a constant (which does not change $\div(f)$), we can assume that 
$$
0,q\in \Im(f)\subseteq \{0,1,\ldots, q\} \: \text{ for some } q\geq 0.
$$
Setting $W_k:=f^{-1}(q-k)$ for any $0\leq k\leq q$, we get an ordered partition $W_\bullet=(W_0,\ldots, W_q)$ of $V(G)$. We now compute the two sides of \eqref{E:DO=div}
\begin{equation}\label{E:sinistra}
\div(f)=
\sum_{\substack{\ee\in \EE(G): \\ s(\ee)\in W_i, t(\ee)\in W_j\\ i<j}}(j-i)t(\ee)-\sum_{\substack{\ee\in \EE(G): \\ s(\ee)\in W_i, t(\ee)\in W_j\\ i>j}}(i-j)t(\ee)=\sum_{\substack{\ee\in \EE(G): \\ s(\ee)\in W_i, t(\ee)\in W_j\\ i<j}}(j-i)\partial(\ee).
\end{equation}
\begin{equation}\label{E:destra}
 D(\O_G(V_\bullet))-D(\O_G(\ov V_\bullet))=\sum_{\substack{\ee\in \EE(G): \\ s(\ee)\in V_i, t(\ee)\in V_j\\ i<j}}t(\ee)-\sum_{\substack{\ee\in \EE(G): \\ s(\ee)\in V_i, t(\ee)\in V_j\\ i>j}}t(\ee)=\sum_{\substack{\ee\in \EE(G): \\ s(\ee)\in V_i, t(\ee)\in V_j\\ i<j}}\partial(\ee).
\end{equation}
We now check that $W_\bullet$ satisfies Properties~\eqref{L:DO-div1} and \eqref{L:DO-div2}.

\un{Claim 1:} $\O_G(V_\bullet)=\O_G(W_\bullet)$.

For each $1\leq i\leq k$, we have that $f$ is constant on $V_i$, as $\div(f)$ does not depend on the orientation of the edges in $E(V_i)$.
Hence, for each $i$, there exists a $0\leq j\leq q$ such that $V_i\subseteq W_j$. In particular, the ordered partition $V_\bullet$ is a refinement of $W_\bullet$. Indeed, we observe that, for each pair $i,i'$, if $i<i'$, then $f_{|V_i}\geq f_{|V_{i'}}$. 

Now, let $1\leq i<i'\leq k$ such that there exists $W_j\supseteq V_i\cup V_{i'}$. Equation \eqref{E:sinistra} restricted to $V_i$ shows that $\div(f)_{V_i}$ is independent of $\val(V_i,V_{i'})$. Since this is true for each such pair $i,i'$, we conclude that $\val(V_i,V_{i'})=0$, by \eqref{E:destra}. The claim follows.

\un{Claim 2:} $E_G(W_i,W_j)=\emptyset$ unless $j-i=\pm 1$.

Indeed, using Claim 1 and Assumption~\eqref{E:DO=div}, we have that 
\begin{equation}\label{E:DO=div2}
    D(\O_G(W_\bullet))-D(\O_G(\ov W_\bullet))=\div(f).
\end{equation}
Using \eqref{E:sinistra} and \eqref{E:destra} with $V_\bullet$ replaced by $W_\bullet$, we deduce that \eqref{E:DO=div2} is equivalent to 
\begin{equation}\label{E:van-1cycle}
    \sum_{\substack{\ee\in \EE(G): \\ s(\ee)\in W_i, t(\ee)\in W_j\\ i+1<j}}(j-i-1)\partial(\ee)=0.
\end{equation}
This equality implies  Claim 2. Indeed, by contradiction suppose this is not the case and pick the biggest index $j$ such that there exists $i+1<j$ and $\ee\in \EE(G)$ with $s(\ee)\in W_i$ and $t(\ee)\in W_j$. Then the coefficient of $t(\ee)$ in the right hand side of \eqref{E:van-1cycle} is positive, which is absurd.  

\vspace{0.1cm}

We end the proof observing that \eqref{E:diff-DO-div} holds because of \eqref{E:DO=div} and the fact that $f=\sum_{i=0}^q(q-i)\chi_{W_i}$ by construction. 
\end{proof}

\begin{definition}
    Let $\Gamma$ be a connected graph and let $\P$ be a PT-assignment on $\Gamma$. We say that a pair $(G,D)\in\OO_{\D(\P)}(\Gamma)$ is:
    \begin{itemize}
        \item \emph{$\P$-semistable} if $D\in\P(G)$;
        \item \emph{$\P$-polystable} if it is $\P$-semistable and $\{D\}=\pi_G^{-1}(\pi_G(D))$;
        \item \emph{$\P$-stable} if it is $\P$-polystable and $G$ is connected.
    \end{itemize}
\end{definition}

\subsection{A generalisation of Kirchhoff's theorem}\label{Sub:Kirc}

The main result of this subsection is Proposition~\ref{P:cardPTstab}, which gives a formula for the cardinalities of the sets $\P(G)$ for a PT-assignment $\P$ on $\Gamma$ and any $G\in \S\S_{\D(\P)}(\Gamma)$, in terms of the generalization of the $\D$-complexity of a graph introduced in Definition~\ref{D:Dadmissible}). In order to prove this result, we introduce the following.

\begin{definition}\label{D:totalorderGamma}
    Let $\Gamma$ be a connected graph. We inductively define a sequence of ordered partitions $(V^1_\bullet,\ldots,V^K_\bullet)$ of $V(\Gamma)$ such that $V^i_\bullet$ is a refinement of $V^j_\bullet$ if $i>j$, as follows:

\begin{itemize}
    \item [Step 1.]
     \begin{enumerate}
        \item Fix an arbitrary vertex $v_{(0,1)}\in V(\Gamma)$, and define $V_0:=\{v_{(0,1)}\}$. If $V_0=V(\Gamma)$, the procedure ends.
        \item Let the distance $d$ between two vertices be the number of edges in the shortest path connecting them. Fix an ordered partition $(V_0,V_1,\ldots,V_n)$ of $V(\Gamma)$ such that $v\in V_{i_1}$ if and only if $d(v,v_{(0,1)})=i_1$.
        \item Further partition each $V_{i_1}$ into $(V_{(i_1,1)},\ldots,V_{(i_1,l(i_1))})$, such that $\Gamma(V_i)=\bigsqcup_{j_1=1,\ldots,l(i_1)}\Gamma(V_{(i_1,j_1)})$ is the decomposition in connected components, arbitrarily ordered. Clearly, $V_0=V_{(0,1)}$.
        \item Define $V^1_\bullet:=(V_{(0,1)},V_{(1,1)},\ldots,V_{(1,l(1))},\ldots,V_{(n,l(n))})$. In other words, $V_{(i_1,j_1)}<V_{(i_1',j_1')}$ iff $(i_1,j_1)>_{\textup{lex}}(i_1',j_1')$, for every $i_1,j_1,i_1',j_1'$.
    \end{enumerate}
    \item [Step k.] Assume an ordered partition $V^{k-1}_\bullet=(V_{(i_1,j_1),(i_2,j_2),\ldots,(i_{k-1},j_{k-1})})_{\{(i_1,j_1),(i_2,j_2),\ldots,(i_h,j_h),{1\leq h\leq k-1}\}}$ of $V(\Gamma)$ has been defined, we now define an ordered partition $V^k_\bullet$ as a refinement of $V^{k-1}_\bullet$, as follows:
    
    \begin{enumerate}
        \item For every $(
        i_1,j_1),(i_2,j_2),\ldots,(i_{k-1},j_{k-1})$ such that $(i_{k-1},j_{k-1})\neq (0,1)$:
    \begin{enumerate}
        \item  Fix an arbitrary vertex $v_{(
        i_1,j_1),(i_2,j_2),\ldots,(i_{k-1},j_{k-1}),(0,1)}\in V_{(
        i_1,j_1),(i_2,j_2),\ldots,(i_{k-1},j_{k-1})}$, and define $V_{(
        i_1,j_1),(i_2,j_2),\ldots,(i_{k-1},j_{k-1}),0}:=\{v_{(
        i_1,j_1),(i_2,j_2),\ldots,(i_{k-1},j_{k-1}),(0,1)}\}$.  If all the elements in $V^k$ are singletons, i.e. they are of the form $V_{(
        i_1,j_1),(i_2,j_2),\ldots,(i_{h-1},j_{h-1}),(0,1)}$, for some $1\leq h\leq k$, the procedure ends with $K:=k$.

        \item  Let
        $$n_k:=\textup{max}_{v\in V_{(
        i_1,j_1),(i_2,j_2),\ldots,(i_{k-1},j_{k-1})}}\{d(v,v_{(
        i_1,j_1),(i_2,j_2),\ldots,(i_{k-1},j_{k-1}),(0,1)})\}.$$ (Note that $n_k$ also depends upon $(
        i_1,j_1),(i_2,j_2),\ldots,(i_{k-1},j_{k-1})$). Fix an ordered partition 
        $$
        (V_{(
        i_1,j_1),(i_2,j_2),\ldots,(i_{k-1},j_{k-1}),0},V_{(
        i_1,j_1),(i_2,j_2),\ldots,(i_{k-1},j_{k-1}),1},\ldots,V_{(
        i_1,j_1),(i_2,j_2),\ldots,(i_{k-1},j_{k-1}),n_k})$$ of $V_{(
        i_1,j_1),(i_2,j_2),\ldots,(i_{k-1},j_{k-1})}$ such that $v\in V_{(
        i_1,j_1),(i_2,j_2),\ldots,(i_{k-1},j_{k-1}),i_k}$ if and only if $d(v,v_{(
        i_1,j_1),(i_2,j_2),\ldots,(i_{k-1},j_{k-1}),(0,1)})=i_k$, where $0\leq i_k\leq n_k$.
        \item Further partition each $V_{(
        i_1,j_1),(i_2,j_2),\ldots,(i_{k-1},j_{k-1}),i_k}$ into\\ $(V_{(
        i_1,j_1),(i_2,j_2),\ldots,(i_{k-1},j_{k-1}),(i_k,1)},\ldots,V_{(
        i_1,j_1),(i_2,j_2),\ldots,(i_{k-1},j_{k-1}),(i_k,l(i_k))})$, such that $$\Gamma(V_{(
        i_1,j_1),(i_2,j_2),\ldots,(i_{k-1},j_{k-1}),i_k})=\bigsqcup_{j_k=1,\ldots,l(i_k)}\Gamma(V_{(
        i_1,j_1),(i_2,j_2),\ldots,(i_{k-1},j_{k-1}),(i_k,j_k)})$$ is the decomposition into connected components, arbitrarily ordered. Clearly, we have $V_{(
        i_1,j_1),(i_2,j_2),\ldots,(i_{k-1},j_{k-1}),0}=V_{(
        i_1,j_1),(i_2,j_2),\ldots,(i_{k-1},j_{k-1}),(0,1)}$.
    \end{enumerate}
        \item Let $$V^k_\bullet:=(V_{(
        i_1,j_1),(i_2,j_2),\ldots,(i_{h},j_{h})})_{\{(
        i_1,j_1),(i_2,j_2),\ldots,(i_{h},j_{h}),1\leq h\leq k\}}$$ be the ordered partition of $V(\Gamma)$, where the ordering is defined by $V_{(
        i_1,j_1),(i_2,j_2),\ldots,(i_{h},j_{h})}<V_{(
        i'_1,j'_1),(i'_2,j'_2),\ldots,(i'_{h'},j'_{h'})}$ iff $(
        i_1,j_1),(i_2,j_2),\ldots,(i_{h},j_{h})>_{\textup{lex}}(
        i'_1,j'_1),(i'_2,j'_2),\ldots,(i'_{h'},j'_{h'})$, where $>_{\textup{lex}}$ is the lexicographic order on the string of indices.
    \end{enumerate}
\end{itemize}
    This sequence of ordered partitions induces a total order on  $V(\Gamma)$. Indeed, for each $v\in V(\Gamma)$, there exists a unique set $V_{(
        i_1,j_1),(i_2,j_2),\ldots,(i_{h-1},j_{h-1}),(0,1)}\in V^h$, for some $1\leq h\leq K$, such that $\{v\}=V_{(
        i_1,j_1),(i_2,j_2),\ldots,(i_{h-1},j_{h-1}),(0,1)}$. We associate the vector $\un v:=(
        i_1,j_1,i_2,j_2,\ldots,i_{h-1},j_{h-1})$ to the vertex $v$. We define the \emph{total order $\leq$ on $V(\Gamma)$} as follows. For every $v,w\in V(\Gamma)$, 
    $$
    v\leq w\quad \text{if and only if}\quad \un v\geq_{\textup{lex}}\un w.
    $$

    \vspace{0.1cm}

    This order also induces a total order on $\textup{Div}^d(\Gamma)$, by defining, for each $D,D'\in\textup{Div}^d(\Gamma)$, $D>D'$ if $D_v>D'_v$, where $v=\textup{max}\{w|D_w\neq D'_w\}$.

    We define the \emph{depth} of a set $V_{(
        i_1,j_1),(i_2,j_2),\ldots,(i_{k-1},j_{k-1}),(0,1)}$ to equal $k$,
    and the \emph{depth} of a vertex $v$ to equal $\textup{depth}(\{v\})$.  (This also equals half the length of the vector $\un{v}$ plus $1$).

    Similarly, we define the \emph{depth} of an edge $e$ to be the smallest positive integer $k$ such that $$e\notin \bigsqcup_{V\textrm{ appears in } V^k_\bullet}\Gamma(V).$$
\end{definition}

\begin{definition}\label{D:totalordinduced}

    Let $f:\Gamma\to \Gamma'$ be a morphism, and let $(V^1_\bullet,\ldots,V^K_\bullet)$ be a sequence of ordered partitions for $V(\Gamma)$ as in Definition~\ref{D:totalorderGamma}. We define the sequence of ordered partitions $(W^1_\bullet,\ldots,W^H_\bullet)$ \emph{induced} by $(V^1_\bullet,\ldots,V^K_\bullet)$ on $\Gamma'$ as follows:
    \begin{itemize}
        \item [Step 1.]
    \begin{enumerate}
        \item Fix $w_{(0,1)}:=f(v_{(0,1)})\in V(\Gamma')$, and define $W_0:=\{w_{(0,1)}\}$.
        \item Fix an ordered partition $(W_0,W_1,\ldots,W_m)$ such that $w\in W_{i_1}$ if and only if $d(w,w_{(0,1)})=i_1$.
        \item Further partition each $W_{i_1}$ into $(W_{(i_1,1)},\ldots,W_{(i_1,l(i_1))})$, such that $\Gamma(W_i)=\bigsqcup_{j_1=1,\ldots,l(i_1)}\Gamma(W_{(i_1,j_1)})$ is the ordered decomposition in connected components, with $j_1<j_1'$ iff 
        $$
        \textup{max}\{v\in f^{-1}(W_{(i_1,j_1)})\}>\textup{max}\{v\in f^{-1}(W_{(i_1,j_1')})\}.
        $$
        Clearly, $W_0=W_{(0,1)}$.
        \item Define $W^1_\bullet:=(W_{(0,1)},W_{(1,1)},\ldots,W_{(1,l(1))},\ldots,W_{(m,l(m))})$. In other words, $W_{(i_1,j_1)}<W_{(i_1',j_1')}$ iff $(i_1,j_1)>_{\textup{lex}}(i_1',j_1')$, for every $i_1,j_1,i_1',j_1'$.
    \end{enumerate}
    \item [Step h.]Assume the ordered partition $W^{h-1}_\bullet=(W_{(i_1,j_1),(i_2,j_2),\ldots,(i_{h-1},j_{h-1})})_{\{(i_1,j_1),(i_2,j_2),\ldots,(i_{k},j_{k}),1\leq k\leq h-1\}}$ of $V(\Gamma')$ has been defined.
    We define the ordered partition $W^h_\bullet$ of $V(\Gamma')$ as a refinement of $W^{h-1}_\bullet$, as follows. 
    
    \begin{enumerate}
        \item For every $(
        i_1,j_1),(i_2,j_2),\ldots,(i_{k-1},j_{k-1})$ such that $(i_{h-1},j_{h-1})\neq(0,1)$:
    \begin{enumerate}
        \item  Fix the vertex $w_{(
        i_1,j_1),(i_2,j_2),\ldots,(i_{h-1},j_{h-1}),(0,1)}\in W_{(
        i_1,j_1),(i_2,j_2),\ldots,(i_{h-1},j_{h-1})}$, such that 
        $$
        \textup{max}\{v\in f^{-1}(w_{(
        i_1,j_1),(i_2,j_2),\ldots,(i_{h-1},j_{h-1}),(0,1)})\}\geq\textup{max}_{w\in W_{(
        i_1,j_1),(i_2,j_2),\ldots,(i_{h-1},j_{h-1})}}\{v\in f^{-1}(w)\},
        $$
        and define $(W_{(
        i_1,j_1),(i_2,j_2),\ldots,(i_{h-1},j_{h-1}),0}:=\{w_{(
        i_1,j_1),(i_2,j_2),\ldots,(i_{h-1},j_{h-1}),(0,1)}\}$. If all the elements of $W^h_\bullet$ are singletons, i.e. they are of the form $W_{(
        i_1,j_1),(i_2,j_2),\ldots,(i_{k-1},j_{k-1}),(0,1)}$, for some $1\leq k\leq h$, the procedure ends with $H:=h$.
        \item  Let
        $$m_h:=\textup{max}_{w\in W_{(
        i_1,j_1),(i_2,j_2),\ldots,(i_{h-1},j_{h-1})}}\{d(w,w_{(
        i_1,j_1),(i_2,j_2),\ldots,(i_{h-1},j_{h-1}),(0,1)})\}.$$ 
        (Note that $m_h$ also depends upon $(
        i_1,j_1),(i_2,j_2),\ldots,(i_{h-1},j_{h-1})$).
        Fix the ordered partition 
        $$
        (W_{(
        i_1,j_1),(i_2,j_2),\ldots,(i_{h-1},j_{h-1}),0},W_{(
        i_1,j_1),(i_2,j_2),\ldots,(i_{h-1},j_{h-1}),1},\ldots,W_{(
        i_1,j_1),(i_2,j_2),\ldots,(i_{h-1},j_{h-1}),m_h})$$ of $W_{(
        i_1,j_1),(i_2,j_2),\ldots,(i_{h-1},j_{h-1})}$ such that $w\in W_{(
        i_1,j_1),(i_2,j_2),\ldots,(i_{h-1},j_{h-1}),i_h}$ if and only if $d(w,w_{(
        i_1,j_1),(i_2,j_2),\ldots,(i_{h-1},j_{h-1}),(0,1)})=i_h$, where $0\leq i_h\leq m_h$.
        \item Further partition each $W_{(
        i_1,j_1),(i_2,j_2),\ldots,(i_{h-1},j_{h-1}),i_h}$ into\\ $(W_{(
        i_1,j_1),(i_2,j_2),\ldots,(i_{h-1},j_{h-1}),(i_h,1)},\ldots,W_{(
        i_1,j_1),(i_2,j_2),\ldots,(i_{h-1},j_{h-1}),(i_h,l(i_h))})$, such that $$\Gamma(W_{(
        i_1,j_1),(i_2,j_2),\ldots,(i_{h-1},j_{h-1}),i_h})=\bigsqcup_{j_h=1,\ldots,l(i_h)}\Gamma(W_{(
        i_1,j_1),(i_2,j_2),\ldots,(i_{h-1},j_{h-1}),(i_h,j_h)})$$ is the ordered decomposition into connected components, with $j_h<j_h'$ iff
        $$
        \textup{max}\{v\in f^{-1}((W_{(
        i_1,j_1),(i_2,j_2),\ldots,(i_{h-1},j_{h-1}),(i_h,j_h)})\}>\textup{max}\{v\in f^{-1}((W_{(
        i_1,j_1),(i_2,j_2),\ldots,(i_{h-1},j_{h-1}),(i_h,j_h')})\}.
        $$

        Clearly, $W_{(
        i_1,j_1),(i_2,j_2),\ldots,(i_{h-1},j_{h-1}),0}=W_{(
        i_1,j_1),(i_2,j_2),\ldots,(i_{h-1},j_{h-1}),(0,1)}$.
    \end{enumerate}
        \item Let $$W^h_\bullet:=(W_{(
        i_1,j_1),(i_2,j_2),\ldots,(i_{k},j_{k})})_{\{(
        i_1,j_1),(i_2,j_2),\ldots,(i_{k},j_{k}),1\leq k\leq h\}}$$ be the ordered partition of $V(\Gamma')$, where the ordering is defined by $W_{(
        i_1,j_1),(i_2,j_2),\ldots,(i_{k},j_{k})}<V_{(
        i'_1,j'_1),(i'_2,j'_2),\ldots,(i'_{k'},j'_{k'})}$ iff $(
        i_1,j_1),(i_2,j_2),\ldots,(i_{k},j_{k})>_{\textup{lex}}(
        i'_1,j'_1),(i'_2,j'_2),\ldots,(i'_{k'},j'_{k'})$.
\end{enumerate}
\end{itemize}

    Similarly to the previous definition, a sequence of ordered partitions $(W^1,\ldots, W^h)$ induces a total order $\leq$ on $V(\Gamma')$ and, consequently, also on $\Div^d(\Gamma')$, for any integer $d$.
\end{definition}

\begin{proposition}\label{P:cardPTstab}
    Let $\D\in\Deg(\Gamma)$, and let $\P$ be an upper subset of $\OO_\D^d(\Gamma)$.
    \begin{enumerate}
     \item \label{P:cardPTstableq}

    Let $G\in\S\S_\D(\Gamma)$. If $\P$ satisfies Property~\ref{D:PTstab}(\ref{D:PTstabProp2}) for each $G'\in S_G(\P)$ (see Notation~\ref{N:NotationPT}(\ref{N:subposet})), we have 
    $$
    |\P(G')|\leq c_{\D(\P)}(G'),
    $$
    for each $G'\in S_G(\P)$. 
    
    In particular, if $\P$ satisfies Property~\ref{D:PTstab}(\ref{D:PTstabProp2}) for each $G\in\S\S_\D(\Gamma)$, then
    $$
    |\P(G)|\leq c_{\D(\P)}(G),
    $$
    for each $G\in\S\S_\D(\Gamma)$.
    \item If, moreover, $\P$ is a PT-assignment, then $$|\P(G)|= c_{\D(\P)}(G).$$
\end{enumerate}
\end{proposition}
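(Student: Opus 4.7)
My plan is to prove the inequality $|\P(G)|\leq c_{\D(\P)}(G)$ of Part~(1) by constructing a canonical injection from $\P(G)$ into the set of $\D(\P)$-admissible spanning forests contained in $G$, and to upgrade this to a bijection in Part~(2) using Property~\ref{D:PTstabProp1}. The construction will use the canonical total order on $V(\Gamma)$ (and hence on $\Div^d(\Gamma)$) from Definition~\ref{D:totalorderGamma} to make all choices uniform, mimicking a bijective proof of Kirchhoff's Matrix-Tree Theorem.

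I first reduce to $G$ connected. When $G=\coprod_i G_i$, Lemma~\ref{L:operOD}(\ref{L:operOD2i}) gives $c_{\D(\P)}(G)=\prod_i c_{\D(G_i)}(G_i)$, and I check, using Remark~\ref{R:DegreemSS} and the upper-closure of $\P$, that $\P(G)$ injects into $\prod_i\P_i(G_i)$ with inherited hypotheses on each $\P_i$. The base case of the induction is when $G$ is itself a $\D(\P)$-admissible spanning forest: each connected component is a tree so $\Pic^{\delta_G^\P}(G)$ is a single point, and for any $D_1, D_2\in \P(G)$ Property~\ref{D:PTstabProp2} provides an ordered partition $V_\bullet$ with $D_1-D(\O_G(V_\bullet))=D_2-D(\O_G(\ov V_\bullet))\in \P(G\setminus E_G(V_\bullet))$. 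Since $G$ has no cycles, any non-trivial cut $E_G(V_\bullet)\neq\emptyset$ would make $G\setminus E_G(V_\bullet)$ fail $\D(\P)$-admissibility (so its $\P$-set would be empty), forcing $E_G(V_\bullet)=\emptyset$ and hence $D_1=D_2$. Thus $|\P(G)|=1=c_{\D(\P)}(G)$.

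For the inductive step with $G$ containing a cycle, I define the descent $D\mapsto F(D)$ as follows. Given $D\in \P(G)$, if the fibre $\pi_G^{-1}(\pi_G(D))$ contains an element strictly less than $D$ in the total order on $\Div^d(\Gamma)$, I pick the $<$-smallest such $D'$, apply Property~\ref{D:PTstabProp2} to $(D, D')$ to obtain an ordered partition $V_\bullet$, and replace $(G, D)$ by $(G\setminus E_G(V_\bullet),D-D(\O_G(V_\bullet)))$; if the fibre is a singleton, I use the canonical edge order induced by Definition~\ref{D:totalordinduced} to pick a canonical cycle edge $e$ and orientation $\vec e$, descending to $(G-e, D-t(\vec e))$, which lies in $\P$ by upper-closure. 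Iteration strictly decreases $|E(G)|$ and terminates at a $\D(\P)$-admissible spanning forest $F(D)\leq G$. For Part~(2), Property~\ref{D:PTstabProp1} ensures that, starting from the unique element of $\P(F)$ for any $F\in \S\F_{\D(\P)}(G)$, one can canonically lift it back to $\P(G)$, providing the inverse map and thus the required bijection.

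The main obstacle will be the injectivity of $D\mapsto F(D)$: distinct $D_1, D_2\in \P(G)$ must produce distinct forests. The key tool is Lemma~\ref{L:DO-div}, which classifies principal divisors of the form $D(\O_G(V_\bullet))-D(\O_G(\ov V_\bullet))$ via consecutive ordered partitions; combined with the greedy smallest-first descent driven by the canonical order, this forbids two divisors from arriving at the same terminal pair $(F, D_F)$. A related challenge is the confluence of the descent: different admissible first steps (choices of partition $V_\bullet$ or cycle edge $e$) must lead to the same terminal forest. This will be guaranteed by the compatibility of the induced total orders under graph morphisms in Definition~\ref{D:totalordinduced} and Lemma-Definition~\ref{LD:operD}, which ensures that the canonical choices remain coherent as the descent passes to smaller subgraphs at each step.
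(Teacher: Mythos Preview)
Your proposal has two genuine gaps that break the argument.

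\textbf{The inductive step is backwards.} When the fibre $\pi_G^{-1}(\pi_G(D))$ is a singleton, you ``pick a canonical cycle edge $e$ and orientation $\vec e$, descending to $(G-e, D-t(\vec e))$, which lies in $\P$ by upper-closure.'' But $(G-e, D-t(\vec e)) \leq (G,D)$ in the poset $\OO^d(\Gamma)$, and upper-closure says that $\P$ is closed under going \emph{up}, not down. There is no reason whatsoever for $(G-e, D-t(\vec e))$ to lie in $\P$, and in general it does not. This is fatal: your descent leaves $\P$ precisely at the step where you have no partition from Property~(\ref{D:PTstabProp2}) to guide you.

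\textbf{The base case is also wrong.} You claim that if $G$ is a $\D(\P)$-admissible spanning forest and $E_G(V_\bullet)\neq\emptyset$, then $G\setminus E_G(V_\bullet)$ fails $\D(\P)$-admissibility. This is false: a $\D$-admissible forest need not be \emph{minimal}, and removing edges can produce another $\D$-admissible forest (e.g.\ whenever $\D=\BCon(\Gamma)$, every spanning subgraph is $\D$-admissible). In that situation $c_{\D(\P)}(G)>1$, so your asserted equality $|\P(G)|=1=c_{\D(\P)}(G)$ fails on the right-hand side, and the argument that $|\P(G)|\leq 1$ does not go through either.

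The paper sidesteps both issues. It never tries to descend within $\P$ by deleting a single cycle edge. Instead, for the \emph{maximal} divisor $\tilde D$ in each fibre $\pi_G^{-1}([D])$, it directly assigns a spanning forest via a chosen Kirchhoff bijection $k'_G:\Pic^{\delta_G}(G)\to\S\F_{\con}(G)$; only for non-maximal divisors does it invoke Property~(\ref{D:PTstabProp2}) to descend to a strictly smaller $G'\in S_G(\P)$, where poset induction applies. The inverse map is then built from Lemma~\ref{L:Vbulletbullet}. The key point you are missing is that the injection targets \emph{all} $\D$-admissible spanning forests of $G$, with the connected ones (spanning trees of each component) absorbing the maximal representatives and the disconnected ones absorbing the rest via iterated descent through Property~(\ref{D:PTstabProp2}).
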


\begin{remark}\label{R:cD-spec}
 Proposition~\ref{P:cardPTstab} reduces to two well-known formulas for PT-assignements with extreme degeneracy subsets:
\begin{enumerate}
\item If $\D(\P)=\emptyset$, then, for any connected spanning subgraph $G\leq \Gamma$, the natural quotient map $\Div(G) \to \Pic(G)$ induces a bijection $\pi_G: \P(G) \to \Pic^{d-E(G)^\mathsf{c}}(G)$ where $d$ is the degree of $\P$ (by Definition~\ref{D:PTstab}\eqref{D:PTstab1}), and then the statement reduces to the equality $|\Pic^0(G)|=c(G)$, which follows from the classical Kirchhoff's (matrix-tree) theorem.
\item If $\D(\P)=\BCon(\Gamma_X)$ then $\P(G)$ is in bijection with the outdedgree sequences of $G$ (by Theorem~\ref{T:V=PT}\eqref{T:V=PT2}, Proposition~\ref{P:maxdegcJ} and Corollary~\ref{C:posdeg}) which is known to be equal to $c_{\BCon(\Gamma_X)}(G)$, i.e. the number of spanning forests of $G$ (see \cite[Prop.~40]{Ber} and the references therein).
\end{enumerate}
\end{remark}

\begin{proof}
We show the first claim by providing an injection from $\P_\D(G)$ to the set of $\D$-admissible spanning forests $\S\F_\D(G)$ of $G$, for each $G\leq \Gamma$ such that $\P_\D(G)\neq\emptyset$. Without loss of generality, we assume $G=\Gamma$.

Consider the full subposet $S(\P):=S_\Gamma(\P)$ of $\S\S_{\D}(\Gamma)$, as in Notation~\ref{N:NotationPT}(\ref{N:subposet}), whose objects are subgraphs $G\leq \Gamma$ such that:
\begin{itemize}
    \item there exists an ordered partition $W_\bullet$ of $V(\Gamma)$ with $G=\Gamma\setminus E(W_\bullet)$;
    \item $\P(G)\neq\emptyset$.
\end{itemize} 
In particular for $G,G'$ objects of $S(\P)$, we have $G\leq G'$ if there exists an ordered partition $W_\bullet$ of $V(G')$ such that $G=G'\setminus E(W_\bullet)$.

We use a total order as given in Definition~\ref{D:totalorderGamma} on $\Div^d(\Gamma)$ restricted to $\P(\Gamma)$ to define a map $K_\Gamma:\P(\Gamma)\to\S\F_\D(\Gamma)$. For each graph $G$ in $S(\P)$, let $\delta_G$ be as in Remark~\ref{R:DegreemSS}.
By the Kirchhoff theorem, we have $|\textup{Pic}^{\delta_G}(G)|=c(G)$ where, if $G=\bigsqcup_i G_i$ is the decomposition into connected components, $c(G)=\prod_ic(G_i)$. Hence, we can choose a bijection $k'_G:\textup{Pic}^{\delta_G}\to\S\F_{\textup{con}}(G)$, where $\S\F_{\textup{con}}(G)$ denotes the set of spanning forests of $G$ whose connected components coincide with those of $G$.

Consider the natural map $\pi_G:\P(G)\to \textup{Pic}^{\delta_G}(G)$.
For each $[D]\in\textup{Pic}^{\delta_G}(G)$ such that $\pi_G^{-1}([D])\neq\emptyset$, let $\tilde{D}=\textup{max}\{D\in\pi_G^{-1}([D])\}$, and define $k_G(\tilde{D}):=k'_G([D])\in\S\F_{\textup{con}}(G)$.

For each object $G$ of $S(\P)$, we define a map $K_G$ extending $k_G$ to all $\P(G)$, by poset induction on $S(\P)$.
If $G$ is minimal in $S(\P)$, it follows from Definition~\ref{D:PTstab}(\ref{D:PTstabProp2}) and Remark~\ref{R:DegreemSS}, that $\pi_G:\P(G)\to \textup{Pic}^{\delta_G}(G)$ is injective, hence $K_G:=k_G$ is defined on $\P(G)$. 
    
Let $G$ be any graph in $S(\P)$, and assume $K_{G'}$ is defined for all $G'<G$ with $G'$ an object of $S(\P)$. Let $D\in\P(G)$. If $D$ is maximal in $\pi_G^{-1}([D])$, then set $K_G(D):=k_G(D)$. Otherwise, there exists a maximal divisor $\tilde{D}\in\pi_G^{-1}([D])$ and an ordered partition $V_\bullet$ such that $(G\setminus E(V_\bullet),D-D(\O(V_\bullet)))\in\P$ and $\tilde D=D-D(\O(V_\bullet))+D(\O(\overline{V}_\bullet))$. By the induction hypothesis, $K_{G\setminus{E(V_\bullet)}}$ is defined, and we set $K_G(D):=K_{G\setminus E(V_\bullet)}(D-D(\O(V_\bullet)))$.

    \vspace{0.1cm}

    Now, we show that the map $K_\Gamma$ is a bijection on its image. We do so by providing an inverse $H_\Gamma:\Im(K_\Gamma)\to\P(\Gamma)$.
    We observe that, from the construction of $K_\Gamma$, for each $D\in\P(\Gamma)$, there exists a unique sequence $V_{\bullet,\bullet}=(V_{\bullet,1},\ldots,V_{\bullet,t(D)})$ of ordered partitions of $V(\Gamma)$ (for some integer $t(D)$) such that the following hold: $$D-\sum_{i=1}^{t(D)}D(\O(V_{\bullet,i})) \textup{ is maximal in } \pi^{-1}_{\Gamma\setminus\bigsqcup_iE(V_{\bullet,i})}([D-\sum_{i=1}^{t(D)}D(\O(V_{\bullet,i}))])$$ and $$K_G(D)=K_{\Gamma\setminus\bigsqcup_iE(V_{\bullet,i})}(D-\sum_{i=1}^{t(D)}D(\O(V_{\bullet,i}))).$$

    We construct the map $H_\Gamma:\Im(K_\Gamma)\to\P(\Gamma)$ as follows.
    For each forest $F$ in $\S\F_\D(\Gamma)$, there exists at most one graph $G\in S(\P)$ such that $F\in\Im(K_G)$. Let $D_0= K_G^{-1}(F)$ be the unique maximal divisor in $\pi_G^{-1}([D_0])$, and let $V_{\bullet,\bullet}$ be the sequence of ordered partitions of $V(\Gamma)$ associated to $D_0$ by Lemma~\ref{L:Vbulletbullet} below (with $G_0=G \in S(\P)$). 
    We define $$H_\Gamma(F):=D_0+\sum_{i=1}^t D(\O(V_{\bullet,i})),$$
    for some $t$, and it is clear that $K_\Gamma$ and $H_\Gamma$ are mutual inverses. This completes the proof of Part~1.

    \vspace{0.1cm}

    We now prove Part~2. If we further assume that the map $\pi_G:\P_\D(G)\to \textup{Pic}^{\delta_G}(G)$ is surjective for every $G\in\S\S_\D(\Gamma)$, then the map $K_\Gamma$ is surjective on $\S\S_\D(\Gamma)$,  with inverse $H_\Gamma$. 
\end{proof}

\begin{lemma}\label{L:Vbulletbullet}
    Let $\Gamma$ be a connected graph, let $\D\in\Deg(\Gamma)$, and let $\leq$ be a total order on its vertices as defined in Definition~\ref{D:totalorderGamma}. Let $\P$ be an upper subset of $\OO^d_\D(\Gamma)$ satisfying Property~\ref{D:PTstab}(\ref{D:PTstabProp2}). Then, for each subgraph $G_0=\Gamma\setminus E(W_\bullet)\leq\Gamma$, for some ordered partition $W_\bullet$, and each divisor $D_0\in\P_\D(G_0)$, there exist a unique integer $k$ and a sequence of ordered partitions $V_{\bullet,\bullet}=(V_{\bullet,1},\ldots,V_{\bullet,k})$ such that the following hold for each $i=1,\ldots,k$:
    \begin{itemize}
        \item $D_i:=D_{i-1}+D(\O(V_{\bullet,i}))\in\P(G_i)$, where $G_i:=G_{i-1}\sqcup E(V_{\bullet,i})$;\\
        \item $D_{i-1}+D(\O(\overline{V}_{\bullet,i}))$ is maximal in $\pi_{G_i}^{-1}([D_i])$;\\
        \item $D_k\in\P(\Gamma)$.
    \end{itemize}
    
\end{lemma}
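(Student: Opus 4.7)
The plan is to proceed by induction on $N := |E(\Gamma)\setminus E(G_0)|$, the number of edges of $\Gamma$ missing from $G_0$. The base case $N = 0$ forces $G_0 = \Gamma$ and $D_0 \in \P(\Gamma)$, in which case the empty sequence ($k = 0$) is vacuously the unique solution; any nontrivial sequence would require $G_k \supsetneq \Gamma$, which is impossible.

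For the inductive step ($N > 0$), I would construct the opening ordered partition $V_{\bullet,1}$ canonically via a maximum principle. Consider the set
$$
T := \left\{(V_\bullet, \tilde D) : \tilde D = D_0 + D(\O(\overline V_\bullet)) \in \P(G_0 \sqcup E_\Gamma(V_\bullet))\right\},
$$
where $V_\bullet$ ranges over ordered partitions of $V(\Gamma)$ whose parts are disjoint unions of elements of $\D$, with $E_\Gamma(V_\bullet) \cap E(G_0) = \emptyset$ and $E_\Gamma(V_\bullet) \neq \emptyset$. Upper-closure of $\P$ ensures $T$ is finite, and nonemptiness will follow from applying Property~\ref{D:PTstab}(\ref{D:PTstabProp2}) to any lift of $D_0$ to $\P(\Gamma)$ (which exists by upper-closure along an arbitrary partial orientation of $E(\Gamma) \setminus E(G_0)$). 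Using the total order on $\Div^d(\Gamma)$ coming from Definition~\ref{D:totalorderGamma}, select the pair $(V_{\bullet,1}, \tilde D^*)$ maximizing $\tilde D$, and set $G_1 := G_0 \sqcup E_\Gamma(V_{\bullet,1})$ and $D_1 := D_0 + D(\O(V_{\bullet,1}))$. Lemma~\ref{L:DO-div} will guarantee that $\tilde D^*$ and $D_1$ lie in the same class of $\Pic(G_1)$, and $D_1 \in \P(G_1)$ by upper-closure. The maximality of $\tilde D^*$ in $T$, combined with upper-closure of $\P$, then yields the required maximality of $\tilde D^*$ inside $\pi_{G_1}^{-1}([D_1]) \cap \P(G_1)$: any strictly larger representative of this fiber would, via Property~\ref{D:PTstab}(\ref{D:PTstabProp2}), furnish a competing member of $T$ exceeding $\tilde D^*$, a contradiction.

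Once $(V_{\bullet,1}, G_1, D_1)$ is fixed, the inductive hypothesis applied to $(G_1, D_1)$ — noting $|E(\Gamma) \setminus E(G_1)| < N$ — supplies a unique tail $(V_{\bullet,2}, \ldots, V_{\bullet,k})$ carrying $D_1$ up to some $D_k \in \P(\Gamma)$. Concatenation yields the full sequence, and uniqueness propagates from the canonical choice of $(V_{\bullet,1}, \tilde D^*)$ via the total order together with the inductive uniqueness on the tail; any residual ambiguity in the partition $V_{\bullet,1}$ realizing $\tilde D^*$ is resolved by the rigidity statement of Lemma~\ref{L:DO-div} (which determines the partition up to the layering of the associated principal function) refined by the vertex order of Definition~\ref{D:totalorderGamma}.

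The main obstacle I anticipate is showing that the maximizing pair $(V_{\bullet,1}, \tilde D^*)$ produced by the total order has parts genuinely in $\D$ (so that Property~\ref{D:PTstab}(\ref{D:PTstabProp2}) truly applies) and effects a \emph{strict} enlargement $G_1 \supsetneq G_0$ (otherwise the induction stalls). Both points should follow from applying Property~\ref{D:PTstab}(\ref{D:PTstabProp2}) to a carefully chosen lift of $D_0$ to $\P(\Gamma)$ and then tracking the resulting partition back through the poset using Lemma~\ref{L:DO-div}, but executing this compatibility argument — and in particular ruling out the trivial partition when $G_0 \neq \Gamma$ — will require the most care.
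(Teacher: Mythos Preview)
Your greedy--inductive scheme is quite different from the paper's proof, which is explicitly constructive and leans on the \emph{depth} of edges built into Definition~\ref{D:totalorderGamma}. The paper first reduces to the case where $G_0$ is totally disconnected (by contracting $E(G_0)$ and passing to the induced order of Definition~\ref{D:totalordinduced}); in that case it simply declares $V_{\bullet,i}$ to be the ordered partition cut out by the edges of depth $k-i+1$, so the sequence processes edges from greatest depth down to least. Maximality and uniqueness are then verified by two direct observations: if a shallower edge is added before a deeper one, an explicit chip-firing along the level set $\overline V_t$ produces a strictly larger divisor in the same fiber; and edges of distinct depths cannot occupy the same step, since the resulting $D_{i-1}+D(\O(\overline V_{\bullet,i}))$ would land outside $\pi_{G_i}^{-1}([D_i])$.

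Your proposal has two genuine gaps that I do not see how to close without bringing the depth structure back in. First, maximality of $\tilde D^*$ over your set $T$ does not give maximality inside $\pi_{G_1}^{-1}([D_1])\cap\P(G_1)$. If some $D'\in\P(G_1)$ with $[D']=[D_1]$ satisfies $D'>\tilde D^*$, applying Property~\ref{D:PTstab}(\ref{D:PTstabProp2}) on $G_1$ produces a partition $U_\bullet$ whose crossing edges lie in $E(G_1)$ and may well meet $E(G_0)$; there is no reason $D'$ is of the form $D_0+D(\O(\overline{V'_\bullet}))$ for a partition with $E_\Gamma(V'_\bullet)\cap E(G_0)=\emptyset$, so $D'$ need not come from $T$ at all. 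Second, your uniqueness argument only shows that your greedy choice is \emph{one} candidate first step; it does not rule out a different valid first step $(V'_{\bullet,1},G'_1)$ with $E_\Gamma(V'_{\bullet,1})\neq E_\Gamma(V_{\bullet,1})$ that happens to satisfy the fiberwise maximality condition on its own $G'_1$. The paper's argument closes exactly this door by proving that the edge set of each step is forced by depth. Your final paragraph anticipates trouble in a nearby place, but these two issues are the real obstructions.
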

\begin{proof}
    Let $E^{(k)}$ denote the set of edges of $\Gamma$ of  depth $k$. 
    
    We observe that the datum of an ordered partition $V_\bullet$ on a graph $\Gamma$ is equivalent to that of a collection of edges $F\subseteq E(\Gamma)$ such that, if $\Gamma\setminus F=\bigsqcup_i \Gamma_i$ is the decomposition in connected components, for each $e\in F$, there exist two indices $i\neq j$ and two vertices $v\in V(\Gamma_i)$, and $w\in V(\Gamma_j)$ such that $e=\{v,w\}$, together with a total order of the components.

    Firstly, assume that $G_0=\bigsqcup_{v\in V(\Gamma)}\{v\}$ is the decomposition in connected components, i.e. that all the components are singletons, and let $k$ be the maximum depth of an edge $e\in\Gamma$. Let $V_{\bullet,\bullet}$ be such that $V_{\bullet,i}$ is given by $E^{(k-i+1)}$ and the order $\geq$ on the sets of vertices of depth at most $k-i+1$. Indeed, these correspond to the connected components of $G_i\setminus E^{(k-i+1)}$.
    
    Then $V_{\bullet,\bullet}$ satisfies the required conditions:
    \begin{itemize}
        \item For each $i=1,\ldots,k$, the divisor $D_{i-1}+D(\O(\overline{V}_{\bullet,i}))$ is maximal in its equivalence class. Indeed, the only possible chipfiring action could happen along the edges in $E^{(k-i+1)}$, but this would lead to a divisor $D'<D_{i-1}+D(\O(\overline{V}_{\bullet,i}))$, by the orientation imposed by $\overline{V}_{\bullet,i}$ on such edges.\\
        \item The sequence $V_{\bullet,\bullet}$ is the only one satisfying these conditions. Let $e=\{s,t\}\in E(V_{\bullet,i})$ and $e'\in E(V_{\bullet,i'})$, such that $\textup{depth}(e)<\textup{depth}(e')$ and $i<i'$. Then $D_{i'-1}+D(\O(\overline{V}_{\bullet,i'}))$ is not maximal in its equivalence class. In fact, let $V_t$ be the unique element such that $\textup{depth}(V_t)=\textup{depth}(e)$ and $t\in V_t$, and let 
        $$\ov V_t:=\bigcup\{V|V\leq V_t\textup{ and } \textup{depth}(V)=\textup{depth}(e)\}.$$ 
        Then,
    $$
    D_{i'-1}+D(\O(\overline{V}_{\bullet,i'}))<D_{i'-1}+D(\O(\overline{V}_{\bullet,i'}))+\div_{G_{i'}}(\chi_{\ov V_t})\in \P_\D(G_{i'}).
    $$

    Hence, we deduce that, for every pair of edges $e,e'$, if $\textup{depth}(e)<\textup{depth}(e')$, then $e\in E(V_{\bullet,i})$, and $e'\in E(V_{\bullet,i'})$, with $i\geq i'$.

    Similarly, we can see that if $e\in E(V_{\bullet,i})$, then all edges $e'$ with $\textup{depth}(e)=\textup{depth}(e')$ must be in $E(V_{\bullet,i})$ as well. 

    Moreover, notice that, for every $1\leq j<j'\leq k$, there doesn't exist an index $i$ such that all the edges of depth $j$ or $j'$ are in $E(V_{\bullet,i})$, as, in that case, we would have $D_{i-1}+D(\O(\overline{V}_{\bullet,i}))\notin \pi_{G_i}^{-1}([D_i])$. Therefore, we conclude that $V_{\bullet,\bullet}$ is unique.
    \end{itemize}

    \vspace{0.1cm}

    Now, let $G_0=\Gamma\setminus E(W_\bullet)\leq\Gamma$, for some ordered partition $W_\bullet$, be any graph. Let $f:\Gamma\to \Gamma'$ be the morphism that contracts all the edges of $G_0$. Construct $V_{\bullet,\bullet}$ on $G'_0:=f(G_0)$ as above, with respect to the order induced by $f$ on $\Gamma'$, by Definition~\ref{D:totalordinduced}. Then, arguing as above, the preimage $f^{-1}(V_{\bullet,\bullet}):=(f^{-1}(V_{\bullet,1}),\ldots,f^{-1}(V_{\bullet,k}))$ is the unique sequence of ordered partitions satisfying the required conditions.
\end{proof}

\subsection{Equivalence of V-stability conditions and PT-assignments}
\label{Sub:PT=V}

In this subsection we prove in Theorem~\ref{T:V=PT} the equivalence of V-stability conditions defined in Definition~\ref{D:Vstab} and PT-assignments defined in Definition~\ref{D:PTstab}, generalizing the results of \cite[Subsections 1.3,1.4,1.5]{viviani2023new}.

First of all, we prove that the semistable set of a V-stability satisfies Condition (\ref{D:PTstabProp2}) of Definition~\ref{D:PTstab} of a PT-assignment.

\begin{proposition} \label{p:Vstabtopic}
    Let $\n$ be a V-stability condition on a graph $\Gamma$ of degree $d$.
    Let $G=\Gamma\setminus S$ be $\D(n)$-admissible, and consider the composite map 
        \begin{equation*}
            \pi_{G}:\P_\n(G)\subset \Div^{d-|S|}(G)\twoheadrightarrow\Pic^{\delta_G}(G).
        \end{equation*}
        If $D, E\in\P_\n(G)$ are such that $\pi_G(E)=\pi_G(D)$, then there exists an ordered partition $V_\bullet=(V_1,\ldots,V_t)$ of $V(G)$, with $V_i\in\wh \D(\n), \text{ for all } 1\leq i \leq t$, such that
        \begin{equation} \label{uno}
        E=D+D(\O_G(V_\bullet))-D(\O_G(\ov V_\bullet))
        \end{equation}
        and
        \begin{equation} \label{due}
        E_{V_i}+e_S(V_i)=\n_{V_i}+\val_G\left(V_i,\bigcup_{j<i} V_j\right) \text{ for all } 1\leq i \leq t.
        \end{equation}
        
        In particular, the restriction of $\pi_G$ to $\P_\n^{ps}(G)$ is injective.
\end{proposition}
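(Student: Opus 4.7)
Since $\pi_G(D)=\pi_G(E)$, the hypothesis gives $E-D=\div_G(f)$ for some integral $f:V(G)\to\ZZ$, which is unique up to a constant on each connected component of $G$. After normalising $f$ per component, list its distinct values $a_1>\cdots>a_r$ and set $W_j:=f^{-1}(a_j)$, so that $W_\bullet=(W_1,\ldots,W_r)$ is an ordered partition of $V(G)$; this will later be refined to the desired $V_\bullet$.

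The technical heart of the proof is the following Lipschitz property: for every $\{u,v\}\in E(G)$, $|f(u)-f(v)|\leq 1$, and $a_j-a_{j+1}=1$ for all $j$. To prove it, for any upper level set $U=f^{-1}([k,\infty))$, one has
\[
E_U-D_U=\div_G(f)_U=\sum_{\{u,v\}\in E_G(U,U^\mathsf{c}),\,v\in U,\,u\in U^\mathsf{c}}(f(u)-f(v)),
\]
in which every summand is $\leq -1$, and becomes $\leq -2$ in the presence of a jump $\geq 2$. On the other hand, decomposing $U$ and $U^\mathsf{c}$ into their $\Gamma$-connected components and applying the two-sided V-stability bounds of Remark~\ref{R:prop-Pn}(3) componentwise (the pieces landing in $\wh\D(\n)$ thanks to the $\D(\n)$-admissibility of $G$ and the extension formulas of Definition~\ref{D:ext-n}) yields $|E_U-D_U|\leq\val_G(U)$. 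These two inequalities are incompatible in the presence of a jump $\geq 2$. This decomposition step is the main obstacle, since $U$ itself is neither biconnected nor in $\wh\D(\n)$ in general, and the V-stability bounds must be reassembled additively from their behaviour on the biconnected pieces of $U$ and $U^\mathsf{c}$.

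Refine $W_\bullet$ to $V_\bullet=(V_1,\ldots,V_t)$ by splitting each $W_j$ into its $\Gamma$-connected components (ordered arbitrarily within each class, since by Lipschitzness no $G$-edge connects distinct pieces of the same $W_j$). Then $\O_G(V_\bullet)=\O_G(W_\bullet)$, and Lemma~\ref{L:DO-div} together with the unit-jump property gives (\ref{uno}). Each $V_i\in\wh\D(\n)$ because $V_i$ is $\Gamma$-connected and, by Lipschitzness and $\D(\n)$-admissibility of $G$, $V_i^\mathsf{c}$ decomposes in $\Gamma$ into $\D(\n)$-pieces. Finally, (\ref{due}) follows by applying the Lipschitz identity to the upper level set $\bigcup_{k\leq i}V_k$: $E$ achieves its lower bound there, and the chip count of boundary edges produces the exact value $E_{V_i}+e_S(V_i)=\n_{V_i}+\val_G(V_i,\bigcup_{j<i}V_j)$.

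For the final injectivity assertion, let $D_1,D_2\in\P_\n^{ps}(G)$ with $\pi_G(D_1)=\pi_G(D_2)$ and apply the main part to extract $V_\bullet$. Equation~(\ref{due}) at $i=1$ gives $(D_2)_{V_1}+e_S(V_1)=\n_{V_1}$; by duality $D_2$ achieves the upper bound on each $\Gamma$-connected component $Z$ of $\Gamma[V_1^\mathsf{c}]$, which lies in $\D(\n)$. Applying (\ref{due}) to the reversed partition $\ov V_\bullet$ with the roles of $D_1,D_2$ swapped gives symmetrically that $D_1$ achieves the lower bound on each such $Z$, and polystability of $D_1$ then forces $\val_G(Z)=0$; hence $\val_G(V_1)=\sum_Z\val_G(Z)=0$ and $V_1$ is a union of connected components of $G$. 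Restricting to $V(G)\setminus V_1$ via Lemma-Definition~\ref{LD:V-subgr} and Proposition~\ref{P:Pn-rest} and iterating produces $D_1=D_2$.
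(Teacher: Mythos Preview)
Your overall strategy---take $f$ with $E-D=\div_G(f)$, partition by level sets of $f$, and use the two-sided V-stability bounds to constrain the jumps of $f$---is exactly the paper's, and the Lipschitz property you isolate is a clean repackaging of the paper's inequalities. Applied to the $\Gamma$-connected components $C$ of an \emph{upper} level set $U_j=W_1\cup\cdots\cup W_j$, Remark~\ref{R:prop-Pn}\eqref{R:prop-Pn2} (not \eqref{R:prop-Pn3}) gives $E_C-D_C\geq -\val_G(C)+|\{Z\text{ comp.\ of }\Gamma[C^\mathsf{c}]:Z\notin\D(\n)\}|$, while the level-set structure gives $\div_G(f)_C\leq -\val_G(C)$; together these force both the unit-jump property and $C\in\wh\D(\n)$ with $E_C+e_S(C)=\n_C$.

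The gap is the passage from components of upper level sets $U_j$ to components of \emph{individual} level sets $W_j$. Your claim that each $V_i\in\wh\D(\n)$ ``by Lipschitzness and $\D(\n)$-admissibility of $G$'' is not justified: Lipschitzness controls $G$-edges, not $\Gamma$-edges, and the $\Gamma$-components of $V_i^\mathsf{c}$ can mix pieces of $U_{j-1}$, $W_j\setminus V_i$, and $L_{j+1}$ in ways not visibly in $\D(\n)$. For a component $V_i$ of $W_j$ with $j\geq 2$, one has $\div_G(f)_{V_i}=\val_G(V_i,W_{j-1})-\val_G(V_i,W_{j+1})$, which need not be $\leq -\val_G(V_i)$, so your inequality argument does not apply to $V_i$ in $G$. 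Similarly, your derivation of (\ref{due}) invokes the set $\bigcup_{k\leq i}V_k$, but when $V_i$ is only one of several components of $W_j$ this is $U_{j-1}$ together with a proper subset of $W_j$, which is not an upper level set of $f$, so the Lipschitz identity does not apply to it.

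The paper closes this gap by an induction: after handling levels $V_1,\ldots,V_h$, it passes to $G_h=G\setminus\bigsqcup_{k\leq h}E_G(V_k,V_k^\mathsf{c})$ and shows via Lemma~\ref{L:ss-isotr2} that the truncated divisors $D_h,E_h$ remain $\n$-semistable on $G_h$. In $G_h$, the edges from $V_{h+1}$ to earlier levels have been removed, so $V_{h+1}$ is now genuinely the top level and the inequality argument applies to its $\Gamma$-components directly, yielding both $V_{h+1}^i\in\wh\D(\n)$ and the precursor of (\ref{due}). Your approach can be completed, but only by running this same induction.

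A smaller point on the injectivity: your argument mixes up the roles of $D_1$ and $D_2$. The clean route is: from (\ref{due}) at $i=1$ one gets $(D_2)_{V_1}+e_S(V_1)=\n_{V_1}$, hence $(D_2)_{Z_l}+e_S(Z_l)=\n_{Z_l}+\val_G(Z_l)$ for each component $Z_l$ of $\Gamma[V_1^\mathsf{c}]$, hence $(D_2)_{Z_l^\mathsf{c}}+e_S(Z_l^\mathsf{c})=\n_{Z_l^\mathsf{c}}$ with $Z_l^\mathsf{c}\in\D(\n)$; now polystability of $D_2$ (not $D_1$) forces $\val_G(Z_l)=0$, whence $\val_G(V_1)=0$ and one iterates on the complement via Proposition~\ref{P:Pn-rest}.
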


If $\D(n)=\emptyset$, this proposition specialises to \cite[Proposition~1.13(2)]{viviani2023new}.

\begin{proof}
Let $F:V(G)\to \ZZ$ be a map such that $E=D+\div_G(F)$.  

Let $$V_1:=\{v\in V(G):F_v=\textup{max}_{w\in V(\Gamma)}F_w=:M_1\}\subseteq V(G).$$
(Note that if $V_1= V(G)$, then $E=D$ and the statement holds trivially with $V_\bullet=(V(G))$. This happens if $\D(\n) = \emptyset$.)

For all $h\geq 1$ such that $\bigcup_{i=1}^h V_i \subsetneq V(G)$, for convenience we set 
 $$V_{h+1}:=\Big\{v\in V(G):F_v=\textup{max}_{w\in V(G)\setminus\bigcup_{i=1}^h V_i}F_w=:M_{h+1}\Big\}\subseteq V(G)\setminus\bigcup_{i=1}^h V_i.$$
For each $1\leq h\leq |\Im(F)|$, let $V_h^i\subseteq V_h$, for $i=1,\ldots,l_h$ be such that $\Gamma[V_h]=\bigsqcup_{i=1}^{l_h}\Gamma[V_h^i]$ is the decomposition into connected components. We also define \[G_{h}:=G\setminus \bigsqcup_{j\leq h}E(V_j), \quad  D_h:=D-\sum_{j\leq h} D(\O(V_j)), \quad E_h:=D_h+\div_{G_h}(F),\] for every $1\leq h\leq |\Im(F)|$.

Before we start our proof, we observe the following. Firstly, since $F_v\leq M_{h+1}$ for each $v\in V(G)\setminus \bigsqcup_{i=1}^{h}V_i$ and $\val_{G_h}(V_{h+1}^i,(V_{h+1}^i)^\mathsf{c}\setminus V_{h+1}^\mathsf{c})=\val_{G_h}(V_{h+1}^i,V_j)=0$, for every $j<h$ and $1\leq i\leq l_h$, we have:
\begin{align}
    \div_{G_h}(F)_{V_{h+1}^i}&=\sum_{v\in V_{h+1}^i}\left[ -F_v\val_{G_h}(v)+\sum_{v\neq w\in V_{h+1}^i}F_w\val_{G_h}(v,w)+\sum_{w\in (V_{h+1}^i)^\mathsf{c}}F_w\val_{G_h}(v,w)\right]=\notag\\
    &=\sum_{v\in V_{h+1}^i}\left[ -M_{h+1}\val_{G_h}(v)+M_{h+1}\val_{G_h}(v,V_{h+1}^i\setminus\{v\})+\sum_{w\in V_{h+1}^\mathsf{c}}F_w\val_{G_h}(v,w)\right]\leq\notag\\
    &
    \leq \sum_{v\in V_{h+1}^i}\left[ -M_{h+1}\val_{G_h}(v)+M_{h+1}\val_{G_h}(v,V_{h+1}^i\setminus\{v\})+(M_{h+1}-1)\val_{G_h}(v,V_{h+1}^\mathsf{c})\right]=\notag\\
    &=\sum_{v\in V_{h+1}^i}\left[M_{h+1}\val_{G_h}(v,(V_{h+1}^i)^\mathsf{c}+(M_{h+1}-1)\val_{G_h}(v,V_{h+1}^\mathsf{c})\right]=\notag \\
    &\label{E:Vcard0} = -\val_{G_h}(V_{h+1}^i,V_{h+1}^\mathsf{c})=-\val_{G_h}(V_{h+1}^i).
\end{align}

Secondly, for all $W\in\Con(G)$, denote by $G[W^\mathsf{c}]=\bigsqcup_{i=1}^kG[Z_i]$ the decomposition into connected components of $G[W^\mathsf{c}]$.
By applying Remark~\hyperref[R:prop-Pn2]{\ref*{R:prop-Pn}(\ref*{R:prop-Pn2})} to both $D$ and $E$, we obtain:
\begin{equation}\label{E:Vcard1}
    |\{Z_i: \: Z_i\not \in \D(\n)\}|-\val_G(W)\leq\div_G(F)_W.
\end{equation}

We will now prove the following claims by induction on $1\leq h\leq |\Im(F)|$:
\begin{itemize}
    \item every $V_h^i$ belongs in $\wh\D(\n)$,
    \item the equation  
    $$E_{V_h^i}+e_S(V_h^i)=\n_{V^i_h}+\val_G\left(V_h^i,\bigcup_{j<h} V_j\right)$$
    is satisfied for each $1\leq i \leq l_h$,
    \item the divisors $D_h,E_h$ belong in $\P_\n(G_h)$.
\end{itemize}

Let $h=1$. 
The combination of \eqref{E:Vcard1} applied to $W=V_1^i$ together with \eqref{E:Vcard0} shows that $V_1^i\in\wh\D(\n)$ and that 
$$
\div_G(F)_{V_1^i}=\val_G(V_1^i)
$$
for all $1 \leq i \leq l_1$. Moreover, since 
$$
E_{V_1^i}=(D+\div_G(F))_{V_1^i}=D_{V_1^i}-\val_G(V_1^i)
$$
and both $D$ and $E$ belong to $\P_\n(G)$, we deduce that 
\begin{equation}\label{E:Vcard2}
\begin{cases}
    D_{V_1^i}+e_S(V_1^i)=\n_{V_1^i}+\val_G(V_1^i),\\
    E_{V_1^i}+e_S(V_1^i)=\n_{V_1^i},
\end{cases}    
\end{equation}
for all $1\leq i\leq l_1$. In particular, each $V_1^i$ satisfies \eqref{due}.

Finally, since $V_1^i\in\wh\D(\n)$ for each $i$, \eqref{E:Vcard2} implies that $D_1,E_1\in\P_\n(G_1)$.
Indeed, let $W^1_1,\ldots,W^1_{k_1}$ be such that $G[V_1^\mathsf{c}]=\bigsqcup_{i=1}^{k_1}G[W^1_i]$ is the decomposition in connected components, and let $$W_\bullet^1:=(W_1,\ldots,W_{k_1},V_1^1,\ldots,V_1^{l_1})$$ be an ordered partition. Then, applying Lemma~\ref{L:ss-isotr2} to $D\geq_{W_\bullet^1}D_1$ and $E\geq_{\ov W_\bullet^1}E_1$ respectively, we obtain the claims for $h=1$.

Assume now, that the claims hold for every $1\leq j\leq h$. By the induction hypothesis, we have $D_h,E_h\in\P_\n(G_h)$.  The combination of \eqref{E:Vcard1} applied to $W=V_{h+1}^i$ together with \eqref{E:Vcard0} shows that $V_{h+1}^i\in\wh\D(\n)$ and that 
$$
\div_{G_h}(F)_{V_{h+1}^i}=\val_{G_h}(V_{h+1}^i).
$$
for all $1 \leq i \leq l_{h+1}$. Moreover, since 
$$
(E_h)_{V_{h+1}^i}=(D_h+\div_{G_h}(F))_{V_{h+1}^i}=(D_h)_{V_{h+1}^i}-\val_{G_h}(V_{h+1}^i)
$$
and both $D_h$ and $E_h$ belong to $\P_\n(G_h)$, we deduce that 
\begin{equation}\label{E:Vcard4}
\begin{cases}
    D_{V_{h+1}^i}+e_{\Gamma\setminus G_h}(V_{h+1}^i)=\n_{V_{h+1}^i}+\val_{G_h}(V_{h+1}^i),\\
    E_{V_{h+1}^i}+e_{\Gamma\setminus G_h}(V_{h+1}^i)=\n_{V_{h+1}^i},
\end{cases}    
\end{equation}
for all $1\leq i\leq l_{h+1}$. By iterating the second equation for every $j\leq h+1$, we obtain

$$
E_{V_{h+1}^i}+e_S(V_{h+1}^i)=\n_{V_{h+1}^i}+\val_G\left(V_{h+1}^i,\bigcup_{j<h+1} V_j\right),
$$
that is, each $V_{h+1}^i$ satisfy \eqref{due}.

As in the previous case, let $W^{h+1}_1,\ldots,W^{h+1}_{k_{h+1}}$ be such that $$G_h[V_{h+1}^\mathsf{c}]=\bigsqcup_{i=1}^{k_{h+1}}G_h[W^{h+1}_i]$$ is the decomposition in connected components, and let $$W_\bullet^{h+1}:=(W^{h+1}_1,\ldots,W^{h+1}_{k_{h+1}},V_{h+1}^1,\ldots,V_{h+1}^{l_{h+1}})$$ be an ordered partition. Then, Lemma~\ref{L:ss-isotr2}, applied to $(G_h,D_h)\geq_{W_\bullet^{h+1}}(G_{h+1},D_{h+1})$ and $(G_h,E_h)\geq_{\ov W_\bullet^{h+1}}(G_{h+1},E_{h+1})$ respectively, shows that $D_{h+1},E_{h+1}\in\P_\n(G_{h+1})$, by \eqref{E:Vcard4} and the fact that $V_{h+1}^i\in\wh\D(\n)$ for each $1\leq i\leq l_{h+1}$. This concludes the proof of the claims by induction.

Finally, we claim that the ordered partition
$$V_\bullet:=(V_1^1,\ldots,V_1^{l_1},V_2^1,\ldots,V_2^{l_2}, \ldots, \ldots, V_{\Im(F)}^1, \ldots, V_{|\Im(F)|}^{l_{|\Im(F)|}})$$
is given by sets in $\wh\D(\n)$ and satisfies Properties \eqref{uno} and \eqref{due}.
Given what was shown by the induction, it remains to check Property \eqref{uno}. Indeed, this immediately follows from the definition of $V_h^i$, for all $1\leq h\leq |\Im(F)|$, and all $1\leq i\leq l_h$.

\end{proof}

\begin{corollary}\label{c:cardVstab}
    The $\n$-semistable set $\P_\n$ associated to a V-stability condition $\n$ on a graph $\Gamma$ satisfies the condition of Definition~\hyperref[D:PTstabProp2]{\ref*{D:PTstab}(\ref*{D:PTstabProp2})}. Moreover, $|\P_\n(G)|\leq c_{\D(\n)}(G)$, for each $G\in\S\S_{\D(\n)}(\Gamma)$.
\end{corollary}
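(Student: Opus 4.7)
The plan is to obtain the corollary as a direct consequence of Proposition~\ref{p:Vstabtopic}, which we have just established, and of the earlier Proposition~\ref{P:cardPTstab}.

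For the first claim—that $\P_\n$ satisfies the condition of Definition~\ref{D:PTstab}(\ref{D:PTstabProp2})—I will take any $G\in\S\S_{\D(\n)}(\Gamma)$ and any $D_1,D_2\in\P_\n(G)$ with $\pi_G(D_1)=\pi_G(D_2)$, and feed them into Proposition~\ref{p:Vstabtopic} (with $E:=D_1$ and $D:=D_2$). The proposition returns an ordered partition $V_\bullet=(V_1,\ldots,V_t)$ of $V(G)=V(\Gamma)$ with each $V_i\in\wh\D(\n)$, and equation (\ref{uno}) there rearranges at once into the required identity
\[D_1-D(\O_G(V_\bullet))=D_2-D(\O_G(\ov V_\bullet)).\]
The compatibility of each $V_i\in\wh\D(\n)$ with the phrase ``disjoint union of elements of $\D(\n)$'' in Definition~\ref{D:PTstab}(\ref{D:PTstabProp2}) is an unpacking of Definition~\ref{D:ext-n}: either $V_i\in\BCon(\Gamma)$, in which case $V_i\in\D(\n)$ since $\D(\n)=\wh\D(\n)\cap\BCon(\Gamma)$, or $V_i=V(\Gamma)$ (trivial case), or $V_i$ is connected with $\Gamma[V_i^{\mathsf c}]$ decomposing into components in $\D(\n)$.

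What remains in this first step is to check that the common divisor $D_1-D(\O_G(V_\bullet))$ lies in $\P_\n(G\setminus E_G(V_\bullet))$. Here I rely on the internal induction carried out in the proof of Proposition~\ref{p:Vstabtopic}: at the final stage $h=t$ one has exactly $G_t=G\setminus E_G(V_\bullet)$ and $E_t=D_1-D(\O_G(V_\bullet))$, and the induction hypothesis yields $E_t\in\P_\n(G_t)$ directly.

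For the cardinality bound $|\P_\n(G)|\le c_{\D(\n)}(G)$, I will simply invoke Proposition~\ref{P:cardPTstab}(\ref{P:cardPTstableq}). Proposition~\ref{P:n-ss} has already ensured that $\P_\n$ is a finite upper subset of $\OO^d(\Gamma)$ contained in $\OO^d_{\D(\n)}(\Gamma)$ (via Remark~\ref{R:Dadm}(3)), and the first part of the corollary supplies the hypothesis Definition~\ref{D:PTstab}(\ref{D:PTstabProp2}) for \emph{every} $G\in\S\S_{\D(\n)}(\Gamma)$. The bound then drops out.

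I do not expect any real obstacle: the proof is essentially a bookkeeping exercise combining the two preceding propositions. The only slightly delicate point is matching the wording of Definition~\ref{D:PTstab}(\ref{D:PTstabProp2}) to the $\wh\D(\n)$-valued partitions produced by Proposition~\ref{p:Vstabtopic}, and this is handled by the brief case analysis indicated above.
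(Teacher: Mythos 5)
Your proof is correct and takes essentially the same route as the paper: the first claim comes from Proposition~\ref{p:Vstabtopic}, with the membership $D_1-D(\O_G(V_\bullet))\in\P_\n(G\setminus E_G(V_\bullet))$ supplied in the paper by Lemma~\ref{L:ss-isotr2} applied to condition \eqref{due} --- which is exactly the fact you extract from the internal induction in the proof of Proposition~\ref{p:Vstabtopic} --- and the cardinality bound follows from Proposition~\ref{P:cardPTstab}(\ref{P:cardPTstableq}) just as you argue. The only cosmetic difference is that citing the statement of Lemma~\ref{L:ss-isotr2} is slightly cleaner than invoking the internal steps of another proof, and your reconciliation of the $\wh\D(\n)$-valued parts with the wording of Definition~\ref{D:PTstab}(\ref{D:PTstabProp2}) is no less careful than what the paper itself does.
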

\begin{proof}
    The first claim follows from the combination of Proposition~\ref{p:Vstabtopic} and Lemma~\ref{L:ss-isotr2}. The second claim follows from the first and Proposition~\ref{P:cardPTstab}(\ref{P:cardPTstableq}).   
\end{proof}

\begin{lemma}\label{L:ss-isotr2}
    For any V-stability $\n$ on $\Gamma$ of degree $d$ and any $(G=\Gamma \setminus S,D)\in \P_\n$. Let $W_\bullet=(W_0,\ldots, W_q)$ be an ordered partition of $V(\Gamma)$ such that $W_i\in \Con(G)$ for each $i$, and define 
    $$(G=\Gamma\setminus S,D)\geq_{\O_G(W_\bullet)} (\ov G=\Gamma\setminus \ov S, \ov D).$$
    Then we have that 
     $$
    (\ov G,\ov D)\in \P_\n \Leftrightarrow 
    \begin{sis} 
    &  W_i\in \wh D(\n) \:  \text{ and } \\  
    & D_{W_i}+e_S(W_i)=\n_{W_i}+\val_G\left(W_i,\bigcup_{j<i} W_j\right) \text{ for each } 0\leq i \leq q.
    \end{sis}
    $$
\end{lemma}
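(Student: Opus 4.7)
The plan is to unpack the setup first, dispatch the forward implication quickly, and isolate the reverse implication as the technical core. From $(G,D)\geq_{\O_G(W_\bullet)}(\ov G,\ov D)$ I read off $\ov S=S\cup E_G(W_\bullet)$ and $\ov D=D-D(\O_G(W_\bullet))$. Because each $W_i\in\Con(G)$ and $E_G(W_\bullet)$ is exactly the set of $G$-edges crossing the blocks of $W_\bullet$, the connected components of $\ov G$ are precisely the induced subgraphs $G[W_i]$. Two bookkeeping identities will be used repeatedly: $e_{\ov S}(W_i)=e_S(W_i)$, since the extra removed edges never lie inside $\Gamma[W_i]$, and $D(\O_G(W_\bullet))_{W_i}=\val_G(W_i,\bigcup_{j<i} W_j)$, since only edges from strictly earlier blocks are oriented into $W_i$. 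Together these give
\[
\ov D_{W_i}+e_{\ov S}(W_i)=D_{W_i}+e_S(W_i)-\val_G\bigl(W_i,\bigcup_{j<i} W_j\bigr).
\]

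For the forward direction, Proposition~\ref{P:n-ss}\eqref{P:n-ss2} yields $\P_\n\subseteq\OO^d_\n(\Gamma)$, so membership $(\ov G,\ov D)\in\P_\n$ forces $\ov G$ to be $\n$-admissible (equivalently, each $W_i\in\wh\D(\n)$) and $\ov D_{W_i}+e_{\ov S}(W_i)=\n_{W_i}$ on each component. Substituting the identity above gives the equality $D_{W_i}+e_S(W_i)=\n_{W_i}+\val_G(W_i,\bigcup_{j<i} W_j)$ on the nose.

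For the reverse direction, the assumption that each $W_i\in\wh\D(\n)$ makes $\ov G$ an $\n$-admissible spanning subgraph whose connected components are the $G[W_i]$, so Proposition~\ref{P:Pn-rest} reduces the goal $(\ov G,\ov D)\in\P_\n$ to verifying $(G[W_i],\ov D|_{W_i})\in\P_{\n(G[W_i])}$ for each $i$. Through Lemma-Definition~\ref{LD:V-subgr}, the latter unfolds into the inequality $D_Z+e_S(Z)\geq\n_Z+\val_G(Z,\bigcup_{j<i} W_j)$ for every $Z\in\BCon(G[W_i])$. To obtain this strengthening of the semistability $(G,D)\in\P_\n$, I plan to use the equality at the full $W_i$ to rewrite $D_Z+e_S(Z)$ in terms of $D_{W_i\setminus Z}+e_S(W_i\setminus Z)$, and then apply the upper bound of Remark~\ref{R:prop-Pn}\eqref{R:prop-Pn2} to $W_i\setminus Z$ (which is connected in $G$ because $Z$ is biconnected in $G[W_i]$). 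The additivity of the extended V-function on $\wh\D(\n)$ (Lemma~\ref{L:add-whn}) then converts the resulting inequality into one involving $\n_Z$.

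The main obstacle is exactly this last bookkeeping step in the reverse direction: the upper bound of Remark~\ref{R:prop-Pn}\eqref{R:prop-Pn2} carries $+1$ corrections for each $\n$-nondegenerate connected component of $\Gamma[(W_i\setminus Z)^\mathsf{c}]$, and these must be matched against the contributions from Lemma~\ref{L:add-whn} so that the right-hand side is exactly $\n_Z+\val_G(Z,\bigcup_{j<i} W_j)$. I expect to handle this by decomposing $\Gamma[(W_i\setminus Z)^\mathsf{c}]$ into its connected components, treating separately the component containing $Z$ (where the sub-case depends on whether or not $Z\in\wh\D(\n)$) and those contained in $W_i^\mathsf{c}$, so that the extra $+1$ terms cancel precisely with the gap between $\n_Z$ and the additive expression in the $\n_{W_j}$'s.
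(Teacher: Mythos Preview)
Your forward implication is correct and is exactly what the paper does: Proposition~\ref{P:n-ss}\eqref{P:n-ss2} gives $W_i\in\wh\D(\n)$ and $\ov D_{W_i}+e_{\ov S}(W_i)=\n_{W_i}$, and your bookkeeping identity converts this into the displayed equality.

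For the reverse implication, however, your direct strategy has a genuine gap. After reducing via Proposition~\ref{P:Pn-rest}, you need $D_Z+e_S(Z)\geq \n_Z+\val_G\bigl(Z,\bigcup_{j<i}W_j\bigr)$ for every $Z\in\BCon(G[W_i])$. Your plan is to combine the equality at $W_i$ with the upper bound of Remark~\ref{R:prop-Pn}\eqref{R:prop-Pn2} at $W_i\setminus Z$. Carrying this through, the upper bound contributes a term $\val_G(W_i\setminus Z)$, which decomposes as $\val_G(W_i\setminus Z,Z)+\val_G\bigl(W_i\setminus Z,\bigcup_{j<i}W_j\bigr)+\val_G\bigl(W_i\setminus Z,\bigcup_{j>i}W_j\bigr)$. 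The first two pieces cancel as you expect, but the third piece, $\val_G\bigl(W_i\setminus Z,\bigcup_{j>i}W_j\bigr)$, survives with the wrong sign and is not controlled by the equality at $W_i$. Concretely, on the path $v_0\text{--}v_1\text{--}v_1'\text{--}v_2$ with $W_\bullet=(\{v_0\},\{v_1,v_1'\},\{v_2\})$, a V-stability with $\{v_0\}$ and $\{v_2\}$ degenerate but $\{v_0,v_1\}$ nondegenerate, and $Z=\{v_1\}$, your bound yields only $D_{\{v_1\}}\geq \n_{\{v_1\}}$, whereas the statement requires $D_{\{v_1\}}\geq \n_{\{v_1\}}+1$. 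The missing unit comes from the edge $v_1'\text{--}v_2$, i.e.\ from information at the \emph{later} block $W_2$, which your argument never invokes.

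The paper circumvents this by induction on $q$: it peels off $W_0$ first, passes to the intermediate element $(\wt G,\wt D)$ with $(G,D)\geq_{\O_G((W_0,W_0^{\mathsf c}))}(\wt G,\wt D)$, and proves $(\wt G,\wt D)\in\P_\n$ by reducing (via Proposition~\ref{P:Pn-rest}) to $G=\Gamma$ and establishing the key equalities $D_{Z_i}=\n_{Z_i}+\val(Z_i)$ on the connected components $Z_i$ of $\Gamma[W_0^{\mathsf c}]$. These equalities already absorb the contribution from all later blocks, so the semistability check on each $Z_i$ succeeds. The induction hypothesis then finishes the job for the shorter partition $(W_1,\ldots,W_q)$. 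To repair your approach, you would need to use the equalities at \emph{all} $W_j$ with $j>i$ (not just at $W_i$), which effectively reconstitutes the paper's induction.
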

\begin{proof}
    We prove separately the two implications.

    $\un{\Longrightarrow}$: note that $W_i\in \Con(\ov G)$ since $W_i\in \Con(G)$ by assumption and none of the edges in the support of $\O_G(W_\bullet)$ are contained in $G[W_i]$.
    Therefore $\ov G=\coprod_i \ov G[W_i]$ is the decomposition of $\ov G$ into connected components. We can now apply Proposition \ref{P:n-ss}\eqref{P:n-ss2} in order to get (for any $0\leq i \leq q$):
    $$
    \begin{sis}
     & W_i \in \wh \D(\n),\\ 
     & \ov D_{W_i}+e_{\ov S}(W_i)=\n_{W_i}.
    \end{sis}
    $$
    We now conclude by observing that:
    \begin{itemize}
        \item $e_S(W_i)=e_{\ov S}(W_i)$, since $G[W_i]$ does not contain any edge of $\ov S-S=\supp \O_G(W_\bullet)$;
        \item $D_{W_i}=\ov D_{W_i}+\D(\O_G(W_\bullet))_{W_i}=\ov D_{W_i}+\val_G(\bigcup_{j<i} W_j, W_i)$, since all the oriented edges of $\O_G(W_\bullet)$ go from $G[W_j]$ to $G[W_i]$ for $j<i$.
    \end{itemize}

     $\un{\Longleftarrow}$: We will proceed by induction on $q$ (the case $q=0$ being trivial). 

     Consider the element $(\wt G, \wt D)\in \OO^d_\Gamma$ defined by 
     $$
     (G,D)\geq_{\O_G((W_0,W_0^\mathsf{c}))} (\wt G=\Gamma\setminus \wt S, \wt D) \geq_{\O(\wt G, (W_1,\ldots, W_q))} (\ov G,\ov D).
     $$

     \un{Claim:} $(\wt G, \wt D)\in \P_\n$.

     First of all, note that $G=\Gamma\setminus S$ is $\n$-admissible by Proposition \ref{P:n-ss}\eqref{P:n-ss2}. Hence, using Proposition \ref{P:Pn-rest}, it is enough to prove the result for $G=\Gamma$. Set $W:=W_0$. 
    
    Consider the decomposition into connected components $\Gamma[W^\mathsf{c}]=\coprod_i \Gamma[Z_i]$. Since $Z_i\in \D(\n)$ and $(\Gamma,D)\in \P_\n$, Remark \ref{R:prop-Pn}\eqref{R:prop-Pn1} implies that
    $$
    D_{Z_i}\leq \n_{Z_i}+\val(Z_i).
    $$
    Summing over all the $Z_i$, we get 
    \begin{equation}\label{E:ineq-sum}
    d-D_W=\sum_i D_{Z_i}\leq \sum_i[\n_{Z_i}+\val(Z_i)]=d-\n_W,
    \end{equation}
    where in the first equality we used that $\deg(D)=d$ and in the last equality we used Lemma \ref{L:add-whn}. Since $D_W=\n_W$ by assumption, we infer that the first and last term of \eqref{E:ineq-sum} are equal, which then forces (for any $Z_i$)
    \begin{equation}\label{E:equ-Zi}
    D_{Z_i}= \n_{Z_i}+\val(Z_i).
    \end{equation}
    By construction, we have that 
    $$
    \wt G=\Gamma[W] \coprod_i \Gamma[Z_i] \text{ and } \wt D=D_{|_W}+\sum_i [D_{|_{Z_i}}-D(\O(W,Z_i))],
    $$
     where $\O(W,Z_i)$ is the partial orientation of the edges $E(W,Z_i)$ from $W$ to $Z_i$. 
    By our assumption and \eqref{E:equ-Zi}, we get that 
    \begin{equation}\label{E:deg-Dbar}
     \deg \wt D_W=\n_W=|\n(\Gamma[W])| \text{ and } \deg \wt D_{Z_i}=\n_{Z_i}=|\n(\Gamma[Z_i])|.  
    \end{equation}
    Hence, by Proposition \ref{P:Pn-rest}, we have to check that 
    \begin{equation}\label{E:ss-W}
    (\Gamma[W],D_{|_W})\in \P_{\n(\Gamma[W])},    
    \end{equation}
    \begin{equation}\label{E:ss-Zi}
    (\Gamma[Z_i],D_{|_{Z_i}}-D(\O(W,Z_i)))\in \P_{\n(\Gamma[Z_i])},    
    \end{equation}
     First, we observe that\eqref{E:ss-W} follows immediately from Lemma~\ref{LD:V-subgr}. Indeed, for every $V\in \BCon(\Gamma[W])\subseteq\Con(\Gamma)$, we have 
     \begin{equation*}
       (D_{|_W})_V=D_V\geq \n_{V}=\n(\Gamma[W])_V,
     \end{equation*}
     where the inequality is given by Remark~\ref{R:prop-Pn}(\ref{R:prop-Pn2}

    Let us now prove \eqref{E:ss-Zi}. Let $V\in \BCon(Z_i)\subseteq\Con(\Gamma)$. Then, Lemma-Definition~\ref{LD:V-subgr} gives
    \begin{equation*}
    (D_{|_{Z_i}}-D(\O(W,Z_i)))_V=D_V-\val(Z_i^\mathsf{c},V)\geq\n_V=\n(\Gamma[Z_i])_V,
    \end{equation*}
    where the inequality follows from Remark~\ref{R:prop-Pn}(\ref{R:prop-Pn2}, since $(\Gamma,D)$ is $\n$-semistable, and \eqref{E:equ-Zi}.

 \vspace{0.2cm}

We now conclude the proof of the Lemma by applying the induction hypothesis to 
$(\wt G, \wt D) \geq_{\O(\wt G, (W_1,\ldots, W_q))} (\ov G,\ov D)$ and using that
(for any $1\leq i \leq q$)
 \begin{itemize}
     \item $(\wt G, \wt D)\in \P_\n$ by the Claim.
     \item $W_i\in \wh D(\n)$ by hypothesis.
     \item $\wt D_{W_i}+e_{\wt S}(W_i)=D_{W_i}-\val_{G}(W_0,W_i)+e_S(W_i)=\n_{W_i}+\val_G(\bigcup_{1\leq j<i} W_j,W_i)$ by hypothesis.
 \end{itemize}
\end{proof}

We now introduce BD-sets (=generalized break divisors sets), generalizing \cite[Sec. 1.4]{viviani2023new}. They will allow us to bridge between V-stabilities and PT-assignments.

\begin{definition}\label{D:BD-set}
    Let $\Gamma$ be a connected graph and let $\D\in\Deg(\Gamma)$. Consider a $\D$-\emph{forest function of degree $d$}, i.e. a function 
    \begin{equation*}
        I:m\S\S_\D(\Gamma)\to \Div(\Gamma),
    \end{equation*}
    such that $I(F)\in \Div^{d-b_1(\Gamma)-b_0(F)+1}(\Gamma)$ for any $F\in m\S\S_\D(\Gamma)$.
    
    Then the \emph{BD-set} with respect to $I$ is the minimal upper closed subset of $\OO_\D^d(\Gamma)$ that contains $\{I(F)\}_{F\in m\S\S_\D(\Gamma) }$. More explicitly, $BD_I$ is the subset of $\OO^d_\D(\Gamma)$ defined by 
    \begin{equation*}
        BD_I(G):=\bigcup_{\substack{G\geq F\in m\S\S_\D(\Gamma)\\ \supp(\O)=E(G)-E(F)}} I(F)+D(\O)\subset \Div^{d-|E(G)^\mathsf{c}|}(G),
    \end{equation*}
    for any $G\in \S\S_\D(\Gamma)$.
An upper closed subset of the form $BD_I$ for  some $\D$-forest function $I$ is called a BD-set with degeneracy set $\D$.
\end{definition}

We now investigate the behaviour of BD-sets under restriction to 
admissible spanning subgraphs and morphisms that preserve the genus.

Let $G=\Gamma\setminus S$ be a $\D$-admissible spanning subgraph of $\Gamma$. By Lemma~\ref{L:operOD}\eqref{L:operOD2i}, we have an inclusion of posets
$$
\S\S_{\D}(\Gamma\setminus S)\hookrightarrow \S\S_\D(\Gamma),
$$
which restricts to an inclusion of admissible forests
    \begin{equation}\label{EQ:SubforestsInclusion}
        \S\F_{\D}(\Gamma\setminus S)\hookrightarrow \S\F_\D(\Gamma)
    \end{equation}
whose image are the $\D$-admissible spanning forests of $\Gamma$ that do not contain any edges in $S$.
Hence, given a $\D$-forest function $I$ on $\Gamma$, we can restrict it to the minimal elements 
\begin{equation*}
    m\S\S_{\D}(\Gamma\setminus S)\hookrightarrow m\S\S_\D(\Gamma)
\end{equation*}
in order to obtain a function
\begin{equation}\label{e:forestfundel}
    I_{\Gamma\setminus S}:=I_{|_{m\S\S_{\D}}(\Gamma\setminus S)}:m\S\S_{\D}(\Gamma\setminus S)\to \bigsqcup_{t\leq d- |S|-b_1(\Gamma)}\Div^t(\Gamma).
\end{equation}
We denote by $\BD_{I_{\Gamma\setminus S}}$ the smallest upper subset of $\OO_\D^{d-|S|}(\Gamma\setminus S)$ that contains the image of $I_{\Gamma\setminus S}$.

\begin{lemma}\label{L:BDrestr}
With respect to the inclusion $\iota_G:\OO^{d-|S|}(\Gamma\setminus S)\hookrightarrow \OO^d(\Gamma)$ of Lemma \ref{L:operOD}\eqref{L:operOD2ii}, we have $BD_{I_{\Gamma\setminus S}}=BD_I\cap \OO^{d-|S|}_{\D}(\Gamma\setminus S)$.
\end{lemma}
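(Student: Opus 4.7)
The plan is to prove the two inclusions separately, using the explicit formula
\[
BD_I(H) = \bigcup_{\substack{H \geq F \in m\SS_\D(\Gamma) \\ \supp(\O) = E(H) \setminus E(F)}} I(F) + D(\O)
\]
and its analogue for $BD_{I_{\Gamma \setminus S}}$, combined with the key structural observation that the inclusion $\iota_G$ identifies spanning subgraphs of $\Gamma \setminus S$ with those spanning subgraphs of $\Gamma$ that contain no edge of $S$, and preserves the poset order on this subset.

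For the inclusion $BD_{I_{\Gamma \setminus S}} \subseteq BD_I \cap \OO^{d-|S|}_\D(\Gamma \setminus S)$, I would take $(G', D') \in BD_{I_{\Gamma\setminus S}}(G')$ and write $D' = I_{\Gamma\setminus S}(F) + D(\O) = I(F) + D(\O)$ for some minimal $\D$-admissible $F \leq G'$ in $\SS_\D(\Gamma \setminus S)$ with $\supp(\O) = E(G') \setminus E(F)$. Via the inclusion \eqref{EQ:SubforestsInclusion}, $F$ is also a minimal $\D$-admissible spanning subgraph of $\Gamma$, and since $\iota_G$ preserves the partial order, the same data exhibits $(G', D')$ as an element of $BD_I$.

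For the reverse inclusion, I would take $(G', D') \in BD_I \cap \OO^{d-|S|}_\D(\Gamma \setminus S)$. Then $G' \in \SS_\D(\Gamma \setminus S)$, meaning $G'$ contains no edge of $S$. By the explicit description of $BD_I$, we can write $D' = I(F) + D(\O)$ for some $F \in m\SS_\D(\Gamma)$ with $F \leq G'$ and $\supp(\O) = E(G') \setminus E(F)$. Since $E(F) \subseteq E(G')$ and $E(G') \cap S = \emptyset$, the forest $F$ also contains no edge of $S$, hence $F \in \SS_\D(\Gamma \setminus S)$ via \eqref{EQ:SubforestsInclusion}. The key small verification is that $F$ remains minimal after restricting the ambient poset: if $F' < F$ were a smaller $\D$-admissible spanning subgraph of $\Gamma \setminus S$, then $F'$ would also lie in $\SS_\D(\Gamma)$, contradicting the minimality of $F$ in $\SS_\D(\Gamma)$. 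Thus $F \in m\SS_\D(\Gamma \setminus S)$, so $D' = I_{\Gamma \setminus S}(F) + D(\O) \in BD_{I_{\Gamma \setminus S}}(G')$.

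No substantial obstacle is expected; the proof is a bookkeeping argument once one observes that $m\SS_\D(\Gamma \setminus S)$ equals the set of $F \in m\SS_\D(\Gamma)$ having no edge in $S$. The only conceptual point to verify is the compatibility of minimality under restriction, which follows immediately because the posets $\SS_\D(\Gamma \setminus S)$ and $\SS_\D(\Gamma)$ carry the same order on the common subset of forests disjoint from $S$.
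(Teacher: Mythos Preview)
Your proposal is correct and follows essentially the same approach as the paper's proof: both use the explicit formula for $BD_I(G)$ and the observation that an element of $BD_I$ lies in $\OO^{d-|S|}_{\D}(\Gamma\setminus S)$ precisely when the underlying graph $G$ contains no edge of $S$, which forces the minimal forest $F\leq G$ to lie in $m\SS_{\D}(\Gamma\setminus S)$, where $I_{\Gamma\setminus S}(F)=I(F)$. Your write-up is in fact more explicit than the paper's, in that you spell out both inclusions and the minimality check, whereas the paper compresses this into a couple of sentences and implicitly relies on the already-stated inclusion $m\SS_{\D}(\Gamma\setminus S)\hookrightarrow m\SS_{\D}(\Gamma)$.
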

\begin{proof}
    An element $I(F)+D(\O)$ of $BD_I(G)$, with $\O$ a partial orientation of $G$ whose support is $E(G)-E(F)$, belongs to $\OO^{d-|S|}_{\D}(\Gamma\setminus S)$ if and only if $G$ does not contain any of the edges in $S$. Hence, $F\leq G\leq \Gamma\setminus S$, and, in particular, $F$ is a minimal spanning forest of $\Gamma\setminus S$. Thus, we can conclude since $I_{\Gamma\setminus S}(F)=I(F)$.
\end{proof}

Let $f:\Gamma\to \Gamma'$ be a morphism of graphs such that $b_1(\Gamma)=b_1(\Gamma')$ and $\D$ be a degeneracy subset for $\Gamma$. Then, by Lemma~\ref{L:operOD}(\ref{L:operOD1}), there exists an injective pullback map 
    \begin{align}\label{EQ:SubforestsPullback}
        f^*:\S\F_{f_*\D}(\Gamma')&\hookrightarrow \S\F_\D(\Gamma)\\
        \Gamma'\setminus R&\mapsto \Gamma\setminus f^E(R),\notag
    \end{align}
    whose image is given by the $\D$-admissible spanning forests of $\Gamma'$ that contain all the edges contracted by $f$.
    
\begin{lemma-definition}
    Assume that the spanning forest $\Gamma\setminus f^E(S)$ belongs in $\tilde{f}^*(m\S\S_{f_*\D}(\Gamma'))\setminus m\S\S_\D(\Gamma)$. Let $S'$ be the maximal subset of contracted edges such that $\Gamma\setminus (f^E(S)\cup S')$ is still $\D$-admissible. Then $\Gamma\setminus (f^E(S)\cup S')$ is the unique minimal spanning forest of $\Gamma$ such that $\Gamma\setminus f^E(S)\geq \Gamma\setminus (f^E(S)\cup S')$ and the following map is well defined:
    \begin{align*}
    f^*:m\S\S_{f_*\D}(\Gamma')&\to m\S\S_\D(\Gamma)\\
    \Gamma'\setminus S&\mapsto \Gamma\setminus (f^E(S)\cup S').
\end{align*}
\end{lemma-definition}

\begin{proof}
    It is enough to observe that, for each spanning forest $F$ of $\Gamma$ there exists a unique minimal spanning forest $F_m$ such that $F_m\leq F$. In fact, let $F$ be a spanning forest of $\Gamma$ and let $e,e'\in E(F)$ such that both $F\setminus \{e\}$ and $F\setminus \{e'\}$ are $\D$-admissible. Then, we observe that $F\setminus\{e,e'\}\in\S\S_\D(\Gamma)$ too.
\end{proof}

Following from this, we now observe that any given $\D$ forest function $I$ induces a $f_*\D$ forest function that we call $f_*(I)$ on $m\S\S_{f_*\D}(\Gamma')$:
\begin{align}\label{e:forestfuncontr}
    f_*(I):m\S\S_{f_*\D}(\Gamma')&\to \bigsqcup_{t\leq d-b_1(\Gamma')}\Div^t(\Gamma')\\
    F&\mapsto f_*(I(f^*(F))).\notag
\end{align}
In the particular case where $f:\Gamma\to \Gamma/S$ is the contraction of some set $S\subset E(\Gamma)$ that does not decrease the genus of $\Gamma$, then we set $I_{\Gamma/S}:=f_*(I)$.

\begin{lemma}
    With respect to the map $f_*: \OO^d(\Gamma) \to \OO^d(\Gamma')$ defined in \eqref{E:f*-O}, we have the inclusion
    \begin{equation}
        BD_{f_*(I)}\subseteq f_*(BD_I).
    \end{equation}
\end{lemma}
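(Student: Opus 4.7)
My plan is to verify the inclusion by showing that the right-hand side $f_*(BD_I)$ is an upper-closed subset of $\OO^d_{f_*\D}(\Gamma')$ that contains the image of the forest function $f_*(I)$, and then invoke the minimality in the definition of $BD_{f_*(I)}$.

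First, I would check the containment of the image. For any $F' \in m\S\S_{f_*\D}(\Gamma')$ we have by definition (see \eqref{e:forestfuncontr}) the equality $f_*(I)(F') = f_*(I(f^*(F')))$; since $f^*(F') \in m\S\S_\D(\Gamma)$, the element $I(f^*(F'))$ lies in $BD_I$, and hence $f_*(I)(F') \in f_*(BD_I)$. Also, the inclusion $f_*(BD_I) \subseteq \OO^d_{f_*\D}(\Gamma')$ follows immediately from $BD_I \subseteq \OO^d_{\D}(\Gamma)$ together with Lemma \ref{L:operOD}\eqref{L:operOD1}, which gives $f_*(\OO^d_{\D}(\Gamma)) = \OO^d_{f_*\D}(\Gamma')$.

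Next I would verify that $f_*(BD_I)$ is an upper subset of $\OO^d_{f_*\D}(\Gamma')$. Pick $(G', D') \in f_*(BD_I)$ together with some $(G'', D'') \geq (G', D')$ in $\OO^d_{f_*\D}(\Gamma')$; I want to show $(G'', D'') \in f_*(BD_I)$. Choose a preimage $(H', E') \in BD_I$ with $f_*(H', E') = (G', D')$. By the upper lifting property of $f_*$ (\cite[Prop.~1.2]{viviani2023new}), there exists $(H'', E'') \in \OO^d(\Gamma)$ with $(H'', E'') \geq (H', E')$ and $f_*(H'', E'') = (G'', D'')$. Since $BD_I$ is upper-closed in $\OO^d_\D(\Gamma)$, the element $(H'', E'')$ belongs to $BD_I$, and therefore $(G'', D'') \in f_*(BD_I)$.

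Combining these two observations, $f_*(BD_I)$ is an upper-closed subset of $\OO^d_{f_*\D}(\Gamma')$ containing $\{f_*(I)(F')\}_{F' \in m\S\S_{f_*\D}(\Gamma')}$. By the very definition of $BD_{f_*(I)}$ as the \emph{minimal} such upper-closed subset, the inclusion $BD_{f_*(I)} \subseteq f_*(BD_I)$ follows. The main (very mild) subtlety is the appeal to the upper lifting property: one has to check that the lifted pair $(H'',E'')$ automatically stays in $\OO^d_\D(\Gamma)$, but this is immediate because the vertex sets of the connected components of $H''$ are unions of vertex sets of connected components of $H'$, and $H'$ is already $\D$-admissible (cf.\ Remark \ref{R:Dadm}).
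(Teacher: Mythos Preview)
Your proof is correct. The approach differs from the paper's: the paper proceeds element-by-element, taking a generic element $f_*(I)(\Gamma'\setminus S)+D(\O)$ of $BD_{f_*(I)}$ and explicitly exhibiting a preimage in $BD_I$ by lifting the orientation $\O$ to an orientation $\tilde\O$ of the corresponding edges upstairs. Your argument instead exploits the \emph{minimality} characterization of $BD_{f_*(I)}$, reducing the problem to checking that $f_*(BD_I)$ is upper-closed (via the upper lifting property of $f_*$) and contains the generating divisors $f_*(I)(F')$. Your route is slightly more conceptual and avoids the explicit orientation bookkeeping; the paper's route is more constructive and self-contained, not appealing to the upper lifting property from \cite{viviani2023new}. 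One small remark: your final parenthetical about $(H'',E'')$ staying in $\OO^d_\D(\Gamma)$ is correct but could be stated more crisply by simply invoking that $BD_I$, being upper-closed in the upper set $\OO^d_\D(\Gamma)\subseteq\OO^d(\Gamma)$, is already upper-closed in all of $\OO^d(\Gamma)$.
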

\begin{proof}
    An element of $BD_{f_*(I)}$ is written as $f_*(I)(\Gamma'\setminus S)+D(\O)$, for some minimal spanning forest $\Gamma'\setminus S$ of $\Gamma'$, some $T\subseteq S$ and some orientation $\O$ of $S\setminus T$. By definition of $f_*(I)$, we have that $f_*(I)(\Gamma'\setminus S)=f_*(I(\Gamma\setminus (f^E(S)\cup S')))$, with $S'\cap f^E(E(\Gamma'))=\emptyset$. Moreover, $\O$ induces an orientation $\tilde{\O}$ of $f^E(S)\setminus f^E(T)$ such that $f_*(D(f^E(S)\setminus f^E(T),\tilde{\O}))=D(S\setminus T,\O)$. Thus, we conclude that
    $$
    f_*(I)(\Gamma'\setminus S)+D(\O)=f_*(I(\Gamma\setminus (f^E(S)\cup S'))+D(\tilde{\O})).
    $$
\end{proof}

In what follows, we will need a generalisation of the usual deletion/contraction property of the complexity of a graph (recall the definition of $\D$-complexity given in Definition~\ref{D:Dadmissible}).

\begin{lemma}\label{L:ContractionDeletion}
    Let $\Gamma$ be a connected graph and let $\D\in\Deg(\Gamma)$. Then for each  spanning subgraph $G\in\S\S_\D(\Gamma)$ and each $e\in E(G)$ such that $G\setminus \{e\}\in\S\S_\D(\Gamma)$ and $b_1(G)=b_1(G/\{e\})$ (i.e. $e$ is not a loop of $G$), we have 
    \begin{equation*}
    c_\D(G)=c_{\D}(G \setminus  e )+c_{(c_e)_*\D}(c_e(G)),
    \end{equation*}
    where $c_e:G\to G/\{e\}$ is the contraction of $e$.
\end{lemma}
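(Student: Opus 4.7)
The plan is to prove the formula by the standard deletion/contraction dichotomy: partition the set of $\D$-admissible spanning forests of $G$ according to whether they contain the edge $e$ or not, and identify each part with one of the two terms on the right hand side.

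First, I would observe that a spanning forest $F \leq G$ with $e \notin E(F)$ is the same data as a spanning forest of $G \setminus e$ (the underlying vertex set and the condition of being a forest are unchanged). Moreover, since $V(F)=V(G\setminus e)=V(G)=V(\Gamma)$, and the $\D$-admissibility condition depends only on the vertex sets of the connected components of $F$ viewed inside $V(\Gamma)$, it follows that $F$ is a $\D$-admissible spanning forest of $G$ if and only if it is a $\D$-admissible spanning forest of $G \setminus e$. Thus the $\D$-admissible spanning forests of $G$ not containing $e$ are counted by $c_\D(G\setminus e)$.

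Next, for the forests containing $e$, I would set up the bijection
\[
\{F \in \S\F_\D(G) : e \in E(F)\} \xrightarrow{\;\sim\;} \S\F_{(c_e)_*\D}(G/e), \qquad F \mapsto c_e(F) = F/e,
\]
whose inverse sends a spanning forest $F'$ of $G/e$ to the subgraph of $G$ with edge set $E(F') \cup \{e\}$ (here we use that $e$ is not a loop, so contracting it in a forest preserves the forest property, and conversely adjoining $e$ to the lift of a forest cannot create a cycle). To match the admissibility conditions under this bijection, I would let $F = \bigsqcup_i F_i$ be the decomposition into connected components and denote by $F_{i_0}$ the component containing $e = \{u,v\}$; then the components of $F/e$ have vertex sets $(c_e)_V(V(F_i))$. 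The key observation is that $(c_e)_V^{-1}\big((c_e)_V(V(F_i))\big) = V(F_i)$ for every $i$, because either $\{u,v\} \subseteq V(F_i)$ (when $i=i_0$) or $\{u,v\} \cap V(F_i) = \emptyset$ (otherwise); combined with the definition of $\wh{(c_e)_*\D}$ in Lemma-Definition~\ref{LD:operD}\eqref{LD:operD1} this gives $V(F_i) \in \wh\D \iff (c_e)_V(V(F_i)) \in \wh{(c_e)_*\D}$. Hence $F$ is $\D$-admissible if and only if $F/e$ is $(c_e)_*\D$-admissible, and the bijection restricts to admissible forests. This shows that the forests in $G$ containing $e$ contribute $c_{(c_e)_*\D}(c_e(G))$, and summing the two contributions yields the formula.

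The only genuinely delicate point is the compatibility of the admissibility condition with the contraction map, which I expect to be straightforward once the equality $(c_e)_V^{-1}\big((c_e)_V(W)\big) = W$ (for any $W$ that either contains or avoids $\{u,v\}$) is combined with the definition of the pushforward degeneracy subset; the rest is the familiar deletion/contraction bijection for spanning forests in a non-loop edge.
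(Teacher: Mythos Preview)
Your proposal is correct and follows essentially the same approach as the paper. The paper's proof is a one-liner that invokes the already-constructed inclusion $\S\F_{\D}(G\setminus e)\hookrightarrow \S\F_\D(G)$ of \eqref{EQ:SubforestsInclusion} and the pullback $\S\F_{(c_e)_*\D}(G/e)\hookrightarrow \S\F_\D(G)$ of \eqref{EQ:SubforestsPullback}, observing that their images are disjoint with union $\S\F_\D(G)$; your argument simply unpacks exactly what those two maps are and why the admissibility conditions match, which is the same deletion/contraction dichotomy.
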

\begin{proof}
    It is sufficient to observe that the maps \eqref{EQ:SubforestsInclusion} for $G\setminus \{e\}\leq G$ and \eqref{EQ:SubforestsPullback} for $c_e:G\to G/\{e\}$ have disjoint images whose union is the set $\S\F_\D(G)$.
\end{proof}

\begin{proposition}\label{BDthm}
    Let $\Gamma$ be a connected graph and let $I$ be a $\D$ forest function, for some $\D\in\Deg(\Gamma)$. 
    \begin{enumerate}
        \item \label{BDthm1} We have that $|BD_I(\Gamma)|\geq c_\D(\Gamma)$.
        \item \label{BDthm2} If $|BD_I(\Gamma)|= c_\D(\Gamma)$, then
        \begin{enumerate}
            \item for any $\D$-admissible spanning subgraph $\Gamma\setminus S$, we have $|BD_I(\Gamma\setminus S)|= c_\D(\Gamma\setminus S)$;
            \item for any morphism $f:\Gamma\to \Gamma'$ that preserves the genus, we have that $|BD_{f_*(I)}(\Gamma')|=c_{f_*(\D)}(\Gamma')$.
        \end{enumerate}
    \end{enumerate}
\end{proposition}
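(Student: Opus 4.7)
I would prove both parts simultaneously by induction on $|E(\Gamma)|$, based on the deletion--contraction identity of Lemma~\ref{L:ContractionDeletion}. For the base case, I would take $\Gamma$ with no non-loop non-bridge edges (essentially a tree equipped with loops), where one inspects directly that $|BD_I(\Gamma)| = c_\D(\Gamma)$ from the definitions.

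For the inductive step of part~(1), I would pick a non-loop non-bridge edge $e \in E(\Gamma)$; then $\Gamma\setminus e$ is still $\D$-admissible because $V(\Gamma\setminus e) = V(\Gamma) \in \wh\D$, so Lemma~\ref{L:ContractionDeletion} applies. I would then exhibit two injections with disjoint images inside $BD_I(\Gamma)$: first, $\alpha \colon BD_{I_{\Gamma\setminus e}}(\Gamma\setminus e) \hookrightarrow BD_I(\Gamma)$ defined by $D \mapsto D + t(\ee_0)$ for a fixed orientation $\ee_0$ of $e$, which is well-defined by Lemma~\ref{L:BDrestr} combined with the inclusion $m\S\S_\D(\Gamma\setminus e) \subseteq m\S\S_\D(\Gamma)$; second, $\beta \colon BD_{(c_e)_*I}(\Gamma/e) \hookrightarrow BD_I(\Gamma)$ defined by lifting $(c_e)_* I(F') + D(\O')$ to $I(c_e^*(F')) + D(\wt{\O})$, where $\wt{\O}$ is the natural orientation on $E(\Gamma) \setminus E(c_e^*(F')) = E(\Gamma/e) \setminus E(F')$ induced by $\O'$. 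Combined with the inductive hypothesis, this yields
\[
|BD_I(\Gamma)| \geq |BD_{I_{\Gamma\setminus e}}(\Gamma\setminus e)| + |BD_{(c_e)_*I}(\Gamma/e)| \geq c_\D(\Gamma\setminus e) + c_{(c_e)_*\D}(\Gamma/e) = c_\D(\Gamma).
\]

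The main obstacle will be proving that the images of $\alpha$ and $\beta$ are disjoint in $BD_I(\Gamma)$: the same divisor could a priori be represented both as $I(F_1) + D(\O_1)$ with $e \notin E(F_1)$ and $\O_1(e) = \ee_0$, and as $I(F_2) + D(\O_2)$ with $e \in E(F_2)$. This is further complicated by the fact that the pullback $c_e^*(F')$ of a minimal admissible spanning forest on $\Gamma/e$ need not itself contain $e$ (when the maximal set $S'$ of contracted edges equals $\{e\}$ in the lemma-definition preceding \eqref{e:forestfuncontr}), so some care is needed in unwinding the definition of $(c_e)_*I$. I would handle this by choosing $\ee_0$ using a total order on $V(\Gamma)$ as in Definition~\ref{D:totalorderGamma}, ensuring that any such collision would force a chip-firing relation ruled out by the structural analysis of outgoing divisors carried out in Lemma~\ref{L:DO-div}.

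For part~(2), the assumption $|BD_I(\Gamma)| = c_\D(\Gamma)$ forces every inductive inequality above to be an equality, whence $|BD_{I_{\Gamma\setminus e}}(\Gamma\setminus e)| = c_\D(\Gamma\setminus e)$ and $|BD_{(c_e)_*I}(\Gamma/e)| = c_{(c_e)_*\D}(\Gamma/e)$ for every admissible non-loop edge $e$. Iterating, part~(a) follows because any $\D$-admissible spanning subgraph $\Gamma\setminus S$ is reached from $\Gamma$ via a chain of $\D$-admissible single-edge deletions (removing non-bridge edges of $S$ first, one at a time, to preserve admissibility at each step), and part~(b) follows since any genus-preserving morphism factors as a sequence of non-loop edge contractions followed by an automorphism, which trivially preserves cardinalities and the BD-structure.
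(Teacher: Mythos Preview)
Your overall strategy—induction via deletion--contraction using Lemma~\ref{L:ContractionDeletion}—is exactly the approach the paper takes: it simply cites \cite[Thm.~1.17]{viviani2023new} and notes that the argument there adapts to the $\D$-admissible setting using the functions $I_{\Gamma\setminus\{e\}}$ and $I_{\Gamma/\{e\}}$. So structurally you are on the right track.

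Two issues, however, deserve attention. First, the paper does \emph{not} restrict to non-bridge edges: the induction runs over non-loop edges $e$ such that $\Gamma\setminus\{e\}\in\S\S_\D(\Gamma)$. This matters for part~(2)(a). Your chain argument (``removing non-bridge edges of $S$ first'') works as long as $S$ contains a non-bridge edge of the current graph, but once only bridges remain you must still delete them one at a time while staying $\D$-admissible. This is possible—pick a bridge whose removal separates off a single component of $\Gamma\setminus S$, and use that the vertex set of such a component lies in $\D=\wh\D\cap\BCon(\Gamma)$—but it means the inductive step must also cover those bridge edges, else you never obtain the equality $|BD_{I_{\Gamma\setminus e}}(\Gamma\setminus e)|=c_\D(\Gamma\setminus e)$ for them. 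Broadening the inductive step to all non-loop $e$ with $\Gamma\setminus\{e\}$ $\D$-admissible (as the paper does) fixes this and also makes the base case trivial: then $\Gamma$ minus its loops is the unique $\D$-admissible spanning forest, so $c_\D(\Gamma)=1=|BD_I(\Gamma)|$.

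Second, your proposed handling of the disjointness of $\Im\alpha$ and $\Im\beta$ via Lemma~\ref{L:DO-div} is not convincing. That lemma characterizes when a difference $D(\O_G(V_\bullet))-D(\O_G(\ov V_\bullet))$ is principal; it does not obviously control collisions of the form $I(F_1)+D(\O_1\cup\{\ee_0\})=I(F_2)+D(\O_2)$ with $e\notin F_1$ and $e\in F_2$, which is what you need. You also correctly note that $c_e^*(F')$ may fail to contain $e$; in that case your equality $E(\Gamma)\setminus E(c_e^*(F'))=E(\Gamma/e)\setminus E(F')$ breaks down (the left side acquires the extra element $e$), so the very definition of $\beta$ needs adjustment before disjointness can even be posed. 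This disjointness is indeed the heart of the argument in \cite{viviani2023new}, but the tools you invoke here do not resolve it.
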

This is proved in \cite[Theorem~1.17]{viviani2023new} for $\D=\emptyset$.
\begin{proof}
    The proof of \cite[Theorem~1.17]{viviani2023new} can be adapted to our more general setting: one argues by induction on the edges of $\Gamma$ such that $\Gamma\setminus \{e\}\in \S\S_{\D}(\Gamma)$ and $b_1(\Gamma/\{e\})=b_1(\Gamma)$, using the functions $I_{\Gamma\setminus \{e\}}$ of \eqref{e:forestfundel} and $I_{\Gamma/\{e\}}$ of \eqref{e:forestfuncontr}, and Lemma \ref{L:ContractionDeletion}.
\end{proof}

\begin{corollary}\label{C:BDsetContractions}
    If $BD_I$ is a weak numerical PT-assignment, then:
    \begin{enumerate}
        \item $BD_{I_{\Gamma\setminus S}}$ is a numerical PT-assignment for every $\D$-admissible spanning subgraph $\Gamma\setminus S$ of $\Gamma$. In particular, $BD_I$ is a numerical PT-assignment.
        \item $BD_{f_*(I)}$ a numerical PT-assignment for each genus preserving morphism $f:\Gamma\to\Gamma'$. In particular $BD_{I_{\Gamma/S}}$ is a numerical PT-assignment for each contraction $\Gamma/S$ such that $b_1(\Gamma)=b_1(\Gamma/S)$.
    \end{enumerate}
\end{corollary}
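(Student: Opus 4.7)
The plan is to deduce everything from Proposition \ref{BDthm}(2) combined with the restriction identity of Lemma \ref{L:BDrestr}. The hypothesis that $BD_I$ is a weak numerical PT-assignment is simply $|BD_I(\Gamma)|=c_\D(\Gamma)$, and Proposition \ref{BDthm}(2) propagates this equality in the two directions we need: to $\D$-admissible spanning subgraphs (part (a)) and to push-forwards along genus-preserving morphisms (part (b)). No further combinatorial input will be required.

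For Part (1), fix a $\D$-admissible spanning subgraph $\Gamma\setminus S$ and pick any $G\in\S\S_\D(\Gamma\setminus S)$. Since $G$ contains no edge of $S$, it is a fortiori a $\D$-admissible spanning subgraph of $\Gamma$, so Proposition \ref{BDthm}(2)(a) yields $|BD_I(G)|=c_\D(G)$. By Lemma \ref{L:BDrestr}, $BD_{I_{\Gamma\setminus S}}(G)$ coincides with $BD_I(G)$ under the inclusion $\OO^{d-|S|}_\D(\Gamma\setminus S)\hookrightarrow\OO^d_\D(\Gamma)$, hence $|BD_{I_{\Gamma\setminus S}}(G)|=c_\D(G)$. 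This is precisely the defining property of a numerical PT-assignment. The ``in particular'' assertion is the special case $S=\emptyset$.

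For Part (2), I first apply Proposition \ref{BDthm}(2)(b): from $|BD_I(\Gamma)|=c_\D(\Gamma)$ one obtains $|BD_{f_*(I)}(\Gamma')|=c_{f_*\D}(\Gamma')$, i.e.\ $BD_{f_*(I)}$ is a weak numerical PT-assignment on $\Gamma'$ with degeneracy subset $f_*\D$ (as defined in Lemma-Definition \ref{LD:operD}). Then I invoke Part (1) on $\Gamma'$ to upgrade this to a numerical PT-assignment. Specialising $f$ to the contraction $\Gamma\to\Gamma/S$, which is genus-preserving exactly when $b_1(\Gamma)=b_1(\Gamma/S)$, gives the final ``in particular'' statement.

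The argument is essentially bookkeeping; the only thing requiring care is to track which graph, degeneracy subset, degree, and forest function are in play at each step, and to verify that the identification $BD_{I_{\Gamma\setminus S}}=BD_I\cap\OO^{d-|S|}_\D(\Gamma\setminus S)$ and the formula for $f_*(I)$ in \eqref{e:forestfuncontr} interact correctly with admissibility. All of this has already been packaged in Lemma \ref{L:BDrestr} and in the definitions preceding Proposition \ref{BDthm}, so the proof reduces to two invocations of that proposition and one self-reference to Part (1).
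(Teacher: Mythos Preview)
Your proof is correct and is exactly the unpacking of the paper's one-line proof, which simply reads ``It follows from Proposition~\ref{BDthm}(\ref{BDthm2}).'' You have correctly identified that Lemma~\ref{L:BDrestr} is needed to translate the equality $|BD_I(G)|=c_\D(G)$ from Proposition~\ref{BDthm}(2)(a) into the corresponding statement for $BD_{I_{\Gamma\setminus S}}$, and that the self-reference to Part~(1) is what upgrades the weak numerical PT-assignment produced by Proposition~\ref{BDthm}(2)(b) to a full numerical PT-assignment on $\Gamma'$.
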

\begin{proof}
    It follows from Proposition~\hyperref[BDthm2]{\ref*{BDthm}(\ref*{BDthm2})}.
\end{proof}

\begin{corollary}\label{C:PTstabisBD}
If $\P$ is a numerical PT-assignment, then $\P=BD_I$ for a unique $\D(\P)$-forest function $I$.
\end{corollary}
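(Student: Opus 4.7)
The plan is to explicitly construct the $\D$-forest function $I$ by picking, for each minimal $\D$-admissible spanning forest $F$, the unique divisor that $\P$ assigns to $F$; then to verify $\P = BD_I$ by a cardinality argument combining the generalised Kirchhoff bound from Proposition~\ref{BDthm} with the bootstrap from weak to full numerical PT given by Corollary~\ref{C:BDsetContractions}.

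First I would observe the key fact that $c_{\D}(F) = 1$ for any $F \in m\S\S_\D(\Gamma)$, where $\D := \D(\P)$. Indeed, any $\D$-admissible spanning forest $F' \leq F$ lies in $\S\S_\D(\Gamma)$, and the minimality of $F$ in $\S\S_\D(\Gamma)$ forces $F' = F$. Combined with the numerical PT hypothesis $|\P(F)| = c_\D(F)$, this shows that $\P(F)$ is a singleton. I can therefore define $I(F)$ to be the unique divisor with $(F, I(F)) \in \P$; using the identity $|E(F)^\mathsf{c}| = b_1(\Gamma) + b_0(F) - 1$ valid for any spanning forest $F$ of the connected graph $\Gamma$, one checks that $I(F)$ sits in degree $d - b_1(\Gamma) - b_0(F) + 1$, so $I$ is indeed a $\D$-forest function in the sense of Definition~\ref{D:BD-set}.

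With $I$ in hand, I would prove $\P = BD_I$ via two inclusions. The inclusion $BD_I \subseteq \P$ is immediate from the minimality built into the definition of $BD_I$, since $\P$ is itself an upper closed subset of $\OO^d_\D(\Gamma)$ containing each $(F, I(F))$. For the reverse inclusion, evaluating at $\Gamma$ gives $|BD_I(\Gamma)| \leq |\P(\Gamma)| = c_\D(\Gamma)$, while Proposition~\ref{BDthm}(\ref{BDthm1}) supplies the opposite inequality $|BD_I(\Gamma)| \geq c_\D(\Gamma)$. Hence $BD_I$ is a weak numerical PT-assignment, and Corollary~\ref{C:BDsetContractions} then upgrades this to $|BD_I(G)| = c_\D(G) = |\P(G)|$ for every $G \in \S\S_\D(\Gamma)$. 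Combined with $BD_I(G) \subseteq \P(G)$, this forces equality for every $G$, so $\P = BD_I$.

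Uniqueness is then an immediate afterthought: any $\D$-forest function $I'$ satisfying $BD_{I'} = \P$ must have $I'(F) \in BD_{I'}(F) = \P(F) = \{I(F)\}$ for each $F \in m\S\S_\D(\Gamma)$, forcing $I' = I$. The only mildly subtle step in the whole argument is the very first one—the identification $c_\D(F) = 1$ for minimal $F$—on which the existence and uniqueness of $I$ both rest; everything else is an assembly of results already established in the paper.
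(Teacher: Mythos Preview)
Your proof is correct and follows the natural line of argument: the paper's own proof simply says ``Similar to \cite[Corollary~1.19]{viviani2023new}'', and your construction of $I$ from the singleton sets $\P(F)$ together with the cardinality squeeze via Proposition~\ref{BDthm} and Corollary~\ref{C:BDsetContractions} is exactly the kind of argument that reference contains (generalised here from $\D=\emptyset$ to arbitrary $\D$).
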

\begin{proof}
    Similar to \cite[Corollary~1.19]{viviani2023new}.
\end{proof}

We now move towards the main results of this subsection, giving a bijection between V-stabilities and  PT-assignments, which are special BD-sets by the above Corollary.

We will first show how to construct a BD-set from a given V-stability.

\begin{lemma-definition}\label{L:BDInwelldef}
Let $\n$ be a V-stability of degree $d$ on $\Gamma$. 

Let $F \in m \S\S_{\D(\n)}(\Gamma)$ and let $F=\bigsqcup_{i=1}^kF_i$ be its decomposition in connected components. For any edge $e\in F$, denote by $F_{i_e}$ the connected component of $F$ containing $e$, and write \[F\setminus \{e\}=:F[W_e]\sqcup F[V(F_{i_e})\setminus W_e]\bigsqcup_{i\neq i_e}F_i,\] (where $F[W_e]$ and $F[V(F_{i_e})\setminus W_e]$ are connected). 

The \emph{$\D(\n)$-forest function of degree $d$ associated to $\n$} is the function 
$$I_\n:\m\S\S_{\D(\n)}(\Gamma)\to \Div(\Gamma)$$
such that $I_\n(F)$ is  the unique divisor on $\Gamma$ verifying
\begin{equation}\label{E:In}
\begin{cases}
    I_\n(F)_{W_e}=\n(F_{i_e})_{W_e},\\
    I_\n(F)_{V(F_{i_e})\setminus W_e}=\n(F_{i_e})_{V(F_{i_e})\setminus W_e},\\
    I_\n(F)_{V(F_i)}=|\n(F_i)|.
\end{cases}     
\end{equation}
\end{lemma-definition}
\begin{proof}
    We observe that $I_\n$ is well defined, as it is uniquely determined on every forest $F$ by  Condition~\eqref{E:In} applied to every edge $e$ of $F$.
    It remains to check that $I(F)\in \Div^{d-b_1(\Gamma)-b_0(F)+1}(\Gamma)$ for any $F\in m\S\S_\D(\Gamma)$.
    
    Fix a forest $F$ and an edge $e\in E(F)$. We observe that, for any $i=1\ldots,k$ and any $W\in\BCon(F_i)$, $\n(F_i)_W=\n(\Gamma[V(F_i)])_W-b_1(\Gamma[W])$. Then, we have 
    \begin{multline*}
    I_\n(F)_{W_e}+I_\n(F)_{W_e^\mathsf{c}}
    =I_\n(F)_{W_e}+I_\n(F)_{V(F_{i_e})\setminus W_e}+\sum_{i\neq i_e}I_\n(F)_{V(F_i)}=\n(F_{i_e})_{W_e}+\n(F_{i_e})_{V(F_{i_e})\setminus W_e}+\sum_{i\neq i_e}|\n(F_i)|=\\=\n_{W_e}-b_1(\Gamma[W_e])+\n_{V(F_{i_e})\setminus W_e}-b_1(\Gamma[V(F_{i_e})\setminus W_e])+\sum_{i\neq i_e}(\n_{V(F_i)}-b_1(\Gamma[V(F_i)]))=\\=d+1-b_1(\Gamma[W_e])-b_1(\Gamma[V(F_{i_e})\setminus W_e])-\val(W_e,V(F_{i_e})\setminus W_e)+\\-\sum_{i\neq i_e}b_1(\Gamma[V(F_i)])-\sum_{i<j}\val(\Gamma[V(F_i)],\Gamma[V(F_j)])=d+1-b_1(\Gamma)-b_0(F),
    \end{multline*}
    where the third equality follows from Lemma-Definition~\ref{LD:V-subgr}, and the fourth by Definition~\ref{D:Vstab}(\ref{D:Vstab1}), since $W_e\notin\wh\D(\n)$ and $V(F_i)\in\wh\D(\n)$ for each $0\leq i\leq k$.
\end{proof}

Conversely, we now show how to construct a V-stability from a given BD-set which is a weak numerical PT-assignment. 

\begin{lemma-definition}\label{L:nIwelldef}
Let $\D\in \Deg(\Gamma)$ and let $I$ be a $\D$-forest function of degree $d$ such that the corresponding BD-set $BD_I$ is a weak numerical PT-assignment. 

For any $W\in\BCon(\Gamma)$, pick a minimal spanning forest $F\in m\S\S_\D(\Gamma)$ that is \emph{adapted} to $W$, i.e. that satisfies
\begin{equation}\label{E:Fadapted}
    \begin{cases}
        F[W] \textup{ is a spanning forest of $\Gamma[W]$},\\
        F[W^\mathsf{c}] \textup{ is a spanning forest of $\Gamma[W^\mathsf{c}]$},\\
        \val_F(W)=1-\mathbbm{1}_\D(W),
    \end{cases}
\end{equation}
where $\mathbbm{1}_\D(W)$ equals $1$ if $W \in \D$ and it equals $0$ otherwise. Then 
\begin{equation}\label{E:nI}
    \begin{aligned} 
    \n^I:\BCon(\Gamma) & \to \ZZ\\
    W & \mapsto \n^I_W:=I(F)_W+b_1(\Gamma[W])+b_0(F[W])-1.
    \end{aligned}
\end{equation}
is a V-stability condition of degree $d$  on $\Gamma$ with $\D(\n^I)=\D$.
\end{lemma-definition}
\begin{proof}
    We follow the outline of the proof of \cite[Lemma~1.22]{viviani2023new}.
    Firstly, we show that $\n^I$ is well defined. This follows from 
    
    \underline{Claim 1:} For each pair of minimal spanning forests $F_1,F_2\in m\S\S_\D(\Gamma)$ that are adapted to $W$, we have
    $$
    I(F_1)_W+b_0(F_1[W])=I(F_2)_W+b_0(F_2[W]).
    $$

    We prove our claim by induction on the number $c_\D(\Gamma)$ of $\D$-admissible spanning forests of $\Gamma$. If $c_\D=1$, then $F_1=F_2$ and the claim is obvious.
    Before proving the general case, we consider the special case $c_\D(\Gamma[W])=c_\D(\Gamma[W^\mathsf{c}])=1$. We notice that if $\Gamma[W]$ (resp.$\Gamma[W^\mathsf{c}]$) has only one spanning forest $F_W$ (resp. $F_{W^\mathsf{c}}$), then it must be a tree. We distinguish two cases:
    \begin{itemize}
        \item If $W\in\D$, then $F_W\cup F_{W^\mathsf{c}}$ is the only minimal $\D$-admissible spanning forest. Hence the claim is obvious.
        \item If $W\notin\D$, we are in the situation of \cite[Lemma~1.22, Special case]{viviani2023new}, hence we conclude using Corollary~\ref{C:BDsetContractions}$(2)$ and Definition~\ref{D:BD-set}.
    \end{itemize}

    We are now ready to prove the general case, i.e. either $c_\D(\Gamma[W])>1$ or $c_\D(\Gamma[W^\mathsf{c}])>1$. Since $I(F_i)_W+I(F_i)_{W^\mathsf{c}}=d-b_1(\Gamma)-b_0(F_i)+1$, up to switching $W$ with $W^\mathsf{c}$ we can assume that $c_\D(\Gamma[W])>1$. We distinguish two cases:
    \begin{itemize}
        \item There exists only one spaning forest $F_W$ of $\Gamma[W]$. We can assume $W\notin\D$, since again the case of $W$ degenerate is trivial. Let $T_W$ be a spanning tree of $\Gamma[W]$ such that $F_W\leq T_W$. Then, as above, the reasoning of \cite[Lemma~1.22, Special case]{viviani2023new} applies with respect to $T_W$ and $F_{W^\mathsf{c}}$, the only spanning forest of $\Gamma[W^\mathsf{c}]$, which is a tree. Hence we have the claim by Corollary~\ref{C:BDsetContractions}$(2)$ and Definition~\ref{D:BD-set}.
        \item Let $F_1,F_2$ be two minimal spanning forests adapted to $W$, such that $F_1[W]\neq F_2[W]$. Then there exists an edge $e$ of $\Gamma[W]$ that is not a loop and such that $\Gamma\setminus \{e\}$ is still $\D$-admissible. 
        Consider the graph $\Gamma\setminus \{e\}$ (resp. $\Gamma/\{e\}$) and the $\D$ forest function $I_{\Gamma\setminus \{e\}}$ (resp. $I_{\Gamma/\{e\}}$) defined in \eqref{e:forestfundel} (resp. \eqref{e:forestfuncontr}). By Corollary~\ref{C:BDsetContractions}, since $BD_I$ is a weak numerical PT-assignment, both $BD_{I_{\Gamma\setminus \{e\}}}$ and $BD_{I_{\Gamma/\{e\}}}$ are numerical PT-assignments. Moreover, since $c_\D(\Gamma\setminus \{e\}),c_\D(\Gamma/\{e\})<c_\D(\Gamma\setminus \{e\})+c_\D(\Gamma/\{e\})=c_\D(\Gamma)$, Claim 1 holds for $I_{\Gamma\setminus \{e\}}$ and $I_{\Gamma/\{e\}}$ by the induction hypothesis. We now distinguish three cases:
        \begin{itemize}
            \item If neither $F_1$ nor $F_2$ contain $e$, then they are minimal spanning forests of $\Gamma\setminus \{e\}$ and, by the induction hypothesis,
            $$
            I(F_1)_W+b_0(F_1[W])=I_{\Gamma\setminus \{e\}}(F_1)_W+b_0(F_1[W])=I_{\Gamma\setminus \{e\}}(F_2)_W+b_0(F_2[W])=I(F_2)_W+b_0(F_2[W]).
            $$
            \item If both $F_1$ and $F_2$ contain $e$, then their respective images $\ov F_1$ and $\ov F_2$ in $\Gamma/\{e\}$ are minimal spanning forests. Hence, by the induction hypothesis,
            $$
            I(F_1)_W+b_0(F_1[W])=I_{\Gamma/\{e\}}(\ov F_1)_{\ov W}+b_0(\ov F_1[\ov W])=I_{\Gamma/\{e\}}(\ov F_2)_{\ov W}+b_0(\ov F_2[\ov W])=I(F_2)_W+b_0(F_2[W]),
            $$
            where $\ov W\subset V(\Gamma/\{e\})$ is the image of $W\subset V(\Gamma)$.
            \item Suppose $F_1$ contains $e$ and $F_2$ does not (the opposite case being analogous). 

            We recall that, for each spanning forest $F$ of $\Gamma$ there exists a unique minimal spanning forest $F_m$ such that $F_m\leq F$.

            Consider the graph $F_1\cup F_2$. Since $F_1,F_2\leq F_1\cup F_2$, with $F_1\neq F_2\in m\S\S_\D(\Gamma)$, we have $b_1(F_1\cup F_2)\geq 1$. Moreover, we see that every edge $f\in F_1\setminus F_2$ is not separating. In fact, if such a separating edge $f$ existed, the connected components of the graph $(F_1\cup F_2)\setminus \{f\}$ would be elements of $\wh \D$. Hence, since $F_1\leq F_1\cup F_2$ is spanning, we would have $F_1\setminus \{f\}\in\S\S_\D(\Gamma)$. But this is in contradiction with the minimality of $F_1$.

            Therefore, there exist a (possibly empty) set of edges $\{g_1,\ldots,g_s\}\subset E(F_1)\setminus\{e\}$, such that the graph $G:=F_2\cup\{e,g_1,\ldots,g_s\}$ has exactly a cycle $\{e,f_1,\ldots,f_r,g_1,\ldots,g_s\}$, for some $f_1,\ldots,f_r\in E(F_2)$, with $r\geq 1$. 
            We observe that $m\S\S_\D(G)=\{F_2,\tilde{F}_1,\ldots,\tilde{F}_r\}$, where $\tilde{F}_i$ denotes the unique minimal spanning forest of the graph $G\setminus \{f_i\}$. Indeed, the minimal spanning forest of $G\setminus g_j$ is $F_2$, for any $j=1,\ldots,s$. 

            Since $e\in E(\tilde{F}_i)$, for each $i=1,\ldots,r$, by the previous case we have that 
            \begin{equation}\label{e:I(tildeF)}
                I(F_1)_W+b_0(F_1[W])=I(\tilde F_i)_W+b_0(\tilde F_i[W]),\quad \textup{for any } i=1,\ldots,r.
            \end{equation}
            By definition of $BD_I$, we have that
            \begin{equation*}
                BD_I(G)\supseteq \bigcup_{i=1}^rI(\tilde F_i)+D(\O(G\setminus \tilde F_i)),
            \end{equation*}
            and, by arguing as in \cite[Lemma~3.7]{PTgenus1}, we see that the inclusion is actually an equality. Therefore, since $\{f_1,\ldots,f_r\}\subset E(\Gamma[W])$, by \eqref{e:I(tildeF)}, we deduce that for each $D\in BD_I(G)$,
            \begin{equation}\label{e:DWtildeF}
                D_W=I(\tilde F_i)_W+|E(G)\setminus E(\tilde F_i)|+b_0(\tilde F_i[W])=I(F_1)_W+|E(G)\setminus E(F_1)|+b_0(F_1[W]).
            \end{equation}
            Since $BD_I(G)\supset I(F_2)+D(\O(G\setminus F_2))$ and $\{e,g_1,\ldots,g_s\}\subset E(\Gamma[W])$, we have that for each $D\in BD_I(G)$,
            \begin{equation}\label{e:DWF2}
                D_W=I(F_2)+|E(G)\setminus E(F_2)|=I(F_2)+s+1.
            \end{equation}
            Finally, observing that, for each $i=1,\ldots,r$, 
            \begin{equation*}
                b_0(F_2[W])+|E(F_2)|=b_0(\tilde F_i[W])+|E(\tilde F_i)|,
            \end{equation*}
            the combination of \eqref{e:DWtildeF} and \eqref{e:DWF2} gives
            $$
            I(F_1)_W+b_0(F_1[W])=I(F_2)_W+b_0(F_2[W])
            $$
            as required.
        \end{itemize}
    \end{itemize}
    Next, we show that $\n^I$ is a V-stability condition by checking that it satisfies the two conditions of Definition~\ref{D:Vstab}.

    \underline{Claim 2}: For any $W\in\BCon(\Gamma)$, we have
    $$
    \n^I_W+\n^I_{W^\mathsf{c}}+\val(W)-d=\begin{cases}
        0,\quad\textup{if }W\in\D,\\
        1,\quad\textup{if }W\notin\D.
    \end{cases}
    $$
    Let $F\in m\S\S_\D(\Gamma)$, such that $F[W]\in m\S\S_\D(\Gamma[W])$, $F[W^\mathsf{c}]\in m\S\S_\D(\Gamma[W^\mathsf{c}])$ and $\val_F(W)=1-\mathbbm{1}_\D(W)$. Then
    \begin{multline*}
        \n^I_W+\n^I_{W^\mathsf{c}}=I(F)_W+b_1(\Gamma[W])+b_0(F[W])-1+I(F)_{W^\mathsf{c}}+b_1(\Gamma[W^\mathsf{c}])+b_0(F[W^\mathsf{c}])-1=\\
        =\deg(I)+b_1(\Gamma)-\val(W)+b_0(F)-\mathbbm{1}_\D(W)=d-\val(W)-\mathbbm{1}_\D(W)+1,
    \end{multline*}
    where we have used that $b_1(\Gamma)=b_1(\Gamma[W])+b_1(\Gamma[W^\mathsf{c}])+\val(W)-1$, that $I(F)\in\Div^{d-b_1(\Gamma)-b_0(F)+1}(\Gamma)$ and that $b_0(F)=b_0(F[W])+b_0(F[W^\mathsf{c}])-1+\mathbbm{1}_\D(W)$.

    \underline{Claim 3}: For any pairwise disjoint $W_1,W_2,W_3\in\BCon(\Gamma)$ such that $W_1\cup W_2\cup W_3=V(\Gamma)$, we have that
    \begin{equation*}
            \sum_{i=1}^{3}\n^I_{W_i}+\sum_{1\leq i<j\leq 3} \val(W_i,W_j)-d \in\begin{cases}
                \{1,2\}, \textup{ if $W_i\notin\D$ for all $i$};\\
                \{1\}, \textup{ if there exists a unique $i$ such that $W_i\in\D$};\\
                \{0\}, \textup{ if $W_i\in\D$ for all $i$}.
            \end{cases}
        \end{equation*}
    For $i=1,2,3$, fix a minimal spanning forest $F_i\leq \Gamma[W_i]$. We distinguish three cases:
    \begin{itemize}
        \item Assume $W_i\notin \D$ for each $i=1,2,3$. Fix an edge $e_{ij}\in E(W_i,W_j)$ for each $1\leq i\neq j\leq 3$, so that $F_i\cup F_j\cup\{e_{ij}\}\in m\S\S_\D(\Gamma[W_i\cup W_j])$, and consider the graph 
        $$
        C:=F_1\cup F_2\cup F_3\cup \{e_{12},e_{23},e_{13}\}.
        $$
        We have 
        \begin{multline}
            \sum_{i=1}^{3}\n^I_{W_i}+\sum_{1\leq i<j\leq 3} \val(W_i,W_j)-d=\sum_{i=1}^3(I(C\setminus \{e_{(i-1)i}\})_{W_i}+b_1(\Gamma[W_i])+b_0(F_i)-1)+\\+\sum_{1\leq i<j\leq 3} \val(W_i,W_j)-d
            =\sum_{i=1}^2(I(C\setminus \{e_{(i-1,i)}\})_{W_i}+b_1(\Gamma[W_i])+b_0(F_i)-1)+\\+\sum_{1\leq i<j\leq 3} \val(W_i,W_j)-b_1(\Gamma)-b_0(C\setminus \{e_{23}\})+b_1(\Gamma[W_3])+b_0(F_3)+2-I(C\setminus \{e_{23}\})_{W_1\cup W_2}=\\
            =-I(C\setminus \{e_{23}\})_{W_1\cup W_2}+I(C\setminus \{e_{13}\})_{W_1}+I(C\setminus \{e_{12}\})_{W_2}+2,
        \end{multline}
        where we have used that $b_1(\Gamma)=\sum_{i=1}^3b_1(\Gamma[W_i])+\sum_{1\leq i<j\leq3}\val(W_i,W_j)-3+1$, and $b_0(C)=\sum_{i=1}^3b_0(F_i)$.

        Fix a spanning tree $T$ of $C$, such that $F_i\leq T[W_i]=:T_i$. The claim follows by arguing as in Claim~3 of the proof of \cite[Lemma~1.22]{viviani2023new}. 
        \item Assume $W_i\in\D$ iff $i=3$. Let $e_{12}\in E(W-i,W_j)$ be such that $F_1\cup F_2\cup \{e_{12}\}\in m\S\S_\D(\Gamma[W_i\cup W_j])$ and consider the graph 
        $$
        C:=F_1\cup F_2\cup F_3\cup \{e_{12}\}
        $$
        in $m\S\S_\D(\Gamma)$. Reasoning as above, we have 
        $$
        \sum_{i=1}^{3}\n^I_{W_i}+\sum_{1\leq i<j\leq 3} \val(W_i,W_j)-d=-I(C)_{W_1\cup W_2}+I(C)_{W_1}+I(C)_{W_2}+1=1,
        $$
        where we have used that $b_0(C)=\sum_{i=1}^3b_0(F_i)-1$.
        \item Assume $W_i\in\D$ for each $i=1,2,3$. Then the graph 
        $$
        C:=F_1\cup F_2\cup F_3
        $$
        is in $m\S\S_\D(\Gamma)$. Again as above, we obtain 
        $$
        \sum_{i=1}^{3}\n^I_{W_i}+\sum_{1\leq i<j\leq 3} \val(W_i,W_j)-d=-I(C)_{W_1\cup W_2}+I(C)_{W_1}+I(C)_{W_2}=0,
        $$
        since $b_0(C)=\sum_{i=1}^3b_0(F_i)$.
    \end{itemize}
\end{proof}

We are now ready to state and prove the main result of this section. 
\begin{theorem}\label{T:V=PT} 
Let $\Gamma$ be a connected graph.
    \begin{enumerate}
        \item\label{T:V=PT1}For any degeneracy subset $\D$ on $\Gamma$ and any $d\in \ZZ$, the  maps 
        \begin{align*}
            \left\{\begin{aligned}\textup{V-stability conditions on $\Gamma$}
            \end{aligned}\right\} &\leftrightarrow   \left\{\begin{aligned}\textup{BD-sets on $\Gamma$ that are} \\ \textup{weak numerical PT-assignments}\end{aligned}\right\}\\
            \n&\mapsto \BD_{I_\n}\\
            \n^I & \mapsfrom \BD_I 
        \end{align*}
        (defined in Lemma-Definitions~\ref{L:BDInwelldef}  and \ref{L:nIwelldef} respectively) are bijections, one the inverse of the other, that preserve the degree and the degeneracy subset, and such that  $\P_\n=\BD_{I_\n}$.
        \item\label{T:V=PT2} For a degeneracy subset $\D\in \Deg(\Gamma)$ and a subset $\P\subset\OO^d_\D(\Gamma)$,  the following are equivalent:
        \begin{enumerate}
            \item\label{T:V=PT2a} There exists a V-stability condition $\n$ such that $\D=\D(\n)$ and $\P=\P_\n$;
            \item\label{T:V=PT2b} $\P$ is a PT-assignment with degeneracy set $\D$;
            \item\label{T:V=PT2c} $\P$ is a numerical PT-assignment with degeneracy set $\D$;
            \item\label{T:V=PT2d} $\P$ is a BD-set with degeneracy set $\D$, which is also a weak PT-assignment;
            \item\label{T:V=PT2e} $\P$ is a BD-set  with degeneracy set $\D$, which is also a weak numerical PT-assignment.
        \end{enumerate}
    \end{enumerate}
\end{theorem}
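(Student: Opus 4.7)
The plan is to prove Part~(1) first — establishing $\P_\n = \BD_{I_\n}$ as well as the bijection between V-stabilities and BD-sets of weak numerical PT type — and then to deduce Part~(2) from it.

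For the equality $\P_\n = \BD_{I_\n}$ I would proceed in three steps. First, show that $I_\n(F) \in \P_\n(F)$ for every $F \in m\S\S_{\D(\n)}(\Gamma)$ with decomposition $F = \bigsqcup_i F_i$: by Proposition~\ref{P:Pn-rest} this reduces to verifying that each restriction $I_\n(F)|_{V(F_i)}$ is $\n(F_i)$-semistable on the tree $F_i$, which follows directly from the defining formula~\eqref{E:In} since removing any edge of the tree that separates a biconnected $W \subset V(F_i)$ makes $I_\n(F)_W$ equal to $\n(F_i)_W$. Upper closure of $\P_\n$ (Proposition~\ref{P:n-ss}(\ref{P:n-ss1})) then yields $\BD_{I_\n} \subseteq \P_\n$. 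Second, combine the chain
\[
c_{\D(\n)}(\Gamma) \leq |\BD_{I_\n}(\Gamma)| \leq |\P_\n(\Gamma)| \leq c_{\D(\n)}(\Gamma)
\]
coming from Proposition~\ref{BDthm}(\ref{BDthm1}) and Corollary~\ref{c:cardVstab} to force equality throughout; this also shows that $\BD_{I_\n}$ is a weak numerical PT-assignment. Third, apply Proposition~\ref{BDthm}(\ref{BDthm2}) to each $\D(\n)$-admissible spanning subgraph $G$ to obtain $|\BD_{I_\n}(G)| = c_{\D(\n)}(G) = |\P_\n(G)|$, whence equality of the sets $\BD_{I_\n}(G) = \P_\n(G)$.

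To complete Part~(1), I would show that the two assignments are mutually inverse. For $\n \mapsto I_\n \mapsto \n^{I_\n}$, given any $W \in \BCon(\Gamma)$ and an adapted spanning forest $F$ satisfying~\eqref{E:Fadapted}, substitute the explicit formula~\eqref{E:In} into~\eqref{E:nI} and use the additivity of Lemma~\ref{L:add-whn} (or, when $W$ is biconnected but not in $\wh\D(\n)$, the computation on the unique tree component containing the separating edge) to obtain $\n^{I_\n}_W = \n_W$. For the reverse composition, given a $\D$-forest function $I$ such that $\BD_I$ is a weak numerical PT-assignment, the forward direction applied to $\n^I$ yields $\BD_I = \P_{\n^I} = \BD_{I_{\n^I}}$, and uniqueness of the forest function representing a numerical PT-assignment (Corollary~\ref{C:PTstabisBD}) forces $I = I_{\n^I}$.

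For Part~(2), the implications (b)$\Rightarrow$(c) and (b)$\Rightarrow$(d) follow from Proposition~\ref{P:cardPTstab}(\ref{P:cardPTstableq}) and the definitions; (c)$\Rightarrow$(e) follows from Corollary~\ref{C:PTstabisBD}; (d)$\Rightarrow$(e) follows by combining Proposition~\ref{P:cardPTstab}(\ref{P:cardPTstableq}) with Proposition~\ref{BDthm}(\ref{BDthm1}) to force equality; and (e)$\Rightarrow$(a) is the content of Part~(1). The remaining implication (a)$\Rightarrow$(b) amounts to checking that $\P_\n$ is a PT-assignment: Condition~(\ref{D:PTstabProp2}) is provided by Proposition~\ref{p:Vstabtopic}, and for Condition~(\ref{D:PTstabProp1}) the cardinality equality $|\P_\n(G)| = c_{\D(\n)}(G) = \prod_i c(G_i) = |\Pic^{\delta_G}(G)|$ (Kirchhoff's theorem) together with the injectivity of $\pi_G$ on the polystable subset from Proposition~\ref{p:Vstabtopic} and the existence of a polystable lift in each chip-firing class upgrades the cardinality coincidence to surjectivity onto a single component $\Pic^{\delta_G}(G)$ of the Picard group. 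The main obstacle I anticipate is Step~1 of Part~(1): even though the formula~\eqref{E:In} for $I_\n(F)$ is explicit, verifying the semistability inequality for every biconnected $W$ requires carefully using the axioms of a V-stability after restriction to connected components of $F$ and managing the distinction between degenerate and nondegenerate subsets; a secondary obstacle is the surjectivity of $\pi_G$ in (a)$\Rightarrow$(b), where one must genuinely produce a polystable representative in each class rather than merely match cardinalities.
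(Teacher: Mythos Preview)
Your overall architecture matches the paper's, and your Step~1 in Part~(1) --- verifying $I_\n(F)\in\P_\n(F)$ directly via Proposition~\ref{P:Pn-rest} and the tree structure --- is actually cleaner than the paper's route through Lemma~\ref{L:Vstabs1divisor}. However, two steps contain genuine gaps.

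First, in the reverse composition of Part~(1), you write ``the forward direction applied to $\n^I$ yields $\BD_I = \P_{\n^I} = \BD_{I_{\n^I}}$''. The forward direction applied to $\n^I$ only gives the second equality $\P_{\n^I} = \BD_{I_{\n^I}}$; the first equality $\BD_I = \P_{\n^I}$ is precisely what you need to prove and does not follow from anything established so far. The paper closes this by a direct computation: it evaluates $I_{\n^I}(F)$ on each $V(F_i)$ and on each $W_e$ using formulas~\eqref{E:In} and~\eqref{E:nI} (together with $e_S(W_e) = b_1(\Gamma[W_e]) + b_0(F[W_e]) - 1$ for a forest $F$) and obtains $I_{\n^I}(F) = I(F)$ vertex by vertex. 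You cannot shortcut this via Corollary~\ref{C:PTstabisBD} without first establishing that $\BD_I$ and $\BD_{I_{\n^I}}$ coincide as sets.

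Second, and more seriously, your surjectivity argument in (a)$\Rightarrow$(b) rests on the claimed equality $c_{\D(\n)}(G) = \prod_i c(G_i) = |\Pic^{\delta_G}(G)|$, which is false whenever $\D(\n)\neq\emptyset$: the $\D$-complexity counts \emph{all} minimal $\D$-admissible spanning forests contained in $G$, not just those with the same connected components as $G$, so in general $c_{\D(\n)}(G) > c(G)$ (cf.\ Remark~\ref{R:cD-spec}). Consequently $|\P_\n(G)| > |\Pic^{\delta_G}(G)|$ and $\pi_G$ is not injective, so a pure cardinality match cannot establish surjectivity. Your ``existence of a polystable lift in each chip-firing class'' is also asserted, not proved --- it is equivalent to the surjectivity you want. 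The paper's argument is different: it uses the explicit map $K_G \colon \P_\n(G) \to \S\F_{\D(\n)}(G)$ built in the proof of Proposition~\ref{P:cardPTstab}, shows it is a bijection (since both sides have cardinality $c_{\D(\n)}(G)$), observes that distinct spanning \emph{trees} of $G$ come from divisors with distinct images in $\Pic^{\delta_G}(G)$, and concludes surjectivity because there are $c(G) = |\Pic^{\delta_G}(G)|$ such trees.
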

This is proved in \cite[Theorem~1.20]{viviani2023new} for $\D=\emptyset$.
\begin{proof}
    We will follow the strategy of the proof of  \cite[Theorem~1.20]{viviani2023new}. 
    
    Let us first prove Part \eqref{T:V=PT1}. First of all, observe that the two maps  preserve the degree and the degeneracy subset by Lemmas \ref{L:BDInwelldef} and \ref{L:nIwelldef}.
    We conclude the proof of Part \eqref{T:V=PT1} by virtue of the  following two Claims. 
    
    \underline{Claim 1}: For any V-stability $\n$ on $\Gamma$, $BD_{I_\n}$ is a weak numerical PT-assignment and $\P_\n=BD_{I_\n}$.

    Proof of the claim. For any spanning forest $F=\Gamma\setminus S\in m\S\S(\Gamma)$, Lemma~\ref{L:Vstabs1divisor} below gives that 
    $$
    \P_\n(F)=\{D(F)\},
    $$
    for some divisor $D(F)\in\Div^{d-b_1(\Gamma)-b_0(F)+1}(F)$. Let $F=\bigsqcup_{i=1}^k F_i$ be the decomposition in connected components. For every $i$ and every edge $e\in F_i$, let $$F\setminus \{e\}=:F_i[W_e]\sqcup F_i[V(F_i)\setminus W_e]\bigsqcup_{j\neq i}F_j.$$ Notice that this identifies the sets of vertices $W_e$ and $V(F_i)\setminus W_e$ up to swapping. Then, we have that $F[W_e]$ and $F[V(F_i)\setminus W_e]$ are minimal spanning forests respectively for $\Gamma[W_e]$ and for $\Gamma[V(F_i)\setminus W_e]$, and we have that $\val_F(W_e)=1$. By Lemma-Definition~\ref{LD:V-subgr} and Remark~\ref{R:prop-Pn}(\ref{R:prop-Pn2}), we see that
    \begin{equation}\label{E:V=PT1}
        D_{W_e}\geq\n_{W_e}-e_S(W_e)=\n(F_i)_{W_e},
    \end{equation}
    and, analogously, Remark~\ref{R:prop-Pn}(\ref{R:prop-Pn1}) gives
    \begin{equation}\label{E:V=PT2}
        D_{W_e}\leq\n_{W_e}-e_S(W_e)=\n(F_i)_{W_e},
    \end{equation}
    since $W_e\notin\wh\D(\n)$.
    Moreover, by Remark~\ref{R:prop-Pn}(\ref{R:prop-Pn3}), we see that
    \begin{equation}\label{E:V=PT3}
        D_{V(F_i)}+e_S(V(F_i))=\n_{V(F_i)},
    \end{equation}
    and, combining \eqref{E:V=PT1}, \eqref{E:V=PT2} and \eqref{E:V=PT3}, up to swapping $W_e$ with $V(F_i)\setminus W_e$, we deduce that 
    $$
    \P_\n(F)=\{I_\n(F)\}.
    $$

    Hence, since $\P_\n$ is an upper subset by Proposition~\ref{P:n-ss}(\ref{P:n-ss1}), by the definition~\ref{D:BD-set} of BD-sets, we have that 
    \begin{equation}\label{E:V=PT5}
        BD_{I_\n}\subset\P_\n.
    \end{equation}
    Now, Lemma~\ref{L:BDrestr} and Proposition~\ref{BDthm} imply that 
    \begin{equation}\label{E:V=PT6}
        |BD_{I_\n}(G)|\geq c_{\D(\n)}(G),
    \end{equation}
    for each $G\in\S\S_{\D(\n)}(\Gamma)$, while Corollary~\ref{c:cardVstab} gives
    \begin{equation}\label{E:V=PT7}
        |\P_\n(G)|\leq c_{\D(\n)}(G).
    \end{equation}
    The combination of \eqref{E:V=PT5}, \eqref{E:V=PT6} and \eqref{E:V=PT7} proves Claim 1.

    \underline{Claim 2}: Any BD set that is also a weak numerical PT-assignment is of the form $BD_{I_\n}$, for a unique V-stability condition $\n$.

    Let $I$ be a $\D$ forest function, and let $\n^I$ be the V-stability condition with $\D(\n^I)=\D$ defined in \eqref{E:nI}. Let $F\in m\S\S_\D(\Gamma)$, with decomposition into irreducible components $F=\bigsqcup_{i=1}^kF_i$. Firstly, we observe that, since $V(F_i)\in\wh\D(\n^I)$, for any $i=1,\ldots,k$, Definition~\ref{D:ext-n}(\ref{D:ext-n2}) shows that 
    \begin{equation*}
        \n_{V(F_i)}^I=I(F)_{V(F_i)}+b_1(\Gamma[V(F_i)]),
    \end{equation*}
    and, by Lemma-Definition~\ref{LD:V-subgr}, that
    \begin{equation}\label{E:V=PT8}
        I_{\n^I}(F)_{V(F_i)}=|\n^I(F_i)|=I(F)_{V(F_i)}.
    \end{equation}

    For each connected component $F_i$, and each $e\in F_i$, let $F\setminus \{e\}=F_i[W_e]\sqcup F_i[V(F_i)\setminus W_e]\bigsqcup_{j\neq i}F_j$ be as above.
    Then, combining \eqref{E:nI} and \eqref{E:In}, we see that
    
    \begin{equation}\label{E:V=PT9}
        I_{\n^I}(F)_{W_e}=\n^I(F_{i})_{W_e}=\n^I_{W_e}-e_S(W_e)=\n^I_{W_e}-b_1(\Gamma[W_e])=I(F)_{W_e}.
    \end{equation}
    Therefore, combining \eqref{E:V=PT8} and \eqref{E:V=PT9}, we obtain $I=I_{\n^I}$, and thus
    $$
    BD_I=BD_{I_{\n^I}}.
    $$
    We are left to prove that $\n^I$ is indeed the unique V-stability condition such that $I=I_{\n^I}$. Let $\n$ be a V-stability condition such that $I_\n=I$. Let $W\in\BCon(\Gamma)$ and let $F\in m\S\S_\D(\Gamma)$ be adapted to $W$, as in \eqref{E:Fadapted}. Then, by applying \eqref{E:In} and using that $I_\n=I=I_{\n^I}$, we obtain
    $$
    \n_W-b_1(\Gamma[W])-b_0(F[W])+1=I(F)_W=\n^I_W-b_1(\Gamma[W])-b_0(F[W])+1,
    $$
    and hence $\n=\n^I$.

    Now, we prove Part \eqref{T:V=PT2} via the following cycle of implications:
    \begin{itemize}
        \item (\ref{T:V=PT2a}$)\implies$(\ref{T:V=PT2b}): By Proposition~\ref{P:n-ss}, $\P_\n$ is an upper subset of $\OO_\n^d(\Gamma)$. Consider a $\D(\n)$-admissible spanning subgraph $G=\Gamma\setminus S\leq\Gamma$. Corollary~\ref{c:cardVstab} implies that the map 
        \begin{equation}\label{E:piG}
            \pi_{G}:\P_\n(G)\subset \Div^{d-|S|}(G)\twoheadrightarrow\Pic^{\delta_G}(G)
        \end{equation}
        satisfies Property~\ref{D:PTstabProp2} of Definition~\ref{D:PTstab}. Hence, by the proof of Proposition~\ref{P:cardPTstab}, the map
        $$
        K_G:\P_\n(G)\hookrightarrow\S\F_{\D(\n)}(G)
        $$
        is injective for each $G\in\S\S_\D(\Gamma)$. Since $\P_\n$ is a BD-set and a weak numerical PT-assignment by part \eqref{T:V=PT1}, Proposition~\ref{BDthm}(\ref{BDthm2}a) implies that 
        $$
        |\P_\n(G)|=c_{\D(\n)}(G)=|\S\F_{\D(\n)}(G)|,
        $$
        hence the map $K_G$ is a bijection. We observe that, by construction of $K_G$, for every pair of divisors $D_1,D_2\in\P_\n(G)$ such that $\pi_G(D_1)=\pi_G(D_2)$, if $K_G(D_1)\in\S\T(G)$, then  $K_G(D_2)\notin\S\T
        (G)$, and viceversa. By surjectivity of $K_G$, for every spanning tree $T\in\S\T(G)$, there exists a divisor $D(T)\in\P_\n$ such that $K_G(D(T))=T$, and for every pair of trees $T,T'\in\S\T(G)$, we have $\pi_G(D(T))\neq\pi_G(D(T'))$. Therefore, we deduce that the map \eqref{E:piG} is surjective, and $\P_\n$ is a PT assignment.
        \item (\ref{T:V=PT2b}$)\implies$(\ref{T:V=PT2c}):  follows from Proposition~\ref{P:cardPTstab}.
        \item (\ref{T:V=PT2c}$)\implies$(\ref{T:V=PT2d}):  follows from Corollary~\ref{C:PTstabisBD}.
        \item (\ref{T:V=PT2d}$)\implies$(\ref{T:V=PT2e}): follows from Proposition~\ref{P:cardPTstab}.
        \item (\ref{T:V=PT2e}$)\implies$(\ref{T:V=PT2a}): follows from Part~\ref{T:V=PT1}.

    \end{itemize}
    \end{proof}
The next Lemma completes the proof of the above.
\begin{lemma}\label{L:Vstabs1divisor}
    Let $\n$ be a V-stability condition on $\Gamma$. If $F\in m\S\S_{\D(\n)}(\Gamma)$, then $\P_\n(F)$ contains exactly one element.
\end{lemma}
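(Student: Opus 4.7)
The plan is to reduce to a single tree component via the restriction property of $\P_\n$, use the cardinality estimates on V-semistable sets to obtain $|\P_\n(F)|\leq 1$, and exhibit an explicit semistable divisor to get $|\P_\n(F)|\geq 1$.

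Since $F\in m\S\S_{\D(\n)}(\Gamma)\subseteq\S\F_{\D(\n)}(\Gamma)$, each connected component in the decomposition $F=\bigsqcup_{i=1}^k F_i$ is a tree. Applying Proposition~\ref{P:Pn-rest} to $G=F$ and restricting to pairs whose underlying spanning subgraph is $F$ itself yields a bijection
$$
\P_\n(F)\xrightarrow{\sim}\prod_{i=1}^k \P_{\n(F_i)}(F_i),
$$
so it suffices to show that $\P_{\n(F_i)}(F_i)$ is a singleton for each $i$. For the upper bound, Corollary~\ref{c:cardVstab} applied to the V-stability $\n(F_i)$ on $F_i$ gives $|\P_{\n(F_i)}(F_i)|\leq c_{\D(\n(F_i))}(F_i)$. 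The minimality of $F$ in $\S\S_{\D(\n)}(\Gamma)$, combined with the product decomposition of Lemma~\ref{L:operOD}, forces $F_i$ to be the unique element of $\S\S_{\D(\n(F_i))}(F_i)$: otherwise, a proper $\D(\n(F_i))$-admissible sub-spanning-subgraph $F_i'<F_i$ would produce $(\bigsqcup_{j\neq i}F_j)\sqcup F_i'<F$ still in $\S\S_{\D(\n)}(\Gamma)$, contradicting minimality of $F$. Hence $c_{\D(\n(F_i))}(F_i)=1$, and the upper bound follows.

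For existence on the tree $T:=F_i$ equipped with the V-stability $\m:=\n(F_i)$, I would fix an arbitrary root $v_0\in V(T)$ and, for each $v\in V(T)$, let $T_v\subseteq T$ denote the subtree consisting of $v$ together with all its descendants; note that $V(T_v)\in\BCon(T)$ with $\val_T(V(T_v))=1$ for every $v\neq v_0$, and that these subsets together with their complements exhaust $\BCon(T)$. I then define the divisor $D\in\Div(T)$ by
$$
D_v:=\m_{V(T_v)}-\sum_{w\,\text{child of}\, v}\m_{V(T_w)}\quad (v\neq v_0),\qquad D_{v_0}:=|\m|-\sum_{w\,\text{child of}\, v_0}\m_{V(T_w)}.
$$
A routine telescoping gives $\deg(D)=|\m|$ and $D_{V(T_v)}=\m_{V(T_v)}$ for every non-root $v$, so the semistability inequality at $W=V(T_v)$ is satisfied (with equality). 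For the complementary subset $W=V(T)\setminus V(T_v)$, we have $D_W=|\m|-\m_{V(T_v)}\geq\m_{V(T)\setminus V(T_v)}$ as a consequence of axiom~\eqref{E:sum-n} of Definition~\ref{D:Vstab} applied to $V(T_v)$: since $\val_T(V(T_v))=1$, this axiom forces $\m_{V(T_v)}+\m_{V(T)\setminus V(T_v)}\leq |\m|$. Hence $D\in\P_\m(T)$, completing the argument.

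The main delicate point is the descent of minimality from $F$ down to each $F_i$, which depends essentially on the product decomposition of Lemma~\ref{L:operOD}; once this is in place, the upper bound from Corollary~\ref{c:cardVstab} matches the lower bound provided by the explicit rooted-tree construction, and the remaining verifications are standard telescoping and a single application of the V-stability axiom at each edge-cut.
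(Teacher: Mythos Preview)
Your proof is correct. Both you and the paper invoke Corollary~\ref{c:cardVstab} for the upper bound $|\P_\n(F)|\leq 1$ (the paper applies it directly to $\n$ on $\Gamma$ at $G=F$, while you first reduce to each tree $F_i$ via Proposition~\ref{P:Pn-rest}; either route works since $c_{\D(\n)}(F)=1$).

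The substantive difference is in the existence step. The paper argues by induction on $|E(F)|$: it picks a leaf edge $e$, contracts it, applies the inductive hypothesis on $\Gamma/\{e\}$ to obtain a divisor there, and then lifts it back by specifying the value at the leaf vertex. This requires a rather delicate verification that the lifted divisor lies in $\P_\n(F)$, involving a three-case analysis (Cases~I--III) of how a biconnected subset $W$ interacts with the leaf. Your approach instead uses Proposition~\ref{P:Pn-rest} to reduce to a single tree $T=F_i$ with the restricted V-stability $\m=\n(F_i)$, and then writes down the unique semistable divisor in closed form via a rooting: the telescoping identity $D_{V(T_v)}=\m_{V(T_v)}$ and the single application of axiom~\eqref{E:sum-n} at each edge-cut (where $\val_T=1$) dispose of all biconnected subsets at once. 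This is shorter and more transparent than the paper's induction, at the cost of invoking the restriction machinery of Proposition~\ref{P:Pn-rest} and Lemma~\ref{L:operOD}, which the paper's argument does not need. Your descent of minimality from $F$ to each $F_i$ via Lemma~\ref{L:operOD}\eqref{L:operOD2i} (together with the identification $\D(\n)(F_i)=\D(\n(F_i))$) is the only place requiring care, and you have handled it correctly.
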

\begin{proof}
    The strategy of the proof is similar to \cite[Lemma~1.21]{viviani2023new}.

    By Corollary~\ref{c:cardVstab}, and the fact that $c_{\D(\n)}(F)=1$ for a minimal spanning forest $F$, it is enough to show that $\P_\n(F)\neq\emptyset$. 

    We will prove this by induction on the number of edges of $\Gamma\setminus S:=F$. 
    \begin{itemize}
        \item[Case 1:] $|E(F)|=0$. Let $V(\Gamma)=\{v_1,\ldots,v_k\}$. Then $F=\bigsqcup_{i=1}^k\{v_i\}$ and $\{v_i\}\in\wh\D(\n)$ for each $i=0,\ldots,k$.
        We define the divisor $D$ on $\Gamma$ by $D_v=\n_v$ for each $v\in V(\Gamma)$, and we show that $D\in\P_\n(F)$. 
        
        For each $W\in\BCon(\Gamma)$, we have that $W\in\D(\n)$. We prove that $D_W=\n_W-e_S(W)$ by induction on $|V(W)|$, the case of $W=\{v\}$ being trivial. Pick a vertex $\ov v\in W$ and write $W=\{\ov v\}\cup W\setminus\{\ov v\}$. By Lemma~\ref{L:add-whn}, we have 
        $$
        D_W=D_{\ov v}+D_{W\setminus\ov \{v\}}=\n_{\ov v}+\n_{W\setminus\{\ov v\}}-e_S(W\setminus\{\ov v\})=
        $$
        $$=\n_W-\val(\{\ov v\},W\setminus\{\ov v\})-e_S(W\setminus\{\ov v\})=\n_W-e_S(W).
        $$

        \item[Case 2:] $|E(F)|=h+1$ for some $h \in \mathbb{N}$. We proceed in complete analogy to \cite[Lemma~1.21]{viviani2023new}. Pick an end vertex $v$ of $\Gamma\setminus S$, let $e$ be the unique edge of $\Gamma\setminus S$ incident to $v$ and let $w$ be the other vertex incident to $e$. 
        Consider the contraction of $e$ 
$$
f:\Gamma\to \Gamma/\{e\}:=\wt \Gamma,
$$
which sends the two vertices $v,w$ into the vertex $\wt w$. We will identify the set $V(\Gamma)\setminus \{v,w\}$ with the set $V(\wt \Gamma)\setminus \{\wt w\}$. 

The image $f_*(\Gamma\setminus S)=\wt \Gamma\setminus \wt S$, where $\wt S=(f^E)^{-1}(S)$,  is a minimal spanning forest of $\wt \Gamma$ and the restriction 
$$
f_{\Gamma\setminus S}:\Gamma\setminus S\to \wt \Gamma \setminus \wt S
$$
is again the contraction of the edge $e$. Proposition~\ref{P:Pn-func} gives a well-defined map 
$$
f_*:\P_{\mathfrak n}(\Gamma\setminus S)\to \P_{f_*\mathfrak n}(\wt \Gamma \setminus \wt S).
$$
By our induction assumption, the set $\P_{f_*\mathfrak n}(\wt \Gamma \setminus \wt S)$ consists of a unique divisor $\wt D$. 
We want to show $\wt D$ is the image via $f_*$ of an element of $\P_{\mathfrak n}(\Gamma\setminus S)$. 

With this aim, we introduce the following divisor on $\Gamma$: 
$$
D_t=
\begin{cases}
\mathfrak \n(\Gamma[V(F_v)])_v & \text{ if } t=v, \\
\wt D_w-\mathfrak \n(\Gamma[V(F_v)])_v & \text{ if } t=w, \\
\wt D_t & \text{ if } t\neq v, w,
\end{cases}
$$
where $F_v$ denotes the connected component of $F$ containing $v$. Notice that if $v\in\BCon(\Gamma)$, then $\n(\Gamma(V(F_v)))_v=\n_v$.
    \end{itemize}

We conclude by showing the following

\un{Claim:} $D\in \P_{\mathfrak n}(\Gamma\setminus S)$.

Indeed, first of all we have that 
$|D|+|S|=|\wt D|+|\wt S|=|f_*\mathfrak n|=|\mathfrak n|$. 
Consider next a non-trivial biconnected subset $W\subset V(\Gamma)$ and let us check that 
\begin{equation}\label{E:stabV}
D_W+e_S(W) \geq \mathfrak n_W.
\end{equation}
We will distinguish several cases according to whether or not $v$ and $w$ belong to $W$. Using Remark~\ref{R:prop-Pn}\eqref{R:prop-Pn1}, we can assume that either $v,w\in W$ or $v\in W\not\ni w$, provided that we prove both the inequalities of \eqref{E:stabV} and of \eqref{E:uppbound} for such $W$'s.

$\bullet$ Case I: $v,w\in W$

We have that $W=f_V^{-1}(\wt W)$ with $\wt w\in \wt W\subset V(\wt \Gamma)$ biconnected non-trivial, and $W$ is $\n$-degenerate if and only if $\wt W$ is $f_*\n$-degenerate. We compute 
\begin{multline}
\mathfrak n_W= (f_*\mathfrak n)_{\wt W}\leq D_W+e_S(W)=\wt D_{\wt W}+e_{\wt S}(\wt W)\leq
\\ \leq \begin{cases}
    (f_*\mathfrak n)_{\wt W}-1+\val_{\Gamma\setminus S}(\wt W)=\mathfrak n_W-1-\val_{\Gamma\setminus S}(W), & \text{if $W$ is $\n$-nondegenerate}\\
    (f_*\mathfrak n)_{\wt W}+\val_{\Gamma\setminus S}(\wt W)=\mathfrak n_W-\val_{\Gamma\setminus S}(W), & \text{if $W$ is $\n$-degenerate}
\end{cases}.
\end{multline}

$\bullet$ Case II: $W=\{v\}$. If $W$ is biconnected, then it is $\n$-nondegenerate, since $F$ is minimal.

We have that 
$$
\mathfrak  n_v=D_v+e_S(v)=\mathfrak n_v-1+\val_{\Gamma\setminus S}(v)=\mathfrak n_v.
$$

$\bullet$ Case III: $\{v\}\subsetneq W\not\ni w$.

Let $Z:=W\setminus \{v\}$ and consider the decomposition $\Gamma[Z]=\Gamma[Z_1]\coprod \ldots \coprod \Gamma[Z_k]$ into connected components.
Note that $Z_i^\mathsf{c}=\bigcup_{j\neq i} Z_j \cup \{v\} \cup W^\mathsf{c}$ is connected since $W^\mathsf{c}$ is connected, there is at least one edge between $v$ and $W^\mathsf{c}$ (namely $e$) and there is at least one edge between $v$ and $Z_j$ (since $W$ is connected). Hence each $Z_i$ is biconnected (and non-trivial).  Moreover, we can write $Z_i=f_V^{-1}(\wt Z_i)$ for some $\wt Z_i\subset V(\Gamma)$ non-trivial and biconnected. 
Up to reordering the sets, let $1\leq r\leq k$ be an index such that $Z_i\in\D(\n)$ iff $i> r$. We notice that $Z_i\in\D(\n)$ iff $\wt Z_i\in\D(f_*\n)$.
We set $\wt Z=\bigcup_i \wt Z_i$ and observe that $Z=f_V^{-1}(\wt Z)$. 

Now we compute 
\begin{equation}\label{E:equa1}
D_W+e_S(W)=D_v+D_Z+e_S(Z)+\val_S(v,Z)=\mathfrak n_v+\wt D_{\wt Z}+e_{\wt S}(\wt Z)+\val_{\Gamma}(v,Z),
\end{equation}
where we have used that $D_v=\mathfrak n_v-e_S(v)$ and that all the edges joining $v$ with $Z$ belong to $S$.

Since $\wt D$ belongs to $\P_{f_*\mathfrak n}(\wt \Gamma \setminus \wt S)$, we have that (for any $1\leq i \leq k$):
 \begin{equation*}\label{E:equa2}
\mathfrak n_{Z_i}=f_*(\mathfrak n)_{\wt Z_i}\leq \wt D_{\wt Z_i}+e_{\wt S}(\wt Z_i)\leq 
\begin{cases}
    f_*(\mathfrak n)_{\wt Z_i}-1+\val_{\wt \Gamma\setminus \wt S}(\wt Z_i)=\mathfrak n_{Z_i}-1+\val_{\Gamma\setminus S}(Z_i) & \text{for $i\leq r$}\\
    f_*(\mathfrak n)_{\wt Z_i}+\val_{\wt \Gamma\setminus \wt S}(\wt Z_i)=\mathfrak n_{Z_i}+\val_{\Gamma\setminus S}(Z_i) & \text{for $i>r$}.
\end{cases}
\end{equation*}
Summing the above inequalities over all indices $i=1,\ldots, k$, and using that $\val_{\Gamma}(Z_i,Z_j)=0$ for $i\neq j$, we find 
 \begin{equation}\label{E:equa3}
\sum_i \mathfrak n_{Z_i}\leq \wt D_{\wt Z}+e_{\wt S}(\wt Z)\leq \sum_i \mathfrak n_{Z_i}-r+\val_{\Gamma\setminus S}(Z).
\end{equation}
Since $\{v\}$ is $\n$-nondegenerate, arguing as in \cite[Lemma~4.6(b)]{FPV1} for each of the subgraphs $\{v\}\bigcup_{1\leq i\leq s} Z_i$ for $1\leq s\leq r$, and using again that $\val_{\Gamma}(Z_i,Z_j)=0$ for $i\neq j$, 
we get that 
\begin{equation}
-r\leq \mathfrak n_{\{v\}\bigcup_{i\leq r}Z_i}-\mathfrak n_v-\sum_{i=1}^r \mathfrak n_{Z_i}-\val_{\Gamma}(v,\bigcup_{i=1}^rZ_i)\leq 0. 
\end{equation}
Finally, we observe that $\{v\}\bigcup_{i\leq r}Z_i\in\D(\n)$ if and only if $W\in\D(\n)$. Either way, arguing as in \cite[Lemma~4.6(b)]{FPV1} for all the subgraphs $\{v\}\bigcup_{i\leq r}Z_i\bigcup_{r+1\leq i\leq s}Z_i$, for $r+1\leq s\leq k$, we obtain 
\begin{equation}\label{E:equa5}
    -r\leq \n_W-\n_v-\sum_{i=1}^k\n_{Z_i}-\val_\Gamma(v,Z)\leq 0.
\end{equation}
Combining \eqref{E:equa1}, \eqref{E:equa3} and \eqref{E:equa5}, and using that $\val_{\Gamma\setminus S}(v,Z)=0$ and $\val_{\Gamma\setminus S}(v,W^\mathsf{c})=1$, 
we get the desired inequalities
\begin{equation*}\label{E:equa6}
\mathfrak n_W\leq D_W+e_S(W)\leq \mathfrak n_W+\val_{\Gamma\setminus S}(Z)=\mathfrak n_W-1+\val_{\Gamma\setminus S}(W).
\end{equation*}
\end{proof}

\section{Compactified Jacobians}\label{Sec:cJ}
In this section we define \emph{compactified Jacobians} and \emph{smoothable compactified Jacobians} for nodal curves. We start the section by fixing the notation, and recalling some geometric properties of the moduli space of rank~$1$ torsion free sheaves on a fixed nodal curve. In particular, we study how such sheaves specialize. 

Then we introduce, in Definition~\ref{D:VcJ}, compactified Jacobians arising from a V-stability condition (introduced in Definition~\ref{D:Vstab}), and prove in Theorem~\ref{T:VcJ-smoo} that these are indeed smoothable compactified Jacobians. Finally, in Theorem~\ref{T:cla-smoo} we prove the converse:  all smoothable compactified Jacobians arise from some V-stability condition.

\subsection{Notation on nodal curves}\label{sub:not-nodal}

Let $X$ be a nodal curve over $k=\ov k$, i.e. a projective and reduced curve over an algebraically closed field $k$  having only nodes as singularities. We denote by $g(X):=1-\chi(\O_X)$ the arithmetic genus of $X$.

The \emph{dual graph} of a nodal curve $X$, denoted by $\Gamma_X$, is the graph having one vertex for each irreducible component of $X$, one edge for each node of $X$ and such that an edge is adjacent to a vertex if the corresponding node belongs to the corresponding irreducible component. 
We will denote the irreducible components of $X$ by 
$$\{X_v\::\: v\in V(\Gamma_X)\},$$ 
and the nodes of $X$ by 
$$\Xsing:=\{n_e\: :\: e\in E(\Gamma_X)\}.$$ 
Note that $X$ is connected if and only if $\Gamma_X$ is connected. In general, we will denote by $\gamma(X)$ the number of connected components of $X$ (or of its dual graph $\Gamma_X$). 

A \emph{subcurve} $Y\subset X$ is a closed subscheme of $X$ that is a curve, or in other words $Y$ is the union of some irreducible components of $X$. 
Hence the subcurves of $X$ are in bijection with the subsets of $V(\Gamma_X)$:
\begin{equation}\label{E:sub-Vert}
 \begin{aligned}
   \left\{\text{Subsets of } V(\Gamma_X)\right\} & \leftrightarrow \left\{\text{Subcurves of} X \right\}  \\
   W &\mapsto X[W]:=\bigcup_{v\in W} X_v \:  \: \text{ for } W\subseteq V(\Gamma_X) \\
   V(\Gamma_Y) & \mapsfrom Y.
 \end{aligned}   
\end{equation}
We say that a subcurve $X[W]$ is non-trivial if $X[W]\neq \emptyset, X$, which happens if and only if $W$ is non-trivial.
The dual graph of $X[W]$ is equal to the induced subgraph $\Gamma_X[W]$. Hence, a subcurve $X[W]$ is connected if and only if $W\subseteq V(\Gamma_X)$ is connected. 

The complementary subcurve of $Y$ is 
$$Y^\mathsf{c}:=\ov{X\setminus Y}.$$
Note that $X[W]^\mathsf{c}=X[W^\mathsf{c}]$.  We say that a subcurve $X[W]$ is biconnected it if is connected and its complementary subcurve $X[W]^\mathsf{c}$ is connected, which happens if and only if $W$ is biconnected. The set of biconnected subcurves of $X$ is denoted by $\BCon(X)$, and it is in canonical bijection with $\BCon(\Gamma_X)$. We will often identify $\BCon(X)$ and $\BCon(\Gamma_X)$ in what follows, using the bijection \eqref{E:sub-Vert}.

We define the join and the meet of two subcurves by 
$$
\begin{sis}
& X[W_1]\vee X[W_2]:=X[W_1\cup W_2], \\
& X[W_1]\wedge X[W_2]:=X[W_1\cap W_2].
\end{sis}
$$
In other words, the join of two subcurves is simply their union, while the meet of two subcurves is the union of their common irreducible components. 

Given two subcurves $Y_1,Y_2$ of $X$ without common irreducible components (i.e. such that $Y_1\wedge Y_2=\emptyset$), we define 
by $|Y_1\cap Y_2|$ the cardinality of their intersection (which is a subset of $\Xsing$).  Note that 
$$
|X[W_1]\cap X[W_2]|=\val_{\Gamma_X}(W_1,W_2) \:\: \text{ for any  } \: W_1\cap W_2=\emptyset.
$$

Given a subset $S\subset E(\Gamma_X)$, we denote by  $X_S$ the \emph{partial normalization} of $X$ at the nodes corresponding to $S$ and by $\nu_S:X_S\to X$ the partial normalization morphism. The dual graph of $X_S$ is equal to 
$$\Gamma_{X_S}=\Gamma_X\setminus S.$$ 

\subsection{Torsion-free, rank-1 sheaves on nodal curves}\label{Sub:sheaves}

Let $X$ be a connected nodal curve over an algebraically closed field $k$. Let $I$ be a coherent sheaf on $X$. We say that $I$ is:
\begin{itemize}
\item  \emph{torsion-free}  if its associated points are generic points of $X$. Equivalently, $I$ is pure of dimension one (i.e. it does not have torsion subsheaves) and it has support equal to $X$.
\item \emph{rank-$1$} if $I$ is invertible on a dense open subset of $X$. 
\item  \emph{simple} if $\End(I) = k $.
\end{itemize}
Note that each line bundle on $X$ is torsion-free, rank-$1$ and simple. 

The \emph{degree} of a rank-$1$ sheaf $I$ is defined to be
\begin{equation}\label{E:deg-I}
\deg(I)=\chi(I)-\chi(\O_X)=\chi(I)-1+g(X).
\end{equation}

A torsion-free sheaf $I$ is locally free away from the nodes of $X$. We will denote by $\NF(I)$, and call it the \emph{non-free locus} of $I$, the set of nodes of $X$ at which $I$ is not free. We will denote by 
$G(I)$ the spanning subgraph $\Gamma_X\setminus \NF(I)$, and we will refer to it as the \emph{free subgraph} of $I$. 
If $I$ is a rank-$1$ torsion-free sheaf, then the stalk at a point $p\in X$ is equal to 
$$
I_p=
\begin{cases}
\O_{X,p} & \text{ if }p\not \in \NF(I), \\
\m_p & \text{ if }p\in \NF(I),
\end{cases}
$$ 
where $\m_p$ is the maximal ideal of the local ring $O_{X,p}$. A rank-$1$, torsion-free sheaf $I$ is equal to 
$$I=\nu_{\NF(I),*}(L_I),$$
for a uniquely determined line bundle $L_I$ on the partial normalization $\nu_{\NF(I)}:X_{\NF(I)}\to X$ of $X$ at $\NF(I)$. Indeed, the line bundle $L_I$ is given by the pull-back $\nu_{\NF(I)}^*(I)$ quotient out by its torsion subsheaf, and its degree is equal to 
$$
\deg L_I=\deg I-|\NF(I)|. 
$$

The  \emph{stack of torsion-free rank-$1$ sheaves}  on $X$ is denoted by $\TF_X$. More precisely, $\TF_X$ is the stack over $k$-schemes such that the fiber over a $k$-scheme $T$ is the groupoid of $T$-flat coherent sheaves $\I$ on $X\times_k T$ such that $\I|_{X\times t}$ is torsion-free, rank-$1$ sheaf on $X$ for any geometric point $t$ of $T$. The stack $\TF_X$ comes equipped with a universal sheaf $\I$ on $X\times_k \TF_X$. The automorphism group of $I\in \TF_X(k)$ is equal to   
\begin{equation}\label{E:Aut-I}
\Aut(I)=\Gm^{\gamma(X_{\NF(I)})}=\Gm^{\gamma(G(I))},
\end{equation}
where each copy of $\Gm$ acts by scalar multiplication on the corresponding connected component of $X_{\NF(I)}$.

In particular, $\Gm$ sits in the automorphism group of every $I\in \TF_X$ as the group of scalar multiplication and the $\Gm$-rigidification 
\begin{equation}\label{E:rig-TF}
    \TF_X\to \TF_X\fatslash \Gm
\end{equation}
is a trivial $\Gm$-gerbe.

The stack $\TF_X$ contains two open substacks
$$\PIC_X\subseteq \Simp_X\subseteq \TF_X,$$ 
where $\Simp_X$ is the open substack parametrizing simple  sheaves (i.e. those sheaves $I$ such that $\Aut(I)=\Gm$), and $\PIC_X$ is the Picard scheme of $X$, which parametrizes line bundles. 
 It follows from \eqref{E:Aut-I} that 
\begin{equation}\label{E:simp-I}
\begin{sis}
& I\in \Simp_X \Leftrightarrow X_{\NF(I)} \text{ is connected } \Leftrightarrow G(I)=\Gamma_X\setminus \NF(I) \text{ is connected, }\\
& I\in \PIC_X \Leftrightarrow X_{\NF(I)}=X  \Leftrightarrow G(I)=\Gamma_X\Leftrightarrow \NF(I)=\emptyset.\\
\end{sis}
\end{equation}
Note that $\Simp_X\fatslash \Gm$ is the biggest open algebraic subspace of $\TF_X\fatslash \Gm$.

We have a decomposition into connected components
\begin{equation}\label{E:Torsd}
\TF_X=\coprod_{d\in \ZZ} \TF^d_X
\end{equation}
where $\TF^\chi_X$ parametrizes sheaves of degree $d$. The decomposition \eqref{E:Torsd} induces the following decompositions 
$$
\Simp_X=\coprod_{d\in \ZZ} \Simp^d_X \: \text{ and }\: \PIC_X=\coprod_{d\in \ZZ} \PIC^d_X,
$$
although $\Simp^d_X$ and $\PIC^d_X$ are not necessarily connected.  

Here are some basic geometric properties of the stack $\TF_X$.

\begin{fact}\label{F:diagTF}
\noindent 
    \begin{enumerate}
        \item \label{F:diagTF1} The stack $\TF_X$ is quasi-separated and locally of finite type over $k$.
        \item \label{F:diagTF2} The diagonal of $\TF_X$ is affine and of finite presentation.
        \item \label{F:diagTF3}$\TF_X$ is a reduced stack  of pure dimension $g(X)-1$ with locally complete intersection singularities and its smooth locus is $\PIC_X$. 
    \end{enumerate}
\end{fact}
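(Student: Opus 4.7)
The plan is to handle (1) and (2) by general principles about moduli of coherent sheaves, and to attack (3) by constructing an explicit étale local model at every point via deformation theory.

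For parts (1) and (2), the observation is that $\TF_X$ is an open substack of the algebraic stack $\mathrm{Coh}_X$ of coherent sheaves on $X$: being torsion-free and of rank $1$ are both open conditions on a flat family. Standard results (see e.g.\ Laumon–Moret-Bailly) give that $\mathrm{Coh}_X$, and hence $\TF_X$, is algebraic, quasi-separated, and locally of finite type over $k$. The diagonal of $\TF_X$ is represented by the Isom-functor between pairs of sheaves; this functor is an open subfunctor of the (representable, affine, finitely presented) $\mathrm{Hom}$-scheme, hence the diagonal is affine and of finite presentation.

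For part (3), fix $I\in\TF_X(k)$ with non-free locus $S:=\NF(I)\subseteq \Xsing$. The strategy is to decompose the deformation theory of $I$ into a smooth global piece plus one local factor per node in $S$, using the local-to-global spectral sequence
\[
E_2^{p,q}=H^p(X,\mathcal{E}xt^q(I,I))\Longrightarrow \mathrm{Ext}^{p+q}(I,I).
\]
Since $\mathcal{E}xt^q(I,I)$ is supported on $S$ for $q\geq 1$ and $H^p$ vanishes for $p\geq 2$ on a curve, the spectral sequence degenerates and yields
\[
\mathrm{Ext}^1(I,I)\cong H^1(\mathcal{H}om(I,I))\oplus \bigoplus_{p\in S}\mathrm{Ext}^1_{\widehat{\O}_{X,p}}(I_p,I_p),\qquad
\mathrm{Ext}^2(I,I)\cong \bigoplus_{p\in S}\mathrm{Ext}^2_{\widehat{\O}_{X,p}}(I_p,I_p).
\]
One identifies $\mathcal{H}om(I,I)=(\nu_S)_*\O_{X_S}$ (because $\mathrm{End}_R(\m)$ equals the normalization of $R=k[[x,y]]/(xy)$), so the global contribution $H^1(\mathcal{H}om(I,I))=H^1(X_S,\O_{X_S})$ is the (smooth, unobstructed) tangent space to $\Pic(X_S)$.

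The local contribution at each $p\in S$ is the classical versal deformation of the $R$-module $\m=(x,y)$, which a direct resolution computation shows to be $\Spec\bigl(k[[s_p,u_p]]/(s_p u_p)\bigr)$: the tangent space $\mathrm{Ext}^1_R(\m,\m)$ is two-dimensional, obstructed by a single element of $\mathrm{Ext}^2_R(\m,\m)$ corresponding to the nodal relation $s_pu_p=0$. Assembling the smooth global piece with one node-smoothing factor per $p\in S$, one obtains an étale neighbourhood of $I$ in the rigidification $\TF_X\fatslash\Gm$ of the form
\[
\Spec\Bigl(k[[t_1,\dots,t_m]]\otimes_k \bigotimes_{p\in S}k[[s_p,u_p]]/(s_p u_p)\Bigr),\qquad m=\dim \Pic(X_S).
\]
This model visibly exhibits every claim of (3): it is a complete intersection of codimension $|S|$ in a smooth ambient scheme, so $\TF_X$ is reduced and LCI; a count of dimensions (taking into account the $\Gm^{\gamma(X_S)}$-stabilizer via \eqref{E:Aut-I}) gives stack dimension $g(X)-1$ at $I$, independently of $S$, yielding pure dimension; finally the singular locus of the model is $\{s_p=u_p=0 \text{ for some } p\in S\}$, so smoothness at $I$ is equivalent to $S=\emptyset$, i.e.\ $I\in\PIC_X$.

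The main obstacle is the construction of this local model as a genuine étale chart rather than only as a formal versal deformation: one must patch the local smoothings at different nodes with the smooth global deformations, and verify that obstructions mixing local and global data vanish. This can be done either by a direct deformation-theoretic argument using the splitting above (the non-trivial $\mathrm{Ext}^2$ lives entirely in the local summands), or by appealing to existing results in the literature (Cook, Esteves, Melo–Viviani) that carry out this construction explicitly for torsion-free sheaves on nodal curves.
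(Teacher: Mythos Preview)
Your proposal is correct and follows essentially the same approach as the paper: parts (1)--(2) are deduced from $\TF_X$ being open in the stack of coherent sheaves (the paper cites the Stacks Project lemmas 0DLY and 0DLZ in place of Laumon--Moret-Bailly), and part (3) is read off from the explicit semiuniversal deformation ring
\[
\widehat{\bigotimes}_{n\in\NF(I)} \frac{k[[x_n,y_n]]}{(x_ny_n)}\ \widehat{\otimes}\ k[[t_1,\ldots, t_{g(X_{\NF(I)})}]].
\]
The only difference is that the paper imports this formula from \cite[Sec.~3]{CMKVlocal}, whereas you sketch its derivation via the local-to-global Ext spectral sequence and the local computation of $\mathrm{Ext}^i_R(\mathfrak{m},\mathfrak{m})$ at each node; your argument is thus a fleshed-out version of what the paper cites.
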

\begin{proof}
 The stack $\TF_X$ is an open substack of the stack of coherent sheaves on $X$. Hence the first two properties follow from \cite[\href{https://stacks.math.columbia.edu/tag/0DLY}{Lemma 0DLY}]{stacks-project} and \cite[\href{https://stacks.math.columbia.edu/tag/0DLZ}{Lemma 0DLZ}]{stacks-project}.

 The last property follows from the fact that the semiuniversal deformation ring of a sheaf $I\in \TF_X$ is given by (see \cite[Sec. 3]{CMKVlocal})
 $$\wh \bigotimes_{n\in\NF(I)} \frac{k[[x_n,y_n]]}{(x_ny_n)}\wh \bigotimes k[[t_1,\ldots, t_{g(X_{\NF(I)})}]],$$
 and the well-known fact that the stack $\PIC_X$ has dimension $g(X)-1$ (the minus ones comes from the fact that its generic stabilizer is $\mathbb{G}_m$).
\end{proof}

For each subcurve $Y$ of $X$, let $I_Y$ be the restriction $I_{|Y}$ of $I$ to $Y$ modulo torsion.
If $I$ is a torsion-free (resp. rank-$1$) sheaf on $X$, so is $I_Y$ on $Y$.
We let $\deg_Y (I)$ denote the degree of $I_Y$, that is, $\deg_Y(I) := \chi(I_Y )-\chi(\O_Y)$. The \emph{multidegree} of a rank-$1$ torsion-free sheaf $I$ on $X$ is the divisor on $\Gamma_X$ defined as the multidegree of the line bundle $L_I$ on $X_{\NF(I)}$, i.e.
$$D(I):=D(L_I)=\{D(I)_v:=\deg_{|(X_{\NF(I)})_v}(L_I)\: : \: v\in V(\Gamma)\}.$$
where, as usual, we have made the identification $V(\Gamma_{X_{\NF(I)}})=V(\Gamma_X\setminus \NF(I))=V(\Gamma_X)$.
It turns out that for the subcurve $X[W]$ of $X$ associated to $W\subset V(\Gamma_X)$, we have
\begin{equation}\label{E:deg-sub}
\deg_{X[W]}(I)=D(I)_{W}+e_{\NF(I)}(W). 
\end{equation}
In particular, the degree of $I$ on an irreducible component $X_v$ is given by 
$$
\deg_{X_v}(I)=D(I)_v+e_{\NF(I)}(v),
$$
and the total degree of $I$ is given by
\begin{equation}\label{E:deg-mdeg}
\deg(I)=|D(I)|+|\NF(I)|. 
\end{equation}

The \emph{generalized Jacobian} $\PIC^{\un 0}_X$ of $X$, i.e. the semiabelian variety parametrizing  line bundles of multidegre zero, acts on $\TF_X$ via tensor product and its orbits are described in the following well-known

\begin{fact}\label{F:orbits} (see e.g. \cite[Sec. 5]{meloviviani})
Let $X$ be a connected nodal curve. 
\begin{enumerate}
\item \label{F:orbits1}
The orbits of $\PIC_X^{\un 0}$ on $\TF_X$ are given by 
\begin{equation}\label{E:orbit1}
\TF_X(G,D):=\{I\in \TF_X\: :\: G(I)=G \: \text{ and }\: D(I)=D\}.
\end{equation}
In particular, we get a decomposition into disjoint $\PIC_X^{\un 0}$-orbits 
$$
\TF_X^d=\coprod_{(G,D)\in \OO^{d}(\Gamma_X)} \TF_X(G,D) \subset \TF_X=\coprod_{(G,D)\in \OO(\Gamma_X)} \TF_X(G,D) 
$$
\item \label{F:orbits3}
Each $\TF_X(G,D)$ is a locally closed irreducible substack of $\TF_X$ and, if we endow it with the reduced stack structure, there is an isomorphism (if we write $G=\Gamma_X\setminus S$)
$$
(\nu_S)_*: \PIC^{D}_{X_S}:=\{L\in \PIC(X_{S}): \:  D(L)=D\} \xrightarrow{\cong} \TF_X(\Gamma_X\setminus S, D). 
$$
Under this isomorphism, the action of $\PIC_X^{\un 0}$ factors through the quotient $\PIC_X^{\un 0}\twoheadrightarrow \PIC_{X_S}^{\un 0}$ followed by the tensor product action of $\PIC_{X_S}^{\un 0}$ on $ \PIC^{D}_{X_S}$.
\item \label{F:orbits4}
The closure of $\TF_X(G,D)$ is equal to 
$$
\ov{\TF_X(G,D)}=\coprod_{(G,D)\geq (G',D')} \TF_X(G',D'),
$$
where $\geq $ is defined in \eqref{E:leq-GD}. In particular, the poset of $\PIC_X^{\un 0}$-orbits of $\TF_X$ (resp. $\TF_X^d$) is isomorphic to $\OO(\Gamma_X)$ (resp. $\OO^d(\Gamma_X)$).
 \end{enumerate}
\end{fact}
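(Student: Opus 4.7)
The proof splits into the three parts of the statement, and throughout I would exploit the pushforward description $I = \nu_{S,*} L_I$ with $S = \NF(I)$. For Part~(1), invariance of the pair $(G(I), D(I))$ under the $\PIC_X^{\un 0}$-action follows from the projection formula: for any $L \in \PIC_X^{\un 0}$ one has $I \otimes L = \nu_{S,*}(L_I \otimes \nu_S^* L)$, and since $\nu_S^* L$ has multidegree zero on $X_S$, both $\NF$ and $D$ are preserved. Conversely, for transitivity on $\TF_X(G,D)$ with $G = \Gamma_X \setminus S$, given $I, I'$ in this stratum the difference $L_{I'} \otimes L_I^{\vee}$ lies in $\PIC_{X_S}^{\un 0}$, and it lifts to $\PIC_X^{\un 0}$ because the restriction homomorphism $\PIC_X^{\un 0} \twoheadrightarrow \PIC_{X_S}^{\un 0}$ is surjective, as one sees from the exact sequence $1 \to \O_X^\times \to \nu_{S,*}\O_{X_S}^\times \to \bigoplus_{n \in S} \Gm \to 1$ of sheaves on $X$ by taking cohomology (the last term is a skyscraper, so its $H^1$ vanishes).

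For Part~(2), the assignment $L \mapsto \nu_{S,*} L$ is a bijection on objects with inverse $I \mapsto L_I$, identifying the automorphism groups $\Gm^{\gamma(X_S)} = \Gm^{\gamma(G)}$. Extending this functorially to families yields the claimed stack isomorphism $\PIC^D_{X_S} \xrightarrow{\sim} \TF_X(G,D)$, and the compatibility of actions is once again the projection formula. Irreducibility of $\TF_X(G,D)$ then follows from the fact that $\PIC^D_{X_S}$ is a $\Gm$-gerbe over a torsor for the connected group $\PIC_{X_S}^{\un 0}$; local closedness follows from semi-continuity of the non-free locus (local freeness at a given node is an open condition) combined with local constancy of the multidegree within each such stratum.

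For Part~(3), the strategy is local at each non-free node of a fixed $I' \in \TF_X(G',D')$, via the explicit form of the semiuniversal deformation ring from Fact~\ref{F:diagTF}\eqref{F:diagTF3}. Each non-free node $n_e$ contributes a factor $k[[x_{n_e}, y_{n_e}]]/(x_{n_e} y_{n_e})$ whose two irreducible branches parametrize the two ways of making $I'$ locally free at $n_e$: starting from the non-free stalk $\m_{n_e} = (x,y)/(xy)$, inverting $x$ kills the generator $y$ modulo torsion and leaves a free module whose remaining generator is supported on one side of the node, producing a $+1$ contribution to the multidegree at one endpoint of $e$, while the other branch is symmetric. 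Given $T \subseteq \NF(I')$ together with an orientation $\ee$ of each $e \in T$, the product of the corresponding branches of the deformation is a locally closed subset whose generic point lies in $\TF_X(G'\cup T, D' + D(\O))$ for the resulting partial orientation $\O$ of $T$. This matches \eqref{E:leq-GD} verbatim and yields the desired closure description.

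The main obstacle is precisely the local verification in Part~(3) that the two branches of each factor $k[[x_{n_e}, y_{n_e}]]/(x_{n_e} y_{n_e})$ correspond to the two orientations of $e$ with exactly the asserted $+1$ shift on the prescribed side, and in particular that this local shift is the correct global one in terms of the multidegree function on $\Gamma_X$. Once this local-to-global matching is rigorously established, the factored tensor-product structure of the deformation ring globalises the statement immediately, since the branch choices at different non-free nodes are independent.
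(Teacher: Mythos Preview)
The paper does not actually prove this statement: it is recorded as a \emph{Fact} with a pointer to \cite[Sec.~5]{meloviviani}, so there is no in-paper argument to compare against. Your outline is a correct reconstruction of the standard proof one finds in that reference: the projection-formula argument for Part~(1), the pushforward identification for Part~(2), and the local analysis of the semiuniversal deformation for Part~(3) are exactly the ingredients used there. The only point worth tightening is the ``local-to-global'' step you flag in Part~(3): one should say explicitly that the semiuniversal deformation of $I'$ is, \'etale-locally on $\TF_X$, a product of the nodal factors with a smooth factor, so that the irreducible components of a neighbourhood of $[I']$ in $\TF_X$ are indexed by choices of branch at each non-free node, and then the identification of each branch with an orientation of the corresponding edge (and the resulting $+1$ shift on the target vertex) follows from the explicit computation you sketch.
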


\begin{remark}\label{R:op=up}
It follows from \eqref{E:simp-I} that the decomposition in Fact \ref{F:orbits}\eqref{F:orbits1} induces the following decomposition into disjoint $\PIC_X^{\un 0}$-orbits
$$
\Simp_X^d=\coprod_{(G,D)\in \OO_{\con}^d(\Gamma_X)} \TF_X(G,D) \subset \Simp_X=\coprod_{(G,D)\in \OO_{\con}(\Gamma_X)} \TF_X(G,D).
$$
\end{remark}

The previous Fact implies that any upper subset $\P$ of $\OO(\Gamma_X)$  determines an open subset 
\begin{equation}\label{E:opTF}
U(\P):=\coprod_{(G,D)\in P} \TF(G,D) \subset \TF_X.
\end{equation}
Conversely, any open subset $U$ of $\TF_X$ which is a union of orbits is of the form $U=U(\P)$ for a unique upper subset $\P:=\P(U)$. 
Note that:
\begin{itemize}
    \item $U(\P)$ is connected if and only if $\P$ is connected. 
    \item $U(\P)\subset \TF^d_X$ if and only if $\P\subset \OO^d(\Gamma_X)$. 
    \item $U(\P)$ is of finite type over $k$ if and only if $\P$ is finite.
 \end{itemize}

\subsubsection{Specializations}\label{S:spec}

In this subsection, we examine the specializations of torsion-free rank-$1$ sheaves on a fixed nodal curve $X$.

Let $R$ be  a discrete valuation $k$-ring with residue field $k$ and quotient field $K$, and we denote by $\val:K\to \ZZ\cup\{\infty\}$ the associated valuation. Set $B:=\Spec R$ with generic point $\eta:=\Spec K$ and special point $o:=\Spec k$.  

Consider a relative torsion-free rank-$1$ sheaf $\I\in \TF_X(B)$ on $X_B:=X\times_k B$ and denote by $\I_{\eta}\in \TF_X(\eta)$ its generic fiber and by $\I_o\in \TF_X(k)$ its special fiber.  
We now recall from \cite[Sec. 2.2]{viviani2023new} the definition of the $1$-cochain associated to $\I$.

Consider  the double dual $\L(\I)$ of the pull-back of $\I$ via the normalization map $\nu_B:\wt X_B:=\wt X\times_k B\to X_B$, which is a line bundle since it is reflexive on a regular $2$-dimensional scheme. 
As shown in \cite[Prop. 12.7]{Oda1979CompactificationsOT}, there is a canonical presentation of $\I$ of the form 
\begin{equation}\label{E:pres-I}
0\to \I \to (\nu_B)_*(\L(\I))\xrightarrow{a} \bigoplus_{n\in \NF(\I_{\eta})^\mathsf{c}}\O_{\{n\}\times B}\to 0.
\end{equation}
For any $n\in \NF(\I_{\eta})^\mathsf{c}=E(G(\I_\eta))$, choose an oriented edge $\ee$ whose underlying edge $e$ corresponds to the node $n$. 
Denote by $n_\ee^s$ and $n_\ee^t$ the two inverse images of $n$ under the normalization map $\nu:\wt X\to X$ in such a way that $n_\ee^s\in \wt X_{s(\ee)}$ and $n_\ee^t\in \wt X_{t(\ee)}$.
The restriction of $a$ to  $\{n\}\times B$ induces a surjection
$$
a_{|\{n\}\times B}: (\nu_B)_*(\L(\I))_{|\{n\}\times B}=\L(\I)_{|\{n_\ee^s\}\times B}\oplus \L(\I)_{|\{n_{\ee}^t\}\times B}\twoheadrightarrow \O_{\{n\}\times B}
$$
which defines an element $[x_{\ee}^s,x_{\ee}^t]\in \PP^1(R)$. Since the restriction $a_{|\{n\}\times \eta}$ is surjective on each of the two factors (see \cite[Prop. 12.7]{Oda1979CompactificationsOT}), the element $[x_\ee^s,x_{\ee}^t] \in \PP^1(R)$ is different from  $0$ and $\infty$. We set 
\begin{equation}\label{E:gammaI}
\gamma(\I)(\ee):=\val\left(\frac{x_{\ee}^t}{x_{\ee}^s}\right)\in \ZZ. 
\end{equation}
Note that $\gamma(\I)(\ee)=-\gamma(\I)(\ov \ee)$ by construction, and hence we get a well-defined element $\gamma(\I)\in \CC^1(G(\I_{\eta}),\ZZ)$, called the \emph{$1$-cochain associated to $\I$.} Intuitively, the integer $\gamma(\I)(\ee)$ measures the ''speed'' at which $\I_o$ is smoothened out in the direction of $\ee$.

We define the support of $\gamma(\I)$ as 
\begin{equation}\label{E:supp-gamma}
\supp \gamma(\I):=\{e\in E(G(\I_\eta))\: : \gamma(\I)(\ee)\neq 0 \text{ for some orientation } \ee  \text{ of } e\},
\end{equation}
and the \emph{orientation} associated to $\I$ (or to $\gamma(\I)$) as 
\begin{equation}\label{E:OI}
\O(\I):=\bigcup\{\ee: \gamma(\I)(\ee)>0\}.
\end{equation}
Note that $\supp \O(\I)=\supp \gamma(\I)$.

The orientation $\O(\I)$ associated to $\I$ allows us to 
describe the combinatorial type $(G(\I_o),D(\I_o))$ of the special fiber $\I_o$ in terms of the combinatorial type $(G(\I_\eta),D(\I_\eta))$.

\begin{lemma}\label{L:Io-Ieta}(see \cite[Lemma 2.6]{viviani2023new})
With the above notation, we have that 
$$\begin{aligned}
&G(\I_o)=G(\I_{\eta})\setminus \supp \O(\I), \\
&D(\I_o)=D(\I_{\eta})-D(\O(\I)).
\end{aligned}$$

In other words, we have that $(G(\I_\eta),D(\I_\eta))\geq_{\O(\I)} (G(\I_o),D(\I_o))$. 
\end{lemma}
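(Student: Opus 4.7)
The plan is to reduce both identities to a node-by-node local computation, using the canonical presentation~\eqref{E:pres-I} as the main tool. First, I would specialize the short exact sequence~\eqref{E:pres-I} from $B$ to the closed point $o$: since the quotient $\bigoplus_{n\in \NF(\I_\eta)^\mathsf{c}} \O_{\{n\}\times B}$ is $R$-flat (a finite free $R$-module), the sequence remains exact after $-\otimes_R k$, giving
\begin{equation*}
0 \longrightarrow \I_o \longrightarrow \nu_*\bigl(\L(\I)_o\bigr) \longrightarrow \bigoplus_{n\in \NF(\I_\eta)^\mathsf{c}} k(n) \longrightarrow 0,
\end{equation*}
after flat base change for the affine morphism $\nu_B$. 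Here $\L(\I)_o$ is a line bundle on $\wt X$, and its per-component degree satisfies $\deg_{\wt X_v}\bigl(\L(\I)_o\bigr) = D(\I_\eta)_v$, because $\L(\I)$ is a line bundle on the connected total space $\wt X_B$ and pullback along the further normalization $\wt X\to X_{\NF(\I_\eta)}$ preserves per-component degrees.

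Next, I would analyze the presentation locally at each node $n$ of $X$, distinguishing two cases. For $n\in \NF(\I_\eta)$, the sum defining the cokernel has no contribution at $n$, so $\I$ is locally the pushforward from the partial normalization at $n$ and $n\in \NF(\I_o)$ automatically. For $n\in \NF(\I_\eta)^\mathsf{c}$ with corresponding edge $e$, I would write the local map $a_{|\{n\}\times B}\colon R\oplus R\to R$ explicitly using a representative $(x_\ee^s,x_\ee^t)$ normalized so that $\min(\val(x_\ee^s),\val(x_\ee^t))=0$, and examine its reduction $a_o$ modulo the uniformizer $\pi$. The map $a_o$ is surjective on both factors of the normalization --- equivalently, $\I_o$ is free at $n$ --- precisely when both $x_\ee^s$ and $x_\ee^t$ reduce to units in $k$, i.e.\ when $\gamma(\I)(\ee)=0$. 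This yields $\NF(\I_o)=\NF(\I_\eta)\sqcup \supp\O(\I)$ and hence the first identity $G(\I_o)=G(\I_\eta)\setminus \supp\O(\I)$.

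For the multidegree, I would compare $L_{\I_o}$ on $X_{\NF(\I_o)}$ with $\L(\I)_o$ on $\wt X$ through the further normalization $\mu\colon\wt X\to X_{\NF(\I_o)}$, using that $\deg_{\wt X_v}(\mu^*L_{\I_o}) = D(\I_o)_v$. The same local analysis as above shows that at each $\ee \in \O(\I)$ the local generator of $\I_o$ near $n$, read in the normalization coordinates $R[[x]]\oplus R[[y]]$, takes the shape $(f,\, y\,g)$ (with $y$ the coordinate at $n_\ee^t$), exhibiting a twist of $-1$ of $\mu^*L_{\I_o}$ relative to $\L(\I)_o$ at the point $n_\ee^t\in \wt X_{t(\ee)}$ and no twist at $n_\ee^s$. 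Summing these contributions over all $\ee\in \O(\I)$ with $t(\ee)=v$ then gives $D(\I_o)_v = D(\I_\eta)_v - D(\O(\I))_v$, as required.

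The main obstacle will be pinning down the sign/orientation convention in the local computation: one must interpret $[x_\ee^s,x_\ee^t]\in \PP^1(R)$ as representing the \emph{kernel} of $a_{|\{n\}\times B}$ (so that swapping $\ee$ with $\ov\ee$ correctly interchanges source and target). It is precisely this convention that puts the twist on the target side $n_\ee^t$ and makes the formula match the definition $D(\O(\I))=\sum_{\ee\in \O(\I)} t(\ee)$; everything else is routine cohomological bookkeeping.
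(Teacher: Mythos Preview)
The paper does not give its own proof of this lemma; it simply cites \cite[Lemma~2.6]{viviani2023new}. Your argument is the standard one and is correct: specializing the presentation~\eqref{E:pres-I} and reading off, node by node, when the reduced map $a_o$ fails to be surjective on one of the two branches is exactly how the result is obtained in the cited reference (and in \cite[\S12]{Oda1979CompactificationsOT}). In particular your computation recovers precisely the relation
\[
\frac{\nu^*(\I_o)}{\Tors(\nu^*(\I_o))}\Bigl(\sum_{\ee\in \O(\I)}n_{\ee}^t\Bigr)=\L(\I)_o
\]
that the paper later quotes as \eqref{E:pull-I12} in the proof of Proposition~\ref{P:2limits}, and your flagging of the convention issue---that $[x_\ee^s,x_\ee^t]$ must be read as the \emph{kernel} direction so that the degree drop occurs on the $t(\ee)$-side---is the one genuine subtlety. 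One small correction: the justification ``because $\L(\I)$ is a line bundle on the connected total space $\wt X_B$'' is not quite right, since $\wt X_B$ is typically disconnected; the correct reason that $\deg_{\wt X_v}\L(\I)_o = D(\I_\eta)_v$ is simply that the degree of a line bundle is locally constant in a flat family together with the fact that the further normalization $\wt X\to X_{\NF(\I_\eta)}$ is birational on each component.
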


 We now describe all the relative torsion-free rank-$1$ sheaves  $X_B$ that have the same generic fiber. 

With this aim, we define an action of $C^0(\Gamma_X,\ZZ)$ on the set $|\TF_X(B)|$ of isomorphism classes of the groupoid $\TF_X(B)$. Take $\I\in \TF_X(B)$ and $g\in C^0(\Gamma_X,\ZZ)$. The exact sequence of abelian groups
$$
0\to (R^*,\cdot) \to (K^*,\cdot) \xrightarrow{\val} \ZZ\to 0
$$
induces an exact sequence of $0$-cochains
$$
0\to C^0(\Gamma_X, R^*) \to C^0(\Gamma_X,K^*) \xrightarrow{C^0(\val)} C^0(\Gamma_X,\ZZ)\to 0.
$$
Pick a lift $\wt g\in C^0(\Gamma_X,K^*)$ of $g$. The element $\wt g$ induces an automorphism $\wt g_*$ of the line bundle $\L(\I_{\eta})$ which is the scalar multiplication by $\wt g(v)\in K^*$ on the irreducible component $(\wt X_v)_{\eta}:=\wt X_v\times_k K$ of $\wt X_{\eta}$ corresponding to the vertex $v$ of $\Gamma_X$. The automorphism $\wt g_*$ extends uniquely to an automorphism of the line bundle $\L(\I)$ on $\wt X_B$ and 
hence to an automorphism of the sheaf $(\nu_B)_*(\L(\I))$ on $X_B$, that we will also denote by $\wt g_*$. Consider now the presentation \eqref{E:pres-I} of $\I$ and define a new element $\wt g(\I)$ of $\TF_X(B)$ as  follows:
\begin{equation}\label{E:pres-gI}
0\to \wt g(\I):=\ker(a\circ \wt g_*) \to (\nu_B)_*(\L(\I))\xrightarrow{a\circ \wt g_*} \bigoplus_{n\in \NF(\I_{\eta})^\mathsf{c}}\O_{\{n\}\times B}\to 0.
\end{equation}
Arguing as in the proof of \cite[Prop. 12.3]{Oda1979CompactificationsOT}, it follows that, if $\wt g'$ is another lift of $g$ (so that $\wt g-\wt g'\in C^0(\Gamma_X,R^*)$), then $\wt g(\I)$ is isomorphic to $\wt g'(\I)$ in $X_B$. 
Hence, there is a well-defined action 
\begin{equation}\label{E:action}
\begin{aligned}
C^0(\Gamma_X,\ZZ)\times |\TF_X(B)|& \longrightarrow |\TF_X(B)|\\
(g,\I) & \mapsto g(\I):=\wt g(\I).
\end{aligned}
\end{equation}
Indeed, this action coincides with the action given by \cite[Eq. (3.7)]{FPV1}.

\begin{proposition}\label{P:non-sepa}(\cite[Prop. 2.7]{viviani2023new}, \cite[Thm. 3.8]{FPV1})
\noindent 
\begin{enumerate}
\item \label{P:non-sepa1} 
The action \eqref{E:action} induces a bijection
$$
\begin{aligned}
|\TF_X(B)|/C^0(\Gamma_X,\ZZ) & \xrightarrow{\cong} |\TF_X(\eta)|\\
[\I] &\mapsto \I_{\eta}.
\end{aligned}
$$
\item  \label{P:non-sepa2}
The $1$-cochains associated to conjugate elements under the action \eqref{E:action}  satisfy the relation
$$
\gamma(g(\I))=\gamma(\I)+\delta_{G(\I_{\eta})}(g)\in \CC^1(G(\I_\eta),\ZZ),
$$
where $g$ is interpreted as an element of $C^0(G(\I_{\eta}),\ZZ)$ using that $V(G(\I_{\eta}))=V(\Gamma_X)$. 
 \end{enumerate}
\end{proposition}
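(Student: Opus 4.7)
The proof will follow the template established in the two cited references \cite[Prop.~2.7]{viviani2023new} and \cite[Thm.~3.8]{FPV1}, and I would split it into three parts: well-definedness of the map and the formula in Part~\eqref{P:non-sepa2} (which is a direct computation), then surjectivity, then injectivity of the map in Part~\eqref{P:non-sepa1}. The computation feeds the injectivity argument, so it is natural to do Part~\eqref{P:non-sepa2} first.

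First, I would check that the action \eqref{E:action} preserves generic fibers, which is immediate from the presentation \eqref{E:pres-gI}: the automorphism $\wt g_*$ of $(\nu_B)_*(\L(\I))$ restricts to an automorphism of $(\nu_\eta)_*(\L(\I_\eta))$, so $g(\I)_\eta\cong \I_\eta$ and $\L(g(\I))=\L(\I)$ as line bundles on $\wt X_B$. Hence the map $[\I]\mapsto \I_\eta$ is well-defined. For Part~\eqref{P:non-sepa2}, I would then compute $\gamma(g(\I))(\ee)$ directly: in the presentation \eqref{E:pres-gI} the surjection $a\circ \wt g_*$ restricted to $\{n\}\times B$ is described by the pair $[\wt g(s(\ee))\,x_\ee^s,\wt g(t(\ee))\,x_\ee^t]\in \PP^1(R)$, so
\[
\gamma(g(\I))(\ee)=\val\!\left(\frac{\wt g(t(\ee))\,x_\ee^t}{\wt g(s(\ee))\,x_\ee^s}\right)=\gamma(\I)(\ee)+g(t(\ee))-g(s(\ee))=\gamma(\I)(\ee)+\delta_{G(\I_\eta)}(g)(\ee),
\]
where we used that the ambiguity in choosing the lift $\wt g$ lies in $C^0(\Gamma_X,R^*)$ and thus does not affect valuations.

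For surjectivity in Part~\eqref{P:non-sepa1}, given $\J\in\TF_X(\eta)$ I would produce some extension to $\TF_X(B)$. The cleanest way is to invoke the valuative criterion for $\TF_X$: by Fact~\ref{F:diagTF} the diagonal of $\TF_X$ is affine of finite presentation, and the total stack of torsion-free rank-$1$ sheaves over the versal deformation satisfies a Langton-type completeness (this is exactly where \cite[Thm.~3.8]{FPV1} is invoked). Alternatively, one can produce an explicit extension: take the double dual $\L(\J)$ on $\wt X_\eta$, extend it to a line bundle $\widetilde{\L}$ on $\wt X_B$ (possible since $\wt X_B$ is regular), and then $(\nu_B)_*\widetilde{\L}$ already extends $(\nu_\eta)_*\L(\J)$; one then modifies by a suitable kernel of a surjection to $\bigoplus_n \O_{\{n\}\times B}$ compatible with the presentation of $\J$.

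For injectivity, suppose $\I,\I'\in\TF_X(B)$ satisfy $\I_\eta\cong \I'_\eta$. Then $\L(\I)=\L(\I')$ on $\wt X_B$, and the two cochains $\gamma(\I),\gamma(\I')\in \CC^1(G(\I_\eta),\ZZ)$ are defined on the same graph. Both sheaves arise as kernels of surjections $(\nu_B)_*\L(\I)\twoheadrightarrow \bigoplus_n \O_{\{n\}\times B}$ that agree over $\eta$, and such surjections are classified by a torsor for $\prod_n R^*$ modulo the diagonal scaling by $\prod_v R^*$ coming from the automorphisms of $\L(\I)$; taking valuations, this identifies the possible cochains of extensions modulo the image of $\delta$. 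Hence there exists $g\in C^0(\Gamma_X,\ZZ)$ with $\gamma(g(\I))=\gamma(\I')$, and combining equal generic fibers with equal cochains forces $g(\I)\cong \I'$ in $\TF_X(B)$. The main obstacle is genuinely the surjectivity step — establishing the valuative criterion or the explicit extension — while injectivity and Part~\eqref{P:non-sepa2} reduce to bookkeeping once the action is set up.
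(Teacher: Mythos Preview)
The paper does not give its own proof of this proposition; it is stated with citations to \cite[Prop.~2.7]{viviani2023new} and \cite[Thm.~3.8]{FPV1} only. Your sketch is correct and reconstructs precisely the argument from those references: the direct computation of $\gamma(g(\I))$ in Part~\eqref{P:non-sepa2}, the explicit extension (or Langton-type completeness) for surjectivity, and the presentation-theoretic analysis for injectivity.

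One small clarification on your injectivity step: the phrase ``such surjections are classified by a torsor for $\prod_n R^*$ modulo the diagonal scaling by $\prod_v R^*$'' is slightly off. In fact, once $\L(\I)\cong\L(\I')$ is fixed, a surjection $a$ over $B$ is uniquely determined by its generic restriction $a_\eta$ (valuative criterion for $\PP^1$), so the relevant torsor lives over $\eta$: generic surjections with kernel isomorphic to $\I_\eta$ form a $C^0(\Gamma_X,K^*)$-orbit in $\prod_n\PP^1(K)$, and taking valuations shows that the possible cochains $\gamma$ lie in a single $\Im(\delta_{G(\I_\eta)})$-coset. This is exactly what you need to find $g$ with $\gamma(g(\I))=\gamma(\I')$; and then, since equal generic fibers plus equal cochains force the surjections in $\prod_n\PP^1(R)$ to agree (again by injectivity of $\PP^1(R)\to\PP^1(K)$), one gets $g(\I)\cong\I'$. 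With that wording adjusted, your argument is complete.
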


We end this subsection by examining the relation between two different specialization of the same sheaf over $\eta\in B$.

\begin{proposition}\label{P:2limits}
Let  $\I^1,\I^2\in \TF_X(B)$ such that $\I^1_\eta=\I^2_\eta$. Fix $g\in C^0(\Gamma_X,\ZZ)$ such that $\I^2=g(\I^1)$ (such a $g$ exists by Proposition \ref{P:non-sepa}). Let $Y$ be a subcurve of $X$ such that $g$ is constant on $V(\Gamma_Y)\subseteq V(\Gamma_X)$. Then we have that 
$$
(\I^1_0)_Y\otimes \O_Y(\sum_{\substack{\ee \in \O(\I^1):\\ t(\ee)\in V(\Gamma_Y)\\ s(\ee)\not \in V(\Gamma_Y)}} n_{|\ee|})=
(\I^2_0)_Y\otimes \O_Y(\sum_{\substack{\ee \in \O(\I^2):\\ t(\ee)\in V(\Gamma_Y)\\ s(\ee)\not \in V(\Gamma_Y)}} n_{|\ee|}).
$$
\end{proposition}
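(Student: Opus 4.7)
The plan is to compute the effect of the $C^0(\Gamma_X,\mathbb Z)$-action on the restriction to $Y$ modulo torsion by carefully analyzing how the presentation \eqref{E:pres-I} changes under the change of representative $\I^1\mapsto \I^2=g(\I^1)$. First I would reduce to the case $g|_{V(\Gamma_Y)}\equiv 0$: constants $c\in\mathbb Z$ act trivially on $|\TF_X(B)|$, since multiplying the defining surjection $a$ by a global scalar in $K^*$ does not change its kernel up to isomorphism, so we can subtract the constant value of $g$ on $V(\Gamma_Y)$. Then I would pick a lift $\wt g\in C^0(\Gamma_X,K^*)$ with $\wt g(v)=1$ for every $v\in V(\Gamma_Y)$ and $\wt g(w)=\pi^{g(w)}$ (times a unit) for $w\notin V(\Gamma_Y)$, so that $\I^2=\wt g(\I^1)=\ker(a\circ \wt g_*)\subset (\nu_B)_*\L(\I^1)$ via \eqref{E:pres-gI}.

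Next I would isolate the local contributions to $(\I^i_0)_Y$. At every node $n_{|e|}$ whose edge $e$ has both endpoints in $V(\Gamma_Y)$, the automorphism $\wt g_*$ acts as the identity on both branches of $(\nu_B)_*\L(\I^1)$, so the presentations of $\I^1$ and $\I^2$ coincide in a formal neighborhood of $n_{|e|}$. Consequently $(\I^1_0)_Y$ and $(\I^2_0)_Y$ agree, as sheaves on $Y$, away from the \emph{boundary nodes} of $Y$ (i.e., those $n_{|e|}$ with exactly one endpoint in $V(\Gamma_Y)$), and no contribution to the twist divisors in the statement arises from internal nodes.

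The heart of the proof is the local analysis at a boundary node $n=n_{|e|}$ whose edge $e$ joins $v\in V(\Gamma_Y)$ and $w\notin V(\Gamma_Y)$. A direct stalk computation with the special fiber of the exact sequence \eqref{E:pres-I}, writing $\I^i_0$ at $n$ either as free $\O_{X,n}$ or as $\m_{X,n}$ depending on whether $|e|\in\supp \O(\I^i)$, and tracking on which branch of the node the non-free direction is supported, shows that $(\I^i_0)_Y$ is, locally at $n$, isomorphic to $\m_{Y,n}\cong \O_Y(-n_{|e|})$ precisely when the oriented edge $\ee=(w\to v)$ (pointing into $Y$) lies in $\O(\I^i)$, and isomorphic to $\O_{Y,n}$ otherwise. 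The twist divisor on the $i$-th side in the statement contains the point $n_{|e|}$ precisely in the first case; twisting by $\O_Y(n_{|e|})$ then cancels the factor $\O_Y(-n_{|e|})$ and produces a canonical local trivialization at $n$. Moreover, the scaling $\wt g(w)=\pi^{g(w)}$ is exactly what switches the orientation $\O(\I^1)$ into $\O(\I^2)$ at $e$ (via $\gamma(\I^2)(\ee)=\gamma(\I^1)(\ee)+\delta(g)(\ee)$ from Proposition \ref{P:non-sepa}\eqref{P:non-sepa2}), so the resulting canonical local trivialization agrees on both sides.

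Combining these observations, both the LHS and RHS of the statement are rank-$1$ torsion-free sheaves on $Y$ that agree with the common sheaf $(\I^1_0)_Y=(\I^2_0)_Y$ away from boundary nodes and admit compatible canonical local trivializations at every boundary node; these glue to the desired equality on $Y$. The main obstacle is the boundary-node case analysis: one must carefully identify the $\O_{X,n}$-module structure of $\I^i_0$ from the modified gluing data $[\wt g(s)x^s_\ee,\wt g(t)x^t_\ee]$, tensor with $\O_Y$, quotient by the torsion subsheaf, and verify in each of the cases $\ee\in\O(\I^i)$, $\ov\ee\in\O(\I^i)$, $|e|\notin\supp\O(\I^i)$ that the divisor in the statement exactly corrects the resulting discrepancy between the $i=1$ and $i=2$ sides.
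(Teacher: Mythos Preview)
Your proposal is correct and follows essentially the same strategy as the paper: both arguments write out the presentations \eqref{E:pres-I} and \eqref{E:pres-gI} for $\I^1$ and $\I^2=g(\I^1)$, restrict to the special fiber, and use that $\wt g$ is constant on $\wt Y$ to see that $(\I^1_0)_Y$ and $(\I^2_0)_Y$ can only differ at the boundary nodes $Y\cap Y^\mathsf{c}$.

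The difference lies in the endgame. The paper does not carry out your node-by-node stalk analysis; instead it invokes the global identity
\[
\frac{\nu^*(\I^i_o)}{\Tors(\nu^*(\I^i_o))}\Bigl(\sum_{\ee\in \O(\I^i)}n_{\ee}^t\Bigr)=\L(\I^i)_o
\]
(from \cite[Lemma~12.6]{Oda1979CompactificationsOT} and \cite[Lemma~2.6]{viviani2023new}), valid for each $i$. Since $\L(\I^1)_o=\L(\I^2)_o$, restricting to $\wt Y$ and pushing forward immediately yields the stated equality. Your local computation is, in effect, a reproof of this formula restricted to the boundary nodes of $Y$; it works, but it is more laborious and the phrasing ``locally isomorphic to $\m_{Y,n}$ vs.\ $\O_{Y,n}$'' is slightly misleading (both are free of rank one over the DVR $\O_{Y,n}$ --- what matters is their identification as subsheaves of the common ambient $(\nu_*\L(\I^1)_o)|_Y/\Tors$, which is what your ``canonical local trivialization'' is really tracking). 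The paper's route is cleaner because it separates two concerns that are already handled in the literature: the effect of the orientation on the special fiber, and the comparison of $\I^1$ with $\I^2$.
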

\begin{proof}
 Consider the presentations \eqref{E:pres-I} of $\I^1$ and \eqref{E:pres-gI} of $g(\I^1)=\I^2$: 
\begin{equation}\label{E:pres-I12}
\begin{sis}
& 0\to \I^1 \to (\nu_B)_*(\L(\I^1))\xrightarrow{a} \bigoplus_{n\in \NF(\I^1_{\eta})^\mathsf{c}}\O_{\{n\}\times B}\to 0, \\
& 0\to \I^2=g(\I^1) \to (\nu_B)_*(\L(\I^1))\xrightarrow{a\circ \wt g_*} \bigoplus_{n\in \NF(\I^1_{\eta})^\mathsf{c}}\O_{\{n\}\times B}\to 0,
\end{sis}
\end{equation}
   where $\wt g_*$ is the automorphism of $\L(\I^1)$ which is the scalar multiplication by $\pi^{g(v)}$ on $\wt X_v\times B$. 
  
By restricting to the special fiber and using that $\I^1_o$ and $\I^2_o$ are torsion-free, we get the two presentations
\begin{equation}\label{E:pres-I12o}
\begin{sis}
& 0\to \I^1_o \to \nu_*(\L(\I^1)_o)\xrightarrow{a_o} \bigoplus_{n\in \NF(\I^1_{\eta})^\mathsf{c}}\O_{\{n\}}\to 0, \\
& 0\to \I^2_o \to \nu_*(\L(\I^1)_o)\xrightarrow{(a\circ \wt g_*)_o} \bigoplus_{n\in \NF(\I^1_{\eta})^\mathsf{c}}\O_{\{n\}}\to 0.
\end{sis}
\end{equation}
By restricting these two presentations to $Y\subset X$ and using that $\wt g$ is constant on $\wt Y$, we deduce that $(\I^1_o)_Y$ and $(\I^2_0)_Y$ must differ from a line bundle on $Y$ supported on $Y\cap Y^\mathsf{c}$. 

We conclude using that (see \cite[Lemma 12.6]{Oda1979CompactificationsOT} and \cite[Lemma 2.6]{viviani2023new}):
\begin{equation}\label{E:pull-I12}
\frac{\nu^*(\I^1_o)}{\Tors(\nu^*(\I^1_o))}(\sum_{\ee\in \O(\I^1)}n_{\ee}^t)=\L(\I^1)_o=\L(\I^2)_o= \frac{\nu^*(\I^2_o)}{\Tors(\nu^*(\I^2_o))}(\sum_{\ee\in \O(\I^2)}n_{\ee}^t)
\end{equation}    
\end{proof}

\subsubsection{Isotrivial specializations}\label{S:isospec}

In this subsection we describe a special class of specializations.

Consider the quotient stack $\Theta=\Theta_k:=[\AA_k^1/\Gm]$. This stack has two $k$-points: the open point $1:=[\AA^1\setminus\{0\}/\Gm]$ with trivial stabiizer and the closed point $0:=[0/\Gm]=B\Gm$ with stabilizer equal to $\Gm$. 
Given two sheaves $I,J\in \TF_X(k)$, we say that $J$ is an \emph{isotrivial (or very close) specialization} of $\I$ if there exists a morphism $f:\Theta\to \TF_X$ such that $f(1)=I$ and $f(0)=J$. The morphism $f:\Theta\to \TF_X$ is called an isotrivial (or very close) specialization from $f(1)$ to $f(0)$. 

In the next Proposition, we describe isotrivial specializations in $\TF_X$. We will need the following definition.
Given a sheaf $I\in \TF_X(k)$ and an ordered partition of $X$ by subcurves
    $$
    Y_{\bullet}:=(Y_0, \ldots, Y_q),
    $$
    i.e. a collection of subcurves covering $X$ and without common pairwise irreducible components,   we set 
\begin{equation}\label{E:GrY}
\Gr_{Y_\bullet}(I):=\bigoplus_{i=0}^q I_{Y_i}\left(-\NF(I)^\mathsf{c}\cap Y_i\cap (\bigcup_{0\leq j<i} Y_j)\right).
\end{equation}
The sheaf $\Gr_{Y_\bullet}(I)$ belongs to $\TF_X(k)$ and Formulas \eqref{E:deg-sub} and \eqref{E:deg-mdeg} imply that $\deg \Gr_{Y_\bullet}(I)=\deg I$.

\begin{proposition}\label{P:iso-spec}
    A sheaf $I\in \TF_X(k)$ isotrivially specializes to $J\in \TF_X(k)$ if and only if $J=\Gr_{Y_\bullet}(I)$ for some ordered partition $Y_\bullet$ of $X$.     
\end{proposition}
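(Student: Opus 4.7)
The strategy is to translate isotrivial specializations into filtrations of $I$ and identify them with the graded construction $\Gr_{Y_\bullet}$. A morphism $f: \Theta \to \TF_X$ corresponds to a $\Gm$-equivariant, $\AA^1$-flat, rank-$1$, torsion-free sheaf $\wt\I$ on $X \times \AA^1$, with $f(1) = \wt\I|_{X \times \{1\}} = I$ and $f(0) = \wt\I|_{X \times \{0\}} = J$. Decomposing $\wt\I = \bigoplus_{n \in \ZZ} \wt\I_n$ by $\Gm$-weight turns flatness over $\AA^1 = \Spec k[t]$ into injectivity of $t: \wt\I_n \hookrightarrow \wt\I_{n+1}$, so after inverting $t$ each $\wt\I_n$ embeds as a coherent subsheaf of $I$. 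Coherence of $\wt\I$ forces the filtration to be bounded:
\begin{equation*}
0 = \wt\I_{n_0-1} \subset \wt\I_{n_0} \subset \cdots \subset \wt\I_{n_1} = I,
\end{equation*}
and $J = \wt\I/t\wt\I = \bigoplus_n \wt\I_n/\wt\I_{n-1}$ is the associated graded.

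For the direction $(\Leftarrow)$, given $Y_\bullet = (Y_0, \ldots, Y_q)$ and setting $Y_{<k} := Y_0 \cup \cdots \cup Y_{k-1}$, I would define the descending filtration
\begin{equation*}
F^k I := \ker\bigl(I \twoheadrightarrow I_{Y_{<k}}\bigr), \quad 0 \leq k \leq q+1,
\end{equation*}
where $I_{Y_{<k}}$ denotes restriction modulo torsion, so $F^0 I = I$ and $F^{q+1} I = 0$. The associated Rees module is a $\Gm$-equivariant, coherent, $\AA^1$-flat sheaf on $X \times \AA^1$ with generic fiber $I$ and special fiber $\bigoplus_k F^k I / F^{k+1} I$. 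A node-by-node local analysis -- distinguishing free nodes of $I$ (where $I_p \cong k[[x,y]]/(xy)$) from non-free ones (where $I_p \cong (x,y)$) -- identifies
\begin{equation*}
F^k I / F^{k+1} I \;\cong\; I_{Y_k}\bigl(-\NF(I)^\mathsf{c} \cap Y_k \cap Y_{<k}\bigr),
\end{equation*}
and checks that the resulting direct sum is rank-$1$ torsion-free on $X$. Thus $J = \Gr_{Y_\bullet}(I)$.

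For the direction $(\Rightarrow)$, given a filtration $\wt\I_\bullet$ extracted from an isotrivial specialization, each nonzero subquotient $\wt\I_n/\wt\I_{n-1}$ is supported on some subcurve $Y_n \subseteq X$. The rank-$1$, torsion-free condition on $J = \bigoplus_n \wt\I_n/\wt\I_{n-1}$ forces the subcurves $Y_n$ to cover $X$ with no pairwise common irreducible components. I would then produce an ordered partition $Y_\bullet$ of $X$ compatible with the filtration and with the local twist patterns of $J$ at each free node of $I$, and verify via the same local computation as in $(\Leftarrow)$ that each $\wt\I_n/\wt\I_{n-1}$ matches the corresponding summand of $\Gr_{Y_\bullet}(I)$, giving $J = \Gr_{Y_\bullet}(I)$.

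The main technical obstacle will be the local analysis at nodes of $X$: in both directions, tracking how the twist $-\NF(I)^\mathsf{c} \cap Y_k \cap Y_{<k}$ arises, and carefully distinguishing the contributions of free nodes (which produce the twist) from those of non-free nodes (where the twist is already absorbed into the restriction $I_{Y_k}$ modulo torsion). A subtler point in $(\Rightarrow)$ is the correct choice of ordering: at a free node $p \in Y_i \cap Y_j$, the local filtration admits a priori two valid shapes producing different twist patterns on $J_p$, so the ordering of $Y_\bullet$ must be selected to match the particular $J$, and one must check that such an ordering indeed arises from a single global coherent filtration of $I$.
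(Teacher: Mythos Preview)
The paper does not give its own proof: it simply cites \cite[Prop.~3.10 and Example~3.4]{FPV1}. Your Rees-module/filtration approach is the standard way such statements are proved (and is almost certainly what the cited reference does, possibly in greater generality for reduced curves), so there is no meaningful methodological comparison to make.

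Your outline is correct, but the ``subtler point'' you flag in $(\Rightarrow)$ about the choice of ordering resolves more cleanly than you suggest. The filtration $\wt\I_\bullet$ already fixes the ordering: each irreducible component of $X$ appears at exactly one level, and the resulting partition $Y_\bullet$ is determined. The local ambiguity you worry about at a free node $p$ between $Y_i$ and $Y_j$ (with $i<j$) does not actually arise. Writing $I_p\cong k[[x,y]]/(xy)$ with $x$ on the $Y_i$-branch and $y$ on the $Y_j$-branch, the step $F^{i+1}_p=\cdots=F^j_p$ is supported on the $Y_j$-branch, hence contained in $y\,k[[y]]$ (the constant term cannot lie in a submodule supported on $\{x=0\}$), so $F^j_p=y^m k[[y]]$ for some $m\ge 1$. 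Torsion-freeness of $F^i/F^{i+1}$ as a sheaf on $Y_i$ then forces $m=1$, since for $m\ge 2$ the class of $y^{m-1}$ is nonzero $x$-torsion. Thus the filtration is forced to be exactly $F^k=\ker(I\twoheadrightarrow I_{Y_{<k}})$, and the twist pattern automatically matches $\Gr_{Y_\bullet}(I)$ for the level ordering. No further global compatibility check is needed.
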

\begin{proof}
This follows from \cite[Prop. 3.10 and Example 3.4]{FPV1}. 
\end{proof}

The above result allows to express the change of combinatorial type in an isotrivial degeneration. For that purpose, given an ordered partition of $V(\Gamma_X)$
    $$
    W_{\bullet}:=(W_0, \ldots, W_q),
     $$
     consider the associated ordered partiton of $X$
    $$
    X[W_{\bullet}]:=(X[W_0], \ldots, X[W_q]).
    $$
For any spanning subgraph $G\leq \Gamma_X$, consider the partial orientation of $G$ 
\begin{equation}
    \O_G(W_\bullet):=\{\ee\in \EE(G): s(\ee)\in W_i  \text{ and } t(\ee)\in W_j \text{ for some } i<j\}.
    \end{equation}

\begin{corollary}\label{C:iso-spec}
   Consider a sheaf $I\in \TF_X(k)$ that isotrivially specializes to  $J=\Gr_{X[W_\bullet]}(I)\in \TF_X(k)$, for some ordered partition $W_\bullet$ of $V(\Gamma_X)$.    Then 
   $$(G(I),D(I))\geq_{\O_{G(I)}(W_\bullet)} (G(J),D(J)).$$ 
\end{corollary}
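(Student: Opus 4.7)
By Proposition~\ref{P:iso-spec}, the sheaf $J$ has the explicit description
\[
J=\bigoplus_{i=0}^{q} I_{X[W_i]}\bigl(-T_i\bigr), \qquad T_i:=\NF(I)^\mathsf{c}\cap X[W_i]\cap \bigcup_{j<i}X[W_j].
\]
The plan is to read off $(G(J),D(J))$ directly from this formula and compare with $(G(I),D(I))$ via the partial orientation $\O_{G(I)}(W_\bullet)$.

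First I would determine $\NF(J)$, i.e.\ the set of nodes of $X$ at which $J$ fails to be locally free. A node $n_e$ is non-free for $J$ precisely when either two distinct summands $I_{X[W_i]}(-T_i)$ and $I_{X[W_j]}(-T_j)$ meet there (which happens exactly when $e$ connects two different parts of the partition, i.e.\ $e\in E_{\Gamma_X}(W_\bullet):=\bigcup_{i<j}E(W_i,W_j)$), or when the unique summand containing $n_e$ is already non-free at $n_e$, which happens exactly when $e\in\NF(I)$. Since twisting by $\O_{X[W_i]}(-T_i)$ does not affect the non-free locus, this yields
\[
\NF(J)=\NF(I)\cup E_{\Gamma_X}(W_\bullet),
\]
and therefore $G(J)=G(I)\setminus\bigl(E(G(I))\cap E_{\Gamma_X}(W_\bullet)\bigr)=G(I)\setminus \supp\O_{G(I)}(W_\bullet)$, which is the graph-theoretic part of the required inequality.

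Next I would compute $D(J)_v$ for an arbitrary $v\in W_i$. Fix such a $v$. Since $E_{\Gamma_X}(W_\bullet)$ contains no loops at $v$, one has $e_{\NF(J)}(v)=e_{\NF(I)}(v)$, so the formula \eqref{E:deg-sub} specialised to $W=\{v\}$ gives
\[
D(J)_v-D(I)_v=\deg_{X_v}(J)-\deg_{X_v}(I).
\]
On the other hand, the summands of $J$ supported on subcurves not containing $X_v$ restrict to torsion on $X_v$, hence $J_{X_v}=(I_{X[W_i]}(-T_i))_{X_v}=I_{X_v}(-T_i\cap X_v)$, where the last equality uses that taking the torsion-free restriction to $X_v$ factors through $X[W_i]$. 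Thus
\[
\deg_{X_v}(J)-\deg_{X_v}(I)=-|T_i\cap X_v|=-\val_{G(I)}\!\Bigl(\{v\},\textstyle\bigcup_{j<i}W_j\Bigr),
\]
the last equality because $T_i\cap X_v$ counts nodes $n_e$ with $e\in E(G(I))$ incident to $v$ whose other endpoint lies in $\bigcup_{j<i}W_j$.

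Finally, unwinding the definition of $\O_{G(I)}(W_\bullet)$ and of the outgoing divisor \eqref{E:outDiv}, one has, for $v\in W_i$,
\[
D(\O_{G(I)}(W_\bullet))_v=\bigl|\{\ee\in\O_{G(I)}(W_\bullet):t(\ee)=v\}\bigr|=\val_{G(I)}\!\Bigl(\{v\},\textstyle\bigcup_{j<i}W_j\Bigr).
\]
Combining the previous two displays gives $D(J)=D(I)-D(\O_{G(I)}(W_\bullet))$, which together with the equality $G(J)=G(I)\setminus\supp\O_{G(I)}(W_\bullet)$ is exactly the statement $(G(I),D(I))\geq_{\O_{G(I)}(W_\bullet)}(G(J),D(J))$ by definition \eqref{E:leq-GD}. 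The only nontrivial bookkeeping step is the identification $(I_{X[W_i]})_{X_v}=I_{X_v}$, but this is immediate since both sheaves agree with $I$ away from the nodes and are the common torsion-free quotient at the nodes of $X_v$.
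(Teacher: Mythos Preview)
Your proof is correct and is precisely the computation the paper intends: in the paper this corollary is stated without proof, as it follows directly from the explicit formula \eqref{E:GrY} for $\Gr_{X[W_\bullet]}(I)$ together with \eqref{E:deg-sub}. Your argument unwinds exactly these ingredients---identifying $\NF(J)=\NF(I)\cup E_{\Gamma_X}(W_\bullet)$ from the direct sum structure and then reading off the degree shift on each $X_v$ from the twist by $-T_i$---and matches what the paper leaves implicit.
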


\subsection{Compactified Jacobians}\label{Sub:cJ}

We are now ready for the main definitions of this paper. Let $X$ be a connected nodal curve over $k=\ov k$. 

\begin{definition}(See \cite[Def. 6.1]{FPV1}) \label{D:compJac}
 A \emph{compactified Jacobian stack} of $X$ is an open substack $\ov \J_X$ of $\TF_X$ admitting a $k$-proper good moduli space $\ov J_X$ (called the associated \emph{compactified Jacobians space}).
 
 If $\ov \J_X$ is contained in $\TF_X^d$ for some $d$ (which is always the case for connected compactified Jacobian stacks), then we say that $\ov \J_X$ is a \emph{degree-$d$} compactified Jacobian stack and we denote it by $\ov \J_X^d$.  

 If $\ov \J_X$ is contained in $\Simp_X$ (or, equivalently, if $\ov J_X=\ov \J_X\fatslash \Gm$) then $\ov \J_X$ is called \emph{fine}.
\end{definition}
The above definition was introduced for fine compactified Jacobian in \cite[Def. 3.1]{pagani2023stability} (see also \cite[Def. 2.22]{viviani2023new}) with the additional assumption that they should also be \emph{connected}.

We are interested in compactified Jacobians that are limits of Jacobians of smooth curves in the following sense. 

 \begin{definition}\label{D:compJac-spec}
An open substack $\ov \J_X^d$ of $\TF_X^d$ is called a \emph{smoothable degree-$d$ compactified Jacobian stack}
    if, for any one-parameter smoothing $\X/\Delta$ of $X$, the open substack $\ov \J_{\X}^d:=\ov \J_X^d\cup \J_{\X_{\eta}}^d\subset \TF_{\X/\Delta}^d$ admits a good moduli space $\ov J_{\X}^d$ that is proper over $\Delta$.
\end{definition}

\begin{remark}\label{R:cJ-compare}
Observe that a  smoothable  degree-$d$ compactified Jacobian is a \emph{connected} degree-$d$ compactified Jacobian.  

    \hspace{0.2cm} Indeed, if $\ov \J^d_{\X}$ admits a good moduli space $\ov J^d_{\X}$ which is proper over $\Delta$, then, by the functoriality of good moduli spaces, the proper algebraic space $\ov J^d_X:=(\ov J^d_{\X})_o$ is a good moduli space for $\ov \J^d_{X}$. Moreover, since $\ov J^d_{\X}$ is proper over $\Delta$ and its generic fiber is the degree-$d$ Jacobian of $\X_{\eta}$ which is geometrically connected, it follows that $\ov J^d_X$ is connected, which then implies that $\ov \J_X^d$ is connected.
\end{remark}

The following result will imply that any compactified Jacobian stack is a (finite) union of orbits.

\begin{proposition}\label{P:unionorb}
 Let $\ov \J_X$ be an open substack of $\TF_X$ that satisfies the existence part of the valuative criterion.  Then  $\ov \J_X$ is a union of $\PIC_X^{\un 0}$-orbits.   
\end{proposition}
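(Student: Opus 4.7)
The plan is to show that each $\PIC_X^{\un 0}$-orbit $\TF_X(G,D)$ is either contained in $\ov\J_X$ or disjoint from it. Fix such an orbit with $\TF_X(G,D)\cap \ov\J_X\neq\emptyset$ and any point $I\in \TF_X(G,D)(k)$; I will show $I\in\ov\J_X$. By Fact~\ref{F:orbits}, $\TF_X(G,D)$ is an irreducible locally closed substack (a torsor over the connected semiabelian group $\PIC_{X_S}^{\un 0}$), so $\TF_X(G,D)\cap \ov\J_X$ is dense open in $\TF_X(G,D)$. Hence I can choose a DVR $R$ over $k$ with residue field $k$ and a morphism $\I:\Spec R\to \TF_X(G,D)$ such that $\I_o=I$ and $\I_\eta\in \ov\J_X(K)$. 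The existence part of the valuative criterion applied to $\ov\J_X$ then yields, after possibly extending $R$ to a DVR $R'$, a second morphism $\I':\Spec R'\to \ov\J_X$ with the same generic fiber as $\I_{R'}$.

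By Proposition~\ref{P:non-sepa}(1), $\I'=g(\I_{R'})$ for some $g\in C^0(\Gamma_X,\ZZ)$. Since $\I$ lies entirely in the orbit $\TF_X(G,D)$, both its fibers have combinatorial type $(G,D)$, and Lemma~\ref{L:Io-Ieta} forces $\gamma(\I)=0$; thus $\gamma(\I')=\delta(g)$ by Proposition~\ref{P:non-sepa}(2). Let $W_\bullet=(W_0,\ldots,W_q)$ be the ordered partition of $V(\Gamma_X)$ given by the level sets of $g$ in increasing order of $g$-value, so that $\O_G(W_\bullet)=\O(\I')$. The key claim is that $\I'_o=\Gr_{X[W_\bullet]}(I)$ in the sense of \eqref{E:GrY}. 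To check this, I apply Proposition~\ref{P:2limits} with $Y=X[W_i]$ (on which $g$ is constant) and use $\O(\I)=\emptyset$ to derive the identity
\[
(\I'_o)_{X[W_i]}=I_{X[W_i]}\otimes \O_{X[W_i]}\Bigl(-\sum_{\substack{\ee\in \O(\I')\\ t(\ee)\in W_i}}n_{|\ee|}\Bigr),
\]
which matches the $i$-th summand of $\Gr_{X[W_\bullet]}(I)$ in \eqref{E:GrY}, as the relevant nodes coincide with $\NF(I)^\mathsf{c}\cap X[W_i]\cap (\bigcup_{j<i}X[W_j])$. Moreover, $\I'_o$ splits as a direct sum over the subcurves $X[W_i]$: every node between distinct $W_i$'s is non-free in $\I'_o$, being either already in $\NF(I)$ or in $\supp \delta(g)$, since $g$ takes different values at its endpoints. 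Assembling the summands gives $\I'_o=\Gr_{X[W_\bullet]}(I)$.

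By Proposition~\ref{P:iso-spec}, the equality $\I'_o=\Gr_{X[W_\bullet]}(I)$ exhibits $\I'_o$ as an isotrivial specialization of $I$, producing a morphism $f:\Theta\to \TF_X$ with $f(1)=I$ and $f(0)=\I'_o$. The preimage $f^{-1}(\ov\J_X)$ is an open substack of $\Theta=[\AA^1/\Gm]$ containing the closed point $0=B\Gm$. Since the only $\Gm$-invariant open subsets of $\AA^1$ are $\emptyset$, $\AA^1\setminus\{0\}$, and $\AA^1$ itself, the only open substack of $\Theta$ containing $0$ is $\Theta$ itself; therefore $f^{-1}(\ov\J_X)=\Theta$, and in particular $I=f(1)\in \ov\J_X$. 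I expect the main obstacle to be the sheaf-level identification $\I'_o=\Gr_{X[W_\bullet]}(I)$ in the middle paragraph: the combinatorial types $(G(\I'_o),D(\I'_o))$ and $(G(\Gr_{X[W_\bullet]}(I)),D(\Gr_{X[W_\bullet]}(I)))$ match automatically via Lemma~\ref{L:Io-Ieta}, but pinning down the actual sheaves requires both the direct-sum splitting and the careful twist computation supplied by Proposition~\ref{P:2limits}.
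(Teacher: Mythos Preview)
Your proof is correct and follows essentially the same approach as the paper's: choose a DVR specialization inside the orbit $\TF_X(G,D)$ with generic fiber in $\ov\J_X$, use the valuative criterion to obtain a second limit $\I'_o\in\ov\J_X$, identify $\I'_o$ with $\Gr_{X[W_\bullet]}(I)$ via Proposition~\ref{P:2limits} and the computation $\gamma(\I')=\delta(g)$, and conclude by isotrivial specialization. Your final step, arguing explicitly via the open substacks of $\Theta$, is just an unpacking of what the paper states in one line (``$I\in\ov\J_X$ since $\wt I\in\ov\J_X$'').
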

Note that a universally closed open substack $\ov \J_X\subset \TF_X$ satisfies the existence part of the valuative criterion by \cite[\href{https://stacks.math.columbia.edu/tag/0CLX}{Lemma 0CLX}]{stacks-project}, using that any such   stack $\ov\J_X$ is quasi-separated by Fact~\ref{F:diagTF}\eqref{F:diagTF1}. The above result was proved for fine compactified Jacobians in \cite[Lemma~7.2]{pagani2023stability}.

\begin{proof}
    We have to show that if there is an orbit $\TF_X(G,D)$ of $\TF_X$ that has nonempty intersection with $\ov \J_X$, then $\TF_X(G,D)$ is entirely contained in $\ov \J_X$.
    
    Fix a sheaf $I\in \TF(G,D)$ and let us show that $I\in \ov \J_X$.
    Since $\ov \J_X$ is open in $\TF_X$ and $\TF_X(G,D)$ is irreducible, then the intersection $\ov \J_X\cap \TF(G,D)$ is an open and dense substack of $\TF(G,D)$.
    Hence, there exists a discrete valuation $k$-ring $R$ with residue field $k$ and a relative torsion rank-$1$ sheaf $\I\in \TF_X(B:=\Spec R)$ whose special fiber $\I_o$ is $I$ and whose general fiber $\I_{\eta}$ belongs to $\ov \J_X\cap \TF(G,D)$.
    
    Since $\ov \J_X$  satisfies the existence part of the valuative criterion by assumption, we can find, up to replacing $R$ with a finite extension,  another relative torsion rank-$1$ sheaf $\wt \I\in \TF_X(B)$ whose general fiber $\wt \I_{\eta}$ coincides with $\I_{\eta}$ and whose special fiber $\wt I:=\wt \I_o$ belongs to $\ov \J_X$.
    
    By Proposition \ref{P:non-sepa}\eqref{P:non-sepa1}, there exists $g\in C^0(\Gamma_X,\ZZ)$ such that $\wt \I=g(\I).$ Set $q+1:=|\Im(g)|$ and let $\Im(g)=\{m_0<m_1<\ldots<m_q\}$. For every $0\leq i\leq q$, define the subcurve 
    $$Y_i:=\bigcup_{v\in V(\Gamma_X): g(v)=m_i}  X_v,$$
    and consider the ordered partition $Y_{\bullet}:=(Y_0,\ldots, Y_q)$. 

We now make the following
    
   \un{Claim:} $\Gr_{Y_{\bullet}}(I)=\wt I$. 

   \vspace{0.1cm}
   
   The Claim concludes the proof since it implies that  $I$ isotrivially specializes to $\wt I$ by Proposition \ref{P:iso-spec}, which in turn implies that $I\in \ov \J_X$ since $\wt I\in \ov \J_X$. 

     \vspace{0.1cm}
     
    It remains to prove the Claim. 
    We first  compare the $1$-cochains associated to $\I$ and $\wt I$. 
    Since $\I_{\eta}$ and $\I_o$ belongs to the same orbit of $\TF_X$, namely $\TF(G,D)$, Lemma \ref{L:Io-Ieta} implies that $\gamma(\I)=0$. Hence, Proposition \ref{P:non-sepa}\eqref{P:non-sepa2} gives that $\gamma(\wt \I)=\delta_G(g)$. Therefore,
    the support of $\gamma(\wt \I)$ is equal to 
    $$\supp \gamma(\wt \I)=\bigcup_{0\leq i<j\leq q} E_G(g^{-1}(m_i),g^{-1}(m_j)).$$
    By applying Lemma \ref{L:Io-Ieta}, we deduce that  $\wt I=\wt \I_o$ is non-free at all the nodes between $Y_i$ and $Y_j$, for any $0\leq i<j\leq q$. This, together with Fact \ref{F:orbits}\eqref{F:orbits3}, gives the decomposition
    \begin{equation}\label{E:dec-wtI}
    \wt I=\bigoplus_{i=0}^q \wt I_{Y_i}.    
    \end{equation}

By comparing the above decomposition \eqref{E:dec-wtI} with the definition \eqref{E:GrY} of $\Gr_{Y_\bullet}(I)$, the Claim becomes equivalent to the equality 
(for any $0\leq i \leq q$):
\begin{equation}\label{E:IYi}
    \wt I_{Y_i}=I_{Y_i}\left(-\NF(I)^\mathsf{c}\cap Y_i\cap (\bigcup_{0\leq j<i} Y_j)\right).
\end{equation}

This follows from Proposition \ref{P:2limits} using that $\O(\I)=\emptyset$ (since $\gamma(\I)=0$) and that 
$$
\O(\wt \I)=\{\ee\in \EE(G): \: s(\ee) \in V(\Gamma_{Y_i}), t(\ee)\in V(\Gamma_{Y_j})\: \text{ for some } i<j\},
$$
which follows from $\gamma(\wt \I)=\delta_G(g)$ and the definition of $Y_\bullet$.
\end{proof}

The above Proposition allows us to define the poset of orbits of a compactified Jacobian stack.

\begin{definition}\label{D:orb-fcJ}
Let $\ov \J_X$ be a open substack of $\TF_X$ that is universally closed (e.g. a compactified Jacobian stack of $X$). The \emph{poset of orbits} of $\ov \J_X$ is the following upper subset 
$$\P(\ov J_X):=\{(G,D)\in \OO(\Gamma_X)\: : \TF(G,D)\subset \ov \J_X^d\}\subset \OO(\Gamma_X).$$ 
\end{definition}
The poset of orbits recovers the open substack $\ov \J_X$, since, by Proposition~\ref{P:unionorb}, we have that 
(with the notation of \eqref{E:opTF}
\begin{equation}\label{E:U-cJ}
\ov \J_X=U(\P(\ov J_X)).
\end{equation}

The smooth locus and the irreducible components of a compactified Jacobian stack are described in the following

\begin{corollary}\label{C:sm-irr}
Let $\ov \J_X$ be a open substack of $\TF_X$ that is universally closed (e.g. a compactified Jacobian stack of $X$).
\begin{enumerate}
\item \label{C:sm-irr1}  $\ov \J_X$ is a reduced stack  of pure dimension $g(X)-1$  with locally complete intersection singularities and its smooth locus is equal to 
$$
(\ov \J_X)_{\sm}=\coprod_{D\in \P(\ov J_X)(\Gamma_X)} \TF(\Gamma_X,D).
$$
In particular, the smooth locus of $\ov \J_X$ is isomorphic to a disjoint union of  copies of the generalized Jacobian $\PIC^{\un 0}(X)$
\item \label{C:sm-irr2} The irreducible components of $\ov \J_X$ are given by 
$$
\Big\{\ov{\TF(\Gamma_X,D)}\: : D\in \P(\ov \J_X)(\Gamma_X)\Big\}.
$$
\end{enumerate}
\end{corollary}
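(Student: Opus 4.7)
Both parts will be deduced by combining the general properties of $\TF_X$ recalled in Fact~\ref{F:diagTF} with the fact, established in Proposition~\ref{P:unionorb}, that $\ov\J_X$ is a union of $\PIC_X^{\un 0}$-orbits (noting that a compactified Jacobian stack satisfies the existence part of the valuative criterion because $\ov J_X$ is proper).

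For Part~\eqref{C:sm-irr1}, I would begin by observing that since $\ov\J_X$ is an open substack of $\TF_X$, the properties of being reduced, of pure dimension $g(X)-1$, and of having locally complete intersection singularities are inherited directly from Fact~\ref{F:diagTF}\eqref{F:diagTF3}. The same Fact identifies the smooth locus of $\TF_X$ with $\PIC_X$, so
\[
(\ov\J_X)_{\sm}=\ov\J_X\cap \PIC_X.
\]
Since $\ov\J_X$ is a union of $\PIC_X^{\un 0}$-orbits by Proposition~\ref{P:unionorb}, this intersection is a union of those orbits $\TF(G,D)$ that are contained in $\PIC_X$. By \eqref{E:simp-I}, the orbits inside $\PIC_X$ are precisely those with $G=\Gamma_X$, giving
\[
(\ov\J_X)_{\sm}=\coprod_{D\in\P(\ov \J_X)(\Gamma_X)}\TF(\Gamma_X,D).
\]
The final assertion is then obtained from Fact~\ref{F:orbits}\eqref{F:orbits3}, which identifies each $\TF(\Gamma_X,D)$ with a copy of $\PIC^D(X)$, and the latter is (non-canonically) a copy of the generalized Jacobian $\PIC^{\un 0}(X)$.

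For Part~\eqref{C:sm-irr2}, my strategy is to use that the smooth locus is open and dense in a reduced scheme, so that $\ov\J_X$ is the closure of $(\ov\J_X)_{\sm}$. Combining this with Part~\eqref{C:sm-irr1}, I obtain
\[
\ov\J_X=\bigcup_{D\in\P(\ov\J_X)(\Gamma_X)}\ov{\TF(\Gamma_X,D)}.
\]
Each $\TF(\Gamma_X,D)$ is irreducible by Fact~\ref{F:orbits}\eqref{F:orbits3}, so each $\ov{\TF(\Gamma_X,D)}$ is irreducible as well. To check that no one of these closures is contained in another, I would note that in the poset $\OO(\Gamma_X)$ the elements of the form $(\Gamma_X,D)$ are maximal and pairwise incomparable (since $\Gamma_X$ is maximal in $\SS(\Gamma_X)$), so by Fact~\ref{F:orbits}\eqref{F:orbits4} the orbit $\TF(\Gamma_X,D')$ is not contained in $\ov{\TF(\Gamma_X,D)}$ for $D'\neq D$. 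This gives the claimed description of the irreducible components.

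The argument is essentially book-keeping; the only mild subtlety is verifying the density of the smooth locus in Part~\eqref{C:sm-irr2}, which however is automatic for a reduced, locally Noetherian scheme of pure dimension together with openness of the smooth locus (both granted by Part~\eqref{C:sm-irr1}).
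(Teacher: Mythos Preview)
Your proof is correct and follows essentially the same approach as the paper, which simply cites Proposition~\ref{P:unionorb} together with Fact~\ref{F:diagTF}\eqref{F:diagTF3} for Part~\eqref{C:sm-irr1} and Proposition~\ref{P:unionorb} together with Fact~\ref{F:orbits} for Part~\eqref{C:sm-irr2}. Your version spells out the details (in particular the density of the smooth locus and the incomparability of the maximal orbits) that the paper leaves implicit, but the underlying argument is the same.
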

\begin{proof}
Part \eqref{C:sm-irr1} follows from Proposition \ref{P:unionorb} and Fact \ref{F:diagTF}\eqref{F:diagTF3}. Part \eqref{C:sm-irr2} follows from Proposition \ref{P:unionorb} and Fact \ref{F:orbits}.
\end{proof}

We now investigate when an open substack $\ov \J_X$ of $\TF_X$ admits a good moduli space. 

\vspace{0.1cm}

Recall that $\ov \J_X$ is $\Theta$-complete if and only if, for any DVR $R$ with residue field $k$, any map $f: \Theta_R\setminus \{0\}\to \ov \J_X$  can be extended to a map $F: \Theta_R\to \ov \J_X$, where $\Theta_R:=[\AA^1_R/\Gm]$ and $0=[\Spec k/\Gm]$ is the closed point of $\Theta_R$. The four points of $\Theta_R$ are related by the following specializations. 
 
 \begin{figure}[hbt!]
 \[
\begin{tikzcd}
    1_{\eta}:=[\AA^1_\eta\setminus \{0\}/\Gm] \arrow[r] \arrow[d, rightsquigarrow] &  1:=[\AA^1_k\setminus \{0\}/\Gm] \arrow[d, rightsquigarrow] \\
    0_\eta:=[\eta/\Gm] \arrow[r] & 0:=[\Spec k/\Gm]
\end{tikzcd}
\]
\caption{The four points of $\Theta_R$ and their specializations: the horizontal arrows are ordinary specializations while the vertical arrows are isotrivial specializations.\label{F:ThetaR}}
\end{figure}

\begin{proposition}\label{P:gms}
 Let $\ov \J_X^d$ be an open substack of $\TF_X^d$. The following conditions are equivalent:
 \begin{enumerate}
     \item \label{P:gms1} for any one-parameter smoothing $\X/\Delta$ of $X$, the stack $\ov \J_{\X}^d:=\ov \J_X^d\cup \J_{\X_{\eta}}^d\subset \TF_{\X/\Delta}^d$ admits a good moduli space $\ov J_{\X}^d$ over $\Delta$.
    \item \label{P:gms2} there exists a regular one-parameter smoothing $\X/\Delta$ of $X$, such that the stack $\ov \J_{\X}^d:=\ov \J_X^d\cup \J_{\X_{\eta}}^d\subset \TF_{\X/\Delta}^d$ admits a good moduli space $\ov J_{\X}^d$ over $\Delta$.
   \item \label{P:gms3} $\ov \J_X^d$ admits a good moduli space $\ov J_X^d$.
     \item \label{P:gms4} $\ov \J_X^d$ is $\Theta$-complete.  
 \end{enumerate}
\end{proposition}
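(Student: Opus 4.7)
The plan is to establish the cycle of implications $(1) \Rightarrow (2) \Rightarrow (3) \Rightarrow (4) \Rightarrow (1)$, using as the main technical tool the existence criterion for good moduli spaces of Alper--Halpern-Leistner--Heinloth (which characterizes stacks with a separated good moduli space in terms of $\Theta$-reductivity/$\Theta$-completeness and S-completeness, plus mild hypotheses on the diagonal).

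The implications $(1) \Rightarrow (2) \Rightarrow (3) \Rightarrow (4)$ are the direct ones. First, $(1) \Rightarrow (2)$ is immediate since regular one-parameter smoothings of the nodal curve $X$ always exist (for instance, taking a smoothing inside the universal deformation). For $(2) \Rightarrow (3)$, one uses the compatibility of good moduli spaces with arbitrary base change: restricting the proposed $\ov J_{\X}^d$ along the closed immersion $\{o\} \hookrightarrow \Delta$ yields a good moduli space for $\ov \J_X^d = (\ov \J_{\X}^d)_o$. For $(3) \Rightarrow (4)$, this is the standard fact that any algebraic stack admitting a good moduli space is $\Theta$-complete.

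The heart of the proof is $(4) \Rightarrow (1)$. The strategy decomposes as follows. Fix a one-parameter smoothing $\X/\Delta$ and consider $\ov \J_{\X}^d$. The stack $\ov \J_{\X}^d$ inherits affine diagonal and local finite presentation from $\TF_{\X/\Delta}^d$ (by Fact~\ref{F:diagTF}), and one deduces that it is of finite type over $\Delta$ from Proposition~\ref{P:unionorb} together with the finiteness of the orbit decomposition on the special fiber. The bulk of the work is verifying the two conditions required by the AHLH criterion: (a) $\Theta$-completeness over $\Delta$, and (b) S-completeness over $\Delta$. For (a), given a DVR $R$ over $\Delta$ and a map $\Theta_R \setminus \{0\} \to \ov \J_{\X}^d$, one distinguishes two cases: if the map factors through the generic fiber $\J_{\X_\eta}^d$ (the Jacobian of a smooth curve), extendability is immediate from properness of the abelian variety; if not, one combines the hypothesis that $\ov \J_X^d$ is $\Theta$-complete with the explicit description of specializations within $\TF_{\X/\Delta}^d$ (Proposition~\ref{P:non-sepa} and Lemma~\ref{L:Io-Ieta}) and of isotrivial specializations (Proposition~\ref{P:iso-spec}, Corollary~\ref{C:iso-spec}) to produce the desired extension. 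For (b), one runs the analogous argument with $\ST_R$ in place of $\Theta_R$, again leveraging the combinatorial description of specializations in terms of ordered partitions $Y_\bullet$ and associated orientations $\O_G(W_\bullet)$. Having established (a) and (b), the AHLH criterion produces a separated good moduli space $\ov J_{\X}^d$ of $\ov \J_{\X}^d$ over $\Delta$. Properness over $\Delta$ then follows: separation is encoded in S-completeness, while universal closedness follows from $\Theta$-completeness together with quasi-compactness and the existence part of the valuative criterion.

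The main obstacle is expected to be the verification that $\Theta$-completeness (and S-completeness) of the \emph{total} stack $\ov \J_{\X}^d$ follows from the hypothesis on the special fiber alone. While the generic fiber is well-behaved (a proper abelian variety), the delicate point is propagating the condition across the special fiber: one must show that any one-parameter test datum into $\ov \J_{\X}^d$ whose limit point lands in $\ov \J_X^d$ can be extended using the hypothesis on $\ov \J_X^d$ plus the combinatorics of Section~\ref{Sub:sheaves}. S-completeness in particular is a condition genuinely distinct from $\Theta$-completeness, and its verification will likely require a careful use of the action \eqref{E:action} and the classification of limits in Proposition~\ref{P:iso-spec}, matching the groupoid of degenerations parameterised by $\ST_R$ with the combinatorial data $(G,D)$ attached to torsion-free sheaves.
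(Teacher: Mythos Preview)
Your cycle $(1)\Rightarrow(2)\Rightarrow(3)\Rightarrow(4)\Rightarrow(1)$ matches the paper, and the first three implications are identical. The difference is in $(4)\Rightarrow(1)$, where you over-engineer the argument in two ways.

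First, the statement only claims existence of a good moduli space over $\Delta$, not that it is separated or proper. So verifying S-completeness and then deducing properness is extra work that the proposition does not ask for. (You may have been misled by Definition~\ref{D:compJac-spec}, which does require properness; but this proposition is a preliminary step and does not.)

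Second, and more importantly, your plan for verifying $\Theta$-completeness of $\ov\J_{\X}^d$ over $\Delta$ via the combinatorics of specializations (Propositions~\ref{P:non-sepa}, \ref{P:iso-spec}, etc.) is far more than is needed. The paper's argument is a one-liner: the generic fiber $\J_{\X_\eta}^d$ is a \emph{scheme}, so any morphism $\Theta_\eta\to \J_{\X_\eta}^d$ is constant and extends trivially; hence $\Theta$-completeness of $\ov\J_{\X}^d$ over $\Delta$ is equivalent to $\Theta$-completeness of the central fiber $\ov\J_X^d$, which is exactly the hypothesis~(4). The remaining conditions of the existence criterion (those involving stabilizers) hold automatically because all stabilizers are tori $\Gm^r$ by~\eqref{E:Aut-I}. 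No orbit analysis is required.

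In short, your route would work but misses the key shortcut: the passage from the central fiber to the total family is free because the generic fiber has no stacky structure at all.
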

\begin{proof}
   We will prove a cyclic chain of implications.

$\eqref{P:gms1}\Rightarrow \eqref{P:gms2}$ is obvious. 

$\eqref{P:gms2}\Rightarrow \eqref{P:gms3}$ follows from the fact that good moduli spaces satisfy base change.

$\eqref{P:gms3}\Rightarrow \eqref{P:gms4}$ follows from the necessary conditions for the existence of good moduli spaces in \cite[Thm. 4.1]{alper-goodmodspaces}.

$\eqref{P:gms4}\Rightarrow \eqref{P:gms1}$: we apply the sufficient conditions for the existence of good moduli spaces in \cite[Thm. 4.1]{alper-goodmodspaces}. Note that the stack $\ov\J_{\X}^d$ is locally of finite type and it has affine diagonal over $\Delta$ by \cite[\href{https://stacks.math.columbia.edu/tag/0DLY}{Lemma 0DLY}]{stacks-project} and \cite[\href{https://stacks.math.columbia.edu/tag/0DLZ}{Lemma 0DLZ}]{stacks-project}. Moreover, the stabilizers of all points of $\ov\J_{\X}^d$ are isomorphic to $\mathbb{G}_m^r$ for some $r\geq 0$ (see \eqref{E:Aut-I}) and hence the stack  $\ov\J_{\X}^d$ satisfies the conditions (1) and (3) of loc. cit. (see also \cite[Prop. 3.55]{alper-goodmodspaces}). Hence it remains to show that $\ov\J_{\X}^d$ satisfies condition (2) of loc. cit., i.e. that $\ov J_{\X}^d\to \Delta$ is $\Theta$-complete, if and only if its central fiber $\ov J_{X}^d$ is $\Theta$-complete. This follows from the fact that the generic fiber $\ov J_{\X_{\eta}}^d$ of $\ov J_{\X}^d$ is a scheme, and hence any morphism $\Theta_\eta\to \ov J_{\X_{\eta}}^d$ is constant. 
\end{proof}

We now give a necessary combinatorial criterion for the $\Theta$-completeness of an open substack $\ov \J_X$ of $\TF_X$ that is union of orbits.

\begin{proposition}\label{P:Theta-red}   
Let  $\ov \J_X$ be an open substack of $\TF_X$ that is union of orbits. 
If $\ov \J_X$ is $\Theta$-complete, then,  for any three orbits 
$\TF_X(G,D)$, $\TF_X(G',D')$ and $\TF_X(G'',D'')$ contained in $\ov \J_X$ such that 
\begin{equation}\label{E:3orient}
    \begin{sis}
    & (G,D)\geq_{\O_G(W_\bullet)} (G',D') \text{ for some ordered partition } W_\bullet \text{ of } V(\Gamma_X);\\
    & (G,D)\geq_{\O} (G'',D'') \text{ for some partial orientation $\O$ of $G$},  
    \end{sis}
    \end{equation}
it holds that:
\begin{itemize}
    \item $\O_G(W_\bullet)$ and $\O$ are  concordant (i.e. any edge of $G$ which is both in the support of $\O_G(W_\bullet)$ and of $\O$ is oriented in the same way in $\O$ and in $\O_G(W_\bullet)$), or equivalently $\O\cup \O_G(W_\bullet)$ is a partial orientation of $G$; 
    \item if we set $(G,D)\geq_{\O\cup \O_G(W_\bullet)} (\wt G, \wt D)$, then the orbit $\TF_X(\wt G,\wt D)$ is contained in $\ov \J_X$. 
    \end{itemize}
\end{proposition}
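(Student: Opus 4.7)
The plan is to exploit $\Theta$-completeness of $\ov\J_X$ by constructing an explicit map $\Theta_R\setminus\{0\}\to \ov\J_X$ from the given data, and reading off the desired compatibility from its extension. Fix a DVR $R$ with residue field $k$ and set $B=\Spec R$. Using $(G,D)\geq_{\O}(G'',D'')$ and Fact~\ref{F:orbits}(\ref{F:orbits4}), choose $\I\in \TF_X(B)$ with $\I_\eta\in \TF_X(G,D)$ and $\I_o\in \TF_X(G'',D'')$; by Lemma~\ref{L:Io-Ieta} and the uniqueness of the partial orientation realizing a relation in $\OO^d(\Gamma_X)$, we automatically have $\O(\I)=\O$, and the map $B\to \ov\J_X$ determined by $\I$ factors through $\ov\J_X$. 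On the other hand, by Proposition~\ref{P:iso-spec} the ordered partition $W_\bullet$ determines an isotrivial specialization of $\I_\eta$ to $\Gr_{X[W_\bullet]}(\I_\eta)$, which by Corollary~\ref{C:iso-spec} lies in $\TF_X(G',D')\subset \ov\J_X$, giving a map $\Theta_\eta\to \ov\J_X$. These two maps agree on the overlap $\{1_\eta\}$, where both take the value $\I_\eta$, and glue to a map $\Theta_R\setminus\{0\}\to \ov\J_X$.

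By $\Theta$-completeness, this extends to a morphism $F\colon\Theta_R\to \ov\J_X$. Because $W_\bullet$ is a fixed partition of the constant vertex set $V(\Gamma_X)$, the resulting filtration on the whole family is obtained by applying $W_\bullet$ uniformly, and its associated graded is the family $\Gr_{X[W_\bullet]}(\I)$ over $B$; thus $F(0)=\Gr_{X[W_\bullet]}(I'')$, which by Proposition~\ref{P:unionorb} lies in some orbit $\TF_X(\wt G,\wt D)\subseteq \ov\J_X$. Applying Corollary~\ref{C:iso-spec} to the isotrivial specialization of $I''$ to $\Gr_{X[W_\bullet]}(I'')$ gives $(G'',D'')\geq_{\O_{G''}(W_\bullet)}(\wt G,\wt D)$, while Lemma~\ref{L:Io-Ieta} applied to the generic-to-special specialization in the family $\Gr_{X[W_\bullet]}(\I)$ provides a partial orientation $\O^\ast$ of $G'$ with $(G',D')\geq_{\O^\ast}(\wt G,\wt D)$.

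Composing specializations in $\OO^d(\Gamma_X)$ along the two possible paths yields
\[(G,D)\geq_{\O\cup \O_{G''}(W_\bullet)} (\wt G,\wt D)\quad\text{and}\quad (G,D)\geq_{\O_G(W_\bullet)\cup \O^\ast} (\wt G,\wt D).\]
Since the partial orientation realizing a given relation in $\OO^d(\Gamma_X)$ is unique, these two partial orientations of $G$ must coincide as sets of oriented edges. The first one agrees with $\O$ on $\supp \O$ (since $\supp \O_{G''}(W_\bullet)\subseteq E(G'')=E(G)\setminus \supp\O$), and the second one agrees with $\O_G(W_\bullet)$ on $\supp \O_G(W_\bullet)$ (since $\supp\O^\ast\subseteq E(G')=E(G)\setminus\supp\O_G(W_\bullet)$); hence on the intersection $\supp\O\cap \supp\O_G(W_\bullet)$ the orientations $\O$ and $\O_G(W_\bullet)$ must agree, which is the concordance claim. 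Consequently $\O\cup \O_G(W_\bullet)$ is a well-defined partial orientation of $G$ equal to both combinations above, so $(\wt G,\wt D)$ is characterized by $(G,D)\geq_{\O\cup \O_G(W_\bullet)}(\wt G,\wt D)$, proving the second assertion.

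The most delicate step in this plan is the identification $F(0)=\Gr_{X[W_\bullet]}(I'')$: once granted, the uniqueness of the orientation realizing each relation in $\OO^d(\Gamma_X)$ does all the combinatorial work. This identification is clean here precisely because $W_\bullet$ is a partition of the constant set $V(\Gamma_X)$, so the filtration prescribed at the generic fibre extends (uniquely up to isomorphism) to a filtration of the entire family $\I$, whose graded piece can then be evaluated at $o$.
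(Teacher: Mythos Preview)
Your overall plan---build $f\colon\Theta_R\setminus\{0\}\to\ov\J_X$, extend it by $\Theta$-completeness, and read off the combinatorics at $F(0)$---is the same as the paper's, but the step you flag as ``most delicate'' does not go through as written, and this undermines the rest of the argument.

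The identification $F(0)=\Gr_{X[W_\bullet]}(I'')$ is not justified. The operation $\Gr_{Y_\bullet}(-)$ of \eqref{E:GrY} twists by $-\NF(I)^{\mathsf c}\cap Y_i\cap(\bigcup_{j<i}Y_j)$, so it depends on the non-free locus of the \emph{particular} sheaf; since $\NF(\I_\eta)\neq\NF(\I_o)$, there is no single ``family version'' $\Gr_{X[W_\bullet]}(\I)$ whose fibres over $\eta$ and $o$ are the fibrewise $\Gr$'s. If you literally extend the filtration used on $\I_\eta$ to a filtration of $\I$ and take associated graded, the result will in general fail to be flat over $B$ with torsion-free rank-$1$ fibres (so it does not even define a map $\Theta_R\to\TF_X$). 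A concrete test case: take $X$ a vine curve with two nodes, $W_\bullet=(\{a\},\{b\})$, and $\O$ equal to $\{e_1\colon b\to a,\ e_2\colon a\to b\}$, which is \emph{not} concordant with $\O_G(W_\bullet)$. Then $G'=G''$ is the edgeless spanning subgraph, so both $\O_{G''}(W_\bullet)$ and $\O^\ast$ are empty, and your two paths produce $(\wt G,\wt D)=(G'',D'')$ and $(\wt G,\wt D)=(G',D')$ respectively, with $D''=D-a-b\neq D-2b=D'$. This is exactly the situation the proposition must rule out, and your argument does not.

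There is also a recurring appeal to ``uniqueness of the partial orientation realising a relation in $\OO^d(\Gamma_X)$'' (first to force $\O(\I)=\O$, and then to identify the two composite orientations). This fails whenever $\Gamma_X$ has parallel edges: for two edges between $a$ and $b$, the orientations $\{e_1\colon a\to b,\ e_2\colon b\to a\}$ and $\{e_1\colon b\to a,\ e_2\colon a\to b\}$ both give $D(\O)=a+b$. Even on simple graphs it can fail (a $4$-cycle has two orientations with indegree sequence $(1,1,1,1)$). So the deduction of concordance from ``the two composite orientations coincide'' does not follow.

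The paper avoids the first difficulty by arguing the contrapositive: assuming the conclusion fails, it builds the same map $f$ and shows it cannot extend, using only that any putative $F(0)$ must lie below \emph{both} $(G',D')$ and $(G'',D'')$ in the orbit poset, together with $\ov\J_X$ being open and a union of orbits. No exact identification of $F(0)$ is needed.
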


\begin{proof}
Suppose first that the combinatorial condition in the statement does not hold and let us prove that $\ov \J_X$ is not $\Theta$-complete. By assumption, there exists  three orbits $\TF_X(G,D), \TF_X(G',D'), \TF_X(G'',D'')\subset \ov \J_X$ satisfying \eqref{E:3orient} and such that either $\O_G(W_\bullet)$ and $\O$ are not concordant or, they are concordant but if we set $(G,D)\geq_{\O\cup \O_G(W_\bullet)} (\wt G, \wt D)$  then $\TF_X(\wt G,\wt D)$ is not contained in $\ov \J_X$. 

By Fact \ref{F:orbits}\eqref{F:orbits4}, there exists a $k$-DVR $R$ with residue field $k$ and a relative rank-$1$ torsion-free sheaf $\I\in \ov \J_X(\Spec R)$ such that  its generic fiber $\I_\eta\in \TF_X(\eta)$ has combinatorial type $(G,D)$ and its special fiber $\I_o\in \TF_X(k)$ has combinatorial type $(G'',D'')$. Moreover, by Proposition \ref{P:iso-spec}, there exists an isotrivial specialization from $\I_\eta$ to $\Gr_{X[W_\bullet]}(\I_\eta)$, or in other words  a map $f_\eta:[\AA^1_\eta/\Gm]\to \ov \J_X$ such that $f_\eta(1_\eta)=\I_\eta$ and $f_\eta(0_\eta)=\Gr_{Y_\bullet}(\I_\eta)$. The map $f_\eta$ and the map $f_1:\Spec R\to \ov \J_X$ associated to $\I$ glue together to give rise to a map $f:\Theta_R\setminus \{0\}\to \ov \J_X$ such that $f_{|[\AA^1_R\setminus\{0\}/\Gm]}=f_1$ and $f_{[\AA^1_\eta/\Gm]}=f_1$. 

Now we claim that the above map $f:\Theta_R\setminus \{0\}\to \ov \J_X$ does not extend to $\Theta_R$. Indeed, by contradiction, assume that there exists an extension $F:\Theta_R\to \ov \J_X$ of $f$. Then the combinatorial type $(\wh G,\wh D)$ of the sheaf $F(0)\in \ov \J_X$ is such that 
$$
\begin{sis}
    & (G,D)\geq_{\O_G(W_\bullet)} (G',D')\geq (\wh G,\wh D),\\
    & (G,D)\geq_{\O} (G'',D'') \geq (\wh G,\wh D).
\end{sis}
$$
Therefore, Fact \ref{F:orbits}\eqref{F:orbits4} implies that there exists a partial orientation $\wh O$ of $G$ such that $(G,D)\geq_{\wh O} (\wh G,\wh D)$ and such that $\wh O$ restricts to both $\O$ and $\O_G(W_\bullet)$. In particular, $\O$ and $\O_G(W_\bullet)$ are concordand and we must have that $(\wh G,\wh D)\leq (\wt G,\wt D)$. 
Since $F(0)\in \ov \J_X\cap \TF_X(\wh G,\wh F)$ and $\ov \J_X$ is a union of orbits, we must have that $\TF_X(\wh G,\wh D)\subset \ov \J_X$. Finally, since $\ov \J_X$ is open, we have also the inclusion $\TF_X(\wt G,\wt D)\subset \ov \J_X$ and this is absurd.  
\end{proof}

\subsection{V-compactified Jacobians}\label{Sub:VcJ}

In this subsection, we recall the definition and main properties of the V-compactified Jacobians, introduced in \cite{viviani2023new} and \cite{FPV1}. 

Let us first recall the definition of V-stability conditions for a connected nodal curve.  

\begin{definition}\label{D:VStabX}
Let $X$ be a connected nodal curve over $k=\ov k$. 
   A \emph{stability condition of vine type} (or simply a \textbf{V-stability condition)}    of characteristic $\chi\in \ZZ$ on  $X$ is a function
    \begin{align*}
        \s:\BCon(X)&\to \ZZ\\
        Y&\mapsto \s_Y
    \end{align*}
    satisfying the following properties:
\begin{enumerate}
\item \label{D:VStabX1} for any $Y\in \BCon(X)$, we have 
\begin{equation}\label{E:sum-s}
\mathfrak s_Y+\mathfrak s_{Y^\mathsf{c}}-\chi
\in \{0,1\}.
\end{equation}
A subcurve $Y\in \BCon(X)$ is said to be \emph{$\s$-degenerate} if $\s_Y+\s_{Y^\mathsf{c}}-\chi=0$,
and \emph{$\s$-nondegenerate} otherwise.

\item  \label{D:VStabX2} given subcurves $Y_1,Y_2,Y_3\in \BCon(X)$ without pairwise common irreducible components such that $X=Y_1\cup Y_2\cup Y_3$, we have that:
\begin{enumerate}
 \item if two among the subcurves $\{Y_1,Y_2,Y_3\}$ are $\s$-degenerate, then so is  the third.
            \item the following condition holds
            \begin{equation} \label{E:tria-s}
            \sum_{i=1}^{3}\s_{Y_i}-\chi
            \in \begin{cases}
                \{1,2\} \textup{ if $Y_i$ is $\s$-nondegenerate for all $i=1,2,3$};\\
                \{1\} \textup{ if there exists a unique   } i\in \{1,2,3\} \text{ such that $Y_i$ is $\s$-degenerate};\\
                \{0\} \textup{ if $Y_i$ is $\s$-degenerate for all $i=1,2,3$}.
            \end{cases}
        \end{equation}
\end{enumerate}
\end{enumerate}

The characteristic $\chi$ of $\s$ will also be denoted by $|\s|$. The \emph{degeneracy set} of $\s$ is the collection
\begin{equation*}
\D(\s):=\{Y\in \BCon(X): Y \text{ is $\s$-degenerate}\}.
\end{equation*}
    A V-stability condition $\s$ is called \emph{general}  if every $Y\in \BCon(X)$ is $\s$-nondegenerate, i.e. if $\D(\s)=\emptyset$.

The collection of all V-stability conditions of characteristic $\chi$ on $X$ is denoted by $\VStab^\chi(X)$ and the collection of all V-stability conditions on $X$ is denoted by 
$$\VStab(X)=\coprod_{\chi \in \ZZ}\VStab^\chi(X).$$
\end{definition}

The easiest way of producing V-stability conditions is via numerical polarizations, as we now explain following \cite[Subsec. 4.1]{FPV1}. 

\begin{example}(\textbf{Classical V-stability conditions})
Let $X$ be a connected nodal curve. A \emph{numerical polarization} on $X$ of characteristic $\chi\in \ZZ$ is a function  
$$
\begin{aligned}
\psi:\left\{\text{Subcurves of } X\right\} & \longrightarrow \RR\\
Y & \mapsto \psi_Y
\end{aligned}
$$
that is \emph{additive}, i.e. if $Y_1,Y_2$ are subcurves of $X$ such that $Y_1\wedge Y_2=\emptyset$ then $\psi_{Y_1\cup Y_2}=\psi_{Y_1}+\psi_{Y_2}$, and such  that $|\psi|:=\psi_X=\chi$. The function 
\begin{equation}\label{E:s-psi}
\begin{aligned}
        \s(\psi) :\BCon(X)&\to \ZZ\\
        Y&\mapsto \s(\psi)_Y:=\left\lceil\psi_Y\right\rceil.
    \end{aligned}
    \end{equation}
is a V-stability condition on $X$ of characteristic $\chi$ (called the V-stability condition associated to $\psi$), such that 
$$
\D(\s(\psi))=\{Y\in \BCon(X): \: \psi_Y\in \ZZ\}.
$$
 This follows  by taking the upper integral parts in the following two equalities
$$
\begin{sis}
& \psi_Y+\psi_{Y^\mathsf{c}}-\chi=0 \text{ for any } Y\in \BCon(X),\\    
& \sum_{i=1}^3 \psi_{Y_i}-\chi=0 \text{ for any } \{Y_1,Y_2,Y_3\} \text{ as in  Definition \ref{D:VStabX}\eqref{D:VStabX2}}. 
\end{sis}
$$
The V-stabilities of the form $\s(\psi)$ are called \emph{classical}.

Because of the additivity property, a numerical polarization is completely determined by its values  on the irreducible components of $X$. Hence, the space of numerical polarizations on $X$ of characteristic $\chi\in \ZZ$, denoted by $\Pol^{\chi}(X)$, is a real affine subspace of $\RR^{I(X)}$ (with $I(X)$ the set of irreducible components of $X$) whose underlying real vector space is $\Pol^0(X)$. 

Consider the arrangement of hyperplanes in $\Pol^{\chi}(X)$ given by  
\begin{equation}\label{E:arr-hyperX}
\A_{X}^\chi:=\left\{\psi_Y=n\right\}_{Y\in \BCon(X), n\in \ZZ}.
\end{equation}
This arrangement yields an induced wall and chamber decomposition on $\Pol^\chi(X)$ such that two numerical polarizations $\psi, \psi'$ belong to the same region, i.e. they have the same relative positions with respect to all the hyperplanes, if and only if $\s(\psi)=\s(\psi')$. In other words, the set of regions induced by $\A_X^\chi$ on $\Pol^\chi(X)$ is the set of classical V-stability conditions on $X$.  Note also that $\s(\psi)$ is a general V-stability condition if and only if $\psi$ belongs to a chamber (i.e. a maximal dimensional region), or equivalently if it does not lie on any of the hyperplanes of $\A_X^\chi$, in which case we say that $\psi$ is a \emph{general} numerical polarization.
\end{example}

We now show that V-stability conditions on a nodal curve $X$ and on its dual graph $\Gamma_X$ are in canonical bijection, and that many of the construction introduced in \cite{FPV1} for V-stability conditions on curves correspond to V-stability conditions on graphs introduced in Section \ref{Sec:Vstab}.

\begin{proposition}\label{P:n-s}
Let $X$ be a connected nodal curve over $k=\ov k$ and let $\Gamma_X$ be its dual graph. 
For any $\chi\in \Z$, we have a bijection 
\begin{equation}\label{E:n-s}
\begin{aligned}
\VStab^{\chi}(X)& \xrightarrow{\cong} \VStab^{\chi+g(X)-1}(\Gamma_X)\\
\s& \mapsto \n \: \text{ such that } \n_{V(\Gamma_Y)}=\s_Y+g(Y)-1 \text{ for any } Y\in \BCon(X).
\end{aligned}
\end{equation}
If $\s\in \VStab^{\chi}(X)$ and $\n\in \VStab^{\chi+g(X)-1}(\Gamma_X)$ are in correspondence under the above bijection, we have that
\begin{enumerate}[(i)]
    \item \label{P:n-s1} $Y\in \D(\s)$ if and only if $V(\Gamma_Y)\in \D(\n)$. In particular, $\s$ is general if and only if $\n$ is general.
    \item \label{P:n-s2} $Y$ belongs to the extended degeneracy subset $\wh \D(\s)$ of $\s$ (see \cite[Def. 4.3]{FPV1}) if and only if $V(\Gamma_Y)\in \wh \D(\n)$.
    \item \label{P:n-s3} The extended V-function associated to $\s$ (see \cite[Def. 4.4]{FPV1}) and the extended V-function associated to $\s$ are related by the same formula \eqref{E:n-s}.  
    \item \label{P:n-s4} The bijection \eqref{E:n-s} is an isomorphism of posets (where the poset structure on $\VStab^\chi(X)$ is defined in \cite[Def. 4.1]{FPV1}).
    \item \label{P:n-s5} A sheaf $I\in \TF_X(k)$ is $\s$-semistable (resp. $\s$-polystable, resp. $\s$-stable) in the sense of \cite[Def. 5.1]{FPV1} if and only if $(G(I),D(I))$ is $\n$-semistable (resp. $\n$-polystable, resp. $\n$-stable).
    \item \label{P:n-s6} There is an isomorphism of real affine spaces
    \begin{equation}\label{E:Pol-Div}
\begin{aligned}
\Pol^{\chi}(X)& \xrightarrow{\cong} \Div^{\chi+g(X)-1}(\Gamma_X)_{\R}\\
\psi& \mapsto \phi\: \text{ such that } \phi_{V(\Gamma_Y)}=\psi_Y+\frac{\deg_Y(\omega_X)}{2} \text{ for any } Y\in \BCon(X),
\end{aligned}
\end{equation}
under which $\s(\psi)=\n(\phi)$.
\end{enumerate}
\end{proposition}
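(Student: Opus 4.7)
The plan is to establish the bijection by substituting the defining formula $\n_{V(\Gamma_Y)} = \s_Y + g(Y)-1$ into axioms~(\ref{D:Vstab1}) and (\ref{D:Vstab2}) of Definition~\ref{D:Vstab} (with degree $d = \chi + g(X) - 1$), and checking that they reduce exactly to axioms~(\ref{D:VStabX1}) and (\ref{D:VStabX2}) of Definition~\ref{D:VStabX}. This simultaneously verifies (\ref{P:n-s1}). Properties (\ref{P:n-s2})--(\ref{P:n-s6}) will then follow by further substitutions.

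The key arithmetic inputs are two Mayer--Vietoris identities for a connected nodal curve $X$. First, for any biconnected subcurve $Y$, one has $g(X) = g(Y) + g(Y^\mathsf{c}) + |Y \cap Y^\mathsf{c}| - 1$. Second, for any decomposition $X = Y_1 \cup Y_2 \cup Y_3$ by biconnected pieces without pairwise common irreducible components, where necessarily $Y_1 \cap Y_2 \cap Y_3 = \emptyset$ since a node has only two branches, one has $\sum_i g(Y_i) + \sum_{i<j}|Y_i \cap Y_j| = g(X) + 2$. Combined with the obvious identifications $\val_{\Gamma_X}(V(\Gamma_Y)) = |Y \cap Y^\mathsf{c}|$ and $\val_{\Gamma_X}(V(\Gamma_{Y_i}), V(\Gamma_{Y_j})) = |Y_i \cap Y_j|$, substituting into \eqref{E:sum-n} and \eqref{E:tria-n} makes the $g$-terms cancel exactly against the contribution from $d$, leaving $\s_Y + \s_{Y^\mathsf{c}} - \chi$ and $\sum_i \s_{Y_i} - \chi$, respectively. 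Hence the two sets of conditions match term-by-term, the degeneracy dichotomies agree, and the bijection is established.

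Properties (\ref{P:n-s2}) and (\ref{P:n-s3}) are then immediate from the matching structures of Definition~\ref{D:ext-n} and \cite[Def.~4.3--4.4]{FPV1}: applying the Mayer--Vietoris identity for a connected $Y$ whose complement decomposes as $Y^\mathsf{c} = \bigsqcup_{i=1}^k Z_i$, namely $g(Y) + \sum_i g(Z_i) + \sum_i |Y \cap Z_i| - k = g(X)$, makes the formula defining $\n_W$ in \eqref{E:n-ext} rearrange into the corresponding curve-side formula for $\s_Y$. Property (\ref{P:n-s4}) is immediate since the shift $g(Y) - 1$ does not depend on $\s$ or $\n$. For (\ref{P:n-s5}), the identity $\chi(I_{X[W]}) = D(I)_W + e_{\NF(I)}(W) + 1 - g(X[W])$, obtained from \eqref{E:deg-sub} and $\chi(I_Y) = \deg_Y(I) + \chi(\O_Y)$, converts the inequality $\chi(I_Y) \geq \s_Y$ on the curve side into $D(I)_W + e_{\NF(I)}(W) \geq \n_W$ on the graph side, while \eqref{E:deg-mdeg} matches degree and characteristic; the polystable and stable versions follow by the same substitution applied to the additional defining conditions. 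Finally, for (\ref{P:n-s6}) the adjunction formula $\deg_Y(\omega_X) = 2g(Y) - 2 + |Y \cap Y^\mathsf{c}|$ yields $\phi_{V(\Gamma_Y)} - \val(V(\Gamma_Y))/2 = \psi_Y + g(Y) - 1$, and since $g(Y)-1 \in \ZZ$ the ceilings are compatible, giving $\n(\phi)_{V(\Gamma_Y)} = \lceil \psi_Y \rceil + g(Y) - 1 = \s(\psi)_Y + g(Y) - 1$.

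The most delicate bookkeeping is in verifying axiom~(\ref{D:Vstab2}) under the correspondence: one must confirm that the case splits $\{1,2\}/\{1\}/\{0\}$ correspond correctly across the two definitions. This follows because the full expressions on both sides coincide (by the three-piece Mayer--Vietoris identity) and the individual degeneracy indicators for each $Y_i$ versus $V(\Gamma_{Y_i})$ are already matched by the case of axiom~(\ref{D:Vstab1}). Modulo this careful case-check, the proof is a sequence of direct substitutions driven by the two Mayer--Vietoris identities.
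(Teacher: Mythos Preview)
Your proposal is correct and follows essentially the same approach as the paper: both arguments reduce the axioms on each side to one another via the two genus (Mayer--Vietoris) identities $g(X)=g(Y)+g(Y^\mathsf{c})+\val(W)-1$ and $g(X)=\sum_i g(Y_i)+\sum_{i<j}\val(W_i,W_j)-2$, then handle (\ref{P:n-s5}) by the same chain $\chi(I_Y)-\s_Y = D(I)_W+e_{\NF(I)}(W)-\n_W$ and (\ref{P:n-s6}) via adjunction. The only point the paper makes slightly more explicit is that the map in (\ref{P:n-s6}) is a well-defined affine isomorphism because $Y\mapsto \tfrac{\deg_Y(\omega_X)}{2}$ is additive with total value $g(X)-1$; you may want to state this rather than leave it implicit.
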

\begin{proof}
Let us first show the bijection in \eqref{E:n-s}. Clearly, it is enough to show that if $\s=\{\s_Y: Y\in \BCon(X)\}$ and $\n=\{\n_W: W\in \BCon(\Gamma_X)\}$ are related by the formula in \eqref{E:n-s}, then $\s$ is a V-stability on $X$ of characteristic $\chi$ if and only if $\n$ is a V-stability on $\Gamma_X$ of degree $d:=\chi+g(X)-1$. 

Let $Y\in \BCon(X)$ and set $W:=V(\Gamma_Y)\in \BCon(\Gamma_X)$, so that $V(\Gamma_{Y^\mathsf{c}})=W^\mathsf{c}$. By applying \eqref{E:n-s} to $Y$ and $Y^\mathsf{c}$, and using the formula $g(X)=g(Y)+g(Y^\mathsf{c})+\val(W)-1$, we compute 
$$
\n_{W}+\n_{W^\mathsf{c}}+\val(W)-d=[\s_Y+g(Y)-1]+[\s_{Y^\mathsf{c}}+g(Y^\mathsf{c})-1]+\val(W)-[\chi+g(X)-1]=\s_Y+\s_{Y^\mathsf{c}}-\chi,
$$
which shows that Condition \eqref{D:VStabX1} of Definition \ref{D:VStabX} holds for $\s$ if and only if Condition \eqref{D:Vstab1} of Definition \ref{D:Vstab} holds for $\n$, and that $Y\in \D(\s)$ if and only if $W\in \D(\n)$.

Consider now three subcurves $Y_1,Y_2,Y_3\in \BCon(X)$ without pairwise common irreducible components such that $X=Y_1\cup Y_2\cup Y_3$, and set $W_i:=V(\Gamma_{Y_i})\in \BCon(\Gamma_X)$ for any $i=1,2,3$, so that $\{W_1,W_2,W_3\}$ form a pairwise disjoint partition of $V(\Gamma_X).$ By applying \eqref{E:n-s} to each $Y_i$, and using the formula $g(X)=\sum_{1\leq i\leq 3} g(Y_i)+\sum_{1\leq i<j\leq 3}\val(W_i,W_j)-2$, we compute 
$$\begin{aligned}
&\sum_{1\leq i\leq 3}\n_{W_i}+\sum_{1\leq i<j\leq 3} \val(W_i,W_j)-d =\\
&\sum_{1\leq i\leq 3}[\s_{Y_i}+g(Y_i)-1]+\sum_{1\leq i<j\leq 3} \val(W_i,W_j)-[\chi+g(X)-1] =\sum_{1\leq i\leq 3}\s_{Y_i}-\chi,
\end{aligned}
$$
which shows that Condition \eqref{D:VStabX2} of Definition \ref{D:VStabX} holds for $\s$ if and only if Condition \eqref{D:Vstab2} of Definition \ref{D:Vstab} holds for $\n$,
This concludes the proof of the bijection and also the proof of part \eqref{P:n-s1}. 

Parts \eqref{P:n-s2}, \eqref{P:n-s3} and \eqref{P:n-s4} follow straightforwardly by comparing the definitions of the objects involved. 

Part \eqref{P:n-s5} for semistability and stability follows by comparing the definitions of $\s$-semistability (resp. $\s$-stability) from \cite[Def. 5.1]{FPV1} and  $\n$-semistability (resp. $\n$-stability) of Definition \ref{D:Pn}, and using that for any $Y\in \BCon(X)$ with $W:=V(\Gamma_Y)\in \BCon(\Gamma_X)$ we have
$$
\begin{aligned}
& \chi(I_Y)-\s_Y=\deg(I_Y)+1-g(Y)-\s_Y= & \text{ by } \eqref{E:deg-I}\\   & =\deg_{X[W]}(I)-\n_W= & \text{ by } \eqref{E:n-s} \\
&=D(I)_W+e_{\NF(I)}(W)-\n_W & \text{ by } \eqref{E:deg-sub}.
\end{aligned}
$$
The equivalence of $\s$-polystability and $\n$-polystability follows from the above relation together with the fact that $E(W,W^\mathsf{c})\subseteq \NF(I)$ if and only if $I=I_{X[W]}\oplus I_{X[W^\mathsf{c}]}$.

Part \eqref{P:n-s6}: the fact that \eqref{E:Pol-Div} is an isomorphism of real affine spaces follows from the fact that the function 
$$
\begin{aligned}
 \left\{\text{Subcurves of } X\right\} & \longrightarrow \R\\
 Y & \mapsto \frac{\deg_Y(\omega_X)}{2}
\end{aligned}
$$
is an additive function such that $\displaystyle \frac{\deg_X(\omega_X)}{2}=g(X)-1$. The last assertion follows by taking the upper integral parts of the following equality
$$
\phi_{V(\Gamma_Y)}-\frac{\val(V(\Gamma_Y))}{2}=\psi_Y+\frac{\deg_Y(\omega_X)}{2}-\frac{\val(V(\Gamma_Y))}{2}=\psi_Y+g(Y)-1.
$$
\end{proof}

To any V-stability condition on a nodal curve (or on its dual graph), we can associate a compactified Jacobian, as we now recall.

\begin{definition}\label{D:VcJ}
 Let $\s$ be a V-stability condition  on $X$. The \emph{V-subset} associated to $\s$ is 
 $$
 \ov \J_X(\s):=\{I\in \TF_X: \: \chi(I)=|\s|, \chi(I_Y)\geq \s_Y \text{ for any } Y\in \BCon(X)\}\subset \TF_X^{|\s|+g(X)-1}. $$
\end{definition}

\begin{remark}\label{R:Pn-cJ}
 Let $\s$ be a V-stability condition  on $X$ and let $\n$ be the corresponding V-stability condition on $\Gamma_X$ as in Proposition \ref{P:n-s}. Then, by comparing \eqref{E:n-s} and \eqref{E:deg-I}, we have that 
 $$ \ov \J_X(\s)=\ov \J_X(\n):=\{I\in \TF_X: \: \deg(I)=|\n|, \deg(I_{X[W]})\geq \n_W \text{ for any } W\in \BCon(\Gamma_X)\}\subset \TF_X^{|\n|}.$$
 
 It follows from Proposition \ref{P:n-s}\eqref{P:n-s5}  \eqref{E:deg-sub} that $I\in \ov \J_X^d$ if and only if $(G(I),D(I))\in \P_\n$. Therefore, since $\P_\n$ is a finite upper subset of $\OO^d(\Gamma_X)$ by Proposition \ref{P:n-ss}\eqref{P:n-ss1}, we have (using \eqref{E:opTF}):
 $$
 \ov \J_X(\n)=U(\P_\n)\subset \TF_X^d \text{ is open and a finite union of orbits.}
 $$
 In particular, $\ov \J_X(\n)$ is of finite type. 
\end{remark}

\begin{theorem}\label{T:VcJ-smoo}
    For any V-stability $\s$ on $X$, the associated V-subset $\ov \J_X(\s)$ is a smoothable compactified Jacobian of $X$ of degree $|\s|+g(X)-1$. Moreover:
    \begin{enumerate}[(i)]
    \item \label{T:VcJ-smoo1} $\ov \J_X(\s)$ is fine if and only if $\s$ is general.
    \item \label{T:VcJ-smoo2} $\s$ is uniquely determined by $\ov \J_X(\s)$.
    \end{enumerate}
\end{theorem}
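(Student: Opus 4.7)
The proof decomposes into three parts: (a) showing $\ov\J_X(\s)$ is a compactified Jacobian stack, (b) establishing its smoothability, and (c) the clauses (i) and (ii). Set $d := |\s|+g(X)-1$ and let $\n$ be the V-stability on $\Gamma_X$ corresponding to $\s$ under the bijection of Proposition~\ref{P:n-s}, so that $|\n|=d$ and $\ov\J_X(\s) = \ov\J_X(\n)$. By Remark~\ref{R:Pn-cJ}, we have $\ov\J_X(\s) = U(\P_\n)$ which is an open substack of $\TF_X^d$ of finite type, using Proposition~\ref{P:n-ss}\eqref{P:n-ss1}. Part (a) together with the uniqueness assertion (ii) is precisely the content of Fact*, i.e., \cite[Thm.~A, Thm~C(2)]{FPV1}; the essential input is properness of the good moduli space of $U(\P_\n)$, obtained through a relative construction over one-parameter smoothings.

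For (b), Proposition~\ref{P:gms} shows that smoothability is equivalent to $\Theta$-completeness of $\ov\J_X(\s)$. The necessary combinatorial criterion of Proposition~\ref{P:Theta-red} can be verified directly in $\P_\n$ by iterating Lemma~\ref{L:ss-isotr2}: given three $\n$-semistable pairs $(G,D), (G',D'), (G'',D'')$ linked by the specializations in \eqref{E:3orient}, the ordered partition $W_\bullet$ is forced by the lemma to have parts in $\wh\D(\n)$, and the combined partial orientation $\O \cup \O_G(W_\bullet)$ is concordant and again yields an element of $\P_\n$. That this suffices for $\Theta$-completeness (and hence for smoothability over any smoothing family) is the substance of \cite[Thm.~A]{FPV1}.

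For (i), a sheaf $I \in \ov\J_X(\s)$ is simple iff $G(I)$ is connected, by \eqref{E:simp-I}, so fineness is the condition that $G$ be connected for every $(G,D) \in \P_\n$. Proposition~\ref{P:n-ss}\eqref{P:n-ss2} places $\P_\n$ inside $\OO^d_\n(\Gamma_X)$, so the vertex set of every connected component of $G$ lies in $\wh\D(\n)$. When $\s$ is general, $\wh\D(\n) = \{V(\Gamma_X)\}$, forcing $G$ to be connected. Conversely, if $Y \in \D(\s)$ with $W := V(\Gamma_Y)$, then $W, W^\mathsf{c} \in \wh\D(\n)$, so one may pick a minimal $\D(\n)$-admissible spanning subgraph $F \leq \Gamma_X[W] \sqcup \Gamma_X[W^\mathsf{c}]$; by Lemma~\ref{L:Vstabs1divisor} there is a unique $(F,D) \in \P_\n$, corresponding to a decomposable sheaf $I_Y \oplus I_{Y^\mathsf{c}} \in \ov\J_X(\s)$, which is not simple.

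The main obstacle is part (b), the smoothability claim, where the passage from the finite combinatorial criterion of Proposition~\ref{P:Theta-red} to $\Theta$-completeness in families requires the full machinery of relative good moduli spaces developed in \cite{FPV1}; the remaining arguments are combinatorial once Proposition~\ref{P:n-s} is invoked. If desired, (ii) can be proved directly by observing that, for any $Y \in \BCon(X)$, Lemma~\ref{L:Vstabs1divisor} applied to a minimal $\D(\n)$-admissible spanning forest adapted to $V(\Gamma_Y)$ (in the sense of \eqref{E:Fadapted}) exhibits a sheaf $I \in \ov\J_X(\s)$ with $\chi(I_Y) = \s_Y$, whence $\s_Y = \min\{\chi(I_Y) : I \in \ov\J_X(\s)\}$ is recovered from $\ov\J_X(\s)$.
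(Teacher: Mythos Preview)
Your overall outline is sound, and parts (a), (i), (ii) are correct, with (i) and (ii) providing more explicit internal arguments than the paper's brief external citations to \cite{FPV1}. However, part~(b) contains two genuine missteps.

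First, your opening claim that ``Proposition~\ref{P:gms} shows that smoothability is equivalent to $\Theta$-completeness of $\ov\J_X(\s)$'' is not what Proposition~\ref{P:gms} asserts. That proposition characterizes when $\ov\J_\X^d$ admits \emph{some} good moduli space over $\Delta$; smoothability (Definition~\ref{D:compJac-spec}) additionally requires that this good moduli space be \emph{proper} over $\Delta$, and Proposition~\ref{P:gms} says nothing about properness. Second, Proposition~\ref{P:Theta-red} is stated only as a \emph{necessary} condition for $\Theta$-completeness, so verifying its combinatorial conclusion in $\P_\n$ via Lemma~\ref{L:ss-isotr2} does not establish $\Theta$-completeness. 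You eventually acknowledge that the real work is in \cite{FPV1}, but the detour through Propositions~\ref{P:gms} and~\ref{P:Theta-red} adds nothing and is misleading as written.

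The paper's proof simply bypasses all of this: it observes that the V-stability $\s$ on $X$ extends uniquely to a relative V-stability on any one-parameter smoothing $\X/\Delta$ (the generic fiber being smooth), so that $\ov\J_{\X/\Delta}(\s)=\ov\J_X(\s)\cup\J_{\X_\eta}^d$, and then invokes \cite[Thm.~6.6]{FPV1} directly to obtain a relative proper good moduli space. Your citation of \cite[Thm.~A]{FPV1} is for the single-curve statement; the relative result you need is \cite[Thm.~6.6]{FPV1}.
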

In particular, $\ov \J_X(\s)$ is a connected compactified Jacobian of degree $|\s|+g(X)-1$ (see Remark \ref{R:cJ-compare}), which will be called the \emph{V-compactified Jacobian} associated to the V-stability condition $\s$.

\begin{proof}
Let $\X/\Delta$ be a one-parameter smoothing of $X$. Since the geometric generic fiber of $\X/\Delta$ is smooth, the V-stability condition $\s$ on $X$ extends uniquely to a V-stability condition on the family $\X/\Delta$ (in the sense of \cite[Subsection 4.3]{FPV1}), which we will continue to denote by $\s$. Moreover, we have that 
$$
\ov \J_{\X/\Delta}(\s)=\ov \J_{\X}^d=\ov \J_X(\s) \cup \J_{\X_{\eta}}^{d}\subset \TF_{\X/\Delta}^d,
$$
where $d:=|\s|+g(X)-1$. It follows from \cite[Thm. 6.6]{FPV1} that $\ov \J_{\X/\Delta}(\s)\to \Delta$ admits a relative proper good moduli space, which shows that $\ov \J_X(\s)$ is a smoothable compactified Jacobians stack of $X$. 
Finally: Part~\eqref{T:VcJ-smoo1} follows from the fact that $\s$-stability coincides with $\s$-semistability if and only if $\s$ is general (see \cite[Sec. 5]{FPV1}); Part~\eqref{T:VcJ-smoo2} follows from \cite[Cor. 8.11]{FPV1}. 
\end{proof}

\subsection{Smallest compactified Jacobians}\label{Sub:smallcJ}

The aim of this subsection is to characterize V-compactified Jacobians as the ones having the smallest number of irreducible components with a given degeneracy subset.

We begin by defining the degeneracy (resp. extended degeneracy) subset of any open subset of $\TF_X^d$.

\begin{definition}\label{D:deg-cJ}
 Let $\ov \J_X^d$ be an open substack of $\TF_X^d$.
 \begin{enumerate}
     \item \label{D:deg-cJ1} 
     The \emph{degeneracy subset of} $\ov \J_X^d$ is
     $$
     \D(\ov \J_X^d):=\{W\in \BCon(\Gamma_X)\: : \text{ there exists a sheaf } I=I_{X[W]}\oplus I_{X[W^\mathsf{c}]}\in \ov \J_X^d\}.$$
     \item \label{D:deg-cJ2}
     The \emph{extended degeneracy subset of} $\ov \J_X^d$ is
     $$
     \wh \D(\ov \J_X^d):=\{W\in \Con(\Gamma_X)\: : \text{ there exists a sheaf } I=I_{X[W]}\oplus I_{X[W^\mathsf{c}]}\in \ov \J_X^d\}.$$
 \end{enumerate}
\end{definition}

\begin{remark}\label{R:obs-deg-cJ}
    Let $\ov \J_X^d$ be an open substack of $\TF_X^d$. 
     \begin{enumerate}
         \item \label{R:obs-deg-cJ1} For any $W\in \BCon(\Gamma_X)$, we have that 
         $$W\in \D(\ov \J_X^d) \Leftrightarrow W^\mathsf{c}\in \D(\ov \J_X^d).$$ 
         \item \label{R:obs-deg-cJ2}
         Let $W\in \Con(\Gamma_X)$ and let $\Gamma_X[W^\mathsf{c}]=\coprod \Gamma_X[V_i]$ be the decomposition into connected component. Then 
         $$W\in \wh \D(\ov \J_X^d) \Rightarrow V_i\in \D(\ov \J_X^d) \text{ for any } i.$$
     \end{enumerate}

Indeed, Part~\eqref{R:obs-deg-cJ1} is obvious from the definition of $\D(\ov \J_X^d)$, while Part~\eqref{R:obs-deg-cJ2} follows from the definition of $\wh \D(\ov \J_X^d)$  together with the fact that for any sheaf $I\in \TF_X^d$ we must have that 
     $$
     I_{X[W^\mathsf{c}]}=\bigoplus_i I_{X[V_i]}. 
     $$
\end{remark}

\begin{remark}\label{R:deg-VcJ}
    Let $\n$ be a V-stability of degree $d$ on $\Gamma_X$. By combining Remark~\ref{R:Pn-cJ} and Proposition~\ref{P:n-ss}\eqref{P:n-ss2} with Theorem~\ref{T:V=PT}, we deduce that 
    $$\D(\ov \J_X(\n))= \D(\n) \text{ and } \wh \D(\ov \J_X(\n))= \wh \D(\n).$$
\end{remark}

\begin{proposition}\label{P:many-orbits}
    Let $\ov \J_X^d$ be a non-empty open substack of $\TF_X^d$ that is universally closed, of finite type, connected and $\Theta$-complete (e.g. a connected compactified Jacobian stack of $X$ of degree $d$). If $\Gamma_X\setminus S$ is a spanning subgraph of $\Gamma_X$ that is $\D(\ov \J_X^d)$-admissible, i.e. such that the decomposition of $\Gamma_X\setminus S=\coprod G_i$ into connected components is such that $V(G_i)\in \wh \D(\n)$, then we have that
    \begin{equation}\label{E:many-orbits}
    \TF_X(\Gamma_X\setminus S,D)\subseteq \ov \J_X^d \: \text{ for some }
    D\in \Div^{d-|S|}(\Gamma_X)
    \end{equation}
\end{proposition}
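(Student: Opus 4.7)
Since $\ov\J_X^d$ is non-empty and universally closed, Proposition~\ref{P:unionorb} gives that it is a union of $\PIC_X^{\un 0}$-orbits, so the poset of orbits $\P(\ov\J_X^d)\subseteq \OO^d(\Gamma_X)$ is well-defined and (via the correspondence following \eqref{E:opTF}) upper-closed. Corollary~\ref{C:sm-irr}\eqref{C:sm-irr1} yields an orbit $(\Gamma_X,D_0)\in\P(\ov\J_X^d)$, handling the base case $S=\emptyset$. Moreover, by definition of the degeneracy subset, for each $W\in\D(\ov\J_X^d)$ there is a sheaf $I_W=I_{X[W]}\oplus I_{X[W^c]}\in\ov\J_X^d$ whose orbit $(G_W,D_W)$ satisfies $G_W\leq \Gamma_X[W]\sqcup\Gamma_X[W^c]$; upper-closure then promotes this to an orbit of the form $(\Gamma_X[W]\sqcup\Gamma_X[W^c],D'_W)\in\P(\ov\J_X^d)$ for some divisor $D'_W$.

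The plan is to proceed by induction on $|S|$. For the inductive step, fix a $\D(\ov\J_X^d)$-admissible decomposition $\Gamma_X\setminus S=\sqcup_{i=1}^k G_i$ and form the ordered partition $V_\bullet=(V(G_1),\ldots,V(G_k))$ of $V(\Gamma_X)$; the support $\supp(\mathcal{O}_{\Gamma_X}(V_\bullet))=\bigcup_{i<j}E_{\Gamma_X}(V(G_i),V(G_j))$ equals the set $S_{\mathrm{inter}}\subseteq S$ of inter-block edges, and $S$ decomposes as $S_{\mathrm{inter}}\sqcup S_{\mathrm{intra}}$. When $S_{\mathrm{inter}}\neq\emptyset$, the strategy is to apply Proposition~\ref{P:Theta-red} to the triple consisting of $(\Gamma_X,D')\in\P(\ov\J_X^d)$, the isotrivial descendant $(\sqcup_i\Gamma_X[V(G_i)],D'-D(\mathcal{O}_{\Gamma_X}(V_\bullet)))$ along $V_\bullet$, and an ordinary descendant $(\Gamma_X\setminus S_{\mathrm{intra}},D''')$ supplied by the inductive hypothesis (applicable since $|S_{\mathrm{intra}}|<|S|$ and $\Gamma_X\setminus S_{\mathrm{intra}}$ is still $\D$-admissible, being a supergraph of $\Gamma_X\setminus S$): the $\Theta$-completeness hypothesis then yields a common descendant of combinatorial type $(\Gamma_X\setminus S,D)\in\P(\ov\J_X^d)$. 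The case $S_{\mathrm{inter}}=\emptyset$ (equivalently $k=1$, $\Gamma_X\setminus S$ connected) is handled analogously by choosing a finer ordered partition $V_\bullet'$ adapted to the block structure of the admissible spanning subgraph, so that $\supp(\mathcal{O}_{\Gamma_X}(V_\bullet'))$ is a non-empty strict subset of $S$.

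The main obstacle is producing the block-polystable isotrivial descendant $(\sqcup_i\Gamma_X[V(G_i)],D'')\in\P(\ov\J_X^d)$: this amounts to exhibiting a sheaf $\bigoplus_i L_i\in \ov\J_X^d$ where each $L_i$ is a line bundle on $X[V(G_i)]$. The extended-admissibility condition $V(G_i)\in\wh\D(\ov\J_X^d)$ asserts that $\Gamma_X[V(G_i)^c]$ decomposes as a disjoint union of $\D(\ov\J_X^d)$-biconnected pieces $Z_j^i$, so the orbits $(\Gamma_X[Z_j^i]\sqcup\Gamma_X[(Z_j^i)^c],D'_{Z_j^i})$ are available in $\P(\ov\J_X^d)$ by the construction of the first paragraph. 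These orbits must be iteratively combined via repeated applications of Proposition~\ref{P:Theta-red} and upper-closure to yield the required block-polystable orbit; verifying the concordance condition in Proposition~\ref{P:Theta-red} at each step and tracking the divisor components compatibly across iterations constitutes the technical heart of the proof.
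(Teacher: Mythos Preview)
Your outline has a genuine gap in the connected case ($k=1$, i.e.\ $S_{\mathrm{inter}}=\emptyset$). You propose to handle it by choosing a finer ordered partition $V'_\bullet$ with $\emptyset\neq\supp(\O_{\Gamma_X}(V'_\bullet))\subsetneq S$, but no such partition exists: if $V'_\bullet$ is nontrivial then $\Gamma_X\setminus\supp(\O_{\Gamma_X}(V'_\bullet))=\bigsqcup_j\Gamma_X[V'_j]$ is disconnected, yet $\supp(\O_{\Gamma_X}(V'_\bullet))\subseteq S$ would force it to contain the connected graph $\Gamma_X\setminus S$. This case is not a technicality---when $\D(\ov\J_X^d)=\emptyset$ every admissible $\Gamma_X\setminus S$ is connected, so your induction never gets off the ground. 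The paper treats this case separately in its Step~I: it pushes $\ov\J_X^d$ forward through the partial normalization $\nu_{\tilde S}$ to obtain, on each block $X_i=X[V_i]$, a nonempty finite-type universally closed open substack $\ov\J_{X_i}^{e_i}\subset\TF_{X_i}^{e_i}$, and then invokes the fine case \cite[Thm.~2.16]{viviani2023new} on each block. Your scheme supplies no substitute for this input.

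There is a second, more subtle gap in the inductive step for $k\ge 2$. You want to apply Proposition~\ref{P:Theta-red} with apex $(\Gamma_X,D')$, isotrivial leg down to a block-polystable orbit $(\sqcup_i\Gamma_X[V(G_i)],E)$, and ordinary leg down to $(\Gamma_X\setminus S_{\mathrm{intra}},D''')$. But $E$ comes from your iterated Theta-reduction and $D'''$ from the inductive hypothesis, and nothing forces a single $D'$ to sit above both: you would need $D'''=D'-D(\O')$ for some orientation $\O'$ of $S_{\mathrm{intra}}$ and simultaneously $E=D'-D(\O_{\Gamma_X}(V_\bullet))$. The paper resolves exactly this kind of compatibility in its Step~II by invoking connectedness of $\ov\J_X^d$ to locate a common upper bound $(G,A)$ of two given orbits before applying Proposition~\ref{P:Theta-red}; you list connectedness among the hypotheses but never use it in the inductive step, and without it the argument does not close.
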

This result was proved in the case where $\ov \J_X^d$ is a non-empty open substack of $\Simp_X^d$ that is of finite type and universally closed in \cite[Thm. 2.16]{viviani2023new}\footnote{Note that \cite[Thm. 2.16]{viviani2023new} was stated for fine compactified Jacobians, but the same proof works for any non-empty open substack of $\TF_X^d$ that is of finite type and universally closed.}.
The four assumptions on $\ov \J_X^d$ are all needed: if one of them is missing, we can construct examples that do not satisfy the conclusion of the above Proposition.
\begin{proof} 
During the proof, we are going to freely use that, since $\ov \J_X^d$ is universally closed, it is a union of $\PIC_X^{\underline{0}}$-orbits  and that it satisfies the existence part of the valuative criterion (see  Proposition \ref{P:unionorb} and the comment shortly below it). 

We set $\Gamma:=\Gamma_X$. Note that $G_i$ is a connected spanning subgraph of $\Gamma[V(G_i)]$ and that $V(\Gamma_X)=\coprod_i V(G_i)$. Hence the proof  follows from the  two steps below.

\un{Step I:} If we have an inclusion
$$\TF_X\left(\coprod_i \Gamma[V_i], E\right)\subseteq \ov \J_X^d \text{ where } V(\Gamma_X)=\coprod_i V_i \text{ with } V_i \in \Con(\Gamma) \text{ and } D\in \Div(\Gamma)$$
then, for any connected spanning subgraph $H_i$ of $\Gamma[V_i]$, there exists  a divisor $D\in \Div(\Gamma)$ such that 
$$\TF_X\left(\coprod_ i H_i,D\right)\subseteq \ov \J_X^d.$$

\vspace{0.1cm}

Indeed, write $\coprod_i \Gamma[V_i]=\Gamma\setminus \wt S$ and $E=\sum_i E_i$ with $E_i\in \Div^{e_i}(\Gamma[V_i])$. The partial normalization of $X$ at $\wt S$ is  given by $X_{\wt S}=\coprod_i X_i$ with $X_i:=X[V_i]$ connected and it comes with a morphism 
 $$
 \nu_{\wt S}:X_{\wt S}=\coprod_i X_i \to X.
 $$
 The pushforward along the morphism $\nu_{\wt S}$ induces a closed embedding 
 $$
 \begin{aligned}
 (\nu_{\wt S})_*=\bigtimes_i \TF_{X_i}^{e_i}& \hookrightarrow \TF_X^d\\
 (I_i)_i & \mapsto \bigoplus_i I_i,
 \end{aligned}
 $$
 with the property that 
$$(\nu_{\wt S})_*(\bigtimes_i \TF_{X_i}(\Gamma[V_i]=\Gamma_{X_i},E_i))=\TF_X( \coprod_i \Gamma[V_i],E)\subset \ov \J_X^d.
$$
 Since $\ov \J_X^d$ is a open substack of $\TF_X^d$ which is a finite union of orbits, we get that the connected component of $(\nu_{\wt S})_*^{-1}(\ov \J_X^d)$ that contains $\bigtimes_i \TF_{X_i}(\Gamma_{X_i},E_i)$
is given by 
$$
\bigtimes_i \ov \J_{X_i}^{e_i}
$$
for some  non-empty open substacks $\ov \J_{X_i}^{e_i}\subseteq \TF_{X_i}^{e_i}$ which are  finite union of orbits (and in particular of finite type).  Moreover, since $\ov \J_X^d$ is universally closed  and $(\nu_{\wt S})_*$ is a closed embedding, we have that $\ov \J_{X_i}^{e_i}$ are also universally closed. Hence, the same proof of \cite[Thm. 2.16]{viviani2023new} gives the existence of a divisor $D_i\in \Div(\Gamma_{X_i})$ such 
$$
\TF_{X_i}(H_i,D_i)\subseteq \ov \J_{X_i}^{e_i}.$$
 From this, we deduce that 
$$
\TF_X(\coprod_i H_i,\sum_i D_i)=(\nu_{\wt S})_*(\bigtimes_i \TF_{X_i}(H_i,D_i)) \subseteq (\nu_{\wt S})_*(\bigtimes_i \ov \J_{X_i}^{s_i})\subseteq \ov \J_X^d,
$$
 and we are done. 

\vspace{0.1cm}

\un{Step II:} If $V(\Gamma)=\coprod_i V_i$ with $V_i\in \wh \D(\ov \J_X^d)$ then 
$$
\TF_X\left(\coprod_i \Gamma[V_i], E\right)\subseteq \ov \J_X^d \text{ for some } E\in \Div(\Gamma).
$$

\vspace{0.1cm}

Indeed, it is enough to show that, for any $k\geq 1$, there exists a divisor $E_k\in \Div(\Gamma)$ such that 
\begin{equation}\label{E:orbit-k}
    \TF_X\left(\coprod_{1\leq i \leq k}\Gamma[V_i]\coprod \Gamma[(V_1\cup\ldots \cup V_k)^\mathsf{c}],E_k\right)\subset \ov \J_X^d.
\end{equation}
We reorder the subsets $V_i$ in such a way that 
\begin{equation*}\tag{*}
    \Gamma[(V_1\cup\ldots \cup V_k)^\mathsf{c}] \text{ is connected for any } k\geq 1. 
\end{equation*}

We will prove \eqref{E:orbit-k} by induction on $k$. If $k=1$ then the result follows from the fact that $V_1\in \wh \D(\ov \J_X^d)$. Let us assume that \eqref{E:orbit-k} is true for some $k$ and let us prove for $k+1$. Since $V_{k+1}\in \wh \D(\ov \J_X^d)$, there exists a divisor $B_k\in \Div(\Gamma)$ such that 
\begin{equation}\label{E:orbit-Vk1}
\TF_X(\Gamma[V_{k+1}]\coprod \Gamma[V_{k+1}^\mathsf{c}],B_k)\subset \ov \J_X^d.
\end{equation}
Using the assumption (*), if we choose an edge $e\in E(\Gamma[V_{k+1}],\Gamma[(V_1\cup \ldots \cup V_{k+1})^\mathsf{c}])$ then we have that 
$ \Gamma[(V_1\cup \ldots \cup V_{k})^\mathsf{c}]\setminus \{e\}^\mathsf{c}$ is a connected spanning subgraph of $\Gamma[(V_1\cup \ldots \cup V_{k})^\mathsf{c}]$, where $\{e\}^\mathsf{c}:=E(\Gamma[V_{k+1}],\Gamma[(V_1\cup \ldots \cup V_{k+1})^\mathsf{c}])\setminus \{e\}$.
By applying Step I to condition \eqref{E:orbit-k} for $k$, we can find a divisor $\wt E_k$ such that 
\begin{equation}\label{E:orbit-new}
     \TF_X\left(\coprod_{1\leq i \leq k}\Gamma[V_i]\coprod \Gamma[(V_1\cup \ldots \cup V_{k})^\mathsf{c}]\setminus \{e\}^\mathsf{c},\wt E_k\right)\subset \ov \J_X^d,  
\end{equation}
Since $\ov \J_X^d$ is connected, there exists a divisor $A$ on the graph 
$$G:=\Gamma\setminus (E(\Gamma[V_1\cup \ldots \cup V_k], \Gamma[V_{k+1}])\cup \{e\}^\mathsf{c}) $$
such that 
     \begin{equation}\label{E:maggiore}
     \begin{aligned}
     & (G,A)\geq_{\O_1} \left(\coprod_{1\leq i \leq k}\Gamma[V_i]\coprod \Gamma[(V_1\cup \ldots \cup V_{k})^\mathsf{c}]\setminus \{e\}^\mathsf{c},\wt E_k\right), \\ 
     & (G,A)\geq_{\O_2} (\Gamma[V_{k+1}]\coprod \Gamma[V_{k+1}^\mathsf{c}],B_k),
     \end{aligned}
     \end{equation}
     for some partial orientations $\O_1$ and $\O_2$ of $G$.
     Since $G\setminus \{e\}= \Gamma[V_{k+1}]\coprod \Gamma[V_{k+1}^\mathsf{c}]$, then the partial orientation $\O_2$ has support equal to $\{e\}$ and it must be of the form  
     \begin{equation}\label{E:O2}
      \O_2={\O_G(W_\bullet)} \text{ for } W_\bullet=(V_{k+1},V_{k+1}^\mathsf{c}) \text{ or } (V_{k+1}^\mathsf{c},V_{k+1}).
     \end{equation}
     where $W_\bullet$ is the ordered partition of $V(\Gamma)$ which is equal to either $(V_{k+1},V_{k+1}^\mathsf{c})$ or $(V_{k+1}^\mathsf{c},V_{k+1})$. 

     Since $\ov \J_X^d$ is $\Theta$-complete,  Proposition \ref{P:Theta-red} implies that $\O_1$ and $\O_2=\O_G(W_\bullet)$ must be concordant and if we set  
     $$
     (G,A)\geq_{\O_1\cup \O_2} (\wt G, \wt A) 
     $$
     then $\TF_X(\wt G,\wt A)\subset \ov \J_X^d$. Since we have that 
     $$
     \wt G=G\setminus \supp(\O_1\cup \O_2)=\left(\coprod_{1\leq i \leq k}\Gamma[V_i]\coprod \Gamma[(V_1\cup \ldots \cup V_{k})^\mathsf{c}]\setminus \{e\}^\mathsf{c}\right)\setminus\{e\}=$$
     $$=\coprod_{1\leq i \leq k+1}\Gamma[V_i]\coprod \Gamma[(V_1\cup \ldots \cup V_{k+1})^\mathsf{c}],
     $$
     we have proved \eqref{E:orbit-k} for $k+1$ with $E_{k+1}:=\wt A$.
\end{proof}

As a corollary of the above Proposition (or rather its proof), we now show that if $\ov\J_X^d$ is a connected compactified Jacobian stack then $\D(\ov \J_X^d)$ is a degeneracy subset, and $\wh \D(\ov \J_X^d)$ is the associated extended degeneracy subset (see Definition \ref{D:Deg}).

\begin{corollary}\label{C:prop-deg-cJ}
Let $\ov \J_X^d$ be a non-empty open substack of $\TF_X^d$ that is universally closed, of finite type, connected and $\Theta$-complete (e.g. a connected compactified Jacobian stack of $X$ of degree $d$).
     \begin{enumerate}
         \item \label{C:prop-deg-cJ1}
         Let $W\in \Con(\Gamma_X)$ and let $\Gamma_X[W^\mathsf{c}]=\coprod_i \Gamma_X[V_i]$ be the decomposition into connected component. Then 
         $$W\in \wh \D(\ov \J_X^d) \Leftrightarrow V_i\in \wh \D(\ov \J_X^d) \text{ for any } i.$$
         \item \label{C:prop-deg-cJ2}
          Let $W_1,W_2\in \BCon(\Gamma_X)$ be disjoint and such that $W_1\cup W_2\in \BCon(\Gamma_X)$. Then 
         $$W_1,W_2\in \D(\ov \J_X^d)\Rightarrow W_1\cup W_2\in \D(\ov \J_X^d).$$
     \end{enumerate}
     In particular, $\D(\ov \J_X^d)\in \Deg(\Gamma_X)$ and $\wh D(\ov \J_X^d)$ is its associated extended degeneracy subset.  
\end{corollary}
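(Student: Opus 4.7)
For Part~(1), the implication $(\Rightarrow)$ is Remark~\ref{R:obs-deg-cJ}(\ref{R:obs-deg-cJ2}), so the work is in the converse. First I verify that each connected component $V_i$ of $\Gamma_X[W^\mathsf{c}]$ belongs to $\BCon(\Gamma_X)$: since $\Gamma_X[W^\mathsf{c}]$ carries \emph{every} edge of $\Gamma_X$ between vertices of $W^\mathsf{c}$, there is no edge of $\Gamma_X$ between two distinct $V_i$'s, and connectedness of $\Gamma_X$ then forces each $V_j$ to attach by at least one edge to the connected set $W$; hence $\Gamma_X[V_i^\mathsf{c}]$ is connected. Consequently each $V_i$ is biconnected, and the definitions give $V_i \in \wh\D(\ov\J_X^d) \Leftrightarrow V_i \in \D(\ov\J_X^d)$.

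The plan is now to re-execute the inductive argument in the proof of Proposition~\ref{P:many-orbits} (Step~II) with respect to the \emph{ordered} partition $(V_1,\dots,V_k,W)$ of $V(\Gamma_X)$. The ordering condition $(*)$ of that proof holds because $(V_1\cup\cdots\cup V_j)^\mathsf{c} = V_{j+1}\cup\cdots\cup V_k\cup W$ is connected for every $0\leq j\leq k-1$ (each $V_i$ with $i>j$ attaches to the connected $\Gamma_X[W]$). Each inductive step from level $j$ to $j+1$ uses only $V_{j+1} \in \wh\D(\ov\J_X^d)$, so stopping the induction at level $k$ never requires $\wh\D$-membership of $W$ and yields an orbit
\[
\TF_X\bigl(\Gamma_X[V_1]\sqcup\cdots\sqcup\Gamma_X[V_k]\sqcup\Gamma_X[W],\,E\bigr) \subseteq \ov\J_X^d.
\]
Any sheaf $I$ in this orbit has $\NF(I) \supseteq E(W,W^\mathsf{c})$ and therefore splits as $I = I_{X[W]}\oplus I_{X[W^\mathsf{c}]}$, proving $W\in\wh\D(\ov\J_X^d)$. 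Part~(2) follows by the same strategy, performing the truncated induction on the ordered partition $(W_1,W_2,(W_1\cup W_2)^\mathsf{c})$: condition $(*)$ holds because $W_1^\mathsf{c}$ and $(W_1\cup W_2)^\mathsf{c}$ are connected (by the biconnectedness of $W_1$ and of $W_1\cup W_2$), and running up to level~$2$ produces an orbit whose sheaves decompose as $I = I_{X[W_1\cup W_2]}\oplus I_{X[(W_1\cup W_2)^\mathsf{c}]}$, forcing $W_1\cup W_2 \in \D(\ov\J_X^d)$. The ``In particular'' statement then assembles as follows: $\D(\ov\J_X^d)\in\Deg(\Gamma_X)$ follows from Remark~\ref{R:obs-deg-cJ}(\ref{R:obs-deg-cJ1}) combined with Part~(2); and Part~(1), together with the biconnected equivalence $V\in\wh\D(\ov\J_X^d)\Leftrightarrow V\in\D(\ov\J_X^d)$ and the trivial membership $V(\Gamma_X)\in\wh\D(\ov\J_X^d)$ (since $\ov\J_X^d$ is non-empty), identifies $\wh\D(\ov\J_X^d)$ with the extended degeneracy subset of $\D(\ov\J_X^d)$.

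The main obstacle is that Proposition~\ref{P:many-orbits} cannot be invoked as a black box: its $\D(\ov\J_X^d)$-admissibility hypothesis for the spanning subgraph with components $\Gamma_X[V_i]$ and $\Gamma_X[W]$ would already presuppose $W\in\wh\D(\ov\J_X^d)$, which is precisely the conclusion being sought. The crucial observation is therefore to enter its proof one step earlier: the inductive procedure of Step~II only incorporates one $\wh\D$-piece at a time, so by placing the ``unknown'' part of the partition last in the ordering and truncating the induction immediately before it, every inductive step uses only hypotheses already in hand.
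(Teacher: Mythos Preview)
Your proof is correct and follows essentially the same approach as the paper's: both invoke the inductive mechanism of Step~II in the proof of Proposition~\ref{P:many-orbits}, truncating before the ``unknown'' piece of the partition is reached, and both handle Part~(2) by the same argument applied to the ordered partition $(W_1,W_2,(W_1\cup W_2)^\mathsf{c})$. Your write-up is somewhat more explicit than the paper's in verifying that each $V_i$ is biconnected and that condition~$(*)$ holds for the chosen ordering, and in articulating why Proposition~\ref{P:many-orbits} cannot be applied as a black box; these are welcome clarifications but do not alter the logical route.
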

 \begin{proof}
     Part \eqref{C:prop-deg-cJ1}: the implication $\Rightarrow$ is easy and always true (see Remark \ref{R:obs-deg-cJ}\eqref{R:obs-deg-cJ2}). Conversely, suppose that $V_i\in \D(\ov \J_X^d)$ for every $i$. Arguing as in Step II of the proof of the above Proposition, we can find a divisor $D\in \Div(\Gamma_X)$ such that 
     $$
     \TF_X\left(\coprod_i \Gamma_X[V_i]\coprod \Gamma_X[(\bigcup_i V_i)^\mathsf{c}], D\right)\subseteq \ov \J_X^d.
     $$
     We conclude using that $(\bigcup_i V_i)^\mathsf{c}=W$.
     
     Part \eqref{C:prop-deg-cJ2}: arguing as in Step II of the proof of the above Proposition, we can find a divisor $D\in \Div(\Gamma_X)$ such that 
     $$
     \TF_X(\Gamma_X[W_1]\coprod \Gamma_X[W_2]\coprod \Gamma_X[(W_1\cup W_2)^\mathsf{c}], D]\subseteq \ov \J_X^d.
     $$
     This implies that $(W_1\cup W_2)^\mathsf{c}\in \D(\ov \J_X^d)$ and hence that also $W_1\cup W_2\in \D(\ov \J_X^d)$ (see Remark \ref{R:obs-deg-cJ}\eqref{R:obs-deg-cJ1}). 
 \end{proof}

As an other corollary of the above Proposition, we now prove a lower bound on the number of orbits of a compactified Jacobian stack $\ov\J_X^d$, in terms of the $\D(\ov \J_X^d)$-complexity 
(as in Definition \ref{D:Dadmissible}).  

\begin{corollary}\label{C:bound-orb}
    Let $\ov \J_X^d$ be a non-empty open substack of $\TF_X^d$ that is universally closed, of finite type, connected and $\Theta$-complete (e.g. a connected compactified Jacobian stack of $X$ of degree $d$). Then  
    \begin{enumerate}[(i)]
   \item \label{C:bound-orb1} $\P(\ov \J_X^d)$ contains some BD-set with degeneracy subset $\D(\ov \J_X^d)$;
    \item  \label{C:bound-orb2} for every $\D(\ov \J_X^d)$-admissible spanning subgraph $G\leq \Gamma_X$ we have that 
    $$
    |\P(\ov \J_X^d)(G)|\geq c_{\D(\ov \J_X^d)}(G).
    $$
    In particular, $\ov \J_X^d$ has at least $c_{\D(\ov \J_X^d)}(\Gamma_X)$ irreducible components.
    \end{enumerate}
\end{corollary}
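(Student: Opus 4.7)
\textbf{Proof plan for Corollary \ref{C:bound-orb}.}

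The strategy is to construct a $\D(\ov \J_X^d)$-forest function $I$ whose associated BD-set is contained in $\P(\ov \J_X^d)$, and then to deduce the lower bound on $|\P(\ov \J_X^d)(G)|$ from the general cardinality estimate for BD-sets (Proposition~\ref{BDthm}).

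First I would verify the preliminary fact that $\P(\ov \J_X^d)\subseteq \OO^d_{\D(\ov \J_X^d)}(\Gamma_X)$. Fix $(G,D)\in \P(\ov \J_X^d)$ and let $G=\coprod_i G_i$ be its decomposition into connected components. By Fact~\ref{F:orbits}, any $I\in \TF_X(G,D)$ has $\NF(I)\supseteq E_{\Gamma_X}(V(G_i),V(G_i)^\mathsf{c})$ for each $i$, hence splits as $I=I_{X[V(G_i)]}\oplus I_{X[V(G_i)^\mathsf{c}]}$. By Definition~\ref{D:deg-cJ}\eqref{D:deg-cJ2} this gives $V(G_i)\in \wh\D(\ov \J_X^d)$, so $G$ is $\D(\ov \J_X^d)$-admissible.

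Next I would construct the forest function. For any minimal $\D(\ov \J_X^d)$-admissible spanning forest $F\in m\S\S_{\D(\ov \J_X^d)}(\Gamma_X)$, Proposition~\ref{P:many-orbits} supplies a divisor $D_F$ with $\TF_X(F,D_F)\subseteq \ov \J_X^d$; set $I(F):=D_F$. A routine count (using that $F$ is a spanning forest, so $|E(F)|=|V(\Gamma_X)|-b_0(F)$, whence $|E(F)^\mathsf{c}|=b_1(\Gamma_X)+b_0(F)-1$) shows that $D_F\in \Div^{d-b_1(\Gamma_X)-b_0(F)+1}(\Gamma_X)$, so $I$ is a bona fide $\D(\ov \J_X^d)$-forest function in the sense of Definition~\ref{D:BD-set}. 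By construction, $\{(F,I(F))\}_F\subseteq \P(\ov \J_X^d)$; since $\P(\ov \J_X^d)$ is an upper closed subset of $\OO^d(\Gamma_X)$ by Fact~\ref{F:orbits}\eqref{F:orbits4}, contained in $\OO^d_{\D(\ov \J_X^d)}(\Gamma_X)$ by the previous paragraph, the minimality clause in the definition of $\BD_I$ yields $\BD_I\subseteq \P(\ov \J_X^d)$. This proves part~\eqref{C:bound-orb1}.

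For part~\eqref{C:bound-orb2}, fix a $\D(\ov \J_X^d)$-admissible spanning subgraph $G=\Gamma_X\setminus S$ with connected components $G=\coprod_i G_i$. Using Lemma~\ref{L:BDrestr} at each component (together with the multiplicativity $c_{\D}(G)=\prod_i c_{\D(G_i)}(G_i)$ of Lemma~\ref{L:operOD}\eqref{L:operOD2i} and the corresponding product decomposition of BD-sets), Proposition~\ref{BDthm}\eqref{BDthm1} applied to each $G_i$ with the restricted forest function gives $|\BD_I(G)|\geq c_{\D(\ov \J_X^d)}(G)$. Combined with $\BD_I\subseteq \P(\ov \J_X^d)$, this yields the desired inequality $|\P(\ov \J_X^d)(G)|\geq c_{\D(\ov \J_X^d)}(G)$. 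The statement about irreducible components then follows by specialising to $G=\Gamma_X$ and invoking Corollary~\ref{C:sm-irr}\eqref{C:sm-irr2}, which identifies the irreducible components of $\ov \J_X^d$ with $\P(\ov \J_X^d)(\Gamma_X)$.

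The only nontrivial step is the construction of the forest function in the second paragraph, where all four hypotheses on $\ov \J_X^d$ (universally closed, of finite type, connected, $\Theta$-complete) are used via Proposition~\ref{P:many-orbits}; everything else is a formal consequence of the posetal/combinatorial machinery of Sections~\ref{Sec:Vstab} and~\ref{Sec:PT}.
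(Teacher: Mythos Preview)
Your proposal is correct and follows essentially the same approach as the paper: construct a $\D(\ov \J_X^d)$-forest function via Proposition~\ref{P:many-orbits}, use upper closure of $\P(\ov\J_X^d)$ to get $\BD_I\subseteq \P(\ov \J_X^d)$, then invoke Lemma~\ref{L:BDrestr} and Proposition~\ref{BDthm}\eqref{BDthm1} for the cardinality bound and Corollary~\ref{C:sm-irr}\eqref{C:sm-irr2} for the irreducible components. Your extra verifications (the inclusion $\P(\ov\J_X^d)\subseteq \OO^d_{\D(\ov\J_X^d)}(\Gamma_X)$, the degree count for $D_F$, and the component-wise decomposition for disconnected $G$) are correct elaborations that the paper leaves implicit.
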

This was proved for fine compactified Jacobians (i.e. if $\D(\ov \J_X^d)=\emptyset$) in \cite[Thm. 2.16]{viviani2023new}.
\begin{proof}
Part \eqref{C:bound-orb1} follows from  Proposition \ref{P:many-orbits}.
Part \eqref{C:bound-orb2} follows from Part~\eqref{C:bound-orb1} together with Lemma~\ref{L:BDrestr} and Proposition \ref{BDthm}\eqref{BDthm1}. The last assertion of Part~\eqref{C:bound-orb2} follows from Corollary \ref{C:sm-irr}\eqref{C:sm-irr2}.
\end{proof}

Observe that if a compactified Jacobian stack ${}^1\ov \J_X$ is contained in another one ${}^2\ov \J_X$, then we have that $\D({}^1\ov \J_X)\subseteq \D({}^2\ov \J_X)$. The next result says that a strict inclusion of connected compactified Jacobian stacks gives rise to a strict inclusions of their degeneracy subsets.

\begin{proposition}\label{P:incl-cJ}
Let ${}^1\ov \J_X$, ${}^2\ov \J_X$ be two connected  compactified Jacobian stacks of $X$ of degree $d$. Assume that:
\begin{enumerate}[(i)]
    \item ${}^1\ov \J_X\subseteq {}^2\ov \J_X$,
    \item $\D({}^1\ov \J_X)= \D({}^2\ov \J_X)$.
\end{enumerate}
Then we have that ${}^1\ov \J_X={}^2\ov \J_X$.
\end{proposition}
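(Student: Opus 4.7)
The plan is to show that $\P_1 := \P({}^1\ov \J_X)$ and $\P_2 := \P({}^2\ov \J_X)$ coincide; by \eqref{E:U-cJ} this will yield ${}^1\ov \J_X = {}^2\ov \J_X$. The hypothesis ${}^1\ov \J_X \subseteq {}^2\ov \J_X$ together with Proposition~\ref{P:unionorb} translates into the inclusion $\P_1 \subseteq \P_2$ of finite upper subsets of $\OO^d(\Gamma_X)$, and Corollary~\ref{C:prop-deg-cJ} provides a common extended degeneracy subset $\wh \D = \wh \D({}^1\ov \J_X) = \wh \D({}^2\ov \J_X)$. The first step is to observe that $\P_2 \setminus \P_1$ is a lower subset of $\P_2$: if $(G,D) \in \P_2 \setminus \P_1$ and $(G',D') \in \P_2$ satisfies $(G',D') \leq (G,D)$, then $(G',D') \notin \P_1$, since otherwise the upper-closure of $\P_1$ would force $(G,D) \in \P_1$. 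Assuming by contradiction that $\P_1 \subsetneq \P_2$, I pick a maximal element $(G_0, D_0) \in \P_2 \setminus \P_1$; by maximality, every strict generization of $(G_0, D_0)$ which lies in $\P_2$ actually lies in $\P_1$.

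The second step is to derive $(G_0, D_0) \in \P_1$ from this abundance of generizations, via the $\Theta$-completeness of ${}^1\ov \J_X$ (Proposition~\ref{P:gms}) in the form of Proposition~\ref{P:Theta-red}. Set $S_0 := E(\Gamma_X) \setminus E(G_0)$ and decompose $G_0 = \bigsqcup_{i=1}^q G_{0,i}$ into connected components, yielding an ordered partition $W_\bullet := (V(G_{0,1}), \ldots, V(G_{0,q}))$ of $V(\Gamma_X)$. Each $V(G_{0,i}) \in \wh \D$ since orbits in a compactified Jacobian have $\wh \D$-admissible graphs, and $\supp \O_{\Gamma_X}(W_\bullet) \subseteq S_0$ by construction. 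Choose a full orientation $\O_0$ of $S_0$ restricting to $\O_{\Gamma_X}(W_\bullet)$ on its support, and set $\O := \O_0|_{S_0 \setminus \supp \O_{\Gamma_X}(W_\bullet)}$. The three orbits $(\Gamma_X, D_0 + D(\O_0))$, its isotrivial specialization via $W_\bullet$, and its ordinary specialization via $\O$ all lie in $\P_1$: the first is a top generization of $(G_0, D_0)$ in $\P_2$, hence in $\P_1$ by maximality; the second because the separatedness of $\TF_X^d$ from Fact~\ref{F:diagTF} makes the $\Theta$-extension of any generic-point morphism to ${}^1\ov \J_X$ unique, so that isotrivial specializations of $\P_1$-orbits remain in $\P_1$; the third is another strict generization of $(G_0, D_0)$ in $\P_2$ and hence in $\P_1$. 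Since $\supp \O$ and $\supp \O_{\Gamma_X}(W_\bullet)$ are disjoint, the two orientations are concordant with union $\O_0$, and Proposition~\ref{P:Theta-red} produces a join orbit which by direct computation equals $(G_0, D_0)$; this places $(G_0, D_0) \in \P_1$, contradicting $(G_0, D_0) \in \P_2 \setminus \P_1$.

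The main obstacle is the degenerate situation where $G_0$ is connected, so that $W_\bullet$ is the trivial partition $(V(\Gamma_X))$ and the isotrivial step in Proposition~\ref{P:Theta-red} becomes vacuous. I expect to bypass this by replacing $(G_0, D_0)$ with a minimal element of $\P_2 \setminus \P_1$, which must be polystable in $\P_2$: when $\D \neq \emptyset$, the combination of Corollary~\ref{C:prop-deg-cJ} with the non-properness of the generalized Jacobian $\PIC^{\un 0}_X$ should force the graph of such a polystable orbit to be disconnected, so that the argument above applies. The remaining fine case $\D = \emptyset$ can be handled separately via the good moduli space: both stacks are contained in $\Simp_X$, so their good moduli spaces coincide with the $\mathbb{G}_m$-rigidifications, and the induced open immersion ${}^1\ov J_X \hookrightarrow {}^2\ov J_X$ between proper, connected algebraic spaces is automatically closed, forcing equality.
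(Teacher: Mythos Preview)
Your approach contains a genuine gap. The central claim that ``isotrivial specializations of $\P_1$-orbits remain in $\P_1$'' is false, and the justification you give---that $\TF_X^d$ is separated by Fact~\ref{F:diagTF}---misreads that fact: having affine diagonal is far weaker than separatedness, and $\TF_X$ is emphatically \emph{not} separated (this non-separatedness is the entire reason compactified Jacobians are nontrivial). Concretely, if $\s$ is a general V-stability on a vine curve, any sheaf $I\in\ov\J_X(\s)$ isotrivially specializes to $I_{C_1}\oplus I_{C_2}(-\text{nodes})$, which is not even simple and hence lies outside $\ov\J_X(\s)$.

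Once this claim is dropped, your argument only shows that orbit~2 lies in $\P_1$ when it is a \emph{strict} generization of $(G_0,D_0)$, i.e.\ when $\supp(\O)=S_0\setminus E_{\Gamma_X}(W_\bullet)\neq\emptyset$. Thus, besides the connected case you flag, there is a second unaddressed case: $G_0$ disconnected with $G_0=\coprod_i\Gamma_X[V_i]$ (so $S_0=E_{\Gamma_X}(W_\bullet)$). Here orbit~2 equals $(G_0,D_0)$ itself and Proposition~\ref{P:Theta-red} for ${}^1\ov\J_X$ becomes circular. Your proposed workaround for the connected case also fails: a minimal element of $\P_2\setminus\P_1$ can have strict specializations in $\P_2$ that lie in $\P_1$, so it need not be a closed orbit of $\P_2$, and the disconnectedness you hope for does not follow.

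The paper's proof avoids these obstructions by a different mechanism: it uses S-completeness of ${}^2\ov\J_X$ to produce a common isotrivial degeneration of two limits $\I^1_o\in{}^1\ov\J_X$ and $\I^2_o\in{}^2\ov\J_X$, then pushes down inside $\P({}^1\ov\J_X)$ to an element $(\wt G,\wt D)$ with $\wt G$ \emph{minimal} in $\SS_{\D({}^1\ov\J_X)}$, and finally applies Proposition~\ref{P:Theta-red} to ${}^2\ov\J_X$ (not to ${}^1\ov\J_X$). The hypothesis $\D({}^1\ov\J_X)=\D({}^2\ov\J_X)$ enters precisely here: it guarantees that $\wt G$ is also minimal in $\SS_{\D({}^2\ov\J_X)}$, forcing the $\Theta$-join to equal $(\wt G,\wt D)$ and hence to lie in $\P_1$. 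This is the idea your sketch is missing.
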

\begin{proof}
We are going to use, throughout the proof, that ${}^1\ov \J_X$ and ${}^2\ov \J_X$ are $\Theta$-complete, S-complete and universally closed (by Proposition \ref{P:gms} and \cite[Thm. 5.4]{alper-goodmodspaces}) and union of orbits (by Proposition~\ref{P:unionorb}).

 Since we have an open embedding  ${}^1\ov \J_X\subseteq {}^2\ov \J_X$ and ${}^2\ov \J_X$ is connected, it is enough to show that the embedding ${}^1\ov \J_X\subseteq {}^2\ov \J_X$ is also closed, or equivalently that ${}^1\ov \J_X$ is stable under specializations inside ${}^2\ov \J_X$. 

 With this aim, consider $\I^2\in {}^2\ov \J_X(\Spec R)$ over a DVR $R$ such that its generic fiber $\I^2_\eta$ belongs to ${}^1\ov \J_X(\eta)$. We want to show that also the special fiber $\I^2_o$ belongs to ${}^1\ov \J_X$. Since ${}^1\ov \J_X$ is universally closed, we can find, up to possibly passing to a finite extension of $R$, another sheaf $\I^1\in {}^1\ov \J_X(\Spec R)$ such that $\I^1_\eta=\I_\eta$. Since ${}^2\ov \J_X$ is S-complete,  we can find an ordered partition $Y_\bullet=X[W_\bullet]$ of $X$ such that 
 $$
 J:=\Gr_{Y_\bullet}(\I^1_o)=\Gr_{\ov{Y_\bullet}}(\I^2_o)\in {}^2\ov \J_X.
 $$
 If we set $(G^i,D^i):=(G(\I_o^i),D(\I_o^i))\in \P({}^i\ov \J_X)$ for $i=1,2$ and $(G,D):=(G(J),D(J))\in \P({}^2\ov \J_X)$, then  Corollary \ref{C:iso-spec} implies that
 \begin{equation}\label{E:I-magg}
    (G^1,D^1)\geq_{\O_{G^1}(W_\bullet)} (G,D)\leq_{\O_{G^2}(\ov W_\bullet)} (G^2,D^2).
 \end{equation}
 Choose now an element $(\wt G,\wt D)\in \P({}^1\ov \J_X)$ such that 
 \begin{equation}\label{E:II-magg}
     (G^1,D^1)\geq_{\wt \O} (\wt G,\wt D) \text{ and } \wt G\in m\S\S_{\D({}^1\ov \J_X)}.
 \end{equation}
Since ${}^2\ov \J_X$ is $\Theta$-complete, we can apply Proposition \ref{P:Theta-red} to 
$(G^1,D^1)\geq_{\O_{G^1}(W_\bullet)} (G,D)$ and $(G^1,D^1)\geq_{\wt \O} (\wt G,\wt D)$ in order to deduce that $\wt O$ and $\O_{G^1}(W_\bullet)$ are concordant and that we have 
\begin{equation}\label{E:III-magg}
(G^1,D^1)\geq_{\wt O\cup \O_{G^1}(W_\bullet)} (\wh G,\wh D) \text{ with } (\wh G,\wh D)\in \P({}^2\ov \J_X).
\end{equation}
By combining \eqref{E:I-magg}, \eqref{E:II-magg} and \eqref{E:III-magg}, we deduce that 
\begin{equation}\label{E:IV-magg}
    (\wt G,\wt D)\geq_{\O_{\wt G}(W_\bullet)} (\wh G,\wh D)\leq_{\wt O} (G,D),
 \end{equation}
where $\wh \O:=\wt O\setminus \O_{G^1}(W_\bullet)$. However, since $\wt G$ is a minimal element of $\S\S_{\D({}^1\ov \J_X)}$ and $\D({}^1\ov \J_X)=\D({}^2\ov \J_X)$ by assumption, we deduce that $\wt G$ is also minimal element of $\S\S_{\D({}^2\ov \J_X)}$. Therefore, we must have that 
\begin{equation}\label{E:V-magg}
(\wh G,\wh D)=(\wt G,\wt D)\in \P({}^1\ov \J_X).
\end{equation}
By combining \eqref{E:V-magg}, \eqref{E:IV-magg} and \eqref{E:I-magg}, and using that $\P({}^1\ov \J_X)$ is an upper subset, we get that $(G^2,D^2)\in \P({}^1\ov \J_X)$. This implies that $\I_o^2\in {}^1\ov \J_X$, and we are done. 
\end{proof}

We now state the main Theorem of this subsection that characterizes the V-compactified Jacobians as the ones having the minimal number of irreducible components with a given degeneracy subset.

\begin{theorem}\label{T:smallcJ}
    Let $\ov \J_X^d$ be a connected compactified Jacobian stack with degeneracy subset $\D:=\D(\ov \J_X^d)$. Then $\ov \J_X^d$ has the minimum number of irreducible components, namely $c_\D(\Gamma_X)$,  if and only if $\ov \J_X^d=\ov \J_X(\n)$ for some unique V-stability $\n$ of degree $d$ on $\Gamma_X$ with $\D(\n)=\D$. 
\end{theorem}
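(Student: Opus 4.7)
The plan is to prove the two implications separately, using the dictionary between V-stability conditions and PT-assignments (Theorem~\ref{T:V=PT}), the lower bound on orbits from Corollary~\ref{C:bound-orb}, and the rigidity result for connected compactified Jacobians of Proposition~\ref{P:incl-cJ}.

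\textbf{Sufficiency ($\Leftarrow$).} Suppose $\ov\J_X^d=\ov\J_X(\n)$ for some V-stability $\n$ of degree $d$ with $\D(\n)=\D$. By Remark~\ref{R:Pn-cJ}, $\ov\J_X(\n)=U(\P_\n)$, and by Corollary~\ref{C:sm-irr}\eqref{C:sm-irr2} the number of irreducible components of $\ov\J_X^d$ is $|\P_\n(\Gamma_X)|$. Since $\P_\n$ is a (numerical) PT-assignment with degeneracy set $\D$ by Theorem~\ref{T:V=PT}\eqref{T:V=PT2}, Proposition~\ref{P:cardPTstab} gives $|\P_\n(\Gamma_X)|=c_\D(\Gamma_X)$, which is the asserted minimum by Corollary~\ref{C:bound-orb}\eqref{C:bound-orb2}.

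\textbf{Necessity ($\Rightarrow$).} Assume $\ov\J_X^d$ has exactly $c_\D(\Gamma_X)$ irreducible components. By Corollary~\ref{C:sm-irr}\eqref{C:sm-irr2}, this means $|\P(\ov\J_X^d)(\Gamma_X)|=c_\D(\Gamma_X)$. By Corollary~\ref{C:bound-orb}\eqref{C:bound-orb1}, $\P(\ov\J_X^d)$ contains a BD-set $\BD_I$ with degeneracy subset $\D$, and by Proposition~\ref{BDthm}\eqref{BDthm1} we have $|\BD_I(\Gamma_X)|\geq c_\D(\Gamma_X)$. Combining these three inequalities forces
\[
|\BD_I(\Gamma_X)|=|\P(\ov\J_X^d)(\Gamma_X)|=c_\D(\Gamma_X),
\]
so $\BD_I$ is a weak numerical PT-assignment. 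Theorem~\ref{T:V=PT} then produces a (unique) V-stability $\n$ of degree $d$ with $\D(\n)=\D$ and $\BD_I=\P_\n$, whence $\ov\J_X(\n)=U(\P_\n)=U(\BD_I)\subseteq U(\P(\ov\J_X^d))=\ov\J_X^d$.

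\textbf{Concluding step.} By Theorem~\ref{T:VcJ-smoo}, $\ov\J_X(\n)$ is a connected compactified Jacobian stack of degree $d$, and by Remark~\ref{R:deg-VcJ}, $\D(\ov\J_X(\n))=\D(\n)=\D=\D(\ov\J_X^d)$. Applying Proposition~\ref{P:incl-cJ} to the inclusion $\ov\J_X(\n)\subseteq \ov\J_X^d$ of connected compactified Jacobian stacks with matching degeneracy subsets yields $\ov\J_X(\n)=\ov\J_X^d$. The uniqueness of $\n$ then follows from Theorem~\ref{T:VcJ-smoo}\eqref{T:VcJ-smoo2}.

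The only conceptually delicate step is the final bootstrap from the equality $|\BD_I(\Gamma_X)|=c_\D(\Gamma_X)$ at the full dual graph to the equality of the whole assignment $\BD_I=\P_\n$: this is precisely where Theorem~\ref{T:V=PT} (and the Kirchhoff-type formula in Proposition~\ref{P:cardPTstab}) does the heavy lifting by promoting a weak numerical PT-assignment all the way to a PT-assignment, and thereby to the semistable set of a V-stability. The subsequent rigidity step relies on Proposition~\ref{P:incl-cJ}, whose proof in turn uses $\Theta$-completeness and $S$-completeness of compactified Jacobian stacks; no further calculation is needed here.
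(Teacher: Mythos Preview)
Your proof is correct and follows essentially the same route as the paper's own proof: both directions rely on the same chain of results (Corollary~\ref{C:sm-irr}, Corollary~\ref{C:bound-orb}/Proposition~\ref{P:many-orbits}, Proposition~\ref{BDthm}, Theorem~\ref{T:V=PT}, and Proposition~\ref{P:incl-cJ}), and the key step of promoting the equality $|\BD_I(\Gamma_X)|=c_\D(\Gamma_X)$ to $\BD_I=\P_\n$ via Theorem~\ref{T:V=PT} is identical. Your explicit invocation of Theorem~\ref{T:VcJ-smoo}\eqref{T:VcJ-smoo2} for the uniqueness of $\n$ is a minor addition (the paper leaves this implicit, relying on the uniqueness already asserted in Theorem~\ref{T:V=PT}).
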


This was proved for fine compactified Jacobians (in which case $c_{\emptyset}(\Gamma_X)$ is the complexity of $\Gamma_X$) in \cite[Cor. 2.34]{viviani2023new}.

\begin{proof}
First of all, a V-compactified Jacobian $\ov \J_X(\n)$ has degeneracy subset equal to $\D(\n)$ by Remark~\ref{R:deg-VcJ} and it has $c_{\D(\n)}(\Gamma_X)$ irreducible components by Corollary \ref{C:sm-irr}\eqref{C:sm-irr2} and Theorem \ref{T:V=PT}.

Conversely, assume  that $\ov \J_X^d$ has $c_{\D}(\Gamma_X)$ irreducible components. Proposition~\ref{P:many-orbits} implies that $\P:=\P(\ov \J_X^d)$ contains a BD-set $\BD_I$ with degeneracy subset $\D=\D(\ov \J_X^d)$. Now Proposition \ref{BDthm}, together with our assumption on the number of irreducible components of $\ov \J_X^d$ and Corollary \ref{C:sm-irr}\eqref{C:sm-irr2}, implies that 
\begin{equation}\label{E:lowerPl}
c_{\D}(\Gamma_X)=|\P(\Gamma_X)|\geq |\BD_I(\Gamma_X)|\geq c_{\D}(\Gamma_X).
\end{equation}
We deduce that $\BD_I$ is a weak numerical PT-assignment. Therefore, we can apply Theorem~\ref{T:V=PT} to deduce that there exists a unique V-stability condition $\n$ of degree $d$ on $\Gamma_X$ such that $\BD_I=\P_\n$ and $\D=\D(\n)$. By taking the associated open subsets of $\TF_X^d$, we get the inclusion $\ov \J_X(\n)\subseteq \ov \J_X^d$ with the property that $\D(\ov \J_X(\n))=\D(\n)=\D=\D(\ov \J_X^d)$. We now conclude that $\ov \J_X(\n)=\ov \J_X^d$ by Proposition~\ref{P:incl-cJ}.
\end{proof}

\subsection{Smoothable compactified Jacobians}\label{SUb:smoothcJ}

The aim of this subsection is to classify the
smoothable compactified Jacobians. The main result is the following.

\begin{theorem}\label{T:cla-smoo}
Any
smoothable compactified Jacobians of $X$ is of the form $\ov \J_X(\n)$ for a unique V-stability $\n$ on $\Gamma_X$. 
\end{theorem}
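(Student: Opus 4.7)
By Theorem~\ref{T:VcJ-smoo}, V-compactified Jacobians are smoothable and $\n$ is recovered from $\ov\J_X(\n)$, so it suffices to prove the converse inclusion. Let $\ov\J_X^d$ be a smoothable compactified Jacobian of degree $d$. Combining Remark~\ref{R:cJ-compare}, Proposition~\ref{P:gms}, and Proposition~\ref{P:unionorb}, $\ov\J_X^d$ is connected, universally closed, of finite type, $\Theta$-complete, and a union of $\PIC_X^{\un 0}$-orbits; hence it is determined by its orbit poset $\P:=\P(\ov\J_X^d)\subseteq \OO^d(\Gamma_X)$. Its degeneracy subset $\D:=\D(\ov\J_X^d)$ lies in $\Deg(\Gamma_X)$ by Corollary~\ref{C:prop-deg-cJ}, and Corollary~\ref{C:bound-orb}\eqref{C:bound-orb1} produces a BD-set $\BD_I\subseteq\P$ with $\D(\BD_I)=\D$. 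In view of Theorem~\ref{T:V=PT}\eqref{T:V=PT2}, it suffices to prove that $\P$ is a PT-assignment with degeneracy subset $\D$: indeed, then $\P=\P_\n$ for a V-stability $\n$, and $\ov\J_X^d=U(\P)=U(\P_\n)=\ov\J_X(\n)$, with uniqueness of $\n$ supplied by Theorem~\ref{T:VcJ-smoo}\eqref{T:VcJ-smoo2}.

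\textbf{Verifying the PT conditions.} Fix a $\D$-admissible spanning subgraph $G=\Gamma_X\setminus S$ and a regular one-parameter smoothing $\X/\Delta$ of $X$; by smoothability, the good moduli space $\ov J_\X^d\to\Delta$ is proper. For the surjectivity in Definition~\ref{D:PTstab}\eqref{D:PTstabProp1}, the plan is to lift an arbitrary class in $\Pic^{d-|S|}(X_S)$ to a line bundle on the relative partial normalization $\X_S/\Delta$ (using smoothness of the relative Picard scheme), extend to a relative torsion-free sheaf on $\X$, and invoke properness of $\ov J_\X^d$ to land the specialization in $\P(G)$ with the prescribed Picard class. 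For the ordered-partition requirement of Definition~\ref{D:PTstab}\eqref{D:PTstabProp2}, take $D_1,D_2\in\P(G)$ with $\pi_G(D_1)=\pi_G(D_2)$ and representatives $I_j\in\TF_X(G,D_j)\subseteq\ov\J_X^d$; lift the $I_j$ to relative sheaves $\I^j\in\ov\J_X^d(\Spec R)$ over a suitable DVR $R$ chosen so that $\I^1_\eta\simeq\I^2_\eta$ (achievable since the Picard classes agree). Proposition~\ref{P:non-sepa}\eqref{P:non-sepa1} then yields $g\in C^0(\Gamma_X,\ZZ)$ with $\I^2=g(\I^1)$, and the ordered level sets $V_i:=g^{-1}(m_i)$, where $m_0<\dots<m_q$ are the distinct values of $g$, form an ordered partition $V_\bullet$ of $V(\Gamma_X)$. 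Proposition~\ref{P:2limits} together with Corollary~\ref{C:iso-spec} then produce the required divisor identity $D_1-D(\O_G(V_\bullet))=D_2-D(\O_G(\ov V_\bullet))$, and its membership in $\P(G\setminus E_G(V_\bullet))$.

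\textbf{Main obstacle.} The delicate point is to show that each $V_i$ is a disjoint union of elements of $\D$, as demanded in Definition~\ref{D:PTstab}\eqref{D:PTstabProp2}; after decomposing $\Gamma_X[V_i]$ into connected components, this amounts to verifying $V_i\in\wh\D$ and that each such component individually lies in $\D$. The plan is to iteratively apply Proposition~\ref{P:Theta-red} to the nested isotrivial specializations of $I_1$ obtained by coarsening $V_\bullet$: for each $0\le k<q$, the isotrivial degeneration of $I_1$ along the ordered partition $(V_0\cup\dots\cup V_k,\,V_{k+1}\cup\dots\cup V_q)$ produces a sheaf in $\ov\J_X^d$ (by Proposition~\ref{P:iso-spec}), and combining this with the ordinary degeneration $\I^1_\eta\rightsquigarrow I_1$, $\Theta$-completeness forces the corresponding orientations to be concordant and exhibits polystable sheaves of the form $J=J_{X[V_i]}\oplus J_{X[V_i^\mathsf{c}]}\in\ov\J_X^d$. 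A finer application of the same argument along the connected components of each $\Gamma_X[V_i]$ then places each such component in $\D$. Once both conditions of Definition~\ref{D:PTstab}\eqref{D:PTstab1} are verified, Theorem~\ref{T:V=PT} delivers the V-stability $\n$ with $\P=\P_\n$ and $\D(\n)=\D$, completing the proof.
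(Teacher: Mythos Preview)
Your proposal has a genuine gap in the verification of condition~\eqref{D:PTstabProp2}, and the overall strategy diverges from the paper's in a way that creates avoidable difficulties.

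\textbf{The gap.} For part~(b), you want to lift $I_1\in\TF_X(G,D_1)$ and $I_2\in\TF_X(G,D_2)$ to families $\I^1,\I^2\in\ov\J_X^d(\Spec R)$ over the \emph{constant} family $X\times\Spec R$ with $\I^1_\eta\cong\I^2_\eta$, and then invoke Propositions~\ref{P:non-sepa} and~\ref{P:2limits}. But this cannot work as stated: by Lemma~\ref{L:Io-Ieta}, if $\I^2=g(\I^1)$ with $\gamma(\I^1)=0$ (e.g.\ $\I^1$ is the constant extension of $I_1$), then $G(\I^2_o)=G\setminus\supp\delta_G(g)$, which is strictly smaller than $G$ whenever $g$ is nonconstant on a connected component of $G$. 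So you cannot arrange $\I^2_o$ to have combinatorial type $(G,D_2)$ with $D_2\neq D_1$. Moreover, Proposition~\ref{P:2limits} compares restrictions to subcurves but does not produce a sheaf in $\ov\J_X^d$ witnessing membership of the common divisor in $\P(G\setminus E_G(V_\bullet))$; for that one needs a completeness argument you have not supplied.

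\textbf{What the paper does instead.} The paper (Lemma~\ref{L:JbarVbullet}) works over a \emph{smoothing} $\X/\Delta$ with carefully prescribed local equations $xy=\pi^{m(e)+1}$ determined by the chip-firing function $f$, passes to the regular resolution $\X^m$, lifts $I_1$ to a line bundle $\widetilde\L$ on $\X^m$, and twists by $\O_{\X^m}(\sum_v\tilde f(v)X^m_v)$ to obtain a second family with the same generic fiber but central multidegree $D_2$. The existence of the common isotrivial degeneration then comes from \emph{S-completeness} of $\ov\J^d_\X$ over $\Delta$ (a consequence of properness of the good moduli space, hence genuinely using smoothability); $\Theta$-completeness alone, which holds for any compactified Jacobian by Proposition~\ref{P:gms}, is not sufficient here.

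\textbf{Two further remarks.} First, the paper never verifies the surjectivity condition~\eqref{D:PTstabProp1} directly: Lemma~\ref{L:JbarVbullet} combined with Proposition~\ref{P:cardPTstab}\eqref{P:cardPTstableq} gives the upper bound $|\P(G)|\leq c_\D(G)$, which together with the lower bound from the BD-set inclusion (Corollary~\ref{C:bound-orb}) forces $\P$ to be a \emph{numerical} PT-assignment; Theorem~\ref{T:V=PT}\eqref{T:V=PT2} then upgrades this to a PT-assignment. This counting argument entirely sidesteps your proposed surjectivity argument, whose sketch also has a gap (properness alone does not force the limit to land in the stratum indexed by $G$). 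Second, your ``main obstacle'' is not an obstacle: once the common degeneration lies in $\P\subseteq\OO^d_\D(\Gamma_X)$, the subgraph $G\setminus E_G(V_\bullet)$ is automatically $\D$-admissible, which is all that is needed.
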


We first prove that the collection of  orbits of  a smoothable compactified Jacobian gives rise to a PT-assignment on the dual graph of the curve.

    \begin{proposition} \label{P:PT-Jbar}
Let $\ov \J_X^d$ be a smoothable compactified Jacobian stack of $X$ of degree $d$.
Then $\P(\ov \J_X^d)$ is a PT-assignment of degree $d$ on $\Gamma_X$ with $\D(\P(\ov \J_X^d))=\D(\ov \J_X^d)$.
    \end{proposition}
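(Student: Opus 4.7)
The plan is to derive all three ingredients of the PT-assignment definition from the smoothability hypothesis. By Proposition~\ref{P:gms}, smoothability of $\ov \J_X^d$ forces $\Theta$-completeness, and by \cite[Thm.~5.4]{alper-goodmodspaces} also S-completeness; together with universal closedness and connectedness (Remark~\ref{R:cJ-compare}), Proposition~\ref{P:unionorb} shows that $\P := \P(\ov \J_X^d)$ is a union of $\PIC_X^{\un 0}$-orbits, and Corollary~\ref{C:prop-deg-cJ} yields $\D := \D(\ov \J_X^d) \in \Deg(\Gamma_X)$ with $\wh \D(\ov \J_X^d)$ its associated extended degeneracy.

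The first task is to show $\P$ is an upper subset of $\OO^d_\D(\Gamma_X)$. Upper-closedness in $\OO^d(\Gamma_X)$ is immediate from openness of $\ov \J_X^d$ combined with Fact~\ref{F:orbits}\eqref{F:orbits4}. For $\D$-admissibility, take $(G, D) \in \P$ with decomposition $G = \coprod_i G_i$ and a sheaf $I = \bigoplus_i I_i \in \ov \J_X^d$ of type $(G, D)$. For each index $i$ and each connected component $V_\ell$ of $\Gamma_X[V(G_i)^\mathsf{c}]$, the sheaf further decomposes as $I = I_{X[V_\ell]} \oplus I_{X[V_\ell^\mathsf{c}]}$. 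Each $V_\ell$ is biconnected in $\Gamma_X$, since $V_\ell^\mathsf{c} = V(G_i) \cup \bigcup_{\ell' \neq \ell} V_{\ell'}$ is connected (each $V_{\ell'}$ attaches to the connected set $V(G_i)$ via an edge of $\Gamma_X$), hence $V_\ell \in \D$, and Corollary~\ref{C:prop-deg-cJ}\eqref{C:prop-deg-cJ1} gives $V(G_i) \in \wh \D$.

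For the surjectivity Property~\ref{D:PTstab}\eqref{D:PTstabProp1}, fix $G \in \S\S_\D(\Gamma_X)$ and choose $(G, D_0) \in \P$ (nonempty by Corollary~\ref{C:bound-orb}), defining $\delta_G(i) := (D_0)_{V(G_i)}$. For any one-parameter smoothing $\X/\Delta$, properness of $\ov J_\X^d \to \Delta$ plus the valuative criterion produces a reduction map $\Pic^d(\X_\eta) = \J^d_{\X_\eta} \rightsquigarrow \ov J_X^d$; lifting back to $\ov \J_X^d$, this yields for each generic line bundle a central-fiber sheaf in some orbit $\TF_X(G', D')$. Varying the generic line bundle by $\Pic^0(\X/\Delta)$ translates the central specialization by the action of $\PIC_X^{\un 0}$, which via the surjection $\PIC_X^{\un 0} \twoheadrightarrow \PIC_{X_S}^{\un 0}$ (Fact~\ref{F:orbits}\eqref{F:orbits3}) acts transitively on each fiber of the classification $\Pic^D(X_S) \twoheadrightarrow \Pic^{\delta_G}(G)$. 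Combined with the openness of $\ov \J_X^d$ and the upper-ness of $\P$, one deduces that every class in $\Pic^{\delta_G}(G)$ is realized by some $D \in \P(G)$.

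For the chipfiring Property~\ref{D:PTstab}\eqref{D:PTstabProp2}, given $D_1, D_2 \in \P(G)$ with $\pi_G(D_1) = \pi_G(D_2)$, pick sheaves $I_j \in \TF_X(G, D_j) \cap \ov \J_X^d$ and lift them to sections $\I_1, \I_2 \in \ov \J_\X^d(\Spec R)$; after an appropriate twist by a line bundle on $\Delta$ lifted to $\X$, arrange $\I_1|_\eta \cong \I_2|_\eta$. S-completeness extends this data to a morphism $\ST_R \to \ov \J_X^d$, whose central point produces a common sheaf $J \in \ov \J_X^d$ that is an isotrivial specialization of both $I_1$ and $I_2$ with opposite orientations. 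Writing $\I_2 = g(\I_1)$ as in Proposition~\ref{P:non-sepa}\eqref{P:non-sepa1} and letting $V_i$ be the level sets of $g$, Corollary~\ref{C:iso-spec} gives $(G, D_1) \geq_{\O_G(V_\bullet)} (G(J), D(J))$ and $(G, D_2) \geq_{\O_G(\ov V_\bullet)} (G(J), D(J))$, yielding $D_1 - D(\O_G(V_\bullet)) = D_2 - D(\O_G(\ov V_\bullet)) = D(J) \in \P(G \setminus E_G(V_\bullet))$. Each $V_i$ is a disjoint union of vertex sets of connected components of $G(J)$, all lying in $\wh \D$ by the second paragraph, so the ordered partition is admissible. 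The main obstacle will be the surjectivity argument: controlling the combinatorial type of the specialization of a chosen generic line bundle to land precisely in $\TF_X(G, -)$, rather than in a strict degeneration thereof, requires careful tracking of the interplay between the generic-to-special reduction map and the orbit structure on $\ov \J_X^d$ induced by the $\PIC_X^{\un 0}$-action.
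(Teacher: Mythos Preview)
Your proposal has two genuine gaps.

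\textbf{The surjectivity argument is incomplete.} You acknowledge this yourself at the end, and it is indeed the crux: the reduction map from $\J^d_{\X_\eta}$ to $\ov J_X^d$ lands somewhere in the good moduli space, but there is no mechanism in your argument for forcing the central specialization to have combinatorial type $(G,-)$ rather than a strict degeneration. Varying by $\Pic^0(\X/\Delta)$ does not help, since that action preserves the orbit type. The paper avoids this entirely: it never proves Property~\eqref{D:PTstabProp1} directly. Instead it proves only Property~\eqref{D:PTstabProp2} (via Lemma~\ref{L:JbarVbullet}), uses Proposition~\ref{P:cardPTstab}\eqref{P:cardPTstableq} to get $|\P(G)|\le c_{\D}(G)$, matches this against the BD-set lower bound from Corollary~\ref{C:bound-orb}, deduces that $\P$ is a \emph{numerical} PT-assignment, and then invokes Theorem~\ref{T:V=PT}\eqref{T:V=PT2} to upgrade to a full PT-assignment. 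Surjectivity thus falls out of the structural classification, not from a direct construction.

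\textbf{The chip-firing argument is also gapped.} You pick arbitrary sheaves $I_1,I_2$ of types $(G,D_1),(G,D_2)$, lift them arbitrarily to $\I_1,\I_2$ over $\Spec R$, and then claim that a twist by a line bundle pulled back from $\Delta$ makes their generic fibers agree. This cannot work: a line bundle pulled back from $\Spec R$ is trivial, and the hypothesis $\pi_G(D_1)=\pi_G(D_2)$ is a purely graph-theoretic linear-equivalence statement that puts no constraint on the continuous moduli of $I_1$ and $I_2$, let alone on arbitrary lifts. The paper's Lemma~\ref{L:JbarVbullet} resolves this by building the two families simultaneously: it chooses a smoothing $\X/\Delta$ with local equations $xy=\pi^{m(e)+1}$ calibrated to the chip-firing function $f$, passes to the regular blow-up $\X^m$, lifts $D_1$ and $D_2$ to explicit multidegrees $\wt D,\wt D'$ on $\Gamma_{X^m}$ differing by $\div(\wt f)$, realizes both via $\wt\L$ and $\wt\L'=\wt\L\otimes\O_{\X^m}(\sum \wt f(v)X^m_v)$ sharing the same generic fiber by construction, and only then applies S-completeness. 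Your sketch skips the construction that makes the generic fibers coincide.
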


\begin{proof}
Set $\P:=\P(\ov \J_X^d)$ and observe that $\ov \J_X^d=U(\P)$ by Proposition~\eqref{P:unionorb}. 
By combining Lemma~\ref{L:JbarVbullet} below with Proposition~\ref{P:cardPTstab}(\ref{P:cardPTstableq}), we deduce the inequality 
\begin{equation}\label{E:upperP}
|\P(G)|\leq c_{\D}(G),
\end{equation}
for every $G\in\S\S_{\D(\ov\J_X)}(\Gamma_X)$.

On the other hand, Proposition~\ref{P:many-orbits} implies that $\P$ contains some BD-set, say $\BD_I$, with degeneracy subset $\D(\ov \J_X^d)$. Therefore, Proposition~\ref{BDthm} gives the opposite inequality
\begin{equation}\label{E:lowerP}
|\P(G)|\geq |\BD_I(G)|\geq c_{\D}(G).
\end{equation}
By combining the inequalities in \eqref{E:upperP} and in \eqref{E:lowerP}, we deduce that $\BD_I=\P$ is a numerical PT assignment. By Theorem~\ref{T:V=PT}\eqref{T:V=PT2}, we conclude that $\P$ is a PT-assignment  with degeneracy subset $\D(\ov \J_X^d)$.
\end{proof}
The following was used in the above proof.
\begin{lemma} \label{L:JbarVbullet}Let $G\leq \Gamma_X$ be a $\D(\ov\J_X^d)$-admissible subgraph. Assume that $D \in \P(G)$ and $D'=D+\div(f) \in \P(G)$ for some $D,D' \in \Div(G)$ and some $f\in C^0(G,\ZZ)$. Then there exists an ordered partition $V_\bullet$ of $V(G)$ such that 
\[
D- D(\mathcal{O}(V_\bullet))=D+\div(f)- D(\mathcal{O}(\overline{V_\bullet})) \in \P(G \setminus E_{G}(V_{\bullet})). 
\]
\end{lemma}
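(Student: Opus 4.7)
The plan is to use smoothability to lift $I$ to a family over a DVR and to produce a second family with common generic fiber whose special fiber realizes $D'$; then, applying S-completeness, we obtain both the ordered partition $V_\bullet$ and a common isotrivial specialization whose combinatorial type yields the divisor $\tilde D$.

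By Proposition~\ref{P:unionorb}, since $(G,D)\in \P = \P(\ov\J_X^d)$, pick a representative $I \in \ov\J_X^d \cap \TF_X(G,D)$. Fix a regular one-parameter smoothing $\X/\Delta$ of $X$; by Definition~\ref{D:compJac-spec}, $\ov\J_\X^d$ admits a proper good moduli space $\ov J_\X^d \to \Delta$, and the valuative criterion applied to this proper GMS produces a family $\I \in \ov\J_\X^d(\Delta)$ with $\I_o = I$ (possibly after a finite base change).

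Consider the line bundle $\mathcal L := \mathcal O_\X\bigl(\sum_v f(v) X_v\bigr)$ on $\X$. Its generic fiber is trivial, since each $X_v$ is supported on the special fiber, and a standard Laplacian computation using $\sum_v X_v \sim 0$ and the fact that $\mathcal O_\X(X_v)|_X$ has multidegree $\div(\chi_v)$ on $\Gamma_X$ shows that $\mathcal L|_X$ has multidegree $\div(f)$. The twisted family $\I' := \I \otimes \mathcal L$ therefore satisfies $\I'_\eta \cong \I_\eta$, while $\I'_o = I \otimes \mathcal L_o$ lies in $\TF_X(G, D+\div(f)) = \TF_X(G, D')$; by the hypothesis $D' \in \P(G)$ and Proposition~\ref{P:unionorb}, the entire orbit $\TF_X(G,D')$ is contained in $\ov\J_X^d$, so $\I' \in \ov\J_\X^d(\Delta)$. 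Now invoke S-completeness of $\ov\J_\X^d$, which follows from the existence of its good moduli space by \cite[Thm.~5.4]{alper-goodmodspaces}: applied to the pair $(\I,\I')$ with common generic fiber, it yields an ordered partition $V_\bullet$ of $V(\Gamma_X)$ and a common isotrivial specialization $J \in \ov\J_X^d$, equal on one side to $\Gr_{X[V_\bullet]}(I)$ and on the other to $\Gr_{X[\ov V_\bullet]}(\I'_o)$. By Corollary~\ref{C:iso-spec}, $J$ has combinatorial type both $(G\setminus E_G(V_\bullet),\, D - D(\O_G(V_\bullet)))$ and $(G\setminus E_G(V_\bullet),\, D' - D(\O_G(\ov V_\bullet)))$; equating the two multidegrees forces $D - D(\O_G(V_\bullet)) = D' - D(\O_G(\ov V_\bullet)) =: \tilde D$, and since $\ov\J_X^d$ is a union of orbits, $\TF_X(G\setminus E_G(V_\bullet), \tilde D) \subseteq \ov\J_X^d$, i.e.\ $\tilde D \in \P(G\setminus E_G(V_\bullet))$, as required.

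The main obstacle is the invocation of S-completeness: one needs to verify that its output in our concrete setting is genuinely of the form $X[V_\bullet]$ for a partition $V_\bullet$ of $V(\Gamma_X)$, and that the associated graded matches the $\Gr_{X[V_\bullet]}$ of Proposition~\ref{P:iso-spec}. The natural argument is that $\I'$ is obtained from $\I$ by twisting with $\mathcal L = \mathcal O_\X(\sum_v f(v) X_v)$, so the Rees-algebra / Hilbert--Mumford filtration underlying S-completeness is exactly the filtration by $f$-level sets of $V(\Gamma_X)$, and the associated graded realises the subcurve grading of Proposition~\ref{P:iso-spec}.
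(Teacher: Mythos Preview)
Your twist step has a genuine gap. On a regular smoothing $\X/\Delta$, the line bundle $\mathcal L=\O_{\X}\bigl(\sum_v f(v)X_v\bigr)$ restricts on the central fiber to a line bundle of multidegree $\div_{\Gamma_X}(f)$ --- the Laplacian on the \emph{full} dual graph $\Gamma_X$ --- not $\div_G(f)$. These differ precisely by the contributions from edges in $E(\Gamma_X)\setminus E(G)$, i.e.\ the nodes where $I$ fails to be locally free. Hence $\I'_o=I\otimes\mathcal L_o$ lands in $\TF_X(G,\,D+\div_{\Gamma_X}(f))$, not $\TF_X(G,D')$, and there is no reason for it to lie in $\ov\J_X^d$; the S-completeness step then cannot be invoked. (A secondary issue: the valuative criterion fills in the special fiber from a given generic one, not the reverse, so your construction of $\I$ with $\I_o=I$ is also unjustified as written.)

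This is exactly the obstacle the paper's proof is built to circumvent. It chooses a \emph{non-regular} smoothing $\X$ with local equation $xy=\pi^{m(e)+1}$ at $n_e$, where $m(e)=f(s(\ee))-f(t(\ee))+1$ for $e\notin E(G)$ (orienting these edges so that $f(s(\ee))\ge f(t(\ee))$) and $m(e)=0$ for $e\in E(G)$, and then resolves to a regular $\sigma\colon\X^m\to\X$ whose central fiber has as dual graph the corresponding edge-subdivision $\Gamma_X^m$. The specific choice of $m(e)$ allows $f$ to be extended to a function $\tilde f$ on $V(\Gamma_X^m)$ that drops by exactly $1$ along each exceptional chain, so that $\div_{\Gamma_X^m}(\tilde f)$ vanishes at the two junctions of every such chain with the original vertices; consequently $\div_{\Gamma_X^m}(\tilde f)$ restricted to $V(\Gamma_X)$ equals $\div_G(f)$ rather than $\div_{\Gamma_X}(f)$. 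One realizes $I$ as $\sigma^o_*L$ for a line bundle $L$ on $X^m$ of the appropriate lifted multidegree, extends $L$ to $\X^m$ by Hensel, twists by $\O_{\X^m}(\sum_v\tilde f(v)X^m_v)$, and pushes forward to $\X$. The resulting second family now genuinely has special fiber in $\TF_X(G,D')\subset\ov\J_X^d$, and S-completeness of $\ov\J_{\X}^d$ produces the ordered partition $V_\bullet$ exactly as you envisage.
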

\begin{proof} 
Fix an orientation of $E(\Gamma_X)\setminus E(G)$, such that $f(s(\ee))\geq f(t(\ee))$, and let 
\begin{align*}
    m: E(\Gamma_X)&\to \NN\\
    e&\mapsto \begin{cases}
        f(s(\ee))-f(t(\ee))+1, \textup{ if }e\notin E(G),\\
        0,\textup{ if }e\in E(G).
    \end{cases}
\end{align*}
Let $\X/\Delta$ be a one-parameter smoothing such that, for a fixed  uniformizer $\pi$ of $R$, for any edge $e\in E(\Gamma_X)$, a local equation of the surface $\X$ at the corresponding node $n_e$ of $X$ is of the form $xy=\pi^{m(e)+1}$, for some $m(e)\in\NN$. 
This defines a function
$$ m:E(\Gamma_X)\to \NN.$$
After performing $m(e)$ blow-ups of the surface $\X$ at each point $n_e$, we get a regular surface $\X^m$ endowed with a morphism 
\begin{equation*}
    \sigma:\X^m\to \X
\end{equation*}
that restricts to an isomorphism $\sigma^\eta : \X^m_\eta \to \X_\eta$ on the generic fiber, and to a contraction $\sigma^o : X^m := \mathcal{X}^m_o \to X=\mathcal{X}_o$ on the central fiber. The dual graph $\Gamma_{X^m}$ of $X^m$ is the graph $\Gamma_X^m$ obtained by subdividing each edge $e$ of $\Gamma_X$ into $m(e)+1$ edges and inserting $m(e)$ new edges. We denote the new (exceptional) vertices ordered from $s(\ee)$ to $t(\ee)$ by $v_1^\ee,\ldots, v_{m(e)}^\ee$ in such a way that $s(\ee^i)=v_{i-1}^{\ee}$ and $t(\ee^i)=v_i^{\ee}$.

We now lift the divisors $D,D'$ to divisors $\widetilde{D}, \widetilde{D}'$ on $\Gamma_{X^m}$, defined as follows:
\begin{itemize}
    \item $\widetilde{D}_v:=\begin{cases}
        \delta_{1,i}, \textup{ if }v=v^\ee_i,\\
        D_v, \textup{ if }v\in V(\Gamma_X).
    \end{cases}$
    \item $\widetilde{D}'_v:=\begin{cases}
        \delta_{m(e),i}, \textup{ if }v=v^\ee_i,\\
        (D+\div(f))_v, \textup{ if }v\in V(\Gamma_X).
    \end{cases}$
\end{itemize}
We claim that there exists a function $\widetilde{f} : V(\Gamma_{X^m}) \to \mathbb{Z}$ that gives  the equality
\begin{equation}\label{E:tildeDequiv}
    \widetilde{D}'=\widetilde{D}+\div(\widetilde{f}).
\end{equation}

 Assuming the claim, we let $[F] \in \ov \J_X^d$ be such that $\deg (F) =D \in \P(G)$. There exists a line bundle $L$ on $X^m$ such that $\sigma^0_*(L)=F$, and we can choose $L$ to have multidegree equal to $\widetilde{D}$ (this follows from \cite[Proposition~5.5]{estevespacini}: with the notation as in loc.cit., take $X^m \to Y$ to be the map that contracts all exceptional components of $\sigma^0 : X^m \to X$ where $\widetilde{D}$ has degree $0$, and set $L$ to be the pullback of $\mathcal{L}$.).  By Hensel's lemma, there exists a line bundle $\widetilde{\L}$ on $\X^m$ such that $\widetilde{\L}_o=L$.
We then let $\widetilde{L}'=\widetilde{\L} \otimes \mathcal{T}$, where we set 
\[
\mathcal{T}:=\mathcal{O}_{\mathcal{X}^m}\Bigg(\sum_{v \in V(\Gamma_{X^m})} \tilde{f}(v) X^m_v\Bigg),
\]
a line bundle that is trivial on the generic fiber $\X^m_\eta$, and whose multidegree equals $\div(\tilde{f})$ on the central fiber $\X^m$. 

Since  $\ov \J_X^d$ is smoothable, the stack $\ov\J_\X:=\ov\J_X^d\cup J^d_{\X_\eta}\subset \TF^d_{\X/\Delta}$ is $S$-complete over $\Delta$. Moreover, we have  that $(\sigma_*\widetilde{\L})_\eta=(\sigma_*\widetilde{\L}')_\eta$ and that $(\sigma_* \widetilde{\L})_0 \in \ov \J^d_X$ by construction  and $(\sigma_* \widetilde{\L}')_0 \in \ov \J^d_X$ because its multidegree equals $D'=D+ \div(f) \in \P(G)$, and as a consequence of Proposition~\ref{P:unionorb}.

Thus, by S-completeness, there exists an ordered partition $Y_\bullet=(Y_0,\ldots,Y_q)$ such that $$\Gr_{Y\bullet}((\sigma_*\widetilde{\L})_o)=\Gr_{\ov Y\bullet}((\sigma_*\widetilde{\L}')_o)\in\ov\J_X^d.$$ 
The statement then follows by setting $V_\bullet:=(V(\Gamma_{Y_0}),\ldots,V(\Gamma_{Y_q}))$.

We now prove the claim. Firstly, we define the function $\widetilde{f}$: 
\begin{align*}
    \wt f:V(\Gamma_{X^m})&\to \ZZ\\
    v&\mapsto \begin{cases}
        f(s(\ee))+1-i, \textup{ if }v=v^\ee_i,\\
        f(v),\textup{ if }v\in V(\Gamma_X).
    \end{cases}
\end{align*}

In order to prove Equation~\ref{E:tildeDequiv}, we distinguish two cases:
\begin{itemize}
    \item[1. $v\in V(\Gamma_X)$.] Since $\widetilde{D}_v=D_v$ and $\widetilde{D}'_v=(D+\div(f))_v$, we need to show $\div(\widetilde{f})_v=\div(f)_v$.
    \begin{align*}
         \div(\widetilde{f})_v&=-\widetilde{f}_v\val_{\Gamma_{X^m}}(v)+\sum_{v\neq w\in V(\Gamma_{X^m})}\tilde{f}(w)\val_{\Gamma_{X^m}}(v,w)=\\
         &=-{f}_v\val_{\Gamma}(v)+\sum_{v\neq w\in V(\Gamma_{X})}f(w)\val_{G}(v,w)+\sum_{e\in E(\Gamma)\setminus E(G)}(f(s(e))\val_{\Gamma_{X^m}}(v^\ee_1,v)+f(t(\ee))\val_{\Gamma_{X^m}}(v^\ee_{m(e)},v))=\\&=-{f}_v\val_{\Gamma}(v)+\sum_{v\neq w\in V(\Gamma_{X})}f(w)\val_{G}(v,w)+\sum_{e\in E(\Gamma)\setminus E(G)}f(v)=\\&=-{f}_v\val_{G}(v)+\sum_{v\neq w\in V(G)}f(w)\val_{G}(v,w)=\div(f)_v,
    \end{align*}
    where we have used the equalities $\val(v^\ee_1,v)=\delta_{v,s(\ee)}$, $\val(v^\ee_{m(e)},v)=\delta_{v,t(\ee)}$ and $m(e)=f(s(\ee))-f(t(\ee))+1$.
   
     \item[2. $v=v^\ee_i$ for] some $1\leq i\leq m(e)$. We observe that 
    \begin{equation*}
        \div(\widetilde{f})_v=-2\widetilde{f}(v_i^\ee)+\widetilde{f}(v^\ee_{i-1})+\widetilde{f}(v^\ee_{i+1})=\begin{cases}
            0,\textup{ if }i\neq1,m(e),\\
            -1, \textup{ if }i=1,\\
            1, \textup{ if }i=m(e),
        \end{cases}
    \end{equation*}
    which, together with
    $$
    \begin{cases}
        \widetilde{D}_v=\delta_{1,i},\\
        \widetilde{D}'_v=\delta_{m(e),i},
    \end{cases}
    $$
    shows the equality.
\end{itemize}
This concludes the proof of the Claim.
\end{proof}

Finally, we can prove the main result of this subsection.

\begin{proof}[Proof of Theorem \ref{T:cla-smoo}]
    Let $\ov \J_X$ be a
    smoothable compactified Jacobian of $X$. 
By Proposition \ref{P:PT-Jbar} we deduce that $\P(\ov \J_X)$ is a PT-assignment. Theorem~\ref{T:V=PT} implies that $\P=\P_\n$ for a unique V-stability $\n$ on $\Gamma_X$ and hence $\ov \J_X=U(\P(\ov \J_X^d))=U(\P_n)=\ov\J_X(\n)$ by Remark~\ref{R:Pn-cJ} and Equality~\eqref{E:U-cJ}.
\end{proof}

\section{Examples}\label{Sec:Examples}

In this section, we discuss examples of  compactified Jacobians or V-compactified Jacobians for some special classes of curves.

\subsection{Maximally degenerate compactified Jacobians}\label{Sub:maxdeg}

In this subsection, we classify all maximally degenerate connected compactified Jacobians, in the following sense.

\begin{definition}\label{D:maxdeg}
Let $X$ be a connected nodal curve. A connected compactified Jacobian stack $\ov \J_X^d$ of $X$ is said to be \emph{maximally degenerate} if $\D(\ov \J_X^d)=\BCon(X)$.    
\end{definition}

Let first introduce a canonical compactified Jacobian stack on $X$.

\begin{lemma}\label{L:candeg}
Let $X$ be a connected nodal curve. 
\begin{enumerate}
    \item \label{L:candeg1}
    The function 
$$ 
\begin{aligned}
 \psi^{\deg}(X):\left\{\text{Subcurves of } X\right\} & \rightarrow \RR\\
 Y & \mapsto \psi^{\deg}(X)_Y:=0,
\end{aligned}$$
is a numerical polarization on $X$ of characteristic $0$, called the \emph{canonical maximally degenerate} numerical polarization on $X$.
\item \label{L:candeg2} The associated V-stability condition $\s^{\deg}(X):=\s(\psi^{\deg}(X))$, called the \emph{canonical maximally degenerate} V-stability condition on $X$,  is given by 
$$
\s^{\deg}(X)_{Y}:=0 \text{ for any } Y\in \BCon(X),
$$
and its degeneracy subset is equal to  $\D(\n^{\deg}(X))=\BCon(X)$.
\item \label{L:candeg3}
Denote by $\n^{\deg}(X)$ the V-stability condition on $\Gamma_X$ of degree $g(X)-1$ corresponding to $\s^{\deg}(X)$ under the bijection of Proposition \ref{P:n-s}. Then the $\n^{\deg}(X)$-semistable set $\P_X^{\deg}:=\P_{\n^{\deg}(X)}$ is equal to 
$$\P_X^{\deg}=\BD_{I^{\deg}_X},$$
where $I^{\deg}_X$ is the $\BCon(\Gamma_X)$-forest function given by 
$$
\begin{aligned}
    I^{\deg}_X:m\S\S_{\BCon(\Gamma_X)}(\Gamma_X) & \longrightarrow \Div(\Gamma_X),\\
    \Gamma_X\setminus E(\Gamma_X) & \mapsto D_X^{\deg}:=\sum_{v\in V(\Gamma_X)} [g(X_v)-1]v.
    \end{aligned}
$$
\end{enumerate}
\end{lemma}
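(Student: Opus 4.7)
Part (1) is immediate: the zero function is additive with total value $0$, hence a numerical polarization of characteristic $0$. Part (2) follows directly from the definitions: $\s^{\deg}(X)_Y = \lceil 0 \rceil = 0$, and since $\psi^{\deg}(X)_Y = 0 \in \ZZ$ for every $Y\in\BCon(X)$, the description of $\D(\s(\psi))$ following \eqref{E:s-psi} gives $\D(\s^{\deg}(X)) = \BCon(X)$.

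For Part (3), I translate to the graph-theoretic setting. By Proposition~\ref{P:n-s}, the corresponding V-stability $\n^{\deg}(X)$ on $\Gamma_X$ of degree $g(X)-1$ satisfies $\n^{\deg}(X)_W = g(X[W]) - 1$ for $W\in\BCon(\Gamma_X)$, and has $\D(\n^{\deg}(X)) = \BCon(\Gamma_X)$. By Remark~\ref{R:Pn-cJ} and Theorem~\ref{T:V=PT}\eqref{T:V=PT1}, $\P_X^{\deg} = \P_{\n^{\deg}(X)} = \BD_{I_{\n^{\deg}(X)}}$, so the claim reduces to verifying $I_{\n^{\deg}(X)} = I_X^{\deg}$. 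I next identify the extended degeneracy subset: for any $V\in \Con(\Gamma_X)$, each connected component $\Gamma_X[V_i]$ of $\Gamma_X[V^\mathsf{c}]$ has complement $V \cup \bigcup_{j\neq i}V_j$, which is connected because every $V_j$ is joined to $V$ by at least one edge (as $\Gamma_X$ is connected and $V_j$ is a component of $\Gamma_X[V^\mathsf{c}]$). Hence each $V_i\in\BCon(\Gamma_X) = \D(\n^{\deg}(X))$, giving $\wh\D(\n^{\deg}(X)) = \Con(\Gamma_X)$. Therefore every spanning subgraph of $\Gamma_X$ is $\D(\n^{\deg}(X))$-admissible, and the unique minimal such is $F := \Gamma_X \setminus E(\Gamma_X)$, so both forest functions have $\{F\}$ as their domain.

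To finish I compute $I_{\n^{\deg}(X)}(F)$. Each connected component $F_v = \{v\}$ of $F$ contains no edges, so the first two clauses of the defining system \eqref{E:In} are vacuous and only $I_{\n^{\deg}(X)}(F)_v = |\n^{\deg}(X)(F_v)|$ remains. Since $F_v$ equals $\Gamma_X[\{v\}]$ with its loops removed, Lemma-Definition~\ref{LD:V-subgr} gives $|\n^{\deg}(X)(F_v)| = \n^{\deg}(X)_{\{v\}} - e_{\Gamma_X}(\{v\})$. Writing $\Gamma_X[\{v\}^\mathsf{c}] = \coprod_i \Gamma_X[Z_i]$ with each $Z_i$ biconnected (hence in $\D(\n^{\deg}(X))$), Definition~\ref{D:ext-n}\eqref{D:ext-n2} yields
\[
\n^{\deg}(X)_{\{v\}} = (g(X)-1) - \sum_i (g(X[Z_i])-1) - \val(\{v\}).
\]
Substituting the standard formulas $g(X)-1 = \sum_{v'} g(X_{v'}) - |V(\Gamma_X)| + |E(\Gamma_X)|$ and $g(X[Z_i]) - 1 = \sum_{v'\in Z_i} g(X_{v'}) - |Z_i| + e_{\Gamma_X}(Z_i)$, together with $|E(\Gamma_X)| = e_{\Gamma_X}(\{v\}) + \sum_i e_{\Gamma_X}(Z_i) + \val(\{v\})$, simplifies to $\n^{\deg}(X)_{\{v\}} = g(X_v) - 1 + e_{\Gamma_X}(\{v\})$, so that $I_{\n^{\deg}(X)}(F)_v = g(X_v) - 1$, matching $D_X^{\deg}$. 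The main obstacle is precisely this final arithmetic simplification of the extended V-function at a singleton; everything else is a direct unpacking of definitions and a single invocation of Theorem~\ref{T:V=PT}.
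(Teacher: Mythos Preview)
Your proof is correct and follows the same approach as the paper's: both reduce Part~(3) to Theorem~\ref{T:V=PT}\eqref{T:V=PT1} together with the identification $I_{\n^{\deg}(X)} = I_X^{\deg}$, after noting that $\Gamma_X\setminus E(\Gamma_X)$ is the unique minimal $\BCon(\Gamma_X)$-admissible spanning subgraph. The paper simply asserts that the identification ``follows easily from Lemma-Definition~\ref{L:BDInwelldef}'', while you carry out the computation of the extended V-function at a singleton explicitly; this is a helpful expansion but not a different argument.
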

\begin{proof}
    Parts \eqref{L:candeg1} and \eqref{L:candeg2} are obvious.
    Part \eqref{L:candeg3} follows from the fact that $\Gamma_X\setminus E(\Gamma_X)$ is the unique minimal $\BCon(\Gamma_X)$-admissible spanning subgraph together with Theorem \ref{T:V=PT}\eqref{T:V=PT1} and the fact that $I_{\n^{\deg}(X)}=I^{\deg}_X$ as it follows easily from Lemma-Definition \ref{L:BDInwelldef}. 
\end{proof}

\begin{corollary}\label{C:posdeg}
  Let $X$ be a connected nodal curve. Then for every spanning subgraph $G\leq \Gamma_X$, we have that 
  $$
  \P_X^{\deg}(G)=D_X^{\deg}+\left\{D(\O): \: \O \text{ is an orientation of $G$}\right\}\subset \Div^{g(X)-1-|E(G)^\mathsf{c}|}(\Gamma_X).
  $$
 Moreover, we have that 
  $$
  |\P_X^{\deg}(G)|=c_{\BCon(\Gamma_X)}(G)=|\left\{\text{Spanning forests of } G\right\}|.
  $$
\end{corollary}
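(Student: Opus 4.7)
The first assertion will follow by unwinding Lemma~\ref{L:candeg}\eqref{L:candeg3}. The plan is to first observe that when $\D = \BCon(\Gamma_X)$, every connected subset of $V(\Gamma_X)$ belongs to the extended degeneracy subset $\wh{\D}$: indeed, for any connected $W \subsetneq V(\Gamma_X)$, each connected component $Z$ of $\Gamma_X[W^\mathsf{c}]$ is biconnected because its complement $W \cup \bigsqcup_{Z' \neq Z} Z'$ is connected (every component of $\Gamma_X[W^\mathsf{c}]$ attaches to $W$ since $\Gamma_X$ is connected). In particular, every singleton $\{v\}$ lies in $\wh{\D}$, so the edgeless spanning graph $F_0 := \Gamma_X \setminus E(\Gamma_X)$ is $\D$-admissible and trivially minimal, hence the unique element of $m\S\S_{\BCon(\Gamma_X)}(\Gamma_X)$. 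Unwinding Definition~\ref{D:BD-set} for $\BD_{I_X^{\deg}}$ using this unique minimal admissible spanning forest $F_0$, and recalling $I_X^{\deg}(F_0) = D_X^{\deg}$, yields directly
\[
\P_X^{\deg}(G) \;=\; \BD_{I_X^{\deg}}(G) \;=\; \Big\{D_X^{\deg} + D(\O) : \O \text{ orientation of } G\Big\}.
\]

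For the cardinality formula, I would invoke Theorem~\ref{T:V=PT}\eqref{T:V=PT2}: since $\P_X^{\deg} = \P_{\n^{\deg}(X)}$ is the semistable set of a V-stability condition with degeneracy subset $\D(\n^{\deg}(X)) = \BCon(\Gamma_X)$ (by Lemma~\ref{L:candeg}\eqref{L:candeg2}), it is a numerical PT-assignment, whence $|\P_X^{\deg}(G)| = c_{\BCon(\Gamma_X)}(G)$ for every $G \in \S\S_{\BCon(\Gamma_X)}(\Gamma_X)$. By the observation of the previous paragraph, every spanning subgraph of $\Gamma_X$ is $\BCon(\Gamma_X)$-admissible, so $\S\S_{\BCon(\Gamma_X)}(\Gamma_X) = \S\S(\Gamma_X)$ and each spanning forest of $\Gamma_X$ is $\BCon(\Gamma_X)$-admissible; therefore, by Remark~\ref{R:Dadm}, $c_{\BCon(\Gamma_X)}(G)$ equals the number of spanning forests of $G$ (viewed as subforests of $\Gamma_X$ contained in $G$).

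No step in the argument presents serious difficulty; everything is a direct application of the theory developed earlier in the paper. The only combinatorial point to verify is the claim that $\wh{\BCon(\Gamma_X)}$ contains every connected subset of $V(\Gamma_X)$, which is an elementary consequence of the connectedness of $\Gamma_X$. As a by-product, one recovers the classical combinatorial fact (cf.\ \cite[Prop.~40]{Ber}) that the distinct outdegree (equivalently indegree) sequences arising from orientations of $G$ are in bijection with the spanning forests of $G$.
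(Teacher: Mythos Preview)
Your argument is correct. For the first assertion you do exactly what the paper does: combine Lemma~\ref{L:candeg}\eqref{L:candeg3} with Definition~\ref{D:BD-set}, the only extra content being your explicit verification that $\wh{\BCon(\Gamma_X)}=\Con(\Gamma_X)$ so that $F_0$ is the unique minimal admissible spanning subgraph.

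For the cardinality formula your route genuinely differs from the paper's. The paper proves $|\P_X^{\deg}(G)|=|\{\text{spanning forests of }G\}|$ by directly invoking the external combinatorial fact \cite[Prop.~40]{Ber} that the number of outdegree sequences of $G$ equals the number of its spanning forests; the equality with $c_{\BCon(\Gamma_X)}(G)$ is then just the definition. You instead appeal to Theorem~\ref{T:V=PT}\eqref{T:V=PT2} to get $|\P_X^{\deg}(G)|=c_{\BCon(\Gamma_X)}(G)$ from the numerical PT-assignment property, and then identify $c_{\BCon(\Gamma_X)}(G)$ with the number of spanning forests via Remark~\ref{R:Dadm}. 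Your approach keeps everything internal to the paper's machinery and yields the outdegree-sequence result of \cite{Ber} as a corollary rather than an input, which is a nice illustration of the theory; the paper's approach is shorter but imports the combinatorics from outside.
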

\begin{proof}
The description of $\P_X^{\deg}(G)$ follows from Lemma \ref{L:candeg}\eqref{L:candeg3} together with the Definition \ref{D:BD-set} of a BD-set. The formula for $|\P_X^{\deg}(G)|$ follows from the fact that  
$$\left\{D(\O): \: \O \text{ is an orientation of $G$}\right\}$$
is the set of outdegree sequences of $G$ whose cardinality is 
equal to the number of spanning forests of $G$ (see \cite[Prop. 40]{Ber} and the references therein)
\end{proof}

\begin{definition}\label{D:candeg}
    Let $X$ be a connected nodal curve. The compactified Jacobian stack 
    $$\ov \J_X^{\deg}:=\ov \J_X(\s^{\deg}(X))=\{I\in \TF_X: \chi(I)=0 \text{ and } \chi(I_Y)\geq 0 \text{ for any } Y\in \BCon(X)\}$$ 
    is maximally degenerate of degree $g(X)-1$, and it is called the  \emph{canonical maximally degenerate} compactified Jacobian stack of $X$.
\end{definition}

The canonical maximally degenerated compactified Jacobian space has been used by Alexeev in \cite{alexeev} to extend the Torelli morphism from the moduli space of stable curves to the moduli space of principally polarized stable abelic pairs, and it was then studied by Caporaso-Viviani in \cite{caporasoviviani} to prove a Torelli-type theorem for stable curves.

\begin{proposition}\label{P:maxdegcJ}
  Let $X$ be a connected nodal curve.  Then every maximally degenerate compactified Jacobian stack of $X$ is equivalent by translation to $\ov \J_X^{\deg}$. In particular, any maximally degenerate compactified Jacobian stack of $X$ is classical.
\end{proposition}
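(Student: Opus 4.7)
\medskip

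\noindent\textbf{Proof plan.} The plan is to show that any maximally degenerate connected compactified Jacobian $\ov \J_X^d$ agrees with the canonical one $\ov \J_X^{\deg}$ up to tensor product with a suitable line bundle on $X$.

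Start from the observation that, when $\D:=\D(\ov \J_X^d)=\BCon(\Gamma_X)$, Corollary~\ref{C:prop-deg-cJ} gives $\wh \D=\Con(\Gamma_X)$. Hence every spanning subgraph of $\Gamma_X$ is $\D$-admissible and the \emph{only} minimal $\D$-admissible spanning subgraph is the discrete graph $F:=\Gamma_X\setminus E(\Gamma_X)$. A $\D$-forest function is therefore nothing but the datum of a single divisor $I(F)\in\Div^{d-|E(\Gamma_X)|}(\Gamma_X)$, and the associated BD-set admits the explicit description $\BD_I(G)=\{I(F)+D(\O):\O\text{ orientation of }G\}$ for every spanning subgraph $G\leq\Gamma_X$, exactly as in Corollary~\ref{C:posdeg}.

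Next, I would apply Proposition~\ref{P:many-orbits} (whose hypotheses are satisfied by $\ov \J_X^d$) to $F$, obtaining a divisor $E\in\Div^{d-|E(\Gamma_X)|}(\Gamma_X)$ such that $\TF_X(F,E)\subseteq\ov \J_X^d$. Upper-closedness of $\P(\ov \J_X^d)$ in $\OO^d(\Gamma_X)$ combined with Fact~\ref{F:orbits}\eqref{F:orbits4} then yields $\BD_I\subseteq \P(\ov \J_X^d)$, where $I$ is the $\D$-forest function with $I(F):=E$.

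Now choose a line bundle $L$ on $X$ whose multidegree on $\Gamma_X$ equals $E-D_X^{\deg}$. This is possible because the degrees match ($d-|E(\Gamma_X)|$ minus $(g(X)-1)-|E(\Gamma_X)|$ equals $d-g(X)+1=\deg L$), and any multidegree is attained by a Cartier divisor supported on smooth points of the components. Tensoring by $L$ is an automorphism of $\TF_X$ that sends the orbit $\TF_X(G,D)$ to $\TF_X(G,D+D(L))$, so the translate $\ov \J_X^{\deg}\otimes L$ is again a compactified Jacobian stack, of degree $d$; in fact it is the classical V-compactified Jacobian $\ov \J_X(\psi^{\deg}(X)+\deg_\bullet L)$, which is maximally degenerate since $\psi^{\deg}_Y+\deg_Y(L)+\psi^{\deg}_{Y^{\mathsf{c}}}+\deg_{Y^{\mathsf{c}}}(L)-\deg L=0$ for every $Y\in\BCon(X)$.

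Finally, by Lemma~\ref{L:candeg}\eqref{L:candeg3} and the fact that tensoring by $L$ just translates BD-sets by $D(L)$, one has
\[
\P(\ov \J_X^{\deg}\otimes L)=\P_X^{\deg}+D(L)=\BD_{I_X^{\deg}+D(L)}=\BD_I\subseteq\P(\ov \J_X^d),
\]
so $\ov \J_X^{\deg}\otimes L\subseteq \ov \J_X^d$. Since both sides are connected compactified Jacobians of degree $d$ and both have degeneracy subset $\BCon(\Gamma_X)$ (the degeneracy subset is preserved by tensoring with a line bundle, as the splitting condition $I=I_Y\oplus I_{Y^{\mathsf{c}}}$ is), Proposition~\ref{P:incl-cJ} forces equality, proving the proposition; classicality of $\ov \J_X^d$ then follows from the explicit description $\ov \J_X^d=\ov \J_X(\psi^{\deg}(X)+\deg_\bullet L)$. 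The only point that needs a little care is the identification of the translated BD-set $\P_X^{\deg}+D(L)$ with $\BD_I$, but this is immediate from the uniqueness of the minimal $\D$-admissible spanning forest $F$ and hence of the associated forest function.
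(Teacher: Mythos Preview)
Your proof is correct and follows essentially the same approach as the paper. The paper invokes Corollary~\ref{C:bound-orb}\eqref{C:bound-orb1} (which packages Proposition~\ref{P:many-orbits} into the statement that $\P(\ov\J_X)$ contains a BD-set) and then says ``up to translation'' one may assume $I=I_X^{\deg}$, whereas you apply Proposition~\ref{P:many-orbits} directly to the unique minimal forest and spell out the translation explicitly via tensoring with a line bundle $L$; both arguments then conclude with Proposition~\ref{P:incl-cJ}.
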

\begin{proof}
 Let $\ov \J_X$ be a maximally degenerate compactified Jacobian stack. By Corollary \ref{C:bound-orb}\eqref{C:bound-orb1}, $\P(\ov \J_X)$ contains $BD_I$ for some $I$ which is a $\BCon(\Gamma_X)$-forest function. Since $m\S\S_{\BCon(\Gamma_X)}(\Gamma_X)$ consists of only one element, namely $\Gamma_X\setminus E(\Gamma_X)$, we can assume, up to translation, that $I=I^{\deg}_X$. Then Lemma~\ref{L:candeg}\eqref{L:candeg3} implies that $\P_X^{\deg}\subseteq \P(\ov \J_X)$, which in turn implies that $\ov \J_X^{\deg}\subseteq \ov \J_X$. Since $\D(\ov \J_X^{\deg})=\D(\ov \J_X)=\BCon(X)$, Proposition \ref{P:incl-cJ} implies that $\ov \J_X^{\deg}= \ov \J_X$ and we are done.  
\end{proof}

\subsection{Irreducible curves}\label{Sub:irr}

For irreducible nodal curves, there is only one compactified Jacobian. 

\begin{proposition}\label{P:irr-cJ}
 If $X$ is an irreducible nodal curve, then the unique compactified Jacobian of degree $d$ is $\TF_X^d=\Simp_X^d$.   
\end{proposition}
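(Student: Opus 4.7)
The plan is to exploit the degeneracy of the combinatorics: for an irreducible nodal curve $X$, the dual graph $\Gamma_X$ has a single vertex $v$ and some number of loop edges (one per node). Consequently $\BCon(\Gamma_X) = \emptyset$, since there are no nontrivial biconnected subsets of a one-vertex graph, and every spanning subgraph of $\Gamma_X$ is automatically connected. The set-theoretic equality $\TF_X^d = \Simp_X^d$ is then immediate from the criterion \eqref{E:simp-I}.

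Next I would verify that $\TF_X^d$ actually \emph{is} a compactified Jacobian of degree $d$. Since $\BCon(\Gamma_X) = \emptyset$, the empty function is vacuously a V-stability condition $\n$ on $\Gamma_X$ of degree $d$ (both axioms in Definition~\ref{D:Vstab} range over the empty set). The defining conditions of the associated V-compactified Jacobian $\ov \J_X(\n)$ (see Definition~\ref{D:VcJ} and Remark~\ref{R:Pn-cJ}) then reduce to $\deg(I) = d$, so $\ov \J_X(\n) = \TF_X^d$. Theorem~\ref{T:VcJ-smoo} therefore identifies $\TF_X^d$ with a smoothable, and in particular connected, compactified Jacobian of degree $d$; irreducibility also follows directly from Corollary~\ref{C:sm-irr}\eqref{C:sm-irr2}, since $\Div^d(\Gamma_X)$ is a single point.

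For uniqueness, given any compactified Jacobian $\ov \J_X^d$ of degree $d$, I would first pass to its connected components $\ov \J_X^d = \coprod_i \ov \J_X^{d,i}$, each of which is open in $\TF_X^d$ and inherits a proper good moduli space; in particular each is itself a connected compactified Jacobian. Since $\BCon(\Gamma_X) = \emptyset$, Definition~\ref{D:deg-cJ} forces $\D(\ov \J_X^{d,i}) = \D(\TF_X^d) = \emptyset$ trivially. Proposition~\ref{P:incl-cJ}, applied to the inclusion $\ov \J_X^{d,i} \subseteq \TF_X^d$ of two connected compactified Jacobians with coinciding (empty) degeneracy subsets, then forces $\ov \J_X^{d,i} = \TF_X^d$ for every $i$. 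Since the components are pairwise disjoint but all equal $\TF_X^d$, there can be only one, and we conclude $\ov \J_X^d = \TF_X^d$.

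There is essentially no technical obstacle here: the only real content is the observation that, for irreducible $X$, the triviality $\BCon(\Gamma_X) = \emptyset$ collapses both the V-stability framework and the degeneracy-comparison hypothesis of Proposition~\ref{P:incl-cJ}, after which existence and uniqueness both become immediate.
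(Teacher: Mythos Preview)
Your argument is correct, but it takes a different route from the paper's proof. The paper argues directly and much more elementarily: since $X$ is irreducible, every rank-$1$ torsion-free sheaf is simple, so $\TF_X^d=\Simp_X^d$ and the rigidification $\TF_X^d\fatslash\Gm$ is already an algebraic space; the paper then observes that $\TF_X^d\fatslash\Gm$ is connected and proper (finite type by Fact~\ref{F:orbits}, separated by Proposition~\ref{P:non-sepa}, universally closed by \cite[Thm.~32]{esteves}). Existence is then immediate, and uniqueness follows because any compactified Jacobian $\ov\J_X^d\subseteq\TF_X^d$ has good moduli space $\ov\J_X^d\fatslash\Gm$, which is a proper open subspace of the connected proper space $\TF_X^d\fatslash\Gm$, hence equals all of it.

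Your approach instead routes everything through the V-stability machinery: you realise $\TF_X^d$ as $\ov\J_X(\n)$ for the vacuous V-stability $\n$ and invoke Theorem~\ref{T:VcJ-smoo} for existence, then Proposition~\ref{P:incl-cJ} for uniqueness. This is a perfectly valid and internally consistent use of the paper's framework, and it is a nice sanity check that the general theory degenerates correctly when $\BCon(\Gamma_X)=\emptyset$. The trade-off is that you are invoking two substantial results (whose proofs use $\Theta$-completeness, $S$-completeness, and the BD-set analysis) to establish something that follows from the bare properness of $\TF_X^d\fatslash\Gm$; the paper's direct argument is shorter and avoids that overhead.
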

\begin{proof}
    This follows from the fact that $\TF_X^d\fatslash \Gm$ is connected and proper. The latter follows from the fact that it is of finite type by Fact \ref{F:orbits}, it is separated by Proposition \ref{P:non-sepa} and it is universally closed (see \cite[Thm. 32]{esteves} and the references therein). 
\end{proof}

\subsection{Vine curves}\label{Sub:vine}

In this subsection, we classify the compactified Jacobians of a \emph{vine curve $X$}, i.e. $X$ is a nodal curve consisting of two smooth curves $C_1$ and $C_2$ meeting in some nodes. 

\begin{proposition}\label{P:vine-cJ}
Let $X=C_1\cup C_2$ be a vine curve. 
Any compactified Jacobian stack of $X$  is a V-compactified Jacobian stack and hence it is equal to 
$$
\ov \J_X(\s)=\{I\in \TF_X\: : \chi(I)=|\s| \text{ and } \chi(I_{C_i})\geq \s_{C_i}\}
$$
for some unique V-stability condition $\s$ on $X$. 
\end{proposition}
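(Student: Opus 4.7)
The plan is to analyze the degeneracy subset $\D:=\D(\ov\J_X^d)$. Since $\BCon(X)=\{C_1,C_2\}$ and Remark~\ref{R:obs-deg-cJ}\eqref{R:obs-deg-cJ1} forces $C_1\in\D\iff C_2\in\D$, only two cases arise: $\D=\emptyset$ or $\D=\{C_1,C_2\}$.

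In the maximally degenerate case $\D=\{C_1,C_2\}$, Proposition~\ref{P:maxdegcJ} immediately gives that $\ov\J_X^d$ is a translate of $\ov\J_X^{\deg}=\ov\J_X(\s^{\deg}(X))$, which is a classical V-compactified Jacobian, concluding this case.

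In the fine case $\D=\emptyset$, any non-simple $I\in\TF_X^d$ on a vine curve has $\NF(I)=E(\Gamma_X)$ (removing fewer nodes still leaves $C_1\cup C_2$ connected), so $I$ decomposes as $I_{C_1}\oplus I_{C_2}$; such a sheaf in $\ov\J_X^d$ would force $C_1,C_2\in\D$, a contradiction. Hence $\ov\J_X^d\subset\Simp_X^d$ is a fine compactified Jacobian. By Theorem~\ref{T:smallcJ}, it suffices to show that $\ov\J_X^d$ has exactly $c_\emptyset(\Gamma_X)=n$ irreducible components, where $n:=|E(\Gamma_X)|$ is the number of nodes. Setting $A:=\{a\in\ZZ:(a,d-a)\in\P(\ov\J_X^d)(\Gamma_X)\}$, the number of components equals $|A|$, and the lower bound $|A|\geq n$ is immediate from Corollary~\ref{C:bound-orb}\eqref{C:bound-orb2}.

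For the matching upper bound $|A|\leq n$, I would first establish that $A$ is an interval of consecutive integers: by upper-closure of $\P(\ov\J_X^d)$, every orbit $\TF(G,D)\in\P(\ov\J_X^d)$ forces the range $\{D_{v_1},\ldots,D_{v_1}+|E(G)^\mathsf{c}|\}\subseteq A$, and chaining such ranges through the connected intersection graph of irreducible components of $\ov\J_X^d$ fills in any missing integer. The hard part will be ruling out $|A|\geq n+1$. In that case, for $a,a+n\in A$ the two extreme irreducible components can share an orbit $\TF(G,D)$ directly only when $|E(G)^\mathsf{c}|\geq n$, i.e.\ $G=\emptyset$, which is forbidden by $\D=\emptyset$. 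The two extremes must therefore be joined only through intermediate components; I expect the contradiction to arise from a valuative / $\Theta$-completeness argument applied to the common isotrivial limit $(\emptyset,(a,d-a-n))$ of the two extreme line-bundle orbits in $\TF_X^d$, whose required presence in $\P(\ov\J_X^d)$ to ensure properness of the good moduli space contradicts $\D=\emptyset$. Alternatively, one may directly invoke the classification of fine compactified Jacobians from~\cite{pagani2023stability,viviani2023new}.
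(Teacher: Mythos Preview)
Your case split on $\D(\ov\J_X^d)\in\{\emptyset,\BCon(X)\}$ is exactly the paper's split into fine versus non-fine (these coincide for a vine curve, as you correctly argue), and your treatment of the maximally degenerate case via Proposition~\ref{P:maxdegcJ} is identical to the paper's.

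For the fine case, the paper does not attempt a self-contained argument: it simply cites \cite[Ex.~7.5]{pagani2023stability}, which is precisely the ``alternative'' you list at the end. Your more ambitious route through Theorem~\ref{T:smallcJ} is reasonable in spirit, but the crux---the upper bound $|A|\le n$---is left as a sketch, and the sketch has a genuine gap. The point where you say ``I expect the contradiction to arise from a valuative / $\Theta$-completeness argument applied to the common isotrivial limit $(\emptyset,(a,d-a-n))$'' does not go through as stated: over a constant family $X\times B$, two line-bundle limits of the same generic sheaf necessarily have the same multidegree (the action of $g\in C^0(\Gamma_X,\ZZ)$ with $\delta(g)\neq 0$ always produces a non-free special fiber), so you cannot manufacture $\I^1,\I^2\in\ov\J_X^d(B)$ with special fibers of types $(\Gamma_X,(a,d-a))$ and $(\Gamma_X,(a+n,d-a-n))$ directly. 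One really needs either the smoothing picture or the explicit combinatorics worked out in \cite{pagani2023stability}. Since you do invoke that citation as a fallback, your proof is ultimately complete and agrees with the paper's; just be aware that the self-contained portion for $\D=\emptyset$ is not finished.
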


Note that a V-stability condition $\s$ on a vine curve $X=C_1\cup C_2$ of characteristic $\chi$ is given by a pair of integers $\s=(\s_{C_1},\s_{C_2})$ such that 
$$
\s_{C_1}+\s_{C_2}-\chi=
\begin{cases}
1 & \text{ if $\s$ is general,}\\
0 & \text{ if $\s$ is not general.}\\
\end{cases}
$$
Every such V-stability condition $\s$ is classical since $\s=\s(\psi)$ for any $\psi\in \Pol^{\chi}(X)$ such that
$$\psi=(\psi_{C_1},\psi_{C_2})=
\begin{cases}
(\s_{C_1}-\epsilon, \s_{C_2}-(1-\epsilon))  & \text{ if $\s$ is general,}\\ 
(\s_{C_1}, \s_{C_2}) & \text{ if $\s$ is non general,}     
\end{cases}$$
for any $0<\epsilon<1$. In particular, any compactified Jacobian stack on $X$ is classical. 
\begin{proof}
The case of fine compactified Jacobian  was proved in 
\cite[Ex. 7.5]{pagani2023stability}. 

On the other hand, all non-fine compactified Jacobians are maximally degenerate and hence the result follows from Proposition \ref{P:maxdegcJ}. 
\end{proof}

\subsection{Compact type curves}\label{Sub:compact}
The aim of this subsection is to describe all compactified Jacobians of a \emph{compact-type  curve}, i.e. a nodal curve $X$ whose dual graph $\Gamma_X$ is a tree $T$.

First of all, observe that the biconnected subsets of $T$ are described by the following bijection
\begin{equation}\label{E:BCon-tree}
\begin{aligned}
    E(T) \xrightarrow{\cong} & \left\{\text{Pairs of complementary biconnected subsets of $T$} \right\} \\
    e & \mapsto (W_e^1,W_e^2) \: \text{ where } \: T\setminus \{e\}=T[W_e^1]\coprod T[W_e^2],
\end{aligned}
\end{equation}
Moreover, since the second condition in the Definition \ref{D:Deg} of degeneracy subsets for $T$ is automatically satisfied (because there are no disjoint biconnected subsets of $V(T)$ whose union is still biconnected), we have the bijection 
\begin{equation}\label{E:Deg-tree}
  \begin{aligned}
   \Deg(T) & \xleftarrow[]{\cong} 2^{E(T)}\\
   \bigcup_{e\in S} \{W_e^1,W_e^2\} & \mapsfrom S.
  \end{aligned} 
\end{equation}

\begin{proposition}\label{P:comp-cJ}
Let $X$ be a compact-type curve with dual graph  $T$. 
\begin{enumerate}
    \item \label{P:comp-cJ1} For any $(G=T\setminus S,D)\in \OO^d(T)$, the function 
\begin{equation}\label{E:phiGD}
     \begin{aligned}
     \phi(G,D):\left\{\text{Subsets of } V(T) \right\}& \longrightarrow \RR,\\
     W & \mapsto D_W+e_S(W)+\frac{\val_S(W)}{2}.
 \end{aligned}
\end{equation}
is a numerical polarization of degree $d$ on $T$ whose degeneracy subset is equal to $\D(\phi(G,D))=E(G)^\mathsf{c}$ (under the identification \eqref{E:Deg-tree}).
   \item \label{P:comp-cJ2} The compactified Jacobian stack associated to $\n(G,D)$ is given by 
   \begin{equation}\label{E:JGD}
       \ov \J_X(G,D):=\ov \J_X(\phi(G,D))=\bigcup_{(\wt G, \wt D)\geq (G,D)}\TF_X(\wt G,\wt D)
       \end{equation}
   and it is such that 
   \begin{enumerate}
       \item \label{P:comp-cJ2a} The irreducible components of $\ov \J_X(G,D)$ are 
    $$
    \left\{\TF_X(T,D+D(\O)): \: \O \text{ is an orientation of the edges of $E(G)^\mathsf{c}$}\right\},
    $$
    and there are $2^{|E(G)^\mathsf{c}|}$ of them.
    \item \label{P:comp-cJ2b} The good moduli space of $\ov \J_X(G,D)$ is 
     \begin{equation}\label{E:gms-JGD}
    \TF_X(G,D)\xleftarrow[\cong]{(\nu_S)_*}\PIC_{X_S}^D\xleftarrow[\cong]{\nu_S^*}\PIC^D_X.
    \end{equation}
   \end{enumerate}
 \item  \label{P:comp-cJ3}  Any connected compactified Jacobian stack of degree $d$ on $X$ is equal to $\ov \J_X(G,D)$ for some uniquely determined $(G,D)\in \OO^d(T)$.
\end{enumerate}
\end{proposition}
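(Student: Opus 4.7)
For Part (1), I would verify additivity of $\phi(G,D)$ by combining $D_{W_1\cup W_2}=D_{W_1}+D_{W_2}$ with the identities \eqref{add-val} for $e_S(W_1\cup W_2)$ and $\val_S(W_1\cup W_2)$ (the $\val_S(W_1,W_2)$ cross-term cancels between the two contributions), and the total degree is $\phi(G,D)_{V(T)}=(d-|S|)+|S|+0=d$. A numerical polarization on a graph is degenerate at $W\in\BCon(T)$ if and only if $\phi_W-\val(W)/2\in\ZZ$; for $W=W_e$ in the tree $T$, $\val(W)=1$ and $\val_S(W)\in\{0,1\}$ equals $1$ precisely when $e\in S$, so $\D(\phi(G,D))$ matches $S=E(G)^\mathsf{c}$ under the identification \eqref{E:Deg-tree}. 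The combinatorial backbone for Parts (2) and (3) is this: for any degeneracy subset $\D\subseteq \BCon(T)$ corresponding to $S_0\subseteq E(T)$, a spanning subgraph $T\setminus S'$ is $\D$-admissible if and only if $S'\subseteq S_0$, because in a tree every component of $(T\setminus S')[V_i^\mathsf{c}]$ is a subtree attached to $V_i$ by a single edge of $S'$, and these boundary edges must all lie in $S_0$ for $V_i\in\wh\D$. Consequently $m\S\S_\D(T)=\{T\setminus S_0\}$ is a singleton and $c_\D(T\setminus S')=2^{|S_0\setminus S'|}$ for every $S'\subseteq S_0$.

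For Part (2), applying the combinatorial observation with $S_0=S$ gives $c_\D(G)=1$. Writing $\n:=\n(\phi(G,D))$, the $\lceil\cdot\rceil$-formula quickly yields $\n_W=D_W+e_S(W)$ for every $W\in\BCon(T)$ (independently of whether $e\in S$), so $(G,D)\in\P_\n$ with all semistability inequalities realized as equalities. Corollary \ref{c:cardVstab} then yields $|\P_\n(G)|\leq c_\D(G)=1$, hence $\P_\n(G)=\{D\}$; since $I_\n(G)\in\BD_{I_\n}(G)\subseteq\P_\n(G)$ by Theorem \ref{T:V=PT}\eqref{T:V=PT1}, we get $I_\n(G)=D$, and therefore $\P_\n=\BD_{I_\n}$ is the upper closure of $(G,D)$ in $\OO^d(T)$, which is Equation \eqref{E:JGD}. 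Part (2a) follows: the set $\P_\n(T)=\{D+D(\O):\O$ orientation of $S\}$ has $2^{|S|}$ elements, indexing the irreducible components by Corollary \ref{C:sm-irr}\eqref{C:sm-irr2}. For Part (2b), the unique closed orbit of $\ov\J_X(G,D)$ is the minimum $\TF_X(G,D)$, into which every other orbit S-degenerates, so the good moduli space of $\ov\J_X(G,D)$ coincides with that of $\TF_X(G,D)\cong\PIC^D_{X_S}$ (Fact \ref{F:orbits}\eqref{F:orbits3}); the compact-type hypothesis makes $\nu_S^*:\PIC^D_X\to\PIC^D_{X_S}$ an isomorphism because $\PIC^{\un 0}_X$ is already an abelian variety, hence insensitive to partial normalization.

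For Part (3), let $\ov\J_X^d$ be a connected compactified Jacobian on $X$ and set $\D:=\D(\ov\J_X^d)$, corresponding to $S_0\subseteq E(T)$ via \eqref{E:Deg-tree}. Corollary \ref{C:bound-orb}\eqref{C:bound-orb1} provides a BD-set $\BD_I\subseteq\P(\ov\J_X^d)$ with degeneracy $\D$, and the singleton property of $m\S\S_\D(T)$ forces $I$ to be encoded by a single divisor $D_0:=I(T\setminus S_0)\in\Div^{d-|S_0|}(T)$. A direct count of oriented edges in $S_0\setminus S'$ gives $|\BD_I(T\setminus S')|=2^{|S_0\setminus S'|}=c_\D(T\setminus S')$, so $\BD_I$ is a weak numerical PT-assignment and Theorem \ref{T:V=PT}\eqref{T:V=PT2} produces a unique V-stability $\n$ with $\P_\n=\BD_I$ and $\D(\n)=\D$. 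The inclusion $\ov\J_X(\n)=U(\BD_I)\subseteq\ov\J_X^d$ between connected compactified Jacobians with equal degeneracy subset becomes an equality by Proposition \ref{P:incl-cJ}. Since $\n$ and $\n(\phi(T\setminus S_0,D_0))$ produce the same BD-set (the upper closure of $(T\setminus S_0,D_0)$), the injectivity in Theorem \ref{T:V=PT}\eqref{T:V=PT1} identifies them, and Part (2) concludes $\ov\J_X^d=\ov\J_X(T\setminus S_0,D_0)$. Uniqueness of $(G,D)=(T\setminus S_0,D_0)$ is immediate: $G$ is determined by $\D(\ov\J_X^d)$ and $D_0$ is the unique element of $\P(\ov\J_X^d)(G)$, which is a singleton because $c_\D(G)=1$. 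The main technical obstacle is the tree-specific simplification that $m\S\S_\D(T)$ is always a singleton and that every BD-set on a tree automatically saturates the bound $c_\D$; once this is in place, the classification of connected compactified Jacobians reduces to the general bijection of Theorem \ref{T:V=PT} together with the rigidity of Proposition \ref{P:incl-cJ}, entirely bypassing any smoothability hypothesis (Theorem \ref{T:cla-smoo}).
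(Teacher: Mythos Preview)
Your proof is correct and, for Parts~\eqref{P:comp-cJ2} and~\eqref{P:comp-cJ3}, follows a genuinely different route from the paper.

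For the key formula~\eqref{E:JGD} in Part~\eqref{P:comp-cJ2}, the paper argues by a direct two-way inclusion: after computing $\n(G,D)_{W_e^i}=D_{W_e^i}+e_S(W_e^i)$, it checks explicitly that any $(\wt G,\wt D)\in\P_{\n(G,D)}$ satisfies $(\wt G,\wt D)\geq(G,D)$ by analyzing when the semistability inequalities can be equalities or off-by-one. You instead invoke the cardinality bound of Corollary~\ref{c:cardVstab} together with $c_\D(G)=1$ to pin down $\P_\n(G)=\{D\}$, and then identify $\P_\n=\BD_{I_\n}$ as the upper closure of $(G,D)$ via Theorem~\ref{T:V=PT}\eqref{T:V=PT1}. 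This is shorter and better illustrates the power of the general theory; the paper's argument is more self-contained.

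For Part~\eqref{P:comp-cJ3}, the paper shows the inclusion $\ov\J_X(G,D)\subseteq\ov\J_X$ is closed by a hands-on application of $\Theta$-completeness (Proposition~\ref{P:Theta-red}) together with the minimality of $G$ in $\S\S_{\D}(T)$. You instead recognize the contained BD-set as a weak numerical PT-assignment by the tree-specific count $|\BD_I(T\setminus S')|=2^{|S_0\setminus S'|}=c_\D(T\setminus S')$, upgrade it to a V-stability via Theorem~\ref{T:V=PT}, and conclude by Proposition~\ref{P:incl-cJ}. Again your route is cleaner and more in the spirit of the paper's main results; the paper's route avoids the dependence on Proposition~\ref{P:incl-cJ}.

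One point where your argument is thinner than the paper's is Part~\eqref{P:comp-cJ2b}: you assert that the good moduli space of $\ov\J_X(G,D)$ ``coincides with that of $\TF_X(G,D)$'' because every orbit isotrivially specializes to the unique minimum. The paper makes this precise by constructing the retraction $\Gr:\ov\J_X(G,D)\to\TF_X(G,D)$ (using the uniqueness, on a tree, of the partial orientation realizing $(\wt G,\wt D)\geq(G,D)$), factoring it through the good moduli space, and exhibiting the inverse as the composite $\TF_X(G,D)\hookrightarrow\ov\J_X(G,D)\to\ov J_X(G,D)$. Your sentence is morally right but would benefit from spelling out this $\Gr$ map.
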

\begin{proof}
Part \eqref{P:comp-cJ1}: the fact that $\phi(G,D)$ is a numerical polarization of degree $d$ on $T$ follows from formulas \eqref{add-val} together with $\deg(D)=d-|S|$. The fact that $\D(\phi(G,D))=S=E(G)^\mathsf{c}$ follows from the fact that (for $e\in E(T)$ and $i=1,2$) 
$$
\phi(G,D)_{W_e^i}-\frac{\val(W_e^1)}{2}=D_{W_e^i}+e_S(W_e^i)+\frac{\val_S(W_e^i)}{2}-\frac{1}{2}\in \ZZ \Leftrightarrow e\in S.$$

Part \eqref{P:comp-cJ2}: let us first prove \eqref{E:JGD}, which is equivalent to showing that the semistable locus for the  V-stability condition $\n(G,D):=\n(\phi(G,D))$ associated to $\phi(G=T\setminus S,D)$ is given by 
\begin{equation}\label{E:PnGD}
\P_{\n(G,D)}=\{(\wt G, \wt D)\in \OO^d(T)\: : (\wt G, \wt D)\geq (G,D)\}.
\end{equation}

Note that $\n(G,D)$ is equal to 
$$
\n(G,D)_{W_e^i}=D_{W_e^i}+e_S(W_e^i) \quad \text{ for any } e\in E(T) \text{ and any } i=1,2.
$$
It follows that $\P_{\n(G,D)}$ contains $(G,D)$ and hence,  being an upper subset, it contains all the elements $(\wt G, \wt D)\in \OO^d(T)$ that dominates $(G,D)$.  

Conversely, let $(\wt G=T\setminus \wt S, \wt D)\in \P_{\n(G,D)}$, which is equivalent to say that 
\begin{equation}\label{E:ineqGD}
\wt D_{W_e^i}+e_{\wt S}(W_e^i)\geq \n(G,D)_{W_e^i}=D_{W_e^i}+e_S(W_e^i) \quad \text{ for any } e\in E(T) \text{ and any } i=1,2.
\end{equation}
By summing \eqref{E:ineqGD} for $i=1,2$ and using that $\deg(D)=d-|S|$ and $\deg(\wt D)=d-|\wt S|$, we discover that:

$\bullet$ $\wt S\subseteq S$;

$\bullet$ for every $e\in \wt S$, we have that $\wt D_{W_e^i}+e_{\wt S}(W_e^i)=D_{W_e^i}+e_S(W_e^i)$ for $i=1,2$;

$\bullet$ for every $e\in S\setminus \wt S$, there exists a unique $i(e)\in \{1,2\}$ such that 
$$
\wt D_{W_e^{i}}+e_{\wt S}(W_e^{i})=
\begin{cases}
    D_{W_e^{i}}+e_S(W_e^{i})+1 & \text{ if } i=i(e),\\
    D_{W_e^{i}}+e_S(W_e^{i}) & \text{ if } i\neq i(e).
\end{cases}
$$
The above properties implies that 
$$(\wt G, \wt D)\geq_{\O} (G,D),$$
where $\O$ is the orientation of the edges of $S\setminus \wt S$  obtained by choosing, for any edge $e\in S\setminus \wt S$, the  orientation $\ee$ of $e$ such that $t(\ee)\in W_{e}^{i(e)}$. 
This concludes the proof of \eqref{E:PnGD}.

We now prove the remaining two assertions of Part \eqref{P:comp-cJ2}.

Part \eqref{P:comp-cJ2a}: the fact that any irreducible component of $\ov \J_X(G,D)$ is of the form $\TF_X(T,D+D(\O))$ for some orientation $\O$ of the edges of $E(G)^\mathsf{c}$ follows from \eqref{E:JGD}, and the fact that they are all distinct follows from the fact that $T$ is a tree.  

Part \eqref{P:comp-cJ2b}: first of all, $\TF_X(G,D)$ satisfies the first isomorphism of \eqref{E:gms-JGD} by Fact  \ref{F:orbits}\eqref{F:orbits3} and the second isomorphism by the fact that $X$ is of compact type. 

It remains to prove that $\TF_X(G,D)$ is the good moduli space of $\ov \J_X(G,D)$.
Since $\ov \J_X(G,D)$ is a compactified Jacobian stack, it has a proper good moduli space
$$
\mu: \ov \J_X(G,D)\to \ov J_X(G,D).
$$
We get therefore a morphism 
$$\phi:\TF_X(G,D)\subset \ov \J_X(G,D)\to \ov J_X(G,D).$$
On the other hand, any stratum $\TF_X(\wt G, \wt D)$ of $\ov \J_X(G,D)$ is such that $(\wt G, \wt D)\geq (G,D)$ by \eqref{E:JGD} and moreover, since $T$ is a tree, we can find a stratification $W_\bullet$ such that $(\wt G, \wt D)\geq_{\O_{\wt G}(W_\bullet)} (G,D)$ and such an orientation $\O_{\wt G}(W_\bullet)$ is unique. Therefore, by Proposition \ref{P:iso-spec}, any sheaf $I\in \ov \J_X(G,D)$ isotrivially degenerate to a canonical sheaf $\Gr(I)\in \TF_X(G,D)$. In other words, we have a morphism 
$$\begin{aligned}
    \Gr: \ov \J_X(G,D)& \longrightarrow \TF_X(G,D)\\
    I & \mapsto \Gr(I).
\end{aligned} $$
By the universal property of the good moduli space, we get a factorization
$$
\Gr: \ov \J_X(G,D)\xrightarrow{\mu} \ov J_X(G,D)\xrightarrow{\psi} \TF_X(G,D).
$$
It is now clear that $\phi$ and $\psi$ are inverses one of the other, and this concludes the proof of Part~\eqref{P:comp-cJ2b}.

 Part \eqref{P:comp-cJ3}: let $\ov \J_X$ be a compactified Jacobian stack of $X$ and let $\D(\ov \J_X)=S\subset E(T)$ under the identification \eqref{E:Deg-tree}. By Corollary \ref{C:bound-orb}\eqref{C:bound-orb1}, $\P(\ov \J_X)$ contains a BD-set $\BD_I$ with degeneracy subset $\D(\ov \J_X)=S$. Since $G:=T\setminus S$ is the unique minimal $\D(\ov \J_X)$-admissible spanning subgraph of $T$, the function $I$ is given by $I(G)=D\in \Div^{d-|S|}(T)$ and hence $\BD_I=\P(\ov \J_X(G,D))$. Therefore, we get the open embedding $\ov \J_X(G,D)\subseteq \ov \J_X$. Since $\ov \J_X$ is connected, it remains to show that the embedding $\ov \J_X(G,D)\subseteq \ov \J_X$ is also closed. Since $\ov \J_X(G,D)$ and $\ov \J_X$ are union of orbits, it is enough to show that 
 $$
 \begin{aligned}
 &\text{ If } (G_1,D_1)\geq_{\O} (G_2,D_2) \text{ with } (G_1,D_1)\in \P(\ov \J_X(G,D)) \text{ and } (G_2,D_2)\in \P(\ov \J_X)\\
 &\text{ then } (G_2,D_2)\in \P(\ov \J_X(G,D)).
 \end{aligned}
 $$
 Since $\P(\ov \J_X(G,D))=\{(\wt G, \wt D)\in \OO^d(T): (\wt G, \wt D)\geq (G,D)\}$ and $T$ is a tree, we can find an ordered partition $W_\bullet$ of $V(T)$ such that $(G_1,D_1)\geq_{\O_{G_1}(W_\bullet)} (G,D)$. 
We can now apply Proposition \ref{P:Theta-red} to the $\Theta$-complete stack $\ov \J_X$ and the three strata $(G_1,D_1)\geq_{\O_{G_1}(W_\bullet)} (G,D)$ and $(G_1,D_1)\geq_{\O} (G_2,D_2)$: we get that $O$ and $\O_{G_1}(W_\bullet)$ are concordant and if we set $(G_1,D_1)\geq_{\O\cup \O_{G_1}(W_\bullet)} (\wt G, \wt D)$ then $(\wt G, \wt D)\in \P(\ov \J_X)$. However, since $G$ is the minimal element of the poset of $D(\ov \J_X)$-admissible spanning subgraphs, we must have that $(\wt G, \wt D)=(G,D)$. Hence, we get that $(G_2,D_2)\geq_{\O_{G_2}(W_\bullet)} (\wt G, \wt D)=(G,D)$ which implies that $(G_2,D_2)\in \P(\ov \J_X(G,D))$ and we are done. 

Finally, notice that if $\ov \J_X(G_1,D_1)=\ov \J_X(G_2,D_2)$ then  \eqref{P:comp-cJ2} implies  that $(G_1,D_1)\geq (G_2,D_2)$ and $(G_2,D_2)\geq (G_1,D_1)$, which forces $(G_1,D_1)=(G_2,D_2)$.
\end{proof}

\begin{corollary}\label{C:comp-cJ}
    Let $X$ be a compact-type curve.
    \begin{enumerate}
        \item All compactified Jacobians of $X$ are classical.
        \item There is an anti-isomorphism of posets
        $$
        \begin{aligned}
         \OO^d(\Gamma_X) & \longrightarrow \{\text{Compactified Jacobian stacks of $X$ of degree $d$}\}\\
         (G,D) & \mapsto \ov \J_X(G,D),
        \end{aligned}
        $$
        where the order relation on compactified Jacobian stacks  is given by inclusion.
        \item There is a bijection 
        $$
        \begin{aligned}
          \frac{\left\{\text{Compactified Jacobians of $X$}\right\}}{\text{translation}} &\xrightarrow[\cong]{\D(-)}   \Deg(\Gamma_X) \xrightarrow[\cong]{} 2^{V(\Gamma_X)}\\
        \ov \J_X(G,D)& \mapsto \D(\ov \J_X(G,D))  \mapsto E(G)^\mathsf{c}. 
        \end{aligned}
        $$
        where the last bijection is induced by \eqref{E:BCon-tree}.
    \end{enumerate}
\end{corollary}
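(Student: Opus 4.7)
The entire Corollary follows from the detailed Proposition~\ref{P:comp-cJ} together with the identification \eqref{E:Deg-tree} of $\Deg(T)$ with $2^{E(T)}$; the only real work is to translate between the different indexings.

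\textbf{Part (1).} By Proposition~\ref{P:comp-cJ}\eqref{P:comp-cJ3}, every connected compactified Jacobian stack of $X$ of degree $d$ is of the form $\ov \J_X(G,D)$ for some $(G,D)\in \OO^d(\Gamma_X)$, and by definition $\ov \J_X(G,D)=\ov \J_X(\phi(G,D))$ is the classical compactified Jacobian associated to the numerical polarization $\phi(G,D)$ of Part~\eqref{P:comp-cJ1}. Hence every compactified Jacobian of $X$ is classical.

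\textbf{Part (2).} Proposition~\ref{P:comp-cJ}\eqref{P:comp-cJ3} already gives that $(G,D)\mapsto \ov \J_X(G,D)$ is a bijection from $\OO^d(\Gamma_X)$ onto the set of connected compactified Jacobian stacks of $X$ of degree $d$. It remains to check that this bijection is order-reversing. By the description \eqref{E:JGD}, the set of $\PIC^{\underline 0}_X$-orbits contained in $\ov \J_X(G,D)$ is the upper subset of $\OO^d(\Gamma_X)$ generated by $(G,D)$. Hence
\begin{equation*}
\ov \J_X(G_1,D_1)\subseteq \ov \J_X(G_2,D_2)\ \Longleftrightarrow\ \{(\wt G,\wt D)\geq (G_1,D_1)\}\subseteq \{(\wt G,\wt D)\geq (G_2,D_2)\},
\end{equation*}
and the right-hand inclusion is equivalent to $(G_1,D_1)\geq (G_2,D_2)$, which is the desired anti-isomorphism.

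\textbf{Part (3).} Tensoring by a line bundle $L\in\PIC_X$ of multidegree $M\in \Div(\Gamma_X)$ gives an automorphism $-\otimes L$ of $\TF_X$ which, by Fact~\ref{F:orbits}\eqref{F:orbits3} (and using that on a compact-type curve every multidegree is realized by a line bundle), sends an orbit $\TF_X(G,E)$ to $\TF_X(G,E+M)$. Consequently it sends $\ov \J_X(G,D)$ to $\ov \J_X(G,D+M)$, so two compactified Jacobians $\ov \J_X(G_1,D_1)$ and $\ov \J_X(G_2,D_2)$ with the same $G_1=G_2$ are translates of one another; conversely, if they are translates their degeneracy subsets must coincide, which by Proposition~\ref{P:comp-cJ}\eqref{P:comp-cJ1} forces $E(G_1)^\mathsf{c}=E(G_2)^\mathsf{c}$, i.e.\ $G_1=G_2$. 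The map $\ov \J_X(G,D)\mapsto \D(\ov \J_X(G,D))=E(G)^\mathsf{c}$ therefore descends to a well-defined bijection between translation classes and $\Deg(\Gamma_X)$, which via \eqref{E:Deg-tree} is in bijection with $2^{E(\Gamma_X)}$ (we note in passing that the reference to $2^{V(\Gamma_X)}$ in the statement appears to be a typographical slip for $2^{E(\Gamma_X)}$, cf.~\eqref{E:Deg-tree}).

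No substantial obstacle is expected: everything reduces to bookkeeping with the poset $\OO^d(\Gamma_X)$ and the description of orbits in $\ov \J_X(G,D)$ provided by \eqref{E:JGD}. The mildest subtlety is justifying, in Part~(3), that every element of $\Div(\Gamma_X)$ lifts to the multidegree of an actual line bundle on $X$; this is automatic for compact-type curves since the multidegree map $\PIC_X\to \Div(\Gamma_X)$ is surjective (indeed it splits, as $\PIC_X\cong \prod_v\PIC_{X_v}$).
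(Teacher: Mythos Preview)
Your proof is correct and follows exactly the intended route: the paper states this Corollary without proof, as an immediate consequence of Proposition~\ref{P:comp-cJ}, and your argument supplies precisely the bookkeeping needed (the order-reversal via \eqref{E:JGD}, and the translation action on the parameter $(G,D)$). You are also right that the target $2^{V(\Gamma_X)}$ in Part~(3) is a typographical slip for $2^{E(\Gamma_X)}$, consistent with \eqref{E:Deg-tree}.
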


\subsection{Necklace (or cycle) curves}\label{Sub:cycle}
The aim of this subsection is to describe the V-stability conditions on a necklace (or cycle) curve with $n$ nodes, i.e. a nodal curve whose dual graph is the necklace (or cycle) graph $C_n$ of length $n$ which is the graph  whose vertices are $\{v_1,\ldots, v_n\}$ and whose edges are $\{e_1,\ldots, e_n\}$ in such a way that the edge $e_i$ joins $v_i$ and $v_{i+1}$, with the cyclic convention that $v_{n+1}:=v_1$ and $v_0:=v_n$ (see Figure\ref{F:Necklace}).

\begin{figure}[h!]
			\caption{}
			\label{F:Necklace}
			\centering
			\begin{tikzpicture}
				\node(0)[circle,fill,inner sep=1.5pt]{};
                \node(l0)[left =.1 of 0]{$v_0=v_n$};
				\node(1)[above right = of 0][circle,fill,inner sep=1.5pt]{};
                \node(l1)[above left =.1 of 1]{$v_1=v_{n+1}$};
				\node(2)[right = of  1][circle,fill,inner sep=1.5pt]{};
                \node(l2)[above right =.1 of 2]{$v_2$};
				\node(7)[below right = of 2][circle,fill,inner sep=1.5pt]{};
                \node(l7)[right =.1 of 7]{$v_3$};
				\node(5)[below right = of 0][circle,fill,inner sep=1.5pt]{};
                \node(l5)[below left =.1 of 5]{$v_{n-1}$};
				\node(6)[below left = of  7][circle,fill,inner sep=1.5pt]{};
                \node(l6)[below right =.1 of 6]{$v_4$};

				\draw (0) edge["$e_n$"] (1);

				\draw (5) to (6)[dashed];
				\draw (1) edge["$e_1$"] (2);
				\draw (7) edge["$e_2$",swap] (2);

				\draw (0) edge["$e_{n-1}$",swap] (5);
				\draw (7) edge["$e_3$"] (6);
				
			\end{tikzpicture}
		\end{figure}
Note that a more general classification of \emph{all} connected compactified Jacobians of necklace curves has recently been given in Santi's PhD thesis~\cite[Chapter~3]{santi}. As part of this classification, Santi also identifies in \cite[Chapter~4]{santi} the \emph{smoothable} fine compactified Jacobians, providing a combinatorial classification that is equivalent to the one given here.

The nontrivial biconnected subsets of $C_n$ are the subsets of $V(C_n)$ given by 
$$V_{ij}:=\begin{cases}
 \{v_{i+1},\ldots, v_j\}   & \text{ if } i<j,\\
  \{v_{i+1},\ldots,v_n,v_1,\ldots,  v_j\}   & \text{ if } j<i.\\
\end{cases}$$
for any pair $i\neq j\in [n]$. Note that $V_{ij}^\mathsf{c}=V_{ji}$. We also set $V_{ii}=\emptyset$ for any $i\in [n]$.

Let $I_{\bullet}=I_1\coprod \ldots \coprod I_r$ be an ordered partition of $[n]:=\{1,\ldots, n\}$ of some length $1\leq r:=l(I_\bullet)\leq n$. The position of an element $i\in [n]$ with respect to $I_\bullet$ is defined to be the integer $p_{I_{\bullet}}(i)=p(i)\in \{1,\ldots, r\}$ such that $i\in I_{p(i)}$.  We get an induced linear (or total) preorder $\leq_{I_{\bullet}}$ on $[n]$ by 
$$i\leq_{I_\bullet} j \stackrel{\textup{def}}{\Longleftrightarrow} p(i)\leq p(j).$$
Furthermore, any linear preorder on $[n]$ is of the form $\leq_{I_\bullet}$, for a unique ordered partition $I_\bullet$ on $[n]$.
We will also write that $i\sim_{I_\bullet} j$ if  $p(i)=p(j)$, or equivalently if and only if $i\leq_{I_\bullet} j$ and $j\leq_{I_\bullet} i$, and we write $i<_{I_\bullet} j$ if $p(i)<p(j)$, or equivalently if $i\leq_{I_\bullet} j$ and $i\not\sim_{I_\bullet} j$.
Note that $\sim_{I_\bullet}$ is an equivalence relation on $[n]$.

We now show how to construct V-stability conditions on $C_n$ starting from an ordered partition of $[n]$ and an integral function on $V(C_n)$.

\begin{definition}\label{D:nIf}
Let $I_{\bullet}=I_1\coprod \ldots \coprod I_r$ be an ordered partition of $[n]$ and let $f:V(C_n)\to \ZZ$ be any function and set $f(W):=\sum_{v\in W} f(v)$ for any subset $W\subseteq V(C_n)$. Consider the function 
$$
\begin{aligned}
\n(I_\bullet,f):\BCon(C_n) & \longrightarrow \ZZ,\\
V_{ij} & \mapsto \n(I_\bullet,f)_{V_{ij}}:=
\begin{cases}
-1+f(V_{ij})  & \text{ if } i\sim_{I_\bullet} j, \\
-1+f(V_{ij})  & \text{ if } i<_{I_\bullet} j, \\
f(V_{ij})  & \text{ if } i>_{I_\bullet} j. \\
\end{cases} 
\end{aligned}
$$
We will write  $\n(I_\bullet):=\n(I_\bullet, \un 0)$ where $\un 0$ is the identically zero function. 
\end{definition}

\begin{lemma}\label{L:nfI}
Notation as in Definition \ref{D:nIf}. 
\begin{enumerate}
    \item \label{L:nfI1} The function $\n(I_\bullet,f)$ is a V-stability on $C_n$ of degree $|f|:=f(V(C_n))$.
    \item \label{L:nfI2} The degeneracy subset of $\n(I_\bullet, f)$ is equal to 
\begin{equation}\label{E:DnIf}
\D(\n(I_\bullet, f))=\{V_{ij}  \: : i\sim_{I_\bullet} j, i\neq j\}.
\end{equation}
\end{enumerate}
\end{lemma}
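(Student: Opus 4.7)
The plan is to verify directly that $\n(I_\bullet, f)$ satisfies the two conditions of Definition~\ref{D:Vstab}, with Part~\eqref{L:nfI2} emerging as an immediate byproduct of the analysis of Condition~\eqref{D:Vstab1}. The key combinatorial inputs are that on $C_n$ every nontrivial biconnected subset has valence $2$; that $V_{ij}^\mathsf{c} = V_{ji}$; and that any triple of pairwise disjoint biconnected subsets covering $V(C_n)$ must be a cyclic decomposition of the cycle into three arcs.

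First I would handle Condition~\eqref{D:Vstab1} and Part~\eqref{L:nfI2} in one shot. For any $V_{ij} \in \BCon(C_n)$, we have $\val(V_{ij}) = 2$ (the two edges $e_i, e_j$) and $f(V_{ij}) + f(V_{ji}) = |f|$. A direct case split on the positions of $i$ and $j$ in the preorder $\leq_{I_\bullet}$ yields
\[
\n(I_\bullet,f)_{V_{ij}} + \n(I_\bullet,f)_{V_{ji}} + \val(V_{ij}) - |f| =
\begin{cases} 0 & \text{if } i \sim_{I_\bullet} j,\\ 1 & \text{if } i <_{I_\bullet} j,\\ 1 & \text{if } i >_{I_\bullet} j,\end{cases}
\]
since in the first case both summands are $-1 + f(V_{\bullet})$, while in the other two cases exactly one summand contributes the $-1$. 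This simultaneously shows that $\n(I_\bullet,f)$ has degree $|f|$, satisfies \eqref{E:sum-n}, and has $\D(\n(I_\bullet,f))$ precisely as claimed in Part~\eqref{L:nfI2}.

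Next, for Condition~\eqref{D:Vstab2}, any pairwise disjoint triple $W_1, W_2, W_3 \in \BCon(C_n)$ with $W_1 \cup W_2 \cup W_3 = V(C_n)$ has the cyclic form $W_k = V_{i_k i_{k+1}}$ for some three distinct indices $i_1, i_2, i_3 \in [n]$ (indices mod $3$), with $\val(W_i, W_j) = 1$ for every $i \neq j$. The implication ``two degenerate $\Rightarrow$ all three degenerate'' is immediate from transitivity of $\sim_{I_\bullet}$: if $i_1 \sim_{I_\bullet} i_2$ and $i_2 \sim_{I_\bullet} i_3$, then $i_1 \sim_{I_\bullet} i_3$. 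Introducing $\epsilon_k = -1$ if $i_k \leq_{I_\bullet} i_{k+1}$ and $\epsilon_k = 0$ otherwise, we get
\[
\sum_{k=1}^3 \n(I_\bullet,f)_{W_k} + \sum_{1 \leq i < j \leq 3} \val(W_i, W_j) - |f| = 3 + \sum_{k=1}^3 \epsilon_k.
\]

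Finally, I would conclude by enumerating the possibilities for the restriction of $\leq_{I_\bullet}$ to $\{i_1, i_2, i_3\}$. When all three are equivalent, every $\epsilon_k = -1$, giving total $0$ (matching three degenerate subcurves). When exactly one pair is equivalent (say $i_1 \sim i_2$, with $i_3$ in a different class), then $\epsilon_1 = -1$ and by the contrapositive $\epsilon_2 + \epsilon_3 = -1$, so the total is $1$ (matching exactly one degenerate subcurve). When no two are equivalent, one directly checks across the six strict linear orders of $\{i_1, i_2, i_3\}$ that $\sum_k \epsilon_k \in \{-2, -1\}$, so the total lies in $\{1, 2\}$. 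The main bookkeeping lies in this last case, but it reduces to observing that as one cycles through the six orderings, the number of ``ascents'' $i_k <_{I_\bullet} i_{k+1}$ around the cycle is never $0$ or $3$ (a $3$-cycle on a strict total order cannot be monotone), so $\sum_k \epsilon_k \in \{-1, -2\}$. This completes the verification of both conditions and hence the lemma.
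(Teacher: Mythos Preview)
Your proof is correct and follows essentially the same approach as the paper's own proof: a direct verification of the two axioms of Definition~\ref{D:Vstab} via case analysis on the relative positions of the indices in the preorder $\leq_{I_\bullet}$. Your introduction of the $\epsilon_k$ notation is a minor streamlining of the bookkeeping, but the underlying argument is identical.
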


\begin{proof}
Let us prove Part \eqref{L:nfI1} by checking the conditions of Definition \ref{D:Vstab}. 
For any $V_{ij}\in \BCon(C_n)$, we have that 
$$
\n(I_\bullet,f)_{V_{ij}}+\n(I_\bullet,f)_{V_{ji}}=
\begin{cases}
 |f|-2=|f|-\val(V_{ij}) & \text{ if } i\sim_{I_\bullet} j, \\
 |f|-1=|f|-\val(V_{ij})+1 & \text{ if } i\not \sim_{I_\bullet} j.
\end{cases}
$$
This shows that \eqref{E:sum-n} is satisfied with $d=|f|$ and that the nontrivial biconnected subsets of $C_n$ that are $\n(I_\bullet,f)$-degenerate are the subsets $V_{ij}$ such that $i\sim_{I_\bullet} j$. This also shows Part \eqref{L:nfI2}.

Consider now three nontrivial biconnected subsets $Z_1,Z_2,Z_3$ of $C_n$ such that 
$V(C_n)=Z_1\cup Z_2\cup Z_3$. Then there exist three pairwise distinct indices $ i,j,k\in [n]$, cyclically ordered with respect to the natural cyclic order on $[n]$, such that $Z_1=V_{ij}$, $Z_2=V_{jk}$ and $Z_3=V_{ki}$. If two of them are $\n(I_\bullet,f)$-degenerate, then also the third one is $\n(I_\bullet,f)$-degenerate since $\sim_{I_\bullet}$ is an equivalence relation. Otherwise we have that 
$$
\n(I_\bullet,f)_{V_{ij}}+\n(I_\bullet,f)_{V_{jk}}+\n(I_\bullet,f)_{V_{ki}}=
\begin{cases}
 |f|-3 & \text{ if } i\sim_{I_\bullet} j\sim_{I_\bullet} k, \\
 |f|-2 & \text{ if } i\sim_{I_\bullet} j\not \sim_{I_\bullet} k \text{ (up to cyclic permutations of the indices)}, \\
  |f|-2 & \text{ if } i<_{I_\bullet} j<_{I_\bullet} k \text{ (up to cyclic permutations of the indices)}, \\
   |f|-1 & \text{ if } i>_{I_\bullet} j>_{I_\bullet} k \text{ (up to cyclic permutations of the indices)}. \\
\end{cases}
$$
This shows that \eqref{E:tria-n} is satisfied with $d=|f|$ since $E(V_{ij}, V_{jk})+E(V_{jk},V_{ki})+E(V_{ki},V_{ij})=3$. 
\end{proof}

We now want to show that any V-stabiity condition on $C_n$ is given by the above construction. 

With this aim, we introduce some definitions. Two ordered partitions $I_\bullet$ and $I'_\bullet$ of $[n]$ are cyclic equivalent, and we will write $I_\bullet \sim_{cyc} I'_{\bullet}$, if and only if 
$$\begin{sis}
&I_\bullet=J_\bullet\coprod K_{\bullet},\\
& I'_\bullet=K_\bullet\coprod J_\bullet,\\
\end{sis}$$
for some ordered partitions $J_\bullet$ and $K_\bullet$ of two disjoint subsets of $[n]$. A cyclic equivalence class of ordered partitions on $[n]$ is called a cyclic partition of $[n]$ and it is denoted by $I_\bullet^{cyc}$. 
A cyclic partition $I_\bullet^{cyc}$ on $[n]$ determines a cyclic preorder on $[n]$, namely the equivalence relation $\sim_{I_\bullet^{cyc}}=\sim_{I_\bullet}$ and the ternary relation $R_{I_{\bullet}^{cyc}}$ given on equivalence classes by 
$$
R_{I_{\bullet}^{cyc}}(i,j,k)\Longleftrightarrow i<_{I_\bullet} j<_{I_\bullet} k \text{ or }
j<_{I_\bullet} k<_{I_\bullet} i \text{ or } k<_{I_\bullet} i<_{I_\bullet} j. 
$$
Furthermore, any cyclic preorder on $[n]$ is of the form $(\sim_{I_\bullet^{cyc}}, R_{I_{\bullet}^{cyc}})$ for a unique cyclic partition $I_\bullet^{cyc}$ of $[n]$.

\begin{proposition}\label{P:VCn}
\noindent
 \begin{enumerate}
 \item \label{P:VCn1} Any V-stability condition on $C_n$ is of the form $\n(I_\bullet,f)$, for some ordered partion $I_\bullet$ and some function $f$.
 \item \label{P:VCn2} Two V-stability conditions $\n(I_\bullet,f)$ and $\n(I'_\bullet,f')$ are equal if and only if 
 \[\begin{sis}
&I_\bullet=J_\bullet\coprod K_{\bullet},\\
& I'_\bullet=K_\bullet\coprod J_\bullet,\\
& f(v_i)=
\begin{cases}
f'(v_i)+1 & \text{ if } i-1\in J_\bullet, i\in K_\bullet, \\
f'(v_i)-1 & \text{ if } i-1\in K_\bullet, i\in J_\bullet, \\
f'(v_i) & \text{ otherwise. }
\end{cases}
\end{sis}\]
for some ordered partitions $J_\bullet$ and $K_\bullet$ of two disjoint subsets of $[n]$ (with the cyclic convention that $0=n\in [n]$). 
\item \label{P:VCn3} Any V-stability condition on the graph $C_n$ is classical, i.e. of the form $\n(\phi)$ for some numerical polarization $\phi$.
 \item \label{P:VCn4} Two V-stability conditions $\n(I_\bullet,f)$ and $\n(I'_\bullet,f')$ are equivalent by translation if and only if $I_\bullet$ and $I'_\bullet$ are cyclic equivalent. 
 \end{enumerate}
 \end{proposition}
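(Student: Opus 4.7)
The plan is to prove Part~(1) by extracting the data $(I_\bullet,f)$ combinatorially from any V-stability $\n$ on $C_n$, then deduce Part~(3) by writing down an explicit numerical polarization realizing $\n(I_\bullet,f)$, with Parts~(2) and (4) falling out as formal consequences.

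For Part~(1), I first show the relation on $[n]$ defined by $i\sim j\iff V_{ij}$ is $\n$-degenerate (with $i\sim i$) is an equivalence relation. Symmetry is immediate from $V_{ij}^\mathsf{c}=V_{ji}$, and transitivity splits on the cyclic placement of $i,j,k$: if they are cyclically ordered on $C_n$ then $V_{ij},V_{jk},V_{ki}$ partition $V(C_n)$ and fall under Definition~\ref{D:Vstab}(\ref{D:Vstab2})(a), while the remaining arrangements reduce via Remark~\ref{R:n-deg}(\ref{R:n-deg1}) applied to disjoint biconnected arcs whose union is a biconnected arc. Let $C_1,\dots,C_r$ denote the equivalence classes. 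Next I introduce a cyclic order on the $C_s$ by declaring, for representatives $i\in C_s, j\in C_t, k\in C_u$ cyclically arranged on $C_n$, that the triple of classes is cyclically ordered precisely when $\n_{V_{ij}}+\n_{V_{jk}}+\n_{V_{ki}}=d-1$ rather than $d-2$, these being the two options provided by Definition~\ref{D:Vstab}(\ref{D:Vstab2})(b). Independence of representatives follows because replacing $i$ by $i'\in C_s$ introduces compensating contributions from the degenerate arc $V_{ii'}$ via Remark~\ref{R:n-deg}(\ref{R:n-deg2}). A choice of basepoint then refines this cyclic order to the ordered partition $I_\bullet$. Finally, $f$ is read off the singletons: $f(v_i):=\n_{\{v_i\}}+1$ if $(i-1)\le_{I_\bullet}i$, and $f(v_i):=\n_{\{v_i\}}$ if $(i-1)>_{I_\bullet}i$, with cyclic convention $0:=n$. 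Agreement of $\n$ and $\n(I_\bullet,f)$ on every arc $V_{ij}$ is then checked by induction on the length of $V_{ij}$, decomposing $V_{ij}=V_{i,j-1}\cup\{v_j\}$ and invoking Remark~\ref{R:n-deg}(\ref{R:n-deg2}) to peel off one vertex at a time.

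For Part~(3), given $\n=\n(I_\bullet,f)$ with $r:=l(I_\bullet)$ classes, pick strictly decreasing reals $1>t_1>\dots>t_r>0$ and set $\phi(v_k):=f(v_k)+t_{p(k)}-t_{p(k-1)}$ with the cyclic convention $p(0):=p(n)$. Then $\sum_k\phi(v_k)=|f|=d$, and a telescoping computation yields $\phi_{V_{ij}}=f(V_{ij})+t_{p(j)}-t_{p(i)}$, a quantity lying in $(f(V_{ij})-1,f(V_{ij}))$ if $i<_{I_\bullet}j$, in $(f(V_{ij}),f(V_{ij})+1)$ if $i>_{I_\bullet}j$, and equal to $f(V_{ij})$ if $i\sim_{I_\bullet}j$. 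Applying $\lceil\cdot\rceil-1$ recovers Definition~\ref{D:nIf} in the three cases, so $\n(\phi)=\n$.

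For Part~(2), the unordered partition of $I_\bullet$ is recovered from $\n$ via its degeneracy set by Lemma~\ref{L:nfI}(\ref{L:nfI2}), so $I_\bullet$ and $I'_\bullet$ must induce the same equivalence relation on $[n]$. Comparing Definition~\ref{D:nIf} at a nondegenerate pair forces $f'(v_i)-f(v_i)\in\{-1,0,1\}$, with the nonzero values controlled by the relative $I_\bullet$- versus $I'_\bullet$-order of $i-1$ and $i$; reading these constraints off at every $i$ forces $I'_\bullet$ to arise from $I_\bullet$ by a splitting $I_\bullet=J_\bullet\coprod K_\bullet$ and setting $I'_\bullet=K_\bullet\coprod J_\bullet$, with $f'$ then pinned down by $f$ according to the stated formula. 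Part~(4) follows at once: translating $\n(I_\bullet,f)$ by $D\in\Div(C_n)$ replaces $f$ by $f+D$, so translation-equivalence is precisely the freedom to modify $f$ arbitrarily, which by Part~(2) reduces to cyclic equivalence of $I_\bullet$ and $I'_\bullet$. The main technical obstacle will be the global consistency of the cyclic order on classes in Part~(1): verifying that the triplewise cyclic orders supplied by the $\{d-1,d-2\}$ dichotomy of axiom~(\ref{D:Vstab2})(b) glue into a single cyclic order on all $r$ classes, which I expect to handle by applying that axiom to four-class configurations and bookkeeping the triple compatibilities.
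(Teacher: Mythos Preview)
Your proposal is correct and follows essentially the same strategy as the paper: extract an equivalence relation and a cyclic order on its classes from the axioms of $\n$, refine to a linear order $I_\bullet$, read $f$ off the singleton arcs $V_{i-1,i}=\{v_i\}$, and for Part~(3) perturb $f$ by a telescoping real cocycle built from the position function. The only cosmetic differences are that the paper verifies $\n=\n(I_\bullet,f)$ by directly checking additivity of $F(V_{ij}):=\n_{V_{ij}}+1_{\{i\le_{I_\bullet}j\}}$ on two- and three-part partitions (rather than your vertex-peeling induction), uses the specific perturbation $t_s=-s/l(I_\bullet)$ (rather than an arbitrary decreasing sequence), and for Part~(2) appeals directly to the intrinsicness of the cyclic preorder established in Part~(1) (rather than arguing from singleton constraints); your anticipated four-class transitivity check is exactly what the paper carries out.
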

\begin{proof}
Part \eqref{P:VCn1}: let $\n$ be a V-stability on $C_n$ of degree $d$. 
Define the binary relation $\sim$ on $[n]$ by 
$$
i\sim j \stackrel{\text{def}}{\Longleftrightarrow} i=j \text{ or } V_{ij} \text{ is $\n$-degenerate.} 
$$
From Definition \ref{D:Vstab}, it follows that $\sim$ is an equivalence relation. 

For any three indices $i,j,k\in [n]$, not pairwise equivalent under $\sim$, we define 
$$
R(i, j, k) \stackrel{\text{def}}{\Longleftrightarrow} 
\begin{cases} 
 \n_{V_{ij}}+\n_{V_{jk}}+\n_{V_{ki}}+3-d=1, & \text{ if } i,j,k \text{ are cyclically ordered,}\\
 \n_{V_{ik}}+\n_{V_{kj}}+\n_{V_{ji}}+3-d=2, & \text{ if } i,j,k \text{ are not cyclically ordered,}
 \end{cases}
$$
This ternary relation is a cyclic preorder because
\begin{itemize}
    \item it is cyclic since $\n_{V_{ij}}+\n_{V_{jk}}+\n_{V_{ki}}$ and $\n_{V_{ik}}+\n_{V_{kj}}+\n_{V_{ji}}$ are invariant under cyclic permutations;
    \item it is asymmetric, i.e. $R(i,j,k)\Rightarrow \not \!\!R(i,k,j)$,  by definition;
    \item it is transitive, i.e. $R(i,j,k)$ and $R(i,k,h)$ imply that $R(i,j,h)$ and $R(j,k,h)$. 
    
   Indeed, suppose for simplicity that $i,j,k,h$ are cyclically oriented (the other cases are left to the reader). Using 
   that $\n_{V_{ki}}+\n_{V_{ik}}-d+2=\n_{V_{jh}}+\n_{V_{hj}}-d+2=1$ since $V_{ki}$ and $V_{jh}$ are $\n$-nondegenerate, we have that 
\begin{equation}\label{E:ijkh}
\begin{sis}
    & \n_{V_{ij}}+\n_{V_{jk}}+\n_{V_{ki}}+\n_{V_{ik}}+\n_{V_{kh}}+\n_{V_{hi}}=
    \n_{V_{ij}}+\n_{V_{jk}}+\n_{V_{kh}}+\n_{V_{hi}}+d-1,\\
    & \n_{V_{ij}}+\n_{V_{jh}}+\n_{V_{hi}}+\n_{V_{jk}}+\n_{V_{kh}}+\n_{V_{hj}}=\n_{V_{ij}}+\n_{V_{jk}}+\n_{V_{kh}}+\n_{V_{hi}}+d-1\\
\end{sis}
\end{equation}
\end{itemize}
The first line in \eqref{E:ijkh} is equal to $2(d-2)+d-1$ because $R(i,j,k)$ and $R(i,k,h)$ hold true by hypothesis. Therefore, since $$\n_{V_{ij}}+\n_{V_{jh}}+\n_{V_{hi}}, \n_{V_{jk}}+\n_{V_{kh}}+\n_{V_{hj}}\in \{d-2,d-1\}$$
and the second line in \eqref{E:ijkh} is equal to the first line in \eqref{E:ijkh}, we must have that 
$$\n_{V_{ij}}+\n_{V_{jh}}+\n_{V_{hi}}= \n_{V_{jk}}+\n_{V_{kh}}+\n_{V_{hj}}=d-2,$$
which is equivalent to say that $R(i,j,h)$ and $R(j,k,h)$. 

Choose now any linear preorder $\leq$ inducing the cyclic preorder $(\sim, R)$ and let $I_{\bullet}$ be the  ordered partition of $[n]$ such that $\leq=\leq_{I_\bullet}$. Consider the  function
\begin{equation}\label{E:def-F}
\begin{aligned}
    F:\BCon(C_n) & \longrightarrow \ZZ, \\
    V_{ij} & \mapsto  F(V_{ij}):=
    \begin{cases}
    \n_{V_{ij}}+1 & \text{ if } i\sim j,\\
    \n_{V_{ij}}+1 & \text{ if } i< j,\\
     \n_{V_{ij}} & \text{ if } i> j.\\
     \end{cases}
\end{aligned}
\end{equation}
From the first condition of Definition \ref{D:Vstab}, we compute for any $i\neq j$:
\begin{equation}\label{E:F-cond1}
   F(V_{ij})+F(V_{ji})=
   \begin{cases}
     \n_{V_{ij}}+n_{V_{ji}}+2=d-2+2=d \text{ if } i\sim j, \\
      \n_{V_{ij}}+n_{V_{ji}}+1=d-1+1=d \text{ if } i\not\sim j.
   \end{cases}
\end{equation}
Furthermore, from the second condition of Definition \ref{D:Vstab}, we compute for any pairwise distinct and cyclically ordered $i,j,k$:
\begin{equation}\label{E:F-cond2}
F(V_{ij})+F(V_{jk})+F(V_{ki})=
\begin{cases}
    \n_{V_{ij}}+\n_{V_{jk}}+\n_{V_{ki}}+3=d-3+3=d & \text{ if } i\sim j\sim k,\\
      \n_{V_{ij}}+\n_{V_{jk}}+\n_{V_{ki}}+2=d-2+2=d & \text{ if } i\sim j\not \sim k,\\
      \n_{V_{ij}}+\n_{V_{jk}}+\n_{V_{ki}}+2=d-2+2=d & \text{ if } i<j< k \text{ (up to cyclic permut.)},\\
      \n_{V_{ij}}+\n_{V_{jk}}+\n_{V_{ki}}+1=d-1+1=d & \text{ if } i>j> k \text{ (up to cyclic permut.)}.\\
\end{cases}
\end{equation}
Conditions \eqref{E:F-cond1} and \eqref{E:F-cond2} imply that the function 
\begin{equation}\label{E:def-f}
\begin{aligned}
 f: V(C_n)& \longrightarrow \ZZ,\\
 v_i & \mapsto f(v_i):=F(V_{i-1,i}),
\end{aligned}    
\end{equation}
is such that $|f|=d$ and $F(V_{ij})=f(V_{ij})$.
It follows now from Definition \ref{D:nIf} that $\n=\n(I_\bullet, f)$.

Part \eqref{P:VCn2}: suppose that $\n(I_\bullet, f)=\n(I'_\bullet, f')$. Then by what proved in Part~\eqref{P:VCn1}, it follows that the cyclic preorders induced by $I_\bullet$ and $I_\bullet'$ coincide. This implies that we can write 
 $$\begin{sis}
&I_\bullet=J_\bullet\coprod K_{\bullet},\\
& I'_\bullet=K_\bullet\coprod J_\bullet,\\
\end{sis}$$
for some ordered partitions $J_\bullet$ and $K_\bullet$ of two disjoint subsets of $[n]$. The desired relation between $f$ and $f'$ follows by comparing the definitions of $\n(I_\bullet, f)$ and of $\n(I'_\bullet, f')$, see Definition \ref{D:nIf}. 

Part \eqref{P:VCn3}: consider a V-stability condition $\n(I_\bullet, f)$.
Denote by $p_{I_\bullet}: [n]\to \{1,\ldots, l(I_\bullet)\}$ the position function associated to $I_\bullet$.
Consider the function
$$
\begin{aligned}
   \phi(I_\bullet,f):\{\text{Subsets of }V(C_n)\} & \to \RR, \\
   Z &\mapsto  \phi(I_\bullet,f)_{Z}:=\sum_{v_i\in Z}\frac{p_{I_\bullet}(i-1)-p_{I_\bullet}(i)}{l(I_\bullet)}+f(Z).
\end{aligned}
$$
The function $\phi(I_\bullet, f)$ is  a numerical polarization on $C_n$ of degree equal to $|\phi(I_\bullet, l)|=|f|$. 
For any $i\neq j\in [n]$, we have that 
$$
\n(\phi(I_{\bullet}, f))_{V_{ij}}=
 \left\lceil \phi(I_{\bullet}, f)_{V_{ij}} -1 \right\rceil=
     \left\lceil \frac{p_{I_\bullet}(i)-p_{I_\bullet}(j)}{l(I_\bullet)}-1 \right\rceil+f(V_{ij})=
$$
$$
=\begin{cases}
-1+f(V_{ij}) & \text{ if } i\leq_{I_\bullet} j,\\
    f(V_{ij}) & \text{ if } i>_{I_\bullet} j.
\end{cases}$$
This shows that $\n(\phi(I_{\bullet}, f))=\n(I_\bullet, f)$, and we are done. 

Part \eqref{P:VCn4}: this follows from Part \eqref{P:VCn2} together with the observation that $\n(I_\bullet, f)+\tau=\n(I_\bullet, f+\tau)$ for any function $\tau:V(C_n)\to \ZZ$.
\end{proof}

 Finally we  describe the poset structure on V-stability conditions on $C_n$ of a given degree.
 
 \begin{proposition}
 Consider a V-stability condition $\n(I_\bullet,f)$ of degree $d$. For any other V-stability condition $\n$ of the same degree $d$, we have that 
     $$\n\geq \n(I_\bullet,f) \Longleftrightarrow \textup{ there exists a refinement  } J_\bullet  \text{ of } I_\bullet \textup{ such that } \n=\n(J_\bullet,f).$$ 
\end{proposition}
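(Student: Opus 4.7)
The plan is to prove both directions by direct manipulation of the formulas in Definition~\ref{D:nIf}. For the ($\Leftarrow$) direction, assuming $\n=\n(J_\bullet,f)$ with $J_\bullet$ a refinement of $I_\bullet$, I will do a case analysis based on whether $i\sim_{I_\bullet}j$ and on the relation of $i,j$ in $J_\bullet$. The key observation is that refinement implies $\sim_{J_\bullet}\subseteq\sim_{I_\bullet}$ and, whenever $i\not\sim_{I_\bullet}j$, the strict inequalities $<_{I_\bullet}$ and $<_{J_\bullet}$ agree. A direct substitution into Definition~\ref{D:nIf} then shows $\n(J_\bullet,f)_{V_{ij}}=\n(I_\bullet,f)_{V_{ij}}$ in every case except $i\sim_{I_\bullet}j$ with $i>_{J_\bullet}j$, where $\n(J_\bullet,f)_{V_{ij}}=\n(I_\bullet,f)_{V_{ij}}+1$. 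Thus $\n(J_\bullet,f)\geq \n(I_\bullet,f)$.

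For the ($\Rightarrow$) direction, I will construct $J_\bullet$ explicitly. Given $\n\geq\n(I_\bullet,f)$, define $\epsilon_{ij}:=\n_{V_{ij}}-\n(I_\bullet,f)_{V_{ij}}$ for each pair $i\neq j$. By~\eqref{E:n1>n2}, $\epsilon_{ij}\in\{0,1\}$ and $\epsilon_{ij}=0$ whenever $V_{ij}$ is $\n(I_\bullet,f)$-nondegenerate (i.e.\ $i\not\sim_{I_\bullet}j$). Using that $\val(V_{ij})=2$ in $C_n$, I find that if $i\sim_{I_\bullet}j$ then either $V_{ij}$ remains $\n$-degenerate, forcing $\epsilon_{ij}=\epsilon_{ji}=0$, or it becomes $\n$-nondegenerate, forcing $\epsilon_{ij}+\epsilon_{ji}=1$. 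Within each block $I_a$ of $I_\bullet$, I then define $i\leq'_a j\iff\epsilon_{ij}=0$ and aim to show this is a linear preorder on $I_a$ whose equivalence classes coincide with $\sim_\n$ restricted to $I_a$.

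The main obstacle, which I expect to be the crux of the argument, is verifying that $\leq'_a$ is genuinely \emph{linear} rather than merely cyclic. Given pairwise $\sim_\n$-nonequivalent $i,j,k\in I_a$, all three of $V_{ij},V_{jk},V_{ki}$ are $\n$-nondegenerate and $\n(I_\bullet,f)$-degenerate. Applying Condition~\eqref{E:tria-n} of Definition~\ref{D:Vstab} to both $\n$ and $\n(I_\bullet,f)$ on the two cyclic triples $(i,j,k)$ and $(j,i,k)$ gives $\epsilon_{ij}+\epsilon_{jk}+\epsilon_{ki}\in\{1,2\}$ (and similarly for the reverse triple); combined with $\epsilon_{ij}+\epsilon_{ji}=1$, this rules out the two purely cyclic orientations (all $\epsilon=0$ or all $\epsilon=1$ on one orientation) and leaves exactly the three linear orderings of $\{i,j,k\}$.

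Finally, I will take $J_\bullet$ to be the refinement of $I_\bullet$ in which each block $I_a$ is replaced by its $\sim_\n$-equivalence classes ordered by $<'_a$, and verify $\n=\n(J_\bullet,f)$ by substituting into Definition~\ref{D:nIf}: for each $V_{ij}\in\BCon(C_n)$, the value $\n(J_\bullet,f)_{V_{ij}}$ matches $\n_{V_{ij}}=\n(I_\bullet,f)_{V_{ij}}+\epsilon_{ij}$, using the definition of $<'_a$ for pairs within a common block and the agreement of $J_\bullet$ with $I_\bullet$ between distinct blocks. As an alternative one could invoke Proposition~\ref{P:VCn}\eqref{P:VCn1} to write $\n=\n(J_\bullet',g)$ and then use the cyclic ambiguity of Proposition~\ref{P:VCn}\eqref{P:VCn2} to change representative to $(J_\bullet,f)$ with $J_\bullet$ a refinement of $I_\bullet$, but I prefer the direct construction since it sidesteps the need to track how $g$ changes under the cyclic shift.
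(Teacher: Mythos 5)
Your proposal is correct and follows essentially the same route as the paper: the backward direction by direct comparison of the formulas under refinement, and the forward direction by reading a linear preorder off the values of $\n$ (your $\epsilon_{ij}=\n_{V_{ij}}-\n(I_\bullet,f)_{V_{ij}}$ is just the paper's normalization $f=\un 0$ in different bookkeeping, and your block-by-block preorder assembles to the paper's global one), with the triangle axiom \eqref{E:tria-n} used exactly as in the paper to exclude the two cyclic configurations and force transitivity. The only quibbles are cosmetic: excluding the two cyclic assignments leaves the six (not three) linear orderings of $\{i,j,k\}$, and the mixed transitivity cases where one pair is $\n$-degenerate need the same triangle argument, which your setup handles without change.
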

\begin{proof}
Up to translating with the function $-f$, we can assume that $f$ is the constant zero function.

First of all, we claim that if $J_\bullet$ is a refinement of $I_\bullet$ then  $\n(J_\bullet)\geq \n(I_\bullet)$.
Indeed, this follows from Definition \ref{D:nIf} using that 
$$
\begin{sis}
i\sim_{J_\bullet} j & \Rightarrow  i\sim_{I_\bullet} j, \\
i<_{J_\bullet} j & \Rightarrow  i\leq_{I_\bullet} j,\\
i>_{J_\bullet} j & \Rightarrow  i\geq_{I_\bullet} j,\\
\end{sis}
$$

Conversely, assume that $\n\geq \n(I_\bullet,f)$. From \eqref{E:n1>n2} it follows that for any $i\neq j\in [n]$ we must have
\begin{equation}\label{E:value-n}
\begin{sis}
 i<_{I_\bullet} j & \Longrightarrow \n_{V_{ij}}=-1,\\
  i>_{I_\bullet} j & \Longrightarrow \n_{V_{ij}}=0,\\
  i\sim_{I_\bullet} j & \Longrightarrow (\n_{V_{ij}},\n_{V_{ji}})=(-1,-1), (-1,0),(0,-1). 
\end{sis}
\end{equation}
Consider the binary relations
\begin{equation}\label{E:defleq}
\begin{sis}
& i\leq j\stackrel{\text{def}}{\Longleftrightarrow} \text{either } i=j \text{ or } \n_{V_{ij}}=-1,\\
& i\sim j\stackrel{\text{def}}{\Longleftrightarrow} i\leq j \text{ and } j\leq i \left(\stackrel{}{\Longleftrightarrow}\text{either } i=j \text{ or } \n_{V_{ij}}=\n_{V_{ji}}=-1\right),\\
&  i< j\stackrel{\text{def}}{\Longleftrightarrow} i\leq j \text{ and } i\not \sim j \left(\stackrel{}{\Longleftrightarrow} \n_{V_{ij}}=-1 \text{ and } \n_{V_{ji}}=0\right),\\
\end{sis}
\end{equation}
Property \eqref{E:value-n} implies that 
\begin{equation}\label{E:compa-<}
\begin{sis}
& i\sim j \Rightarrow i\sim_{I_\bullet} j,\\
& i<_{I_\bullet} j \Rightarrow i<j \Rightarrow i\leq j\Rightarrow i\leq_{I_\bullet} j. 
\end{sis}
\end{equation}
Since $\n$ is a V-stability condition of degree $0$ on $C_n$, it follows from \eqref{E:value-n} and \eqref{E:defleq} that 
$$
i\sim j \Longleftrightarrow i\leq j \text{ or } V_{ij} \text{ is $\n$-degenerate.}
$$
Therefore, from the properties of  V-stability conditions, it follows that $\sim$ is an equivalence relation. 

Let us now check that $\leq $ defines a linear preorder on $[n]$:
\begin{itemize}
    \item Reflexivity is obvious.
    \item Linearity, i.e. that either $i\leq j$ or $j\leq i$ for any $i\neq j\in [n]$, follows from the fact that either $\n_{V_{ij}}=-1$ or $\n_{V_{ji}}=-1$ by \eqref{E:value-n}. 
    \item Transitivity: if $i\leq j \leq k$ then $i\leq k$.

    Indeed, we can assume that $i\not\sim k$, for otherwise the result is obvious. We will assume that $\{i,j,k\}$ are cyclically ordered (the other case is similar and left to the reader). We will distinguish several cases: 
    \begin{itemize}
        \item If $i\sim j$ and $j\sim k$ then $i\sim k$ because $\sim$ is an equivalence relation, and we are done. 
         \item  If $i\sim j<k$ or $i<j\sim k$ then $i\not\sim k$ (because $\sim$ is an equivalence relation) and Definition~\ref{D:Vstab} implies that 
    $$
    1+\n_{V_{ki}}=\n_{V_{ij}}+\n_{V_{jk}}+\n_{V_{ki}}+3=1\Rightarrow
    \n_{V_{ki}}=0\Rightarrow \n_{V_{ik}}=-1\Rightarrow i< k.
    $$
     \item  If $i<j<k$ then either  $i\sim k$ and we are done, or $i\not \sim k$ in which case Definition \ref{D:Vstab} implies that
     $$
     \n_{V_{ki}}+1=\n_{V_{ij}}+\n_{V_{jk}}+\n_{V_{ki}}+3\in \{1,2\} \Rightarrow \n_{V_{ki}}=0
     \Rightarrow \n_{V_{ik}}=-1\Rightarrow i< k.
     $$
    \end{itemize}
\end{itemize}
The linear preorder $\leq$ correspond to a unique ordered partition $J_\bullet$ of $[n]$, i.e. $\leq=\leq_{J_\bullet}$. Property \eqref{E:compa-<} implies that 
$J_\bullet$ is a refinement of $I_\bullet$. 
Finally, comparing \eqref{E:value-n} and \eqref{E:defleq} with Definition \ref{D:nIf} for $J_\bullet$, we deduce that $\n=\n(J_\bullet)$, which concludes the proof. 
\end{proof}

    \bibliographystyle{alpha}	
    \bibliography{bibtex}
\end{document}